\documentclass{amsart}
\usepackage{amsmath, amssymb, enumerate}
\usepackage{amsrefs}
\usepackage{hyperref} 
\usepackage{enumerate}
\usepackage{verbatim}
\usepackage{esint}
\usepackage{color}

\theoremstyle{plain}
\newtheorem{theorem}{Theorem}
\newtheorem*{piecetheorem}{Theorem \ref{piecethm}}
\newtheorem{lemma}[theorem]{Lemma}
\newtheorem{prop}[theorem]{Proposition}
\newtheorem{corollary}[theorem]{Corollary}
\newtheorem{conj}[theorem]{Conjecture}

\theoremstyle{definition}
\newtheorem{definition}[theorem]{Definition}

\theoremstyle{remark}
\newtheorem{remark}[theorem]{Remark}

\newcommand{\e}{\epsilon}
\newcommand{\R}{\mathbb{R}}

\newcommand{\G}{\mathbb{G}}

\renewcommand{\H}{\mathbb{H}}

\newcommand{\N}{\mathbb{N}}
\newcommand{\cl}{\overline}
\newcommand{\ti}{\textit}
\newcommand{\RR}{\mathbb{R}}
\newcommand{\HH}{\mathcal{H}}
\newcommand{\LIP}{\textnormal{LIP}}
\newcommand{\obar}[1]{\overline{#1}}

\DeclareMathOperator{\dist}{\textup{\text{dist}}}
\DeclareMathOperator{\diam}{\textup{\text{diam}}}
\DeclareMathOperator{\length}{\textup{\text{length}}}

\DeclareMathOperator{\supp}{\textup{\text{supp}}}
\DeclareMathOperator{\Lip}{\textup{\text{Lip}}}
\DeclareMathOperator{\Tan}{\textup{Tan}}
\DeclareMathOperator{\Hdim}{\textup{Hdim}}

\numberwithin{equation}{section}
\numberwithin{theorem}{section}

\begin{document}


\title[Lipschitz maps from PI spaces to Carnot groups]{Lipschitz and bi-Lipschitz maps from PI spaces to Carnot groups}
\author{Guy C. David}
\address{Department of Mathematical Sciences, Ball State University, Muncie, IN 47306}
\email{gcdavid@bsu.edu}

\author{Kyle Kinneberg}
\address{Department of Mathematics, Rice University, Houston, TX 77005 \newline \indent National Security Agency}
\email{kekinneberg@gmail.com}

\thanks{The first author was partially supported by the National Science Foundation under Grants No. DMS-1664369 and DMS-1758709.}

\subjclass[2010]{Primary: 28A75, Secondary: 30L99, 53C23}
\date{\today}
\keywords{bi-Lipschitz pieces, PI space, Carnot group}

\begin{abstract}
This paper deals with the problem of finding bi-Lipschitz behavior in non-degenerate Lipschitz maps between metric measure spaces. Specifically, we study maps from (subsets of) Ahlfors regular PI spaces into sub-Riemannian Carnot groups. We prove that such maps have many bi-Lipschitz tangents, verifying a conjecture of Semmes. As a stronger conclusion, one would like to know whether such maps decompose into countably many bi-Lipschitz pieces. We show that this is true when the Carnot group is Euclidean. For general Carnot targets, we show that the existence of a bi-Lipschitz decomposition is equivalent to a condition on the geometry of the image set.
\end{abstract}

\maketitle

\section{Introduction}

Let $X$ be a metric space of Hausdorff dimension $Q >0$, and $Y$ another metric space. Suppose $f \colon X \rightarrow Y$ is a Lipschitz map that is non-degenerate, in the sense that $f(X)$ has positive $Q$-dimensional Hausdorff measure in $Y$. This would occur, for example, if $f$ were a bi-Lipschitz homeomorphism, but there is no reason that $f$ need be bi-Lipschitz in general. Indeed, a weaker possibility that would yield the same conclusion is that $X$ contains a subset of positive measure on which $f$ is bi-Lipschitz. One can now ask whether this must be the case, and it is not easy to see why not.

To put it more generally, this paper is about the following question. Under what circumstances must there be some bi-Lipschitz behavior in the non-degenerate Lipschitz mapping $f$?

There are different forms that such bi-Lipschitz behavior could take. One might ask any of the following questions about the map $f$. They are ordered so that, under reasonable assumptions, a ``yes'' answer to one question implies a ``yes'' answer to those above it.
\begin{enumerate}[\normalfont (i)]
\item\label{existstangent} Must $f$ have a single tangent (or even ``weak tangent'') which is bi-Lipschitz? That is, must there be a sequence of scales along which one can ``zoom in'' on $f$ and pass to a limit map which is bi-Lipschitz?
\item\label{manytangents} Must $f$ have bi-Lipschitz tangents on a set of positive measure in $X$? In other words, must there be a positive measure set of points at which this ``zooming in'' yields a bi-Lipschitz map?
\item\label{bilippiece} Must there be a set $E \subset X$ of positive measure on which the restriction of $f$ is bi-Lipschitz?
\item\label{quantbilippiece} Must there be a set $E \subset X$ of positive measure on which the restriction of $f$ is bi-Lipschitz, with quantitative bounds on both the size of $E$ and the bi-Lipschitz constant of $f$ that are independent of $f$?
\end{enumerate}

For general $X$ and $Y$, the answer to all of these questions may be negative, as we discuss below. There are, however, a number of cases where one is guaranteed to find bi-Lipschitz behavior in Lipschitz mappings.

If $X=[0,1]^n\subset\RR^n$ and $Y=\RR^m$, then it is a classical fact, a consequence of Rademacher's theorem, that the answer to \eqref{bilippiece} is ``yes'', i.e., the map $f$ must have pieces on which it is bi-Lipschitz \cite[Lemma 3.2.2]{Fe69}. This was extended to the case in which $X=[0,1]^n\subset\RR^n$ and $Y$ is an arbitrary metric space by Kirchheim \cite{Ki94}. Kirchheim's extension relies on defining and proving a suitable version of Rademacher's theorem (Lipschitz maps are differentiable almost everywhere) for metric-space-valued Lipschitz mappings.

The quantitative question \eqref{quantbilippiece} was first studied in connection with problems on singular integrals and uniform rectifiability. A positive answer was established in Euclidean spaces by David \cite{Da88} and Jones \cite{Jo88}, and for mappings from $[0,1]^n\subset \RR^n$ to arbitrary metric spaces by Schul \cite{Sc09}. Related quantitative results appear for sub-Riemannian Carnot groups in \cites{Me13, Li15} and for certain metric manifolds in \cite{GCD16}. A general framework for addressing question \eqref{quantbilippiece} appears in \cite{Se00}.

In this paper we concern ourselves with the qualitative questions \eqref{existstangent}, \eqref{manytangents}, and \eqref{bilippiece}. In contrast to most of the previously studied cases discussed in the preceeding paragraph, which have domains $X$ that are Euclidean or Carnot, we are interested in the case where the domain of the mapping is allowed to be a quite general metric space supporting a notion of calculus, while the target $Y$ will be a Euclidean space or Carnot group. Carnot groups are a class of nilpotent Lie groups equipped with sub-Riemannian metrics that generalize many geometric and analytic features of Euclidean spaces. We discuss them more precisely in section \ref{Carnotsection}.

The domain spaces $X$ that we will study are the so-called ``PI spaces'': doubling metric measure spaces supporting a Poincar\'e inequality in the sense of Heinonen--Koskela \cite{HK98}. These spaces play a central role in the modern theory of analysis on metric spaces \cites{Ch99, He01, HKST15} and, relevant to our purposes, are known to support a form of differential calculus for Lipschitz functions by a theorem of Cheeger \cite{Ch99}. We will discuss PI spaces and Cheeger's theory in detail in section \ref{PILD}.

It is important to note that when one allows domains $X$ that are more general than Euclidean space, all the questions \eqref{existstangent} through \eqref{quantbilippiece} may have negative answers. For the simplest counterexample to \eqref{bilippiece}, one may consider $X$ to be $[0,1]$ equipped with the metric $d(x,y) = |x-y|^{1/2}$, giving it Hausdorff dimension $2$, and let $f$ be a Lipschitz mapping from $X$ onto the standard square $[0,1]^2$. The classical Hilbert space-filling curve gives a construction of such a map $f$. This example is discussed further in \cite{Me13} and \cite{GCD16}. There are also more elaborate examples due to David and Semmes \cite[Propositions 14.5 and 14.43]{DS97}, with even stronger analytic properties, which also provide negative answers to all four questions \eqref{existstangent} through \eqref{quantbilippiece}.

Even when the domain $X$ is a PI space, which implies strong restrictions on both the geometry of $X$ and the behavior of Lipschitz functions on it, questions \eqref{existstangent} through \eqref{quantbilippiece} may have negative answers. Indeed, there is an example due to Laakso \cite{La99} of an Ahlfors $Q$-regular PI space $X$ and a Lipschitz mapping from $X$ onto itself such that no tangent of $f$ is bi-Lipschitz, i.e., for which the answer to \eqref{existstangent} (and hence, to the three other questions) is ``no''. For more discussion on when these ``Lipschitz implies bi-Lipschitz behavior'' results can fail, see \cite[Chapter 5]{Se01} or \cite[Section 8]{GCD16}.

The point of discussing these counterexamples is to explain that, in order to hope for positive answers even when $X$ is a PI space, one must put some constraint on the target $Y$. Our constraint in this paper will be that $Y$ is a sub-Riemannian Carnot group.

\subsection{Semmes' questions and the main results}
In \cite{Se01}, Semmes, having digested the aforementioned counterexamples, makes a number of specific conjectures, related to the above questions, about the structure of Lipschitz mappings from PI spaces into Euclidean spaces and, more generally, into Carnot groups.

\begin{conj}[\cite{Se01}, Conjecture 5.3]\label{conjpiece}
Let $X$ be an Ahlfors $Q$-regular PI space, $A \subset X$ a subset, and $\G$ a sub-Riemannian Carnot group. Suppose $f \colon A \rightarrow \G$ is a Lipschitz mapping such that $\mathcal{H}^Q(f(A)) > 0$. Then there is a subset $E \subset A$ such that $\mathcal{H}^Q(E) > 0$ and $f$ is bi-Lipschitz on $E$.
\end{conj}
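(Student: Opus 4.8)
\emph{Proof proposal.} The plan has three parts: differentiate $f$ infinitesimally, combining Cheeger's calculus on $X$ with a Pansu-type blow-up at the target $\G$; use the non-degeneracy hypothesis to find a positive-measure set of points at which the infinitesimal behaviour is bi-Lipschitz; and patch this into a macroscopic bi-Lipschitz piece. After a routine reduction, assume $A$ compact, $0<\mathcal H^Q(f(A))<\infty$, and work with the Ahlfors $Q$-regular measure $\mu=\mathcal H^Q\llcorner X$. For the first part, recall that by Cheeger's theorem $X$ carries countably many charts $(U_i,x_i)$, $x_i\colon U_i\to\RR^{N_i}$, on which every real-valued Lipschitz function is differentiable $\mu$-a.e. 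Let $f^h=\pi_1\circ\log\circ f\colon A\to V_1$ be the horizontal part of $f$; it is Lipschitz, since the horizontal projection $\G\to V_1$ is $1$-Lipschitz for the Carnot metric (the Baker--Campbell--Hausdorff formula shows higher-layer brackets do not affect the $V_1$-component of $\log$). Cheeger-differentiating $f^h$ and running a Pansu--Rademacher argument --- as in the proofs of Pansu and Magnani for Euclidean domains, now carried out over the chart $x_i$ --- one wants: at $\mu$-a.e.\ density point $p$ of $A$, the rescalings $q\mapsto\delta_{1/r}\!\big(f(p)^{-1}f(q)\big)$ converge as $r\to0$, along every blow-up $(\hat X_p,\hat p)$ of $X$ at $p$, to a homogeneous homomorphism $Df_p\colon\hat X_p\to\G$ whose horizontal component is governed by the Cheeger differential of $f^h$. (When $\G=\RR^m$ this just says $Df_p$ is a linear map factoring through the chart differential $dx_p$; in general the point is that the super-horizontal components of $f$, which are only H\"older, blow up to the identity at the rate set by the parabolic scaling of $\G$.)

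Next, let $A^*=\{p\in A:\ Df_p\text{ is bi-Lipschitz on }\hat X_p\}$. Since $Df_p$ is homogeneous and $\hat X_p$ is doubling and complete, hence proper with compact unit sphere, $Df_p$ is bi-Lipschitz as soon as it is injective; so it suffices to rule out $\ker Df_p\neq\{e\}$ holding $\mu$-a.e.\ on $A$. If it did, a Sard-type argument would give a contradiction: for each $\e>0$, cover $A$ by balls $B(p,r)$ with $r$ small on which $f(B(p,r))$ lies in the $\e r$-neighbourhood of $f(p)\cdot Df_p\big(B_{\hat X_p}(\hat p,r)\big)$, a set of $\mathcal H^Q$-measure at most $C\e r^Q$ because the homomorphism $Df_p$ has nontrivial kernel and thus collapses onto a proper homogeneous subgroup; passing to a Vitali subfamily and letting $\e\to0$ forces $\mathcal H^Q(f(A))=0$. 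Hence $\mu(A^*)>0$.

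For the patching step, fix $i$ with $\mu(A^*\cap U_i)>0$; the crux is to produce \emph{uniform coordinates} near $A^*\cap U_i$. Concretely, I would seek a positive-measure set $G\subseteq A^*\cap U_i$, a fixed homogeneous space $H$ whose dilations are compatible with those of the tangents $\hat X_p$, a bi-Lipschitz embedding $\iota\colon (G,d)\hookrightarrow H$, and model maps $h_p\colon B(p,r_0)\to\G$ built from $f(p)$, $Df_p$ and $\iota$ with $h_p(p)=f(p)$, $d_\G(h_p(x),h_p(y))\geq c_0\,d_H(\iota(x),\iota(y))$, and $d_\G(f(q),h_p(q))\leq\e r$ for $q\in B(p,r)\cap G$ and $r<r_0$, with uniform constants $r_0,c_0>0$ and $\e$ as small as we like (the last estimate coming from the blow-up statement above together with Egorov). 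Granting this, one concludes in Kirchheim's style: let $E\subseteq G$ be a positive-measure set of density points with $\diam E<r_0$; for $x,y\in E$ and $s=d(x,y)$,
\[
d_\G(f(x),f(y))\ \geq\ d_\G(h_x(x),h_x(y))-\e\,s\ \geq\ c_0\,d_H(\iota(x),\iota(y))-\e\,s\ \geq\ c'\,d(x,y),
\]
using that $\iota$ is bi-Lipschitz and $\e$ is small; the reverse inequality is the Lipschitz hypothesis on $f$. Thus $f|_E$ is bi-Lipschitz, as required.

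The main obstacle is supplying these uniform coordinates. When $\G=\RR^m$ it is available: for $p\in A^*$ the homomorphism $Df_p$ factors through the \emph{abelian} Cheeger chart, $Df_p=T_p\circ dx_p$ with $T_p$ linear, and injectivity of $Df_p$ forces $dx_p$ to be injective, hence --- being homogeneous and Lipschitz --- bi-Lipschitz; and a PI space whose tangents are bi-Lipschitz to Euclidean spaces on a positive-measure set is rectifiable there, so $x_i$ itself is bi-Lipschitz on a positive-measure $G\subseteq A^*\cap U_i$, and one takes $H=\RR^{N_i}$, $\iota=x_i|_G$. For a general Carnot target none of this is at hand: the tangents $\hat X_p$ need not be abelian, the Cheeger chart need not be infinitesimally injective even where $Df_p$ is bi-Lipschitz, and --- most fundamentally --- the rectifiability statement one would need, namely that a PI space whose tangents bi-Lipschitz embed into homogeneous subgroups of $\G$ on a positive-measure set is itself bi-Lipschitz to pieces of such subgroups, is not known; the structure theory for subsets of Carnot groups is far less complete than the Euclidean rectifiability theory. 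Consequently this scheme proves Conjecture~\ref{conjpiece} when $\G$ is Euclidean and, for general $\G$, reduces it to a rectifiability-type condition on the infinitesimal geometry of the image $f(A)$ inside $\G$; removing that condition for arbitrary Carnot targets seems to require genuinely new ideas and is the heart of the conjecture.
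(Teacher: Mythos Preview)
Your proposal reaches the same partial conclusion as the paper: the conjecture is proved when $\G$ is Euclidean, and for general Carnot targets it is reduced to a structural condition on the image. The paper does not resolve the full conjecture either; it proves the Euclidean case (Theorem~\ref{Rnpiecethm}) and shows that, for general $\G$, the bi-Lipschitz decomposition is \emph{equivalent} to a condition on tangent measures of $f(A)$ (Theorem~\ref{piecethm}). So at the level of what is actually established, you and the paper agree.

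The routes differ, and your framework has a real gap. You treat the tangent $\hat X_p$ as if it carried a group structure and a family of dilations, so that the blow-up $Df_p\colon\hat X_p\to\G$ can be called a ``homogeneous homomorphism'' and so that ``injective plus homogeneous plus compact unit sphere'' forces bi-Lipschitz. But tangents of PI spaces are, in general, only quasiconvex doubling metric spaces with no group law and no canonical dilations; the compactness-of-the-unit-sphere argument does not run, and the Sard step ``nontrivial kernel implies the image sits in a proper homogeneous subgroup'' has no meaning for a map whose domain is not a group. The paper's substitute for all of this is to show (Theorem~\ref{blowup}) that every tangent map $\hat f$ is a \emph{Lipschitz quotient} onto a fixed Carnot subgroup $\H_x\subset\G$; this is obtained not from homogeneity of $\hat X_p$ but from the fact that the Cheeger chart blows up to a Lipschitz quotient onto $\R^k$ (Proposition~\ref{LQblowup}), combined with unique lifting of horizontal curves in $\G$. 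The existence of a single bi-Lipschitz tangent then comes from an entirely different mechanism: David--Semmes regularity of some tangent (Lemma~\ref{DSregular}) plus a quantitative big-pieces theorem (Theorem~\ref{bigpieces}), not from injectivity of a differential.

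For the Euclidean case, your final reduction --- ``a PI space whose tangents are bi-Lipschitz to $\R^n$ is rectifiable there'' --- is correct in spirit but is precisely the content of the Bate--Li characterization (Theorem~\ref{bateli}), which the paper invokes directly. The paper's argument is shorter: the hypothesis $\mathcal H^Q(f(A))>0$ together with the strong-unrectifiability result (Theorem~\ref{strongunrect}) forces the chart dimension to equal $Q$, whence Bate--Li gives rectifiability of $A$, and the classical Euclidean bi-Lipschitz-pieces lemma finishes. Your Kirchheim-style patching is not needed once rectifiability is in hand.
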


\begin{conj}[\cite{Se01}, Conjecture 5.9]\label{conjtangent}
Let $X$ be an Ahlfors $Q$-regular PI space, $A \subset X$ a subset, and $\G$ a sub-Riemannian Carnot group. Suppose $f \colon A \rightarrow \G$ is a Lipschitz mapping such that $\mathcal{H}^Q(f(A)) > 0$. Then there is a ``weak tangent'' of $f$ that is bi-Lipschitz.
\end{conj}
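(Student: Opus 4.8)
The plan is to prove the stronger assertion that $f$ has a bi-Lipschitz blow-up at $\mathcal{H}^Q$-almost every point of a positive-measure subset of $A$; since a blow-up at a point (a pointed Gromov--Hausdorff limit of rescalings $\lambda(X,p)$, $\lambda\to\infty$, together with the corresponding limit of $\lambda(f-f(p))$) is in particular a weak tangent of $f$, this resolves question~\eqref{manytangents}, hence~\eqref{existstangent}, and in particular proves Conjecture~\ref{conjtangent}. First I would reduce to a single Cheeger chart: decompose $X$ up to a null set into charts $(U_\alpha,\phi_\alpha)$ with $\phi_\alpha\colon U_\alpha\to\mathbb{R}^{N_\alpha}$, pick one with $\mathcal{H}^Q(A\cap U_\alpha)>0$, and replace $A$ by a positive-measure subset of $A\cap U_\alpha$. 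The essential input, which I would extract from the differentiation theory for Lipschitz maps from PI spaces into Carnot groups (Cheeger's theorem combined with a Pansu/metric-differentiation argument exploiting the self-similarity of $\G$), is this: at $\mathcal{H}^Q$-a.e.\ $p\in A$, $p$ is a density point of $A$, every blow-up $\hat X_p$ of $(X,p)$ is a $Q$-regular PI space, and along any sequence of scales realizing such a blow-up the rescaled maps converge to a map $\hat f_p\colon\hat X_p\to\G$ which is $\Lip(f)$-Lipschitz, dilation-equivariant, and ``homomorphism-like'' --- it respects the infinitesimal graded/group structures of $\hat X_p$ and $\G$. The role of this last property is to preclude ``folding'': the image $\hat f_p(\hat X_p)$ is a dilation-invariant, Ahlfors regular subset of $\G$ of well-defined dimension $d_p\le Q$, and $\hat f_p$ is either injective or collapses in a quantitatively controlled way. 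For $X=\mathbb{R}^N$ this $\hat f_p$ is a homogeneous homomorphism $\mathbb{R}^N\to\G$ in Pansu's sense; for general $X$ it need not factor through $\mathbb{R}^N$, which is exactly why the hypothesis constrains $\G$ and not $X$.

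Next I would convert the non-degeneracy hypothesis into information about blow-ups, by contradiction. Call $p$ \emph{good} if the conclusion above holds there (a.e.\ $p$ is good), and call a good point \emph{collapsing} if every blow-up of $f$ at $p$ has image dimension $d_p<Q$. If the non-collapsing good points have positive measure, we pass to the final step. Otherwise a.e.\ good $p$ is collapsing, and I claim $\mathcal{H}^Q(f(A))=0$. Indeed, by compactness of blow-ups and the uniform Ahlfors regularity of their images, for every $\epsilon>0$ and a.e.\ $p$ there is $r_0(p,\epsilon)>0$ so that $f(B(p,r)\cap A)$ is covered by at most $C\epsilon^{-d}$ balls of radius $\epsilon r$ whenever $r<r_0(p,\epsilon)$, with $C$ and $d<Q$ uniform (the image dimensions lie in a fixed finite set, all $<Q$). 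Restricting to a positive-measure $A_0\subset A$ on which $r_0(\cdot,\epsilon)\ge 5r_0>0$, applying the $5r$-covering lemma to get disjoint balls $B(p_i,r_i)$ with $p_i\in A_0$, $r_i<r_0$, $A_0\subset\bigcup_i B(p_i,5r_i)$, and $\sum_i r_i^Q\lesssim\mathcal{H}^Q$ of a bounded neighborhood of $A_0$ (by $Q$-regularity), one obtains
\[
\mathcal{H}^Q_\infty(f(A_0))\le\sum_i C\epsilon^{-d}(5\epsilon r_i)^Q\lesssim\epsilon^{Q-d}\sum_i r_i^Q\lesssim\epsilon^{Q-d}.
\]
Letting $\epsilon\to0$ gives $\mathcal{H}^Q(f(A_0))=0$, and letting $r_0\to0$ exhausts $A$ up to a null set (whose image is null since $f$ is Lipschitz), so $\mathcal{H}^Q(f(A))=0$, contradicting the hypothesis.

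It remains to show that a non-collapsing blow-up is bi-Lipschitz. Fix a good $p$ and a blow-up $\hat f_p\colon\hat X_p\to\G$ with $d_p=Q=\Hdim\hat X_p$. The homomorphism-like structure forces $\hat f_p$ to be injective: its fibers over image points are dilation-invariant and sub-structure-like, so a nontrivial fiber would push the image dimension strictly below $Q$ --- in the Euclidean/Carnot model this is the familiar statement that a graded homomorphism with full-rank image has trivial kernel. An injective, dilation-equivariant, Lipschitz map from the $Q$-regular, proper space $\hat X_p$ into $\G$ is then automatically bi-Lipschitz: the modulus of continuity of $\hat f_p^{-1}$ on the image is scale-invariant by equivariance, hence linear, and its slope is finite (equivalently, the bi-Lipschitz lower bound is positive) by injectivity together with a normalized compactness argument using the homogeneity of $\hat X_p$. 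Thus $\hat f_p$ is a bi-Lipschitz weak tangent of $f$, which proves Conjecture~\ref{conjtangent} and, more strongly, question~\eqref{manytangents}.

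The hardest part, I expect, is the differentiation input in the first paragraph: proving that $\G$-valued Lipschitz maps out of PI spaces blow up, $\mathcal{H}^Q$-a.e., to maps with enough algebraic rigidity to preclude folding and with Ahlfors regular image of a well-defined dimension. Cheeger's theorem differentiates only $\mathbb{R}$-valued Lipschitz functions, and --- as the identity map of a Heisenberg group already shows --- a $\G$-valued map can detect directions invisible to the Cheeger chart, so one genuinely needs a metric/Pansu-type differentiation theorem adapted to the PI--Carnot setting, built on the self-similarity of both $\G$ and the tangents $\hat X_p$ and on the fact that horizontal directions control the Carnot metric. Secondary technical points are making the two structural implications of the third paragraph precise for tangents $\hat X_p$ that are not themselves groups, and verifying the uniform regularity of blow-up images used in the covering estimate. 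I would also stress that this argument deliberately stays at the level of tangents: passing from a bi-Lipschitz tangent back to bi-Lipschitz behavior of $f$ at a single scale is a genuinely harder, separate problem, which is why the decomposition statement (Conjecture~\ref{conjpiece}) can be reached here only when $\G$ is Euclidean.
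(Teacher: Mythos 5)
Your overall architecture (reduce to a chart, rule out collapsing blow-ups by a covering argument, then upgrade a full-dimensional blow-up to a bi-Lipschitz one) is reasonable, but the two load-bearing steps rest on an input that does not exist and on a claim that is false. First, the ``essential input'' of your opening paragraph --- that at a.e.\ point every blow-up $\hat f_p\colon\hat X_p\to\G$ is dilation-equivariant and ``homomorphism-like,'' with dilation-invariant, algebraically rigid fibers --- is postulated rather than proved, and in this generality it is not available: a tangent $\hat X_p$ of a PI space carries no group law and no dilations (tangents of PI spaces need not be self-similar), so there is no Pansu-type differentiation theorem that would make $\hat f_p$ respect any graded structure. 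The paper gets by with much less: it differentiates only the $\RR^n$-valued composition $\pi\circ f$ with the horizontal projection (so Cheeger's theorem applies), uses the fact that Cheeger charts blow up to Lipschitz quotient maps, and identifies the image of every tangent map as the Carnot subgroup $\H_x$ generated by $Df_x(\RR^k)$ via path-lifting, uniqueness of horizontal lifts, and the rectifiable connectivity of tangents coming from quasiconvexity of PI spaces. The conclusion there is only that every tangent is a Lipschitz quotient onto $\H_x$ --- no injectivity and no equivariance.

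Second, your step ``non-collapsing implies injective'' is false even in the Euclidean model you invoke: the homogeneous map $z\mapsto z^2/|z|$ on $\RR^2$ is a dilation-equivariant Lipschitz quotient onto $\RR^2$ with full-dimensional image and two-point fibers. Nontrivial fibers do not push the image dimension below $Q$, so a full-dimensional blow-up need not be bi-Lipschitz, and the ``scale-invariant modulus of continuity of $\hat f_p^{-1}$'' argument has nothing to act on. This is exactly where the paper has to work hardest: it invokes David--Semmes (Proposition 12.8 of \cite{DS97}) to produce, at a.e.\ point of the image (this is where the hypothesis $\mathcal{H}^Q(f(A))>0$ enters), one tangent that is David--Semmes regular; regularity forces $\Hdim\H_x=Q$, so the tangent is a Lipschitz quotient between Ahlfors $Q$-regular spaces and, by the David--Jones--Semmes theorem (Theorem \ref{bigpieces}), is bi-Lipschitz on a subset of positive measure; a further blow-up at a density point of that subset, combined with the ``tangents of tangents are tangents'' principle (Proposition \ref{tangentsoftangents}), finally yields a bi-Lipschitz tangent of $f$. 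Without some substitute for this mechanism your argument does not close.
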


In this paper, we prove three theorems pertaining to Semmes' conjectures. First, we verify Conjecture \ref{conjtangent} completely. In fact, we show that such mappings have many bi-Lipschitz tangents.

\begin{theorem}\label{tangentthm}
Let $X$ be an Ahlfors $Q$-regular PI space, $A \subset X$ a subset, and $\G$ be a sub-Riemannian Carnot group. Suppose $f \colon A \rightarrow \G$ is a Lipschitz mapping such that $\mathcal{H}^Q(f(A)) > 0$.

Then there is a subset $E \subset A$, with $\mathcal{H}^Q(f(A \setminus E)) = 0$, such that, at each point $x \in E$, there exists $(\hat{X}, \hat{x}, \hat{f}) \in \Tan(A,x,f)$ with $\hat{f}$ bi-Lipschitz.
\end{theorem}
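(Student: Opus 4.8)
The plan is to reduce the problem to the study of Cheeger-type differentials and then exploit the group structure of $\G$ through its Pansu-type differentiation theory. Here is the strategy in order.

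First, I would recall Cheeger's theorem: since $X$ is a PI space, it admits a measurable differentiable structure, so there are countably many charts $(U_\alpha, \varphi_\alpha)$ with $\varphi_\alpha \colon U_\alpha \to \R^{N_\alpha}$, and every Lipschitz function on $X$ (in particular each coordinate of a composition of $f$ with a Lipschitz function on $\G$) is differentiable almost everywhere with respect to this structure. Working in a single chart and using the PI structure, at almost every point $x$ one can take blow-ups of $(X,x)$ and obtain that the pointed Gromov–Hausdorff tangents $(\hat X, \hat x)$ are themselves Ahlfors $Q$-regular PI spaces (this uses doubling and the self-improving/persistence properties of PI spaces under blow-up), and simultaneously blow up $f$ to obtain a tangent map $\hat f \colon \hat X \to \G$, where here one rescales the metric on $\G$ by the group dilations $\delta_{1/r}$. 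The key point is that at generic $x$ this limit $\hat f$ is a \emph{group-homomorphism-like} object: composing with the homogeneous norm on $\G$ and with horizontal coordinate functions, and using that $\G$-valued Lipschitz maps from PI spaces are "differentiable" in the appropriate Pansu sense (a theorem that follows from Cheeger's theorem together with the structure of Carnot groups — this should be available from the machinery of section \ref{Carnotsection} and \ref{PILD}), one gets that $\hat f$ intertwines the dilation structures and, crucially, is Lipschitz with a derivative that is constant along the blow-up.

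Second — and this is where the hypothesis $\mathcal{H}^Q(f(A))>0$ enters decisively — I would use the area/co-area philosophy. Since $\hat{f}$ is Lipschitz and $\hat X$ is Ahlfors $Q$-regular, $\hat f$ is automatically Lipschitz-\emph{upper}-regular: $\mathcal H^Q(\hat f(\hat X)) < \infty$. To get the reverse (lower) bound and hence bi-Lipschitzness, I would argue that the set of non-degeneracy is preserved under blow-up: the hypothesis $\mathcal H^Q(f(A))>0$ means, after discarding a set $E^c$ of points whose images are $\mathcal H^Q$-null (which is exactly the set we are allowed to throw away in the statement, since we only need $\mathcal H^Q(f(A\setminus E))=0$), that at every remaining point $x$ the map $f$ has a "metric Jacobian" that is positive in a density sense — i.e. $\limsup_{r\to 0} \mathcal H^Q(f(B(x,r)))/r^Q > 0$. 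Blowing up at such a point along the scales realizing the $\limsup$, the tangent map $\hat f$ must satisfy a lower volume bound $\mathcal H^Q(\hat f(B_{\hat X}(\hat x, s))) \gtrsim s^Q$ for all $s$. Combined with the homogeneity of $\hat f$ under the dilations and the fact that a homogeneous Lipschitz surjection-onto-its-image between $Q$-regular spaces with matching volume growth has no collapsing, one concludes $\hat f$ is bi-Lipschitz onto its image.

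Third, I would assemble the measure-theoretic bookkeeping: the set $E$ is the set of points $x\in A$ that are (a) points of differentiability in some Cheeger chart, (b) points where the blow-up of $X$ behaves well (density/doubling points, $\mu$-a.e.\ by Lebesgue differentiation), and (c) points of positive metric Jacobian as above. The complement $A\setminus E$ is a countable union of sets each of which either has $\mu$-measure zero — hence $\mathcal H^Q$-image zero since $f$ is Lipschitz and $X$ is $Q$-regular — or consists of points of vanishing Jacobian, whose union also maps to an $\mathcal H^Q$-null set by a standard covering argument (cover by small balls $B(x,r)$ with $\mathcal H^Q(f(B(x,r))) \le \e r^Q$ and sum). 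This gives $\mathcal H^Q(f(A\setminus E))=0$, as required.

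The main obstacle I anticipate is the second step: controlling the blow-up of the \emph{image measure} and showing that the lower volume bound passes to the tangent. Gromov–Hausdorff convergence of the domains and uniform convergence of the maps do \emph{not} automatically give convergence of pushforward Hausdorff measures — one needs lower semicontinuity of $\mathcal H^Q$ of images under Lipschitz maps and GH-convergence, which is delicate because Hausdorff measure is not continuous under GH limits in general. I expect one must argue more carefully, e.g.\ using that in a PI space the measure $\mu$ is comparable to $\mathcal H^Q$ and that $f_*\mu$ blows up to $\hat f_* \hat\mu$ in a controlled way precisely at density points, perhaps invoking a differentiation theorem for the measure $f_*\mathcal H^Q$ restricted to $f(A)$. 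The Carnot structure of the target (in particular, that $\G$ is Ahlfors regular of homogeneous dimension and that its dilations are conformal on Hausdorff measure) is what makes this tractable; without it one could not control how $\mathcal H^Q$ on $f(A)\subset\G$ interacts with rescaling.
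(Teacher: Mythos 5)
Your first and third steps are broadly in the spirit of the paper: the actual proof also works chart-by-chart with Cheeger differentiation, composes $f$ with the horizontal projection $\pi\colon\G\to V_1$ to get a Cheeger derivative $Df_x$, and discards the $\mathcal{H}^Q$-null-image exceptional set at the end. (One correction of framing: the tangent map $\hat f$ is not a ``group-homomorphism-like'' object --- its domain $\hat X$ is just a quasiconvex Ahlfors regular space, not a group. What the paper actually proves, via Proposition \ref{LQblowup}, path lifting, and the characterization of Carnot subgroups in Proposition \ref{Carnotsub}, is that $\hat f$ is a \emph{Lipschitz quotient} onto the Carnot subgroup $\H_x$ generated by $Df_x(\R^k)$.)

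The genuine gap is in your second step. The claim that a homogeneous Lipschitz map onto its image between $Q$-regular spaces ``with matching volume growth has no collapsing, hence is bi-Lipschitz'' is false. The standard counterexample is the winding map $re^{i\theta}\mapsto re^{2i\theta}$ on $\R^2$: it is a Lipschitz quotient of $\R^2$ onto $\R^2$, commutes with dilations, and satisfies $\mathcal{H}^2(f(B(x,r)))\gtrsim r^2$ everywhere, yet it is $2$-to-$1$ off the origin and nowhere bi-Lipschitz on any set of positive measure near a whole circle of points identified with each other. Lower volume bounds on images control measure, not injectivity, and injectivity is exactly what bi-Lipschitzness requires. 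The paper closes this gap with substantially heavier machinery: (a) a result of David--Semmes (\cite[Proposition 12.8]{DS97}, restated as Lemma \ref{DSregular}) produces, at $\nu$-a.e.\ point of the image, a tangent map that is \emph{David--Semmes regular} (preimages of $r$-balls covered by boundedly many $Cr$-balls) --- this is where $\mathcal{H}^Q(f(A))>0$ enters, and it is a much stronger conclusion than a volume lower bound; (b) the David--Jones--Semmes theorem (Theorem \ref{bigpieces}) upgrades a Lipschitz quotient between Ahlfors $Q$-regular spaces to a map that is bi-Lipschitz on a subset of positive measure; and (c) one must then blow up \emph{again} at a density point of that subset and invoke a ``tangents of tangents are tangents'' principle (Proposition \ref{tangentsoftangents}, whose proof in the Carnot setting requires an Assouad-embedding reduction in the Appendix) to obtain a globally bi-Lipschitz tangent of the original map. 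None of steps (a)--(c) is present in your outline, and without them the argument does not yield bi-Lipschitzness. Your own flagged worry about semicontinuity of $\mathcal{H}^Q$ of images under Gromov--Hausdorff convergence is also real, and the paper avoids it precisely by routing the argument through David--Semmes regularity rather than through convergence of pushforward measures.
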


We discuss tangents and the $\Tan$ notation in Section \ref{tangentsec}. As one might expect, $f$ having a bi-Lipschitz tangent implies that $f$ has a bi-Lipschitz weak tangent, and so Theorem \ref{tangentthm} indeed proves Conjecture \ref{conjtangent}. In other words, for non-degenerate mappings from PI spaces to Carnot groups, the answers to questions \eqref{existstangent} and \eqref{manytangents} are both ``yes''.

We also prove Conjecture \ref{conjpiece}, and thereby give a positive answer to question \eqref{bilippiece}, under certain structural assumptions on the image set $f(A) \subset \G$.

\begin{theorem} \label{piecethm}
Let $X$ be an Ahlfors $Q$-regular PI space, $A \subset X$ a compact subset, and $\G$ be a sub-Riemannian Carnot group. Suppose $f \colon A \rightarrow \G$ is a Lipschitz mapping such that $\mathcal{H}^Q(f(A)) > 0$, and let $\nu = \mathcal{H}^Q|_{f(A)}$. Then the following are equivalent.

\begin{enumerate}[\normalfont (i)]
\item\label{piecei} There are countably many compact subsets $A_i \subset A$ such that $\nu(f(A \setminus \cup_i A_i)) = 0$
and $f$ is bi-Lipschitz on each $A_i$.
\item\label{pieceii} At $\nu$-a.e. point $y \in f(A)$, each tangent measure $\hat{\nu} \in \Tan(\nu,y)$ is comparable to the restriction of $\mathcal{H}^Q$ to a Carnot subgroup of $\G$.
\item\label{pieceiii} At $\nu$-a.e. point $y \in f(A)$, the support of each tangent measure $\hat{\nu} \in \Tan(\nu,y)$ is a connected subset of $\G$.
\end{enumerate}
\end{theorem}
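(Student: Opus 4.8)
The plan is to prove the three-way equivalence by establishing a cycle of implications, with the real analytic content concentrated in the step from a metric condition on tangent measures back to a decomposition statement. The easy direction is \eqref{piecei} $\Rightarrow$ \eqref{pieceii}: if $f$ is bi-Lipschitz on a compact set $A_i$, then on $f(A_i)$ the measure $\nu$ agrees (up to a bi-Lipschitz change of coordinates) with the pushforward of $\mathcal{H}^Q|_{A_i}$, which by Ahlfors regularity of $X$ and the Poincaré inequality is asymptotically doubling; at $\nu$-a.e. $y \in f(A_i)$ the blow-ups of $A$ along $f$ converge (using Theorem \ref{tangentthm}'s machinery and Cheeger's differentiation theory) to a bi-Lipschitz embedding of a tangent of $X$, whose image is a Carnot subgroup by Pansu-type rigidity, and the tangent measure $\hat\nu$ is then $\mathcal{H}^Q$ restricted to that subgroup up to constants. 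Since the $A_i$ cover $\nu$-almost all of $f(A)$, this holds $\nu$-a.e. The implication \eqref{pieceii} $\Rightarrow$ \eqref{pieceiii} is immediate, since a Carnot subgroup is connected (it is a connected Lie subgroup), and hence so is its support.

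The substantive implication is \eqref{pieceiii} $\Rightarrow$ \eqref{piecei}. First I would invoke Theorem \ref{tangentthm} to find $E \subset A$ with $\mathcal{H}^Q(f(A \setminus E)) = 0$ such that at each $x \in E$ there is a bi-Lipschitz tangent $(\hat X, \hat x, \hat f) \in \Tan(A,x,f)$. The key point is that a bi-Lipschitz tangent is a lower bound: it shows $f$ does not collapse too much at \emph{some} scales near $x$. To upgrade this to bi-Lipschitz behavior on a positive-measure \emph{set}, the standard mechanism (as in Kirchheim's and Schul's arguments, adapted to the Carnot setting) is to decompose $E$ into countably many pieces on which $f$ is, say, $L$-Lipschitz and the lower bi-Lipschitz estimate holds at a fixed scale uniformly; a Vitali-type covering and a pigeonhole on the bi-Lipschitz constants and scales of the tangents reduce matters to this. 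The connectedness hypothesis \eqref{pieceiii} enters precisely to rule out the Laakso-type obstruction: if $f$ could fail to be bi-Lipschitz on every positive-measure subset, then one produces points where $f$ identifies widely separated parts of $A$ infinitely often along a sequence of scales, and the blow-up of the image $\nu$ at such a point has a disconnected support (roughly, two or more ``sheets'' coming together). Making this precise is the heart of the matter: one shows that failure of \eqref{piecei} forces, on a positive-$\nu$-measure set of $y$, a tangent measure $\hat\nu$ whose support is disconnected, contradicting \eqref{pieceiii}.

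I expect the main obstacle to be exactly this last dichotomy — converting ``$f$ is not bi-Lipschitz on any positive-measure set'' into ``the blow-up of $f(A)$ has disconnected tangents on a positive-measure set.'' The difficulty is that non-bi-Lipschitz behavior of $f$ is an a priori \emph{qualitative} statement and could in principle be spread diffusely across scales rather than concentrated; one must use the differentiation theory on PI spaces (Cheeger charts, measurable differentiable structure) together with the group structure of $\G$ to localize the collapsing to a definite scale and a definite pair of directions in a Cheeger chart, and then show that the corresponding blow-up of the measure $\nu$ genuinely separates into disjoint pieces (rather than, say, the two sheets touching along a lower-dimensional set, which would not disconnect the support). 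The technical engine for this should be the structure of tangents of Ahlfors regular measures in Carnot groups — a blow-up $\hat\nu$ is again Ahlfors $Q$-regular, so its support, if connected, is a $Q$-regular connected subset of $\G$; combined with the presence of the bi-Lipschitz \emph{lower} tangent from Theorem \ref{tangentthm}, one argues that a connected such support must in fact be (an isometric copy inside) a Carnot subgroup, recovering \eqref{pieceii} along the way and closing the cycle. Thus the proof naturally yields \eqref{pieceiii} $\Rightarrow$ \eqref{pieceii} $\Rightarrow$ \eqref{piecei} $\Rightarrow$ \eqref{pieceii}, with \eqref{pieceii} $\Leftrightarrow$ \eqref{pieceiii} for free.
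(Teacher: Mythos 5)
There is a genuine gap, and it sits exactly where you suspect: the implication \eqref{pieceiii} $\Rightarrow$ \eqref{piecei}. Your first proposed mechanism --- take the bi-Lipschitz tangents supplied by Theorem \ref{tangentthm} and upgrade them to bi-Lipschitz pieces by ``a Vitali-type covering and a pigeonhole on the bi-Lipschitz constants and scales'' --- cannot work as stated. Theorem \ref{tangentthm} only produces, at each good point, \emph{one} bi-Lipschitz tangent along \emph{some} sequence of scales; this is much weaker than a differentiability statement with nondegenerate derivative, which is what Kirchheim's and Schul's pigeonhole arguments actually consume. If that upgrade were available, Conjecture \ref{conjpiece} would follow unconditionally from Theorem \ref{tangentthm} and hypothesis \eqref{pieceiii} would be superfluous --- but the whole point of Theorem \ref{piecethm} is that the bi-Lipschitz decomposition is \emph{equivalent} to a nontrivial condition on the image. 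Your fallback (the contrapositive dichotomy: no bi-Lipschitz pieces forces disconnected tangent supports on a set of positive measure) is precisely the part you defer as ``the heart of the matter,'' and you yourself flag the obstruction that two ``sheets'' could touch along a lower-dimensional set without disconnecting the support. That worry is legitimate, and nothing in the proposal resolves it.

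The paper closes this gap by two ingredients absent from your sketch. First, it establishes a rigid structure for the tangent measures before connectedness is ever used: by Theorem \ref{blowup} every tangent map at a.e.\ $x$ is a Lipschitz quotient onto a $Q$-dimensional Carnot subgroup $\H_x$, so $\H_x \subset \supp(\hat{\nu})$ (Lemma \ref{tangent1}); Mattila's moving-basepoint principle (Proposition \ref{mattilamove}) then gives $z \cdot \H_x \subset \supp(\hat{\nu})$ for every $z \in \supp(\hat{\nu})$, so the support is a union of left cosets of $\H_x$ --- which are pairwise disjoint or equal by the group structure --- and Ahlfors regularity of tangent measures (Lemma \ref{tanmeasureAR}) bounds the number of cosets, yielding $\supp(\hat{\nu}) = z_1\H_x \cup \cdots \cup z_m\H_x$ with the union disjoint (Lemma \ref{finite}). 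This is exactly what rules out ``touching sheets'': connectedness forces $m=1$, hence $\supp(\hat{\nu}) = \H_x = \hat{f}(\hat{X})$, i.e., \emph{every} tangent map surjects onto the corresponding tangent of the image. Second, this surjectivity is converted into bi-Lipschitz pieces not by a Vitali argument but by Proposition \ref{generalbilip}: a stopping-time construction on a Christ--David cube decomposition of the image verifies David's condition, and the Bate--Li theorem (Theorem \ref{BLthm}) then produces the countable bi-Lipschitz decomposition. Your sketch of \eqref{piecei} $\Rightarrow$ \eqref{pieceii} is broadly in the right spirit (the paper uses the Lipschitz-quotient characterization of Carnot subgroups, Proposition \ref{Carnotsub}, rather than Pansu rigidity, and again needs Lemma \ref{tangent1} to identify $\supp(\hat\nu)$ with $\hat f(\hat X)$), and \eqref{pieceii} $\Rightarrow$ \eqref{pieceiii} is indeed immediate; but without the coset-structure lemma and the David's-condition machinery, the cycle does not close.
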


Finally, we prove that Conjecture \ref{conjpiece} holds, and so the answer to question \eqref{bilippiece} is positive, if the target Carnot group $\G$ is a Euclidean space. 

\begin{theorem}\label{Rnpiecethm}
Let $X$ be an Ahlfors $Q$-regular PI space, $A \subset X$ a compact subset, and $N \in \N$. Suppose $f \colon A \rightarrow \R^N$ is a Lipschitz mapping such that $\mathcal{H}^Q(f(A)) > 0$. Then there are countably many subsets $A_i \subset A$ such that $\mathcal{H}^Q(f(A \setminus \cup A_i)) = 0$ and $f$ is bi-Lipschitz on each $A_i$.
\end{theorem}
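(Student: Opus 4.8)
The plan is to deduce this from Theorem \ref{piecethm} by verifying condition \eqref{pieceiii} (or equivalently \eqref{pieceii}) when $\G = \R^N$. So let $\nu = \mathcal{H}^Q|_{f(A)}$ and consider, at a $\nu$-typical point $y \in f(A)$, a tangent measure $\hat\nu \in \Tan(\nu, y)$. We must show $\supp \hat\nu$ is connected (as a subset of $\R^N$). The key input is Theorem \ref{tangentthm}: at $\nu$-a.e. $y$ (pushing forward the full-measure set $E \subset A$ produced there, and using that $\mathcal{H}^Q(f(A\setminus E))=0$), there is a tangent $(\hat X, \hat x, \hat f) \in \Tan(A, x, f)$ with $\hat f$ bi-Lipschitz, where $x \in E$ is a preimage of $y$. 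Since $X$ is an Ahlfors $Q$-regular PI space, so is $\hat X$ (tangents of PI spaces are PI spaces, by Cheeger/Keith-type results), and in particular $\hat X$ is quasiconvex, hence connected and indeed locally connected. Because $\hat f$ is bi-Lipschitz, $\hat f(\hat X) \subset \R^N$ is bi-Lipschitz equivalent to $\hat X$, so $\hat f(\hat X)$ is connected.

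The next step is to match up the measure-theoretic tangent $\hat\nu$ with the image $\hat f(\hat X)$ of the map-theoretic tangent. One expects that, after passing to a common subsequence of scales and rescaling, the rescaled copies of $\nu$ near $y$ converge (in the weak-$*$/pointed-measured-Gromov-Hausdorff sense) to a measure supported on $\hat f(\hat X)$; more precisely, $\supp\hat\nu = \hat f(\hat X)$ up to the usual ambiguity, since $\hat f$ is bi-Lipschitz and $X$ is $Q$-regular, so that $\hat f$ pushes the ($Q$-regular) tangent measure of $X$ at $x$ forward to something comparable to $\mathcal{H}^Q|_{\hat f(\hat X)}$, which is in turn a tangent measure of $\nu$ at $y$. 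This identification — carefully coordinating the scales at which one blows up the domain, the map, and the image measure — is where the real work lies. It amounts to showing that the three blow-up procedures can be run simultaneously, which should follow from compactness arguments (Gromov-Hausdorff precompactness of the rescaled domains, Arzel\`a--Ascoli for the rescaled maps, weak-$*$ precompactness of the rescaled measures) together with the non-degeneracy that $\hat f$ is bi-Lipschitz, ruling out collapse. Granting this, $\supp\hat\nu$ is (bi-Lipschitz equivalent to, hence homeomorphic to) the connected set $\hat X$, so $\supp\hat\nu$ is connected, and \eqref{pieceiii} holds. Theorem \ref{piecethm} then gives the countable bi-Lipschitz decomposition.

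I expect the main obstacle to be precisely the compatibility of the three tangent notions: a priori, the scales realizing a bi-Lipschitz map-tangent in $\Tan(A,x,f)$ need not be the scales realizing a given measure-tangent $\hat\nu \in \Tan(\nu,y)$. One way around this is to note that it suffices to produce, at $\nu$-a.e. $y$, \emph{some} tangent measure whose support is connected, provided one also knows the measure tangents at $y$ are "all alike" in the relevant sense; alternatively, and more robustly, one argues directly that \emph{every} measure tangent of $\nu$ at a typical $y$ has connected support, by showing that along any sequence of scales $r_k \to 0$ for which $\frac{1}{\nu(B(y,r_k))}(\delta_{1/r_k})_* \nu$ converges, one can extract a further subsequence along which the domain and map also converge to a bi-Lipschitz tangent (using that the full-measure set $E$ from Theorem \ref{tangentthm} can be taken so that the bi-Lipschitz tangent property holds along \emph{a.e.} sequence of scales, or by a density-point / porosity argument on the scales). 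This upgrade from "some scales" to "a.e. scales" — a standard but delicate feature of the PI-space differentiation theory underlying Theorem \ref{tangentthm} — is the technical heart; once it is in place, the connectedness of $\hat X$ and the bi-Lipschitz nature of $\hat f$ finish the argument immediately.
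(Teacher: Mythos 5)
Your proposal has a genuine gap, and it is located exactly where you suspect trouble but misdiagnose its nature. The fatal step is the identification $\supp\hat{\nu} = \hat{f}(\hat{X})$. What is actually true (Lemma \ref{tangent1}) is that $\supp\hat{\nu}$ equals the tangent \emph{set} $\hat{Y}\in\Tan_{\G}(f(A),y)$ of the whole image, and one only gets the containment $\hat{f}(\hat{X})=\H_x \subset \supp\hat{\nu}$. The reverse containment can fail because $f$ need not be injective: points of $f(A)$ arbitrarily close to $y$ may be images of points of $A$ far from $x$, so the blow-up of the image set at $y$ can contain ``extra sheets'' invisible to the blow-up of the map at $x$. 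Lemma \ref{finite} shows that $\supp\hat{\nu}$ is a finite disjoint union of translates of $\H_x$, and ruling out more than one translate is precisely the content of conditions \eqref{pieceii} and \eqref{pieceiii} of Theorem \ref{piecethm} --- it is not a consequence of Theorem \ref{tangentthm}. A secondary problem is the upgrade from ``some scales'' to ``all scales'': Theorem \ref{blowup} gives that \emph{every} tangent map is a Lipschitz quotient onto $\H_x$, but a bi-Lipschitz tangent is only produced along \emph{some} sequence of scales (via the David--Semmes regular tangent and a tangents-of-tangents argument), so you cannot assume the scales defining an arbitrary $\hat{\nu}$ also yield a bi-Lipschitz map tangent. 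Note the decisive sanity check: your argument uses nothing specific to $\R^N$, so if it worked it would prove Conjecture \ref{conjpiece} for all Carnot targets, which the paper explicitly does not do (Theorem \ref{piecethm} only establishes an equivalence).

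The paper's actual proof is entirely different and does exploit the Euclidean target: since $\mathcal{H}^Q(f(A))>0$ with $f$ mapping into $\R^N$, the chart containing $A$ cannot be strongly $Q$-unrectifiable, so by Theorem \ref{strongunrect} the chart dimension equals $Q$ and $Q\in\N$; then the Bate--Li rectifiability criterion (Theorem \ref{bateli}) together with Kirchheim's lemma shows $A$ is covered up to measure zero by bi-Lipschitz images of compact subsets of $\R^Q$, and the classical Euclidean decomposition \cite[Lemma 3.2.2]{Fe69} applied to $f\circ g_i$ finishes the argument. If you want to salvage your tangent-measure approach, you would need to verify condition \eqref{pieceiii} directly for Euclidean targets, which requires showing that the image set $f(A)$ cannot have disconnected blow-ups at a.e.\ point --- and that is not something the existence of bi-Lipschitz tangent maps gives you.
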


We note that in Theorems \ref{piecethm}\eqref{piecei} and \ref{Rnpiecethm}, one obtains a conclusion slightly stronger than requested in question \eqref{bilippiece} above. Namely, the image of $f$ is covered, up to measure zero, by images of sets on which $f$ is bi-Lipschitz. This stronger version is typical in these type of theorems, appearing for example in \cite[Lemma 3.2.2]{Fe69} and in some of the other results discussed above.

Note also that in Theorems \ref{piecethm} and \ref{Rnpiecethm} we assume that $A\subset X$ is compact, whereas Semmes' Conjecture \ref{conjpiece} asks about arbitrary sets. However, the relevant special cases of Conjecture \ref{conjpiece} follow immediately from applying these results to compact subsets of $\overline{A}$.

\begin{remark}
An astute reader of Semmes' book \cite{Se01} may notice that Semmes' notion of a PI space, on the face of it, is different than the one used here (introduced in subsection \ref{PILD}), which is now more standard. However, Semmes' definition and ours are proved to be equivalent in \cite{KR04}, and so Theorems \ref{tangentthm}, \ref{piecethm}, and \ref{Rnpiecethm} do indeed address Semmes' Conjectures \ref{conjpiece} and \ref{conjtangent}.
\end{remark}

The terminology and notation appearing in these three theorems will be discussed in Sections \ref{backsec} and \ref{tangentsec}. The latter section is devoted to a discussion of various types of tangent objects that we need: tangents of metric spaces, tangents of measures, and tangents of mappings between metric spaces. In Section \ref{LQsec} we prove Theorem \ref{tangentthm}, showing further that every every tangent of $f$ at a point $x \in E$ is a Lipschitz quotient onto its image. Then, in Section \ref{DCsec}, we establish some general criteria for a mapping to have bi-Lipschitz pieces in terms of its tangent maps. Finally, in Section \ref{piecesec} we prove Theorem \ref{piecethm}, and in Section \ref{Rnpiecesection} we prove Theorem \ref{Rnpiecethm}.

It is likely that Theorem \ref{Rnpiecethm} can be proven by an extension of the method used to prove Theorem \ref{piecethm}. However, we instead present a short proof based on results from \cite{GCD14} and \cite{BL15}. This proof works only in the case of $\RR^n$ targets and not for the general case of Carnot group targets.

\section{Background} \label{backsec}

\subsection{Metric measure spaces} \label{basicdefs}
We denote metric spaces by $(X,d)$, or simply $X$ if the metric is understood, writing $B(x,r)$ for the open ball of radius $r$ centered at $x\in X$ and $\overline{B}(x,r)$ for the closed ball. If $\lambda>0$, then we write $\lambda X$ for the metric space $(X,\lambda d)$. A metric space $(X,d)$ is said to be \ti{metrically doubling} if there is a constant $C$ such that every ball of radius $r >0 $ in $X$ can be covered by at most $C$ balls of radius $r/2$.

A \ti{metric measure space} $(X,d,\mu)$ is a separable metric space $(X,d)$ equipped with a Radon measure $\mu$. We say that the measure $\mu$ is \ti{doubling} if there is a constant $C$ such that
$$\mu(B(x,2r)) \leq C \mu(B(x,r))$$
for all $x \in X$ and $r>0$. The measure $\mu$ is \ti{pointwise doubling} if
$$ \limsup_{r \rightarrow 0} \frac{\mu(B(x,2r))}{\mu(B(x,r))} < \infty$$
for $\mu$-a.e. $x\in X$.

For $Q>0$, we will consider $Q$-dimensional Hausdorff measure $\mathcal{H}^Q$ on a metric space $(X,d)$. To fix our normalizations, we define Hausdorff measure as follows. For $E \subset X$ a subset and $\delta >0$, let
$$\mathcal{H}^Q_{\delta}(E) = \inf\{ \sum_i r_i^Q : E \subseteq \cup_i B(x_i,r_i) \text{ with each } r_i < \delta \},$$
and define $\mathcal{H}^Q(E) = \lim_{\delta \rightarrow 0} \mathcal{H}^Q_{\delta}(E)$. We denote the Hausdorff dimension of a metric space $X$ by $\Hdim(X)$.

A complete metric space $X$ is \ti{Ahlfors $Q$-regular}, for $Q>0$, if there is a constant $C>0$ such that
$$ C^{-1}r^Q \leq \mathcal{H}^Q(\overline{B}(x,r)) \leq Cr^Q $$
for all $x\in X$ and $0<r\leq \diam(X)$. In this case, $\mathcal{H}^Q$ is a doubling Radon measure.

\subsection{David--Semmes regular and Lipschitz quotient mappings}
The main classes of mappings considered in this paper are the Lipschitz and bi-Lipschitz mappings. However, there are two intermediate classes that play key roles.

\begin{definition}[\cite{DS97}, Definition 12.1]\label{DSregulardef}
Let $X,Y$ be metric spaces and $f \colon X \rightarrow Y$ a Lipschitz mapping. We say that $f$ is \ti{David--Semmes regular} (with constant $C\geq 1$) if, for every ball $B=B(y,r)$ in $Y$, we can cover $f^{-1}(B)$ in $X$ with at most $C$ balls of radius $Cr$.
\end{definition}

\begin{definition}[\cite{BJLPS99}]\label{LQdef}
Let $X,Y$ be metric spaces and $f \colon X\rightarrow Y$ a mapping. We say that $f$ is a \ti{Lipschitz quotient mapping} if there are constants $C,c>0$ such that
\begin{equation}\label{LQdefeqn}
B(f(x),cr) \subseteq f(B(x,r)) \subseteq B(f(x),Cr)
\end{equation}
for all $x\in X$ and $r>0$.
\end{definition}
Note that the second inclusion in \eqref{LQdefeqn} simply says that the mapping is $C$-Lipschitz. The constant $c$ is called the \ti{co-Lipschitz} constant of $f$. Since precise constants do not matter much below, we will typically say that a mapping is an \ti{$L$-Lipschitz quotient mapping} if it is $L$-Lipschitz and $(1/L)$-co-Lipschitz.

One can use Lipschitz quotient mappings to lift rectifiable curves, as in the following lemma. This lemma was first proven in \cite[Lemma 4.4]{BJLPS99} and \cite[Lemma 2.2]{JLPS00}. Though stated there for $\RR^n$-targets, the proof works the same way in the setting below, as was observed in \cite[Lemma 4.3]{DK16} and \cite[Lemma 3.3]{GCDK16}, where one can find a proof written in this generality. Recall that a metric space is \ti{proper} if closed balls are compact.

\begin{lemma}\label{pathlifting}
Let $X$ be a proper metric space and $Y$ a metric space. Let $f \colon X\rightarrow Y$ be an Lipschitz quotient map with co-Lipschitz constant $c>0$, and let $\gamma \colon [0,T] \rightarrow Y$ be a $1$-Lipschitz curve with $\gamma(0)=f(x)$. Then there is a $(1/c)$-Lipschitz curve $\tilde{\gamma} \colon [0,T] \rightarrow X$ such that $\tilde{\gamma}(0)=x$ and $f \circ \tilde{\gamma} = \gamma$. 
\end{lemma}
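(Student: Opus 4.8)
The plan is to prove Lemma~\ref{pathlifting} by a standard limiting argument, first constructing discrete approximate lifts along a finer and finer partition of $[0,T]$ and then extracting a uniformly convergent subsequence using the properness of $X$. The co-Lipschitz condition is exactly what lets us make each small step of the lift, and the Lipschitz condition controls the total length of the approximate lifts so that the Arzel\`a--Ascoli theorem applies.

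\textbf{Step 1: Discrete approximate lifts.} Fix $n \in \N$ and let $t_j = jT/n$ for $j = 0, \dots, n$. I would define points $x_j^{(n)} \in X$ recursively: set $x_0^{(n)} = x$, and given $x_j^{(n)}$ with $f(x_j^{(n)})$ close to $\gamma(t_j)$, use the inclusion $B(f(x_j^{(n)}), c s) \subseteq f(B(x_j^{(n)}, s))$ with $s$ on the order of $d(\gamma(t_j), \gamma(t_{j+1}))/c \le (T/n)/c$ to find $x_{j+1}^{(n)} \in X$ with $f(x_{j+1}^{(n)}) = \gamma(t_{j+1})$ (since $\gamma$ is $1$-Lipschitz, $\gamma(t_{j+1}) \in \overline{B}(\gamma(t_j), T/n)$, which by co-Lipschitzness lies in $f(\overline B(x_j^{(n)}, (T/n)/c))$). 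Thus one can arrange exact agreement $f(x_j^{(n)}) = \gamma(t_j)$ at all partition points, with $d(x_j^{(n)}, x_{j+1}^{(n)}) \le (T/n)/c$. Interpolating by constant speed (with respect to arc length in the discrete sense, i.e.\ affinely in the parameter between $t_j$ and $t_{j+1}$ if $X$ is geodesic, or more robustly by the piecewise-constant-then-jump function) gives a curve $\gamma_n \colon [0,T] \to X$; the cleanest choice that avoids assuming $X$ is geodesic is to let $\gamma_n(t) = x_j^{(n)}$ for $t \in [t_j, t_{j+1})$, which is not continuous but satisfies $d(\gamma_n(s), \gamma_n(t)) \le (1/c)|s - t| + (1/c)(T/n)$ for all $s,t$.

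\textbf{Step 2: Compactness and passage to the limit.} All the curves $\gamma_n$ take values in the compact set $\overline{B}(x, (1/c)T)$ (since the total displacement is at most $n \cdot (T/n)/c = T/c$), using properness of $X$. The estimate in Step~1 shows the family $\{\gamma_n\}$ is uniformly ``almost equicontinuous'': the modulus of continuity is $\omega_n(\delta) = (1/c)\delta + (1/c)(T/n) \to (1/c)\delta$. By a diagonal/Arzel\`a--Ascoli argument one extracts a subsequence converging uniformly to a map $\tilde\gamma \colon [0,T] \to X$, which is then genuinely $(1/c)$-Lipschitz. Since $f$ is continuous and $f \circ \gamma_n \to \gamma$ pointwise on a dense set (the partition points, which become dense), we get $f \circ \tilde\gamma = \gamma$; and $\tilde\gamma(0) = x$ is preserved in the limit.

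\textbf{Main obstacle.} The only real subtlety is handling the discontinuity of the discrete lifts and making sure the limit is continuous with the sharp constant $1/c$ rather than something slightly worse; this is resolved automatically because the ``error term'' $(1/c)(T/n)$ in the modulus of continuity vanishes in the limit, so the limiting curve inherits the clean Lipschitz bound. A secondary point is that one should verify $f \circ \tilde\gamma = \gamma$ on \emph{all} of $[0,T]$, not just partition points: this follows since both sides are continuous and agree on a dense set. Everything else is routine, and as the excerpt notes, the argument is identical to the $\RR^n$-target case in \cite{BJLPS99, JLPS00}, with the only input being that $Y$ need not be Euclidean — only the metric structure and the co-Lipschitz inclusion are used.
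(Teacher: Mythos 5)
Your proposal is correct and follows the same discrete-lift-plus-compactness argument as the proofs the paper cites for this lemma (\cite{BJLPS99}, \cite{JLPS00}, and the versions written out in \cite{DK16} and \cite{GCDK16}): lift partition points step by step via the co-Lipschitz inclusion, then use properness and an Arzel\`a--Ascoli/diagonal argument to pass to a $(1/c)$-Lipschitz limit whose post-composition with $f$ agrees with $\gamma$ by continuity. The only point to tidy is that the co-Lipschitz condition is stated with open balls, so at each step you should either take $s$ strictly larger than $(T/n)/c$ (absorbing the excess into the $O(T/n)$ error that vanishes in the limit) or first upgrade to the closed-ball inclusion using properness and continuity of $f$.
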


\subsection{PI spaces and Lipschitz differentiability} \label{PILD}
In this subsection, we give a very brief introduction to PI spaces, as first defined in \cite{HK98}, and to Cheeger's theory of Lipschitz differentiation on such spaces \cite{Ch99}.

\subsubsection{PI spaces}
We now start to define PI spaces, the ambient spaces of Theorems \ref{tangentthm}, \ref{piecethm}, and \ref{Rnpiecethm}. As we will explain below, these are doubling metric measure spaces that support a Poincar\'e inequality. In this paper, we do not make serious use of the Poincar\'e inequality itself, using instead two of its important consequences: Proposition \ref{PIqc} and Cheeger's differentiation theory (subsection \ref{LDsec}). Nonetheless, we include this material as background. For a much more substantial introduction to these spaces, see \cites{HK98, HKST15, Ch99}.

We begin by defining the pointwise Lipschitz constant of a Lipschitz function.
\begin{definition}
Let $f \colon X\rightarrow\RR$ be a Lipschitz function. Then
$$ \Lip_f(x) = \limsup_{x\neq y\rightarrow x} \frac{|f(x)-f(y)|}{d(x,y)}.$$
\end{definition}

We now define the Poincar\'e inequality for metric measure spaces. Such a definition was first given by Heinonen and Koskela \cite{HK98}; the definition we state is an equivalent one due to work of Keith \cite{Ke03}.
\begin{definition}
Let $(X,d,\mu)$ be a metric measure space and $p\geq 1$. We say that $(X,d,\mu)$ \ti{admits a $p$-Poincar\'e inequality} (or simply a Poincar\'e inequality) if there exist constants $C,\lambda\geq 1$ such that, for every compactly supported Lipschitz function $f \colon X\rightarrow\RR$ and every open ball $B$ in $X$,
$$\fint_B |f-f_B|d\mu \leq C(\diam B)\left(\fint_{\lambda B} (\Lip_f)^p d\mu \right)^{1/p}.$$
Here the notations $\fint_E g d\mu$ and $g_E$ both denote the average value of the function $g$ on the set $E$, i.e., $\frac{1}{\mu(E)}\int_E g d\mu$.
\end{definition}

Finally, we define the PI spaces themselves.
\begin{definition}
A \textit{PI space} is a complete metric measure space $(X,d,\mu)$ such that $\mu$ is doubling and $(X,d,\mu)$ supports a Poincar\'e inequality.
\end{definition}

As mentioned above, we will not directly use the Poincar\'e inequality itself at any point in this paper, but rather some consequences. The following result of Semmes is one of the most useful properties of PI spaces.

\begin{prop}[Appendix A of \cite{Ch99}]\label{PIqc}
Let $(X,d,\mu)$ be a PI space. Then $X$ is quasiconvex; that is, there is a constant $K$ such that each pair of points $x,y\in X$ can be joined by a rectifiable curve of length at most $Kd(x,y)$.

The constant $K$ depends only on the constants associated to the doubling and Poincar\'e inequality properties of $X$.
\end{prop}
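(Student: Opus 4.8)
The plan is the classical chaining argument of Semmes, which I will phrase using upper gradients so as to sidestep the fact that the natural test function below is not globally Lipschitz. Two preliminaries. Since $\mu$ is a nonzero doubling measure, iterating the doubling condition shows $\mu(B)>0$ for every ball $B$, so $\mu$ charges every nonempty open set; and iterating the metric doubling property shows bounded subsets of $X$ are totally bounded, whence, by completeness, $X$ is proper. Recall also that, by Keith's theorem \cite{Ke03}, the Poincar\'e inequality in the definition of a PI space holds in the equivalent form with upper gradients: for every $u\in L^1_{\mathrm{loc}}$ with upper gradient $g$ and every ball $B$, $\fint_B|u-u_B|\,d\mu\le C(\diam B)(\fint_{\lambda B}g^p\,d\mu)^{1/p}$.

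Fix $\delta>0$ and $x_0\in X$, and let $\rho\colon X\to[0,\infty]$ be the $\delta$-chain distance from $x_0$, namely the infimum of $\sum_i d(z_i,z_{i+1})$ over finite chains $x_0=z_0,\dots,z_m=z$ with $d(z_i,z_{i+1})<\delta$ (and $+\infty$ if none exists). Then $\rho$ is lower semicontinuous; it is continuous at $x_0$ with $\rho(x_0)=0$ (since $\rho(z)\le d(x_0,z)$ near $x_0$); and for each $R>0$ the truncation $\rho_R=\min(\rho,R)$ has the constant function $1$ as an upper gradient, because discretizing any rectifiable curve $\gamma$ from $a$ to $b$ at scale $<\delta$ and appending it to a near-optimal chain for $a$ gives $\rho(b)\le\rho(a)+\length(\gamma)$, hence $|\rho_R(a)-\rho_R(b)|\le\length(\gamma)$. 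The heart of the matter is the claim that there is a constant $K$, depending only on the doubling and Poincar\'e constants and not on $\delta$, $x_0$, or $z$, with $\rho(z)\le K\,d(x_0,z)$ for all $z$. To see it, fix $z$, set $d_0=d(x_0,z)$ and $B=B(x_0,3d_0)\ni z$. Apply the upper-gradient Poincar\'e inequality to each $\rho_R$ — its upper gradient being $1$, the right-hand side on a ball $\beta$ is simply $C\diam(\beta)$ — and run the standard telescoping estimate over the balls $B(x_0,2^{-j}d_0)$ and over the balls $B(z,2^{-j}d_0)$, $j\ge 0$, together with the doubling property. This yields $|(\rho_R)_B-\rho_R(x_0)|\le C'd_0$ and $|(\rho_R)_B-\rho_R(z)|\le C'd_0$ for $\mu$-a.e.\ $z$ (the telescoping bound holds at every Lebesgue point of $\rho_R$, which includes $x_0$ by continuity and $\mu$-a.e.\ point). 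Since $\rho_R(x_0)=0$, we get $\rho_R(z)\le 2C'd_0$ for $\mu$-a.e.\ $z$, and letting $R\to\infty$, $\rho(z)\le 2C'd(x_0,z)$ for $\mu$-a.e.\ $z$. Finally $z\mapsto\rho(z)-2C'd(x_0,z)$ is lower semicontinuous, so the set where it is $\le 0$ is closed and of full measure; as $\mu$ charges every nonempty open set, that set is all of $X$. I expect this step to demand the most care: the bound $\Lip_\rho\le 1$ is automatic and carries no information, and it is precisely the Poincar\'e inequality, funneled through the telescoping, that turns it into a genuine quantitative estimate.

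It remains to pass from chains to curves. Given $x,y\in X$, put $L=2K\,d(x,y)$; applying the estimate just proved with $\delta=1/n$ gives, for each $n$, a $(1/n)$-chain from $x$ to $y$ of total length at most $L$, and the associated step curve $\gamma_n\colon[0,L]\to X$ — equal to the $i$-th vertex on the $i$-th subinterval, whose length is the corresponding edge length — satisfies $d(\gamma_n(s),\gamma_n(t))\le|s-t|+1/n$ and takes values in the compact ball $\overline{B}(x,L)$. A diagonal argument (extract a subsequence converging at all rational parameters, then take the unique continuous extension) produces a $1$-Lipschitz curve $\gamma\colon[0,L]\to X$ with $\gamma(0)=x$, $\gamma(L)=y$, and $\length(\gamma)\le L=2K\,d(x,y)$. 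This is quasiconvexity with constant $2K$, which by construction depends only on the doubling and Poincar\'e data. (Alternatively, one could simply invoke the chaining machinery of \cite{HK98} or \cite{HKST15}, from which quasiconvexity follows directly.)
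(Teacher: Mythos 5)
Your argument is correct and is essentially the chaining proof from the cited source (Semmes's argument in Appendix A of \cite{Ch99}): the $\delta$-chain distance $\rho$, the Poincar\'e inequality applied to its truncations via telescoping over shrinking balls, the lower-semicontinuity upgrade from a.e.\ to everywhere, and the Arzel\`a--Ascoli limit of step curves are all the standard ingredients. The paper itself gives no proof beyond the citation, so there is nothing further to compare.
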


\subsubsection{Lipschitz differentiability}\label{LDsec}
In \cite{Ch99}, Cheeger proved that all PI spaces support a remarkable form of first-order calculus for Lipschitz functions. To explain this, we first define a class of spaces which support such a first-order calculus by fiat.

\begin{definition}[\cite{Ch99}, Theorem 17.1]\label{LDspace}
A metric measure space $(X,d,\mu)$ is called a \textit{Lipschitz differentiability space} if it satisfies the following condition. There are countably many Borel sets (``charts'') $U_i$ covering $X$, positive integers $n_i$ (the ``dimensions of the charts''), and Lipschitz maps $\phi_i\colon X\rightarrow\mathbb{R}^{n_i}$ with respect to which any Lipschitz function $f \colon X \rightarrow \R$ is differentiable almost everywhere, in the sense that for each $i$ and for $\mu$-almost every $x\in U_i$, there exists a unique $df(x) \in\mathbb{R}^{n_i}$ such that
\begin{equation} \label{LD}
\lim_{y\rightarrow x} \frac{|f(y) - f(x) -  df(x) \cdot(\phi_i(y)-\phi_i(x))|}{d(x,y)} = 0.
\end{equation}
Here $df(x) \cdot(\phi_i(y)-\phi_i(x))$ denotes the standard scalar product in $\mathbb{R}^{n_i}$.
\end{definition}
Note that the Borel measurability of the function $x\mapsto df(x)$ and the set of differentiability points of $f$ are consequences of this definition; see \cite[Remark 1.2]{BS13}.

The main result of \cite{Ch99} is that PI spaces are Lipschitz differentiability spaces.

\begin{theorem}[\cite{Ch99}]\label{cheeger}
Every PI space is a Lipschitz differentiability space. The dimensions $n_i$ of the charts are bounded above uniformly, depending only on the constants associated to the doubling condition and the Poincar\'e inequality.
\end{theorem}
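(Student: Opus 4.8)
The plan is to follow Cheeger \cite{Ch99} (in the form later streamlined by Keith). The proof splits into an \emph{a priori dimension bound} and, built on top of it, a \emph{maximality argument} producing the charts. For the first part one shows that there is an integer $N$, depending only on the doubling and Poincar\'e constants of $X$, such that one cannot find $N+1$ Lipschitz functions $b_1,\dots,b_{N+1}\colon X\to\R$, a number $\delta>0$, and a set $E$ with $\mu(E)>0$ on which $(b_1,\dots,b_{N+1})$ is \emph{$\delta$-independent}, meaning $\Lip_{\langle\lambda,b\rangle}(x)\ge\delta|\lambda|$ for all $\lambda\in\R^{N+1}$ and $\mu$-a.e.\ $x\in E$. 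On $\R^n$ this is a rank computation (the Jacobian of an $(n+1)$-tuple has a kernel); for general PI spaces it is where the Poincar\'e inequality does its essential work, and some such hypothesis is genuinely needed, since on a Cantor set arbitrarily long $\delta$-independent tuples exist for a fixed $\delta$. Granting the dimension bound, the second part extracts countably many charts and shows simultaneously that \emph{every} Lipschitz function is differentiable on them.

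For the second part, let $n^*$ be the largest integer for which some tuple $b=(b_1,\dots,b_{n^*})$ of Lipschitz functions on $X$ is $\delta$-independent a.e.\ on a set $E$ with $\mu(E)>0$, for some $\delta>0$; the dimension bound forces $n^*\le N$. Fix such $b$, $\delta$, $E$, put $U=E$, and take $(U,\phi=b)$ as a chart. To see that an arbitrary Lipschitz $f\colon X\to\R$ is differentiable with respect to $\phi$ at $\mu$-a.e.\ point of $U$, set $\eta(x)=\inf_{\lambda\in\R^{n^*}}\Lip_{f-\langle\lambda,\phi\rangle}(x)$, and note that any near-minimizing $\lambda$ obeys $\delta|\lambda|\le\Lip_{\langle\lambda,\phi\rangle}(x)\le\Lip_f(x)+\Lip_{f-\langle\lambda,\phi\rangle}(x)$, so $|\lambda|$ is bounded in terms of $\delta,\Lip f,\Lip\phi$. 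If $\eta>0$ on a positive-measure subset of $U$, refine to a positive-measure subset on which $\eta\ge\eta_0>0$; a short computation with the triangle and reverse triangle inequalities for $\Lip_{(\cdot)}$ then shows that the $(n^*+1)$-tuple $(f,b_1,\dots,b_{n^*})$ is $\delta'$-independent there for some $\delta'=\delta'(\eta_0,\delta,\Lip f)>0$, contradicting the maximality of $n^*$ --- and this is exactly where the finiteness of $n^*$, i.e.\ the dimension bound, is indispensable. Hence $\eta=0$ $\mu$-a.e.\ on $U$, and there is a measurable $x\mapsto\lambda(x)$ with $\Lip_{f-\langle\lambda(x),\phi\rangle}(x)=0$ a.e. Since $\Lip_g(x)=0$ says precisely that $g(y)-g(x)=o(d(x,y))$ as $y\to x$, setting $df(x)=\lambda(x)$ yields exactly the limit \eqref{LD}, and uniqueness of $df(x)$ follows from the $\delta$-independence of $\phi$. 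As $U$ has positive measure, applying the same construction to $X\setminus U$ and iterating gives countably many charts covering $X$ up to a $\mu$-null set; this exhibits $X$ as a Lipschitz differentiability space in the sense of Definition \ref{LDspace}, with chart dimensions at most $n^*\le N$.

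The main obstacle is the dimension bound, whose engine is the interaction between the Poincar\'e inequality and the \emph{pointwise} Lipschitz constant. The crucial intermediate fact is a \emph{Lip--lip inequality}: there is $K\ge 1$, depending only on the PI data, such that $\Lip_f(x)\le K\operatorname{lip}_f(x)$ for $\mu$-a.e.\ $x$ and every Lipschitz $f$, where $\operatorname{lip}_f(x)=\liminf_{r\to 0}\,r^{-1}\sup_{d(x,y)\le r}|f(y)-f(x)|$; equivalently, $\Lip_f$ agrees up to a bounded factor with the minimal $p$-upper gradient of $f$. Establishing this --- the technical heart of \cite{Ch99} --- is done by a telescoping/chaining estimate along the quasiconvex curves supplied by Proposition \ref{PIqc}, organized through the Banach space $H^{1,p}(X)$ of $L^p$ functions admitting an $L^p$ upper gradient and its reflexivity (that $\Lip_f$ is always \emph{some} upper gradient of $f$ is elementary; the Poincar\'e inequality is what shows it is, up to $K$, the smallest one). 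Given the Lip--lip inequality, $\delta$-independence of $b=(b_1,\dots,b_n)$ at a density point $x_0$ of $E$ upgrades to a lower bound on $\operatorname{lip}$, which makes $b$ at all small scales a David--Semmes regular (Definition \ref{DSregulardef}) and co-Lipschitz map onto a subset of $\R^n$ of positive Lebesgue measure; pushing $\mu|_E$ forward under rescaled copies of $b$ then yields, on a Euclidean ball, a doubling measure whose doubling constant is controlled by $C_\mu$ and $\delta$, and since such a measure has doubling constant at least a fixed multiple of $2^n$, this bounds $n$. Obtaining the bound uniformly in $\delta$ --- which is what gives the clean statement of the first part, and in particular rules out the Cantor-set pathology --- needs a finer blow-up analysis of $(X,\mu)$, the other place where genuine care is required.

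In summary: derive the Lip--lip inequality from the Poincar\'e inequality; use it to bound the number of $\delta$-independent directions by a function $N$ of the PI data; then run the maximality argument to extract the charts and differentiate every Lipschitz function. The genuinely hard input is the Lip--lip inequality, and the most delicate bookkeeping is tracking how the doubling constant enters the dimension bound; the remaining items --- the measurable selection of the differentials and matching up null sets across the countably many charts --- are routine but must be done carefully to assemble the global statement of Theorem \ref{cheeger}.
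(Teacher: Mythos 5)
Theorem \ref{cheeger} is quoted in the paper as an external result of Cheeger \cite{Ch99}; the paper gives no proof of it, so there is no in-paper argument to compare yours against. On its own terms, your outline is an accurate roadmap of the known proof (Cheeger's original argument, essentially in the reorganized form of Keith \cite{Ke04}): first a uniform bound $N=N(C_{\mathrm{doubling}},C_{\mathrm{PI}})$ on the number of Lipschitz functions that can be $\delta$-independent on a positive-measure set, then the maximality argument showing that for a maximal independent tuple $\phi$ and any Lipschitz $f$ one has $\inf_{\lambda}\Lip_{f-\langle\lambda,\phi\rangle}(x)=0$ a.e., which together with the equivalence $\Lip_g(x)=0\Leftrightarrow g(y)-g(x)=o(d(x,y))$ and the $\delta$-independence of $\phi$ yields existence and uniqueness of $df(x)$ in the sense of \eqref{LD}. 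Your ``short computation'' upgrading $\eta\geq\eta_0$ to $\delta'$-independence of the augmented tuple does work (split according to whether $|\mu|$ is small or large relative to $\Lip(f)/\delta$), and the measurable selection of $\lambda(x)$ is routine given your a priori bound on near-minimizers.

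That said, as a proof rather than a plan, the two places where essentially all of the content of \cite{Ch99} lives are left unproved, and you acknowledge this: the Lip--lip inequality $\Lip_f\leq K\operatorname{lip}_f$ a.e.\ (equivalently, the comparability of $\Lip_f$ with the minimal upper gradient, which occupies the bulk of Cheeger's paper and is where the Poincar\'e inequality is genuinely used), and the uniformity in $\delta$ of the dimension bound, without which $n^*$ could be infinite and the maximality argument cannot start. So the proposal should be read as a correct, well-organized reduction of Theorem \ref{cheeger} to those two facts, not as a self-contained proof of it.
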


In recent years, the study of Lipschitz differentiability spaces in their own right (independent of the stronger PI space assumptions) has become an active area of research, and we refer the reader to \cites{Ch99, Ke04, CK09, KM11, Ba15, GCD14, BL15, BL16, CKS15, EB16} for more background.

The following additional fact is quite simple but useful.
\begin{prop}
Let $(X,d,\mu)$ be a PI space and let $A\subset X$ be a closed subset of positive measure. Then $(A,d,\mu)$ is a Lipschitz differentiability space, and its charts can be taken to be the restrictions of charts on $X$.
\end{prop}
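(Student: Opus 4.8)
The natural strategy is to transfer differentiability from $X$ to $A$ by extension. Let $\{(U_i,\phi_i)\}$ be charts for $X$ as furnished by Theorem~\ref{cheeger} (and Definition~\ref{LDspace}); the sets $U_i\cap A$ are Borel and cover $A$, and we aim to show that they serve as charts for $A$ with chart maps $\phi_i|_A$. Given a Lipschitz function $g\colon A\to\RR$, since the target is the real line the McShane--Whitney extension theorem produces a Lipschitz $\tilde g\colon X\to\RR$ with $\tilde g|_A=g$. By Definition~\ref{LDspace} applied to $\tilde g$, for each $i$ and $\mu$-a.e.\ $x\in U_i$ --- in particular for $\mu$-a.e.\ $x\in U_i\cap A$ --- there is $d\tilde g(x)\in\RR^{n_i}$ with
\[
\lim_{X\ni y\to x}\frac{|\tilde g(y)-\tilde g(x)-d\tilde g(x)\cdot(\phi_i(y)-\phi_i(x))|}{d(x,y)}=0.
\]
Restricting the limit to $y\in A$ and using $\tilde g|_A=g$ shows that $dg(x):=d\tilde g(x)$ satisfies \eqref{LD} for $g$ with respect to $\phi_i|_A$ at $x$. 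This gives the existence half of Definition~\ref{LDspace} for $(A,d,\mu)$.

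The substantive point is \emph{uniqueness} of $dg(x)$: condition \eqref{LD} for $g$ at $x$ only constrains $dg(x)$ through the values $\phi_i(y)$ with $y\in A$ near $x$, so if $A$ were squeezed (in the $\phi_i$-coordinates) into a proper affine subspace near $x$, uniqueness would fail and the effective dimension of the chart would drop. To rule this out, the plan is to prove that
\[
N_i:=\Bigl\{x\in U_i\cap A:\ \exists\, w\in\RR^{n_i}\setminus\{0\}\ \text{with}\ \lim_{A\ni y\to x}\tfrac{|w\cdot(\phi_i(y)-\phi_i(x))|}{d(x,y)}=0\Bigr\}
\]
is $\mu$-null. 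The engine is a measure-theoretic non-degeneracy property of Cheeger charts: for $\mu$-a.e.\ $x\in U_i$ and every unit vector $w\in\RR^{n_i}$, the thin slice $\{y\in B(x,r):|w\cdot(\phi_i(y)-\phi_i(x))|\le\e\, d(x,y)\}$ has $\mu$-measure at most $\theta(\e)\,\mu(B(x,r))$ for all small $r$, where $\theta(\e)\to 0$ as $\e\to 0$. This can either be cited from the structure theory of Lipschitz differentiability spaces (for instance the Alberti-representation description in \cite{Ba15}) or extracted from the uniqueness of the differential on $X$ via a blow-up/tangent-measure argument; a routine reduction to a countable dense set of directions $w$, using that $\phi_i$ is Lipschitz, makes it uniform in $w$. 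Granting this, note that by the Lebesgue density theorem for the doubling measure $\mu$, $\mu$-a.e.\ point of $A$ is a density point of $A$; if $x\in N_i$ is such a density point, then for all small $r$ we have $\mu(A\cap B(x,r))\ge\tfrac12\mu(B(x,r))$, whereas, choosing $\e$ with $\theta(\e)<\tfrac12$ and invoking the witness $w$ at $x$, the set $A\cap B(x,r)\setminus\{x\}$ is contained in the thin slice once $r$ is small enough, forcing $\mu(A\cap B(x,r))\le\theta(\e)\,\mu(B(x,r))<\tfrac12\mu(B(x,r))$ --- a contradiction. Hence $N_i$ lies in the $\mu$-null set of non-density points of $A$, which yields uniqueness $\mu$-a.e.\ on $U_i\cap A$ and completes the verification.

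I expect this non-degeneracy/uniqueness step to be the main obstacle: the extension argument for existence is formal, but preventing the chart dimension from collapsing on $A$ genuinely requires knowing how $\mu$ distributes relative to the chart maps $\phi_i$, which is exactly the content of the differentiation theory of $X$ that must be imported (or re-derived) here.
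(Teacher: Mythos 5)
Your existence argument (McShane extension, differentiate on $X$, restrict the limit to $y\in A$) is correct, and you have correctly identified uniqueness of $dg(x)$ as the only real issue. The gap is in how you handle it: the entire uniqueness step rests on the ``engine'' --- the claim that for $\mu$-a.e.\ $x$ and every unit $w$ the slice $\{y\in B(x,r):|w\cdot(\phi_i(y)-\phi_i(x))|\le\e\,d(x,y)\}$ has measure at most $\theta(\e)\mu(B(x,r))$ with $\theta(\e)\to0$, uniformly in small $r$ --- and this is neither proved nor a statement you can simply cite. It is a genuinely quantitative structural assertion about how $\mu$ distributes relative to the chart; extracting it from Alberti representations is a real argument (one must integrate the time each curve fragment spends in the slice), and the alternative you suggest, ``extracted from the uniqueness of the differential on $X$ via a blow-up argument,'' does not obviously work: pointwise uniqueness of $df$ gives only that $\limsup_{y\to x}|w\cdot(\phi_i(y)-\phi_i(x))|/d(x,y)>0$ for each $w$, which is far from a measure bound that holds for \emph{all} small $r$ uniformly in $w$.

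The point is that no such quantitative estimate is needed; this is why the paper calls the proposition a ``simple consequence of Cheeger's theorem, the doubling property, and the Lebesgue density theorem'' (and why it can alternatively be cited from \cite{BS13}, Corollary 2.7). At a $\mu$-density point $x$ of $A$, doubling implies $A$ is non-porous at $x$: for every $\lambda>0$ and every $y$ sufficiently close to $x$ there is $y'\in A$ with $d(y,y')\le\lambda d(x,y)$ (otherwise $B(y,\lambda d(x,y))\subset B(x,2d(x,y))\setminus A$ would contradict density, using doubling to compare the two balls). Now suppose $x\in N_i$ is such a density point, with witness $w\neq0$. Given $y\in X$ near $x$, pick $y'\in A$ as above; then
\[
|w\cdot(\phi_i(y)-\phi_i(x))|\le |w|\,\Lip(\phi_i)\,\lambda\, d(x,y)+|w\cdot(\phi_i(y')-\phi_i(x))| = |w|\,\Lip(\phi_i)\,\lambda\, d(x,y)+o(d(x,y)).
\]
Since $\lambda>0$ is arbitrary, $w\cdot(\phi_i(y)-\phi_i(x))=o(d(x,y))$ as $y\to x$ in all of $X$. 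But then the Lipschitz function $h(y)=w\cdot\phi_i(y)$ admits both $w$ and $0$ as Cheeger differentials at $x$, contradicting the uniqueness in Definition~\ref{LDspace} for $X$. Hence $N_i$ is contained in the null set of non-density points (union the null set where uniqueness on $X$ fails), and your reduction of uniqueness to ``$N_i$ is null'' then finishes the proof without any thin-slice estimate.
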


In the case of PI spaces, the previous Proposition is a simple consequence of Cheeger's Theorem \ref{cheeger}, the doubling property, and the Lebesgue density theorem. In fact, the analogous result holds for subsets of general Lipschitz differentiability spaces by Corollary 2.7 of \cite{BS13}.

\begin{remark}\label{assumptions}
Ahlfors regularity (or localized versions of it) are important in all our main results, Theorems \ref{tangentthm}, \ref{piecethm}, and \ref{Rnpiecethm}. However, the Poincar\'e inequality itself is not really used in the proofs. In Theorems \ref{tangentthm} and \ref{piecethm}, we use only the fact that $X$ is a Lipschitz differentiability space and that all tangents of $X$ are quasiconvex. This follows from the assumption that $X$ is a PI space, but it would also hold if we instead assumed that $X$ was just an RNP-differentiability space, in the sense of \cite{BL16} (see Theorem 6.6 in \cite{BL16}). RNP-differentiability is a strictly stronger assumption than Lipschitz differentiability (see \cite{Sc16}). However, by recent work of Eriksson-Bique \cite{EB16}, each RNP-differentiability space is essentially a union of subsets of PI spaces up to measure zero, and so we do not really lose any generality in our theorems with the stated assumptions.

In Theorem \ref{Rnpiecethm}, it would be enough to assume that $X$ was an Ahlfors regular Lipschitz differentiability space; see section \ref{Rnpiecesection}.
\end{remark}

\subsection{Carnot groups} \label{Carnotsection}
Carnot groups (equipped with their Carnot--Carath\'eodory metrics and Hausdorff measure) are metric measure spaces that naturally generalize Euclidean spaces from the perspective of many problems in geometry and analysis. In this subsection, we give some brief background on Carnot groups. For more, we refer the reader to \cite{Mo} or \cite[Chapter 2]{CDPT07}.

A Carnot group is a simply connected nilpotent Lie group $\G$ whose Lie algebra $\mathfrak{g}$ admits a stratification
$$\mathfrak{g} = V_1 \oplus \cdots \oplus V_s,$$
where the first layer $V_1$ generates the rest via $V_{i+1} = [V_1,V_i]$ for all $1 \leq i \leq s$, and we set $V_{s+1} = \{0\}$. The exponential map $\exp \colon \mathfrak{g} \rightarrow \G$ is a diffeomorphism, so choosing a basis for $\mathfrak{g}$ gives exponential coordinates for $\G$. For each $x \in \G$, we will use $L_x \colon \G \rightarrow \G$ to denote the left multiplication map $y \mapsto xy$.

A natural family of automorphisms of $\G$ are the dilations $\delta_{\lambda} \colon \G \rightarrow \G$, for $\lambda >0$. On the Lie algebra level, these are linear maps defined by 
\begin{equation}\label{dilation}
v \mapsto \lambda^i v, \hspace{0.3cm} \text{for } v \in V_i,
\end{equation}
and one can see that this gives a Lie algebra isomorphism. Conjugating this back to $\G$ by the exponential map defines $\delta_{\lambda}$. 

On any Carnot group $\G$, there are metrics that interact nicely with the translations and dilations, in the sense that each $L_x$ is an isometry and $\delta_{\lambda}$ scales distances by the factor $\lambda$.

\begin{definition}\label{homogdistance}
A metric $d \colon \G \times \G\rightarrow [0,\infty)$ is called a \textit{homogeneous distance} if it induces the manifold topology of $\G$, it is left-invariant, i.e.
$$ d(xy,xz) = d(y,z) \text{ for all } x,y,z \in \G,$$
and it is $1$-homogeneous with respect to the dilations $\delta_\lambda$ defined above:
$$ d(\delta_\lambda(x), \delta_\lambda(y)) = \lambda d(x,y) \text{ for all } \lambda>0 \text{ and } x,y\in\G.$$
\end{definition}

For example, given an inner product $\langle\cdot,\cdot\rangle$ on the horizontal layer $V_1$, the associated sub-Riemannian Carnot--Carath\'eodory metric $d_{cc}$ is an example of a homogeneous distance on $\G$. This is defined by
$$ d_{cc}(x,y) = \inf\{ \int_0^1 \langle \gamma'(t), \gamma'(t)\rangle ^{1/2} dt : \gamma \text{ horizontal curving joining } x \text{ to } y \},$$
where an absolutely continuous curve $\gamma \colon [0,1]\rightarrow\G$ is called \ti{horizontal} if $\gamma'(t) \in V_1$ for a.e. $t\in[0,1]$.

All homogeneous distances on a given Carnot group are bi-Lipschitz equivalent. For the purposes of this paper, a ``sub-Riemannian Carnot group'' means a Carnot group $\G$ equipped with such an inner product on $V_1$ and hence a Carnot--Carath\'eodory distance $d_{cc}$. As we will not consider any other distances on Carnot groups, we will often simply write ``Carnot group'' when we mean ``sub-Riemannian Carnot group''. In that case, the homogeneity of the group implies that $ (\G, d_{cc}, \mathcal{H}^Q)$ is an Ahlfors $Q$-regular metric measure space, where $Q = \Hdim(\G)$. 

If $\G$ is a Carnot group, there is a natural ``horizontal projection''
$$\pi \colon \G \rightarrow V_1 \simeq \R^n$$
obtained by composing $\exp^{-1}$ with the vector space projection $\mathfrak{g} \rightarrow V_1$. The following lemma summarizes the basic properties of $\pi$ that we will need below. These properties are standard, and are collected with proofs and/or references in \cite[Lemma 2.8]{DK16}.

\begin{lemma}\label{pifacts}
Let $\G$ be a Carnot group whose horizontal layer $V_1$ has dimension $n$, and let $\pi \colon \G \rightarrow V_1 \simeq \R^n$ be the associated horizontal projection.
\begin{enumerate}[\normalfont (i)]
\item\label{pihomomorphism} $\pi$ is a group homomorphism.
\item\label{pidilation} $\pi$ commutes with dilations: $\pi(\delta_{\lambda}(x)) = \lambda \pi(x)$ for all  $\lambda >0$.
\item\label{pitangent} For all $x\in \G$, every element of $\Tan(\G, x, \pi)$ is isometric to $(\G, 0, \pi)$. (See Section \ref{tangentsec} for the notation.)
\item\label{piLQ} $\pi$ is a Lipschitz quotient map onto $V_1 \simeq \R^n$.
\item\label{picurve} If $\gamma \colon [0,1]\rightarrow \G$ is a non-constant Lipschitz curve, then $\pi \circ \gamma$ is non-constant.
\item\label{pilift} If $\gamma \colon [0,1]\rightarrow V_1$ is a Lipschitz curve and $x \in \pi^{-1}(\gamma(0))$, then there is a \textit{unique} Lipschitz curve $\tilde{\gamma}\colon [0,1]\rightarrow \G$ such that $\pi(\tilde{\gamma}(t)) = \gamma(t)$ and $\tilde{\gamma}(0) = x$. 
\end{enumerate}
\end{lemma}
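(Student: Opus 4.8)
The plan is to treat the six items in turn, deriving each from standard facts about the Baker--Campbell--Hausdorff formula, the stratification of $\mathfrak{g}$, and the Carath\'eodory theory of ODEs, so that the bulk of the work is bookkeeping. For \eqref{pihomomorphism} and \eqref{pidilation} I would work on the Lie algebra side: write an element of $\G$ as $\exp(X)$ with $X = X_1 + \cdots + X_s$, $X_i \in V_i$, so that $\pi(\exp X) = X_1$ under $V_1 \simeq \R^n$. By BCH, $\exp(X)\exp(Y) = \exp(X + Y + \tfrac12[X,Y] + \cdots)$, and every bracket term lies in $V_2 \oplus \cdots \oplus V_s$ by the stratification; projecting to $V_1$ gives $\pi(\exp(X)\exp(Y)) = X_1 + Y_1$, which is \eqref{pihomomorphism} since the target $(V_1,+) \simeq \R^n$ is abelian. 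For \eqref{pidilation}, the definition \eqref{dilation} gives $\delta_\lambda(\exp X) = \exp(\lambda X_1 + \lambda^2 X_2 + \cdots)$, so $\pi(\delta_\lambda(\exp X)) = \lambda X_1 = \lambda\,\pi(\exp X)$.

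The curve statements \eqref{picurve} and \eqref{pilift} rest on two inputs: (a) a curve in $\G$ that is Lipschitz for $d_{cc}$ is, off a null set of times, horizontal, i.e.\ $\gamma'(t) = (L_{\gamma(t)})_* v(t)$ with $v(t) \in V_1$ and $|v(t)|$ controlled by the Lipschitz constant; and (b) for the left-invariant horizontal frame, the linear ODE governing horizontal lifts has, by Carath\'eodory existence and uniqueness, a unique absolutely continuous solution on all of $[0,1]$ for any bounded measurable horizontal velocity and initial point, with no blow-up since $\G$ is complete and the frame is complete. A short computation with $d\pi$ shows that a horizontal $\gamma$ with velocity $v(t)$ satisfies $(\pi\circ\gamma)'(t) = v(t)$, using $\pi \circ L_x = L_{\pi(x)} \circ \pi$ from \eqref{pihomomorphism} and that $d\pi_e$ is the identity on $V_1$ and kills the higher layers. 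Then \eqref{picurve} is immediate: if $\pi\circ\gamma$ were constant, then $v \equiv 0$ a.e., so $\gamma' \equiv 0$ a.e., so $\gamma$ is constant. For \eqref{pilift}, given Lipschitz $\gamma \colon [0,1] \to V_1$ solve the horizontal-lift ODE with velocity $\gamma'(t) \in V_1$ and initial point $x$; the solution $\tilde\gamma$ is Lipschitz with $(\pi\circ\tilde\gamma)' = \gamma'$ and $\pi(\tilde\gamma(0)) = \pi(x) = \gamma(0)$, hence $\pi\circ\tilde\gamma = \gamma$, and uniqueness among Lipschitz competitors follows since any such competitor is horizontal by (a) and hence solves the same ODE.

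For \eqref{piLQ}, the inclusion $\pi(B(x,r)) \subseteq B(\pi(x), r)$ is just the assertion that $\pi$ is $1$-Lipschitz: a horizontal curve and its $\pi$-image have equal length by the computation above, so $\pi$ does not increase distances. For the co-Lipschitz inclusion $B(\pi(x),r) \subseteq \pi(B(x,r))$, left-invariance of $d_{cc}$ with \eqref{pihomomorphism} and $1$-homogeneity with \eqref{pidilation} reduce to $x = e$, $r = 1$: given $v \in V_1$ with $|v| < 1$, lift the segment $t \mapsto tv$ via \eqref{pilift} to a curve from $e$ of length $|v| < 1$, whose endpoint then lies in $B(e,1)$ and maps under $\pi$ to $v$. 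Thus $\pi$ is a $1$-Lipschitz quotient map. Finally, \eqref{pitangent} follows from exact self-similarity: the relations $\pi \circ L_x = L_{\pi(x)} \circ \pi$ and $\pi \circ \delta_\lambda = \delta_\lambda \circ \pi$, with $L_{\pi(x)}$ and $\delta_\lambda$ now a Euclidean translation and dilation on $\R^n$, mean that every rescaling used to form an element of $\Tan(\G, x, \pi)$ — translate $x$ to $e$ by the isometry $L_{x^{-1}}$, apply $\delta_{1/\lambda_j}$ on the source, and apply the corresponding Euclidean isometry and dilation on the target — returns, up to these isometric identifications, precisely $(\G, e, \pi)$; so the only tangent, a limit of a constant sequence, is $(\G, 0, \pi)$.

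I expect \eqref{pitangent} to need the most care, since carrying it out means unwinding the definition of $\Tan$ for pointed maps and checking that the rescaling operations commute with $\pi$ exactly rather than merely approximately; the only genuinely non-algebraic ingredient elsewhere is the standard fact that $d_{cc}$-Lipschitz curves are horizontal, which powers \eqref{picurve}, \eqref{pilift}, and hence the co-Lipschitz half of \eqref{piLQ}.
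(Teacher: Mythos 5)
Your proof is correct. Note that the paper itself gives no argument for this lemma---it states that the properties are standard and defers to \cite[Lemma 2.8]{DK16}---so there is no in-paper proof to compare against; what you have written is a correct, self-contained version of the standard arguments. The key computations all check out: the BCH expansion places every bracket term in $V_2\oplus\cdots\oplus V_s$, giving \eqref{pihomomorphism}; the identity $(\pi\circ\gamma)'(t)=v(t)$ for a horizontal curve with left-invariant velocity $v(t)\in V_1$ follows from $\pi\circ L_x=L_{\pi(x)}\circ\pi$ and the fact that $d\pi_e$ is the identity on $V_1$, and this single identity correctly powers \eqref{picurve}, \eqref{pilift}, and both halves of \eqref{piLQ}; and your unwinding of \eqref{pitangent} is right, since $y\mapsto\delta_{\lambda_i^{-1}}(x^{-1}y)$ is a pointed isometry from $(\lambda_i^{-1}\G,x)$ to $(\G,0)$ under which the rescaled map $y\mapsto\lambda_i^{-1}(\pi(y)-\pi(x))=\pi(\delta_{\lambda_i^{-1}}(x^{-1}y))$ becomes exactly $\pi$, so the tangent is the limit of a constant sequence. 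The one ingredient you correctly flag as non-algebraic---that $d_{cc}$-Lipschitz curves are absolutely continuous and horizontal a.e.---is indeed the standard fact one must import for \eqref{picurve}, \eqref{pilift}, and the uniqueness claim in \eqref{pilift}.
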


\subsection{Carnot subgroups}

Let $\G$ be a sub-Riemannian Carnot group with Lie algebra $\mathfrak{g}$ and horizontal layer $V_1 \subset \mathfrak{g}$, and let $\pi \colon \G \rightarrow V_1$ be the horizontal projection.

\begin{definition}
Let $V \subset V_1$ be a vector subspace, and let $\mathfrak{h} \subset \mathfrak{g}$ be the stratified Lie sub-algebra generated by $V$. The homogeneous subgroup $\H = \exp_{\G}(\mathfrak{h}) \subset \G$ is called the Carnot subgroup generated by $V$.
\end{definition}
The name ``Carnot subgroup'' was used for these objects in \cite{DK16}, where they played a similar role to that of vector subspaces in Euclidean space.

For $V$ fixed, $\H$ is the unique homogeneous subgroup of $\G$ with Lie algebra $\mathfrak{h}$. Indeed, any homogeneous subgroup of $\G$ is identified with its Lie algebra in exponential coordinates. Moreover, $\H$ is itself a Carnot group with horizontal layer $V$, and as a subset of $\G$ it is rectifiably connected with respect to the Carnot--Carath\'eodory metric.

Here we include a characterization of those subsets of $\G$ that coincide with Carnot subgroups. We will use it in the proof of Theorem \ref{blowup}, but it appears to be an interesting statement in itself. This argument is essentially that of Theorem 4.1 in \cite{DK16}.

\begin{prop} \label{Carnotsub}
Let $Y \subset \G$ be a closed, rectifiably connected subset with $0 \in Y$. If there is a vector subspace $V \subset V_1$ such that $\pi|_Y \colon Y \rightarrow V$ is a Lipschitz quotient map onto $V$, then $Y$ coincides with the Carnot subgroup generated by $V$.
\end{prop}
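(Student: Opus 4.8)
The plan is to show that $Y$ equals $\H$, the Carnot subgroup generated by $V$, by proving mutual containment. The essential structural input is Lemma \ref{pathlifting} (path lifting for Lipschitz quotient maps), applied to the restriction $\pi|_Y$, together with the path lifting for $\pi$ itself, Lemma \ref{pifacts}\eqref{pilift}, and the key observation that these two liftings must agree when they start at the same point. This is what ties the curves in $Y$ to curves in $\G$.

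\textbf{Step 1: $\H \subset Y$.} First I would note that $\H$ is rectifiably connected in the Carnot--Carath\'eodory metric, with horizontal layer $V$, so any point $h \in \H$ can be joined to $0$ by a horizontal curve in $\H$; projecting by $\pi$ gives a Lipschitz curve $\gamma$ in $V$ from $0$ to $\pi(h)$ (using Lemma \ref{pifacts}\eqref{pidilation} and \eqref{pihomomorphism}, $\pi$ restricted to $\H$ is the horizontal projection of $\H$ onto $V$, hence surjective and in fact the projection realizing the sub-Riemannian structure). Now $Y$ is closed, hence proper as a closed subset of the proper space $\G$ — I should double-check that Carnot groups are proper; they are, being locally compact groups with a left-invariant metric inducing the manifold topology. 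So Lemma \ref{pathlifting} applies to $\pi|_Y \colon Y \to V$, yielding a Lipschitz lift $\tilde\gamma$ of $\gamma$ into $Y$ starting at $0$. But Lemma \ref{pifacts}\eqref{pilift} says the lift of $\gamma$ through $\pi$ into all of $\G$ starting at $0$ is \emph{unique}; the original horizontal curve in $\H$ and $\tilde\gamma$ are both such lifts, so they coincide. Hence the curve lies in $Y$, and its endpoint $h$ lies in $Y$. Thus $\H \subseteq Y$.

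\textbf{Step 2: $Y \subset \H$.} Conversely, take $y \in Y$. Since $Y$ is rectifiably connected, join $0$ to $y$ by a rectifiable curve in $Y$, which we may reparametrize to be $1$-Lipschitz on some $[0,T]$; call it $\sigma$. Then $\pi \circ \sigma$ is a Lipschitz curve in $V_1$ starting at $0$. I claim it actually takes values in $V$: this should follow from the co-Lipschitz/Lipschitz-quotient hypothesis on $\pi|_Y$ having image exactly $V$, so $\pi(\sigma(t)) \in V$ for all $t$. Now lift $\pi \circ \sigma \colon [0,T] \to V$ horizontally into $\H$ starting at $0$ using the path lifting in $\H$ (which is a Carnot group with horizontal layer $V$); by uniqueness of lifts through $\pi$ in $\G$ (Lemma \ref{pifacts}\eqref{pilift}), this horizontal lift into $\H$ must agree with $\sigma$. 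Hence $\sigma(T) = y \in \H$, giving $Y \subseteq \H$.

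\textbf{Main obstacle.} The delicate point is Step 2, specifically justifying that $\pi \circ \sigma$ stays in the subspace $V$ and that the lift can be taken \emph{inside} $\H$. The containment $\pi(Y) = V$ is immediate from the hypothesis, but one wants $\pi(\sigma(t)) \in V$ for a curve $\sigma$ in $Y$, which is fine since $\sigma(t) \in Y$. The subtler issue is that the horizontal lift of a curve in $V$ starting at a point of $\H$ remains in $\H$ — this is where one uses that $\H = \exp_\G(\mathfrak h)$ is a Carnot group in its own right whose sub-Riemannian structure is compatible with that of $\G$, so its own path lifting (Lemma \ref{pifacts}\eqref{pilift} applied within $\H$) produces the same curve as the ambient lift by the uniqueness clause. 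One must also be careful that the two co-Lipschitz constants (for $\pi|_Y$ and for $\pi$ on $\H$) don't interfere; but since we only need the set-theoretic conclusion $Y = \H$ and not quantitative bounds, tracking constants is unnecessary. I would also remark that the argument is, as the authors note, essentially that of Theorem 4.1 in \cite{DK16}, so the write-up can be kept brief by citing that structure.
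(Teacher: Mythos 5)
Your proof is correct, and it rests on the same two pillars as the paper's argument: path lifting for the Lipschitz quotient $\pi|_Y$ (Lemma \ref{pathlifting}) combined with the uniqueness of lifts through $\pi$ (Lemma \ref{pifacts}\eqref{pilift}). Your Step 2 ($Y \subseteq \H$) is essentially verbatim the paper's: project a rectifiable curve in $Y$ to $V$, note that $V$ is the horizontal layer of $\H$, and invoke uniqueness to conclude the curve already lies in $\H$. Where you diverge is the containment $\H \subseteq Y$. The paper first shows that $Y$ is a homogeneous subgroup of $\G$ --- closed under multiplication, inversion, and dilation, each verified by translating/dilating a curve and matching the translated lift against the unique $\G$-lift --- then writes $Y = \exp_\G(\mathfrak h')$ and gets $\H \subseteq Y$ from the Lie algebra containment, since the horizontal part of $\mathfrak h'$ is $\pi(Y) = V$. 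You instead lift a horizontal curve of $\H$ directly into $Y$ using the co-Lipschitz property of $\pi|_Y$ and identify it, via uniqueness, with the original curve; this bypasses the algebraic detour entirely and is a bit shorter, at the cost of not separately recording that $Y$ is a subgroup (which of course follows a posteriori from $Y = \H$). Your flagged subtleties are the right ones and are handled correctly: $Y$ is proper as a closed subset of the proper space $(\G, d_{cc})$, so Lemma \ref{pathlifting} applies; $\pi(\sigma(t)) \in V$ because $\sigma(t) \in Y$ and $\pi(Y) = V$; and a curve horizontal in $\H$ is horizontal and Lipschitz in $\G$ (since $V \subseteq V_1$ and $d_{cc}^{\G} \leq d_{cc}^{\H}$ on $\H$), so the uniqueness clause of Lemma \ref{pifacts}\eqref{pilift} does identify the $\H$-lift with the ambient one.
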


\begin{proof}
We first verify that $Y$ is a homogeneous subgroup of $\G$. 

Let $x,y \in Y$ and let $\gamma$ be a rectifiable curve in $Y$ from $0$ to $y$. Then $\pi \circ \gamma$ is a rectifiable curve in $V$ from $0$ to $\pi(y)$, and so its translate $\pi(x) + \pi \circ \gamma$ is a rectifiable curve in $V$ from $\pi(x)$ to $\pi(x)+\pi(y) = \pi(xy)$. As $\pi|_Y$ is a Lipschitz quotient map onto $V$, this curve has a lift to a rectifiable curve in $Y$ that begins at $x$, by Lemma \ref{pathlifting}. Note, though, that $L_{x} \circ \gamma$ is the unique lift of $\pi(x) + \pi \circ \gamma$ to $\G$; this follows immediately from the fact that $\pi$ is a group homomorphism. Thus, the curve $L_{x} \circ \gamma$ is contained in $Y$, so in particular $xy \in Y$. This shows that $Y$ is closed under multiplication.

Similar arguments show that $Y$ is closed under inversion and dilation. If $x \in Y$, then let $\gamma$ be a rectifiable curve in $Y$ from $0$ to $x$. Note that $L_{-x} \circ \gamma$ is also rectifiable in $\G$ and is the unique lift of $-\pi(x) + \pi \circ \gamma$ to $\G$ that begins at $0$. As $-\pi(x) + \circ \pi \circ \gamma$ is a rectifiable curve in $V$, the Lipschitz quotient property of $\pi|_Y$ ensures that this lift lies in $Y$. In particular, $x^{-1} \in Y$. Finally, if $\delta_{\lambda}$ is a dilation for $\G$, then $\delta_{\lambda} \circ \gamma$ is rectifiable in $\G$ with projection $\pi \circ \delta_{\lambda} \circ \gamma = \lambda \cdot \pi \circ \gamma$, which again lies in $V$. Once again, the Lipschitz quotient property of $\pi|_Y$ guarantees that $\delta_{\lambda} \circ \gamma$ lies in $Y$, so in particular $\delta_{\lambda}(x) \in Y$.

Thus, $Y$ is a closed, rectifiably connected, homogeneous subgroup of $\G$. Let $\mathfrak{h} \subset \mathfrak{g}$ be its Lie algebra so that $Y = \exp_{\G}(\mathfrak{h})$. We note that the horizontal component of $\mathfrak{h}$ is precisely $\pi(Y) = V$. Consequently, if $\H$ denotes the Carnot subgroup generated by $V$, then its Lie algebra is contained in $\mathfrak{h}$ and we automatically have $\H \subset Y$. For the reverse containment, we proceed as before: for $x \in Y$ and $\gamma$ a rectifiable curve in $Y$ from $0$ to $x$, the projection $\pi \circ \gamma$ is in $V$, which is the horizontal layer of $\H$. As $\H$ is a Carnot group itself, the unique lift of $\pi \circ \gamma$ to $\G$ that begins at $0$, namely $\gamma$, actually lies in $\H$. We conclude that $x \in \H$, which verifies that $Y = \H$.
\end{proof}

Let us remark that rectifiable connectivity of $Y$ is important in the above characterization. Indeed, for any subspace $V \subset V_1$, the full pre-image $\pi^{-1}(V)$ is a homogeneous subgroup of $\G$, and the restriction of $\pi$ to this subgroup is a Lipschitz quotient map onto $V$. In fact, the arguments given above make it clear that the Carnot subgroup generated by $V$ is precisely the subset
$$\H = \{x \in \pi^{-1}(V) : 0 \text{ and } x \text{ can be joined by a rectifiable path in } \pi^{-1}(V) \}.$$
As a basic example to illustrate this point, consider the first Heisenberg group $\H^1$ in exponential coordinates, so $\pi \colon \H^1 \rightarrow \R^2$ is just the linear projection onto the $xy$-plane. If $V$ is the $x$-axis, then $\pi^{-1}(V)$ is the homogeneous subgroup formed by the $xz$-plane. The Carnot subgroup generated by $V$ is, however, just the $x$-axis, which is precisely the set of points in the $xz$-plane that can be joined to $0$ by a finite-length curve lying in this plane.

\section{Tangents} \label{tangentsec}
In this section, we give definitions for the different types of tangent (or ``blowup'') constructions that we will need in the proofs. We also provide some useful facts on tangents of subsets at points of density, as well as some versions of a principle, usually attributed to Preiss, about moving basepoints in tangents.

\subsection{Tangents of metric spaces}
First, we recall the notion of Gromov--Hausdorff tangents. Suppose $(X,x)$ is a pointed metric space. A Gromov--Hausdorff tangent of $(X,x)$ at $x$ is any complete pointed metric space that is a pointed Gromov--Hausdorff limit of $(\lambda_i^{-1} X, x)$ for a sequence $\lambda_i \rightarrow 0$. Note that this is defined only up to isometry. We will introduce a precise (pseudo)-metric inducing this convergence below, when we add mappings to the picture. For additional background on pointed Gromov--Hausdorff convergence, see \cites{BBI01, DS97, GCD14}.

We denote the collection of all (isometry classes of) Gromov--Hausdorff tangents of $(X,x)$ by $\Tan(X,x)$. By standard compactness results for pointed Gromov--Hausdorff convergence (e.g., Theorem 8.1.10 of \cite{BBI01}), if $X$ is metrically doubling then $\Tan(X,x)$ is always non-empty. In fact, for any sequence $\lambda_i \rightarrow 0$, there is a subsequence for which $(\lambda_i^{-1} X, x)$ converges to an element of $\Tan(X,x)$.

\subsection{Tangents of subsets of Carnot groups}
If $E$ is a subset of a Carnot group $\G$ and $x\in E$, there is a related but distinct notion of a tangent of $E$ at $x$, namely the intrinsic tangents of $E$ in $\G$. These are closed subsets of $\G$ that are pointed Hausdorff limits of 
\begin{equation}\label{rescaling}
 \delta_{\lambda_i^{-1}}(x^{-1} E)
\end{equation}
for some sequence $\lambda_i \rightarrow 0$.

Recall that a collection of pointed sets $A_i$ in $\G$ converges to $A$ in the \textit{pointed Hausdorff sense} if and only if
$$ \lim_{i\rightarrow\infty} d_R(A_i,A) = 0\text{ for all } R>0,$$
where
$$ d_R(A,B) = \sup \{\dist(a,B) : a\in A\cap B(0,R)\} + \sup \{\dist(b,A) : b\in B\cap B(0,R)\}.$$
Observe that $d_R$ does not distinguish between a set and its closure, and hence we define intrinsic tangents to be closed sets.

We denote the collection of all these intrinsic tangents of $E$ at $x$ by $\Tan_{\G}(E,x)$. In contrast to $\Tan(E,x)$, distinct elements of $\Tan_{\G}(E,x)$ may be isometric as pointed metric spaces. By standard compactness results for pointed Hausdorff convergence (see, e.g., \cite[Lemma 8.2]{DS97}), if $E\subset\G$ and $x\in E$, then $\Tan_\G(E,x)$ is non-empty, and indeed any sequence of scales tending to zero yields a subsequence along which the sequence in \eqref{rescaling} converges in the pointed Hausdorff sense to a closed set.

Using this fact, one sees that if $\hat{E}\in\Tan_{\G}(E,x)$, then the pointed isometry class of the pointed metric space $(\hat{E},0)$ is an element of $\Tan(E,x)$. Conversely, if $(Y,y)\subset\Tan(E,x)$, then there is an element $\hat{E}\in\Tan_{\G}(E,x)$ and a pointed isometry from $(Y,y)$ to $(\hat{E},0)$. Moreover, the fact that $d_R$ does not distinguish between a set and its closure implies that $\Tan_{\G}(E,x) = \Tan_{\G}(\cl{E},x)$ for any $E \subset \G$ and $x \in E$.

\subsection{Tangents of metric spaces and mappings to Carnot groups}

Before introducing tangents of mappings, we need some additional definitions.

We will call a \textit{package} a triple of the form $(X,x,f \colon X\rightarrow \G)$, where $(X,x)$ is a pointed metric space, $\G$ is a sub-Riemannian Carnot group, and $f \colon X\rightarrow \G$ is a Lipschitz mapping. (This name is not standard; similar objects are called ``space-functions'' in \cites{Ke04,GCD14} and ``mapping packages'' in \cite{DS97}.)

Strictly speaking, a package is an equivalence class rather than a single triple; two packages $(X,x,f)$ and $(X',x',f')$ are considered equivalent if there is a surjective isometry $i \colon X\rightarrow X'$ such that $i(x)=x'$ and $f'\circ i = f$. Note that two equivalent packages have identical images in $\G$ (i.e., $f(X) = f'(X')$), and not merely isometric images.

Often, we will abuse notation and say that a package is ``complete'' or ``doubling'' if the underlying space is complete or doubling. On occasion, we will also use the phrase ``Lipschitz package" to emphasize that the mapping $f$ is Lipschitz, even though this is part of the definition.

We now explain a notion of distance on the collection of all such packages with a fixed Carnot target $\G$. This notion is a minor variation of that defined in \cite{GCD14} when $\G=\RR^n$, which is in turn based on standard metrizations of pointed Gromov--Hausdorff distance like those in \cites{LD11, BBI01, DS97}.

\begin{definition}\label{epsilonisomdef}
A map $\phi \colon (X,d,x)\rightarrow (Y,d',y)$ between pointed metric spaces is called an \textit{$\epsilon$-isometry} if 
\begin{enumerate}[\normalfont (i)]
\item \label{isom1} For all $a,b \in B_X(x,1/\epsilon)$, we have $|d'(\phi(a),\phi(b)) - d(a,b)| < \epsilon$, and
\item \label{isom2} for all $\epsilon \leq r\leq 1/\epsilon,$ we have $N_\epsilon(\phi(B_X(x,r))) \supseteq B_Y(y,r-\epsilon)$.
\end{enumerate}
Here $N_\epsilon(E)$ denotes the open $\epsilon$-neighborhood of a subset $E$ in a metric space. Note that we do not ask that $\phi(x)=y$, although it follows from the definition that $d'(\phi(x),y)\leq 2\epsilon$.
\end{definition}

\begin{definition}
If $(X,x,f\colon X\rightarrow \G)$ and $(Y,y,g\colon Y\rightarrow \G)$ are packages, we define
\begin{align*}
\tilde{D}((X,d,x,f), (Y,d',y,g)) = \inf \bigg\{&\epsilon>0: \text{there exist } \phi \colon (X,d,x)\rightarrow (Y,d',y) \text{ and }\\
&\psi \colon (Y,d',y)\rightarrow (X,d,x) \text{ that are }\epsilon\text{-isometries,}\\
&\text{for which }\sup_{B(x,1/\epsilon)} d_{cc}(f,g\circ\phi)<\epsilon \text{ and }\\
&\sup_{B(y,1/\epsilon)}d_{cc}(g,f\circ\psi)<\epsilon\bigg\}.
\end{align*}
\end{definition}

We remark that any package $(X,x,f \colon X\rightarrow \G)$ is at zero $\tilde{D}$-distance from a complete package $(\overline{X},x,f)$, where $\overline{X}$ is the metric completion of $X$ and $f$ is identified with its unique completion to $\overline{X}$.

The following lemma is essentially a duplicate of Lemma 2.3 in \cite{GCD14}. Although that lemma was stated only for $\G=\RR^n$, the proof is identical and so we omit it.
\begin{lemma}\label{Dproperties}
If we define $D = \min\{\tilde{D}, 1/2\}$, then $D$ is a ``pseudo-quasi-metric", by which we mean the following.

\begin{enumerate}[\normalfont (i)]
\item\label{sym} $D$ is finite, non-negative, and symmetric.
\item\label{pseudometric} The $D$-distance between two doubling packages $(X,x,f)$ and $(Y,y,g)$ is zero if and only if there is a surjective isometry $i\colon \obar{X}\rightarrow \obar{Y}$ such that $g\circ i = f$, where $g$ and $f$ are identified with their extensions to the completions $\obar{X}$ and $\obar{Y}$.
\item\label{quasimetric} $D$ satisfies the quasi-triangle inequality
$$ D\left((X,x,f), (Z,z,h)\right) \leq 2\left(D\left((X,x,f),(Y,y,g)\right) + D\left((Y,y,g),(Z,z,h)\right)\right).$$
\end{enumerate}
\end{lemma}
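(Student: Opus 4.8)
The plan is to follow \cite[Lemma 2.3]{GCD14} essentially verbatim, the only change being that the Euclidean target and its norm used there are replaced everywhere by $\G$ and its metric $d_{cc}$; since $d_{cc}$ enters only through the triangle inequality and through continuity of the maps, this substitution causes no difficulty. First I would reduce to \emph{complete} packages: since every package lies at zero $\tilde D$-distance from its metric completion, once \eqref{pseudometric} and \eqref{quasimetric} are in hand one may assume $X = \obar{X}$ and $Y = \obar{Y}$ throughout. Part \eqref{sym} is then immediate: $\tilde D \ge 0$ because the infimum is taken over $\epsilon > 0$, $D \le 1/2$ by definition, and $\tilde D$ (hence $D$) is symmetric because the condition defining it is plainly invariant under simultaneously exchanging $(X,x,f) \leftrightarrow (Y,y,g)$ and $\phi \leftrightarrow \psi$. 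The easy direction of \eqref{pseudometric} is also quick: if $i \colon \obar{X} \to \obar{Y}$ is a surjective isometry with $i(x) = y$ and $g \circ i = f$, then $i$ and $i^{-1}$ satisfy conditions \eqref{isom1} and \eqref{isom2} of Definition \ref{epsilonisomdef} with zero error, hence are $\epsilon$-isometries for every $\epsilon > 0$, and they witness $\sup_{B(x,1/\epsilon)} d_{cc}(f, g \circ i) = \sup_{B(y,1/\epsilon)} d_{cc}(g, f \circ i^{-1}) = 0 < \epsilon$; so $\tilde D = 0$ and $D = 0$.

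For the quasi-triangle inequality \eqref{quasimetric} I would first note that if $D((X,x,f),(Y,y,g)) = 1/2$ or $D((Y,y,g),(Z,z,h)) = 1/2$ then the right-hand side is at least $1$ and nothing is to be proved; hence we may assume both distances are realized by $\tilde D$ and are as small as we please. Fixing $\epsilon_1, \epsilon_2$ slightly larger than these infima, we obtain $\epsilon_1$-isometries $\phi, \psi$ between the first two packages and $\epsilon_2$-isometries $\chi, \omega$ between the last two, together with the corresponding $d_{cc}$-estimates. A direct computation, tracking how the admissible radii $1/\epsilon_i$ shrink and how the additive errors accumulate under composition (this is routine once $\epsilon_1, \epsilon_2$ are below an absolute threshold, which the reduction above permits), shows that $\chi \circ \phi$ and $\psi \circ \omega$ are $2(\epsilon_1 + \epsilon_2)$-isometries, the factor $2$ being merely a convenient constant that absorbs the cross-terms. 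For the compatibility condition one uses that $\phi$ maps each small ball about $x$ into a slightly larger ball about $y$, so that the supremum of $d_{cc}(f, h \circ \chi \circ \phi)$ over the appropriate ball is at most $\sup d_{cc}(f, g \circ \phi) + \sup d_{cc}(g, h \circ \chi) < \epsilon_1 + \epsilon_2$, and symmetrically for $\psi \circ \omega$. Thus $\tilde D((X,x,f),(Z,z,h)) \le 2(\epsilon_1 + \epsilon_2)$, and letting $\epsilon_1, \epsilon_2$ decrease to the infima gives \eqref{quasimetric}.

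The substance of the lemma is the converse direction of \eqref{pseudometric}: assuming $D((X,x,f),(Y,y,g)) = 0$ with both packages complete and doubling, I must produce a surjective isometry $i$ with $i(x) = y$ and $g \circ i = f$. I would choose $\epsilon_n \downarrow 0$ and $\epsilon_n$-isometries $\phi_n \colon X \to Y$ with $\sup_{B(x,1/\epsilon_n)} d_{cc}(f, g \circ \phi_n) < \epsilon_n$, fix a countable dense set $\{a_k\} \subset X$, and observe that for each $k$ the sequence $(\phi_n(a_k))_n$ eventually lies in a fixed ball of $Y$, which is relatively compact since $Y$ is complete and doubling. A diagonal extraction yields a subsequence along which $\phi_n(a_k) \to b_k$ for every $k$; the almost-isometry property then gives $d'(b_k, b_l) = \lim_n d'(\phi_n(a_k), \phi_n(a_l)) = \lim_n d(a_k, a_l) = d(a_k, a_l)$, so $a_k \mapsto b_k$ extends to an isometric embedding $i \colon X \to Y$, and $d'(i(x), y) = \lim_n d'(\phi_n(x), y) \le \lim_n 2\epsilon_n = 0$ forces $i(x) = y$. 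For surjectivity, given $b \in Y$ one invokes condition \eqref{isom2} of Definition \ref{epsilonisomdef} to find, for all large $n$, a point $a^{(n)} \in B_X(x, r)$ (with $r$ fixed so that $b \in B_Y(y, r - \epsilon_n)$) satisfying $d'(\phi_n(a^{(n)}), b) < \epsilon_n$; since $X$ is complete and doubling the $a^{(n)}$ subconverge to some $a \in X$, and a routine estimate using that $\phi_n$ is almost distance-preserving and agrees in the limit with $i$ on $\{a_k\}$ shows that $\phi_n(a^{(n)}) \to i(a)$, whence $i(a) = b$. Finally, fixing $k$ and letting $n \to \infty$ in $d_{cc}(f(a_k), g(\phi_n(a_k))) < \epsilon_n$, with $g$ continuous, gives $f(a_k) = g(i(a_k))$; density of $\{a_k\}$ together with continuity of $f$ and $g \circ i$ then yields $g \circ i = f$.

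I expect this last argument to be the only real obstacle. The verifications in \eqref{sym} and \eqref{quasimetric} are entirely routine, but the converse of \eqref{pseudometric} requires a diagonal compactness extraction and careful interchanges of limits --- in particular, to show that the limiting map $i$ is surjective and intertwines $f$ with $g$ --- and it is exactly here that the doubling hypothesis is used: it guarantees that bounded subsets of $X$ and $Y$ are totally bounded, hence compact after completion, so that the required subsequences exist.
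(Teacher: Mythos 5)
Your proof is correct and follows exactly the route the paper takes: the paper omits the argument entirely, stating that it is identical to \cite[Lemma 2.3]{GCD14}, and your write-up is a faithful adaptation of that standard Gromov--Hausdorff argument with $\R^n$ replaced by $(\G,d_{cc})$, using doubling plus completeness to get compactness of closed balls for the diagonal extraction in the hard direction of \eqref{pseudometric}. The one phrase worth tightening is in \eqref{quasimetric}: one cannot literally assume $\epsilon_1,\epsilon_2$ are ``as small as we please,'' but rather one notes that when $2(\epsilon_1+\epsilon_2)\ge 1/2$ the inequality is trivial because $D\le 1/2$, and in the remaining case the smallness needed to compose the $\epsilon$-isometries is automatic.
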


Although the function $D$ is not a metric, we will still say that a sequence of packages 
$$(X_n, x_n,f_n \colon X_n\rightarrow \G)$$
``converges in $D$'' to a package $(X,x,f \colon X\rightarrow \G)$ if
$$D((X_n, x_n,f_n), (X,x,f))\rightarrow 0 \text{ as } n\rightarrow\infty.$$
The convergence in $D$ of a sequence of packages implies that the pointed metric spaces converge in the pointed Gromov--Hausdorff sense. Conversely, if $(X_n, x_n, f_n)$ are $C$-doubling, $L$-Lipschitz packages mapping to a fixed Carnot group $\G$, the sequence $\{f_n(x_n)\}$ is bounded, and $(X_n, d_n, x_n)\rightarrow (X,d,x)$ in the pointed Gromov--Hausdorff sense, then there is a subsequence  $(X_{n_k}, x_{n_k}, f_{n_k})$ and a Lipschitz function $f\colon X\rightarrow G$ such that
$$ (X_{n_k}, x_{n_k}, f_{n_k}) \rightarrow (X, x, f) $$
in the metric $D$. This follows by a standard Arzel\`{a}-Ascoli type argument.

In particular, if $(X_n, x_n, f_n)$ is a sequence of $C$-doubling, $L$-Lipschitz packages mapping to a fixed Carnot group $\G$, and $\{f_n(x_n)\}$ are bounded, then there is a subsequence that converges in $D$.

\begin{lemma}\label{GHproperties}
The following properties are preserved under convergence of a sequence of packages $(X_i,x_i,f_i \colon X_i\rightarrow \G) \rightarrow (X,x,f \colon X\rightarrow \G)$:
\begin{enumerate}[\normalfont (i)]
\item If the functions $f_i$ are all $L$-Lipschitz, then so is $f$.
\item If the functions $f_i$ are all $L$-bi-Lipschitz, then so is $f$.
\item If the spaces $X_i$ are uniformly doubling metric spaces, then $X$ is doubling.
\item If the spaces $X_i$ are uniformly quasi-convex, then $X$ is quasi-convex.
\item If the spaces $X_i$ are uniformly Ahlfors $Q$-regular, then $X$ is Ahlfors $Q$-regular.
\end{enumerate}
\end{lemma}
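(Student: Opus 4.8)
The plan is to prove each of the five assertions in Lemma \ref{GHproperties} directly from the definition of $D$-convergence, exploiting the fact that convergence in $D$ yields, for each $i$, an $\epsilon_i$-isometry $\phi_i \colon X_i \to X$ (and one in the other direction) with $\epsilon_i \to 0$ and with $d_{cc}(f_i, f \circ \phi_i)$ small on large balls. The unifying strategy is: pick two points $a,b \in X$ (and their nearby images in a suitable $X_i$ under the approximate inverse isometry), pass to the limit along $i$, and read off the desired inequality. Since all five statements are ``closed'' conditions — upper bounds on $d(f(a),f(b))$, two-sided bounds, covering/packing estimates, existence of short curves, measure bounds — they survive passage to a limit.

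For (i) and (ii): given $a,b \in X$, choose $a_i, b_i \in X_i$ with $\phi_i(a_i) \to a$ and $\phi_i(b_i) \to b$ (using that $\phi_i$ is an $\epsilon_i$-isometry, hence approximately surjective onto balls, so such preimages exist for $i$ large). Then $d_{X_i}(a_i,b_i) \to d_X(a,b)$ by property \eqref{isom1} of an $\epsilon_i$-isometry, and $d_{cc}(f(a),f(b)) = \lim_i d_{cc}(f_i(a_i), f_i(b_i))$ because $d_{cc}(f(\phi_i(a_i)), f_i(a_i)) < \epsilon_i \to 0$ (and likewise at $b_i$) together with continuity of $f$. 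The $L$-Lipschitz bound $d_{cc}(f_i(a_i),f_i(b_i)) \le L\, d_{X_i}(a_i,b_i)$ then passes to the limit, giving (i); for (ii) one additionally uses the lower bound $d_{cc}(f_i(a_i),f_i(b_i)) \ge L^{-1} d_{X_i}(a_i,b_i)$, which passes to the limit the same way. For (iii): doubling is the statement that each ball of radius $r$ is covered by $N$ balls of radius $r/2$; given a ball $B_X(a,r)$, transport it via the approximate inverse isometry to a slightly larger ball in $X_i$, cover that by $N$ balls of radius slightly more than $r/2$, and transport the cover back by $\phi_i$, absorbing the $\epsilon_i$-errors by taking $i$ large (and allowing $r$ to range over a countable dense set, or a standard $\epsilon$-of-room argument); the doubling constant of $X$ can be taken to be $2^d N$ for a fixed small dilation, or handled by the usual trick that if balls of radius $r$ are covered by $N$ balls of radius $r/2$ up to an arbitrarily small error then the same holds exactly. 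For (iv): given $a,b \in X$ and $\epsilon>0$, pick $a_i,b_i \in X_i$ as above; by uniform quasiconvexity there is a curve $\gamma_i$ in $X_i$ from $a_i$ to $b_i$ of length at most $K\, d_{X_i}(a_i,b_i)$; the images $\phi_i(\gamma_i)$ are ``almost curves'' (their $\phi_i$-images of dense finite subsets form chains with controlled gaps), and a diagonal/Arzel\`a--Ascoli argument extracts a genuine curve in the limit $X$ from $a$ to $b$ of length at most $K\, d_X(a,b)$; the cleanest route is to reparametrize $\gamma_i$ by constant speed on $[0,1]$ and observe the maps $[0,1] \to X$ given by $\phi_i \circ \gamma_i$ are uniformly Lipschitz (up to $\epsilon_i$), so a subsequence converges uniformly to a $K d_X(a,b)$-Lipschitz curve joining $a$ and $b$. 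For (v): Ahlfors $Q$-regularity is a two-sided bound $C^{-1} r^Q \le \mathcal H^Q(\overline B(x,r)) \le C r^Q$; since Gromov--Hausdorff convergence of uniformly doubling spaces implies weak convergence of the (suitably normalized) measures $\mathcal H^Q$ restricted to balls — this is standard, see e.g.\ the measured Gromov--Hausdorff framework and the fact that on uniformly Ahlfors regular spaces $\mathcal H^Q$ is uniformly doubling — one gets the same two-sided bound on $X$ with the same constant $C$, perhaps after the usual adjustment comparing open and closed balls of slightly different radii.

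The main obstacle is statement (iv), quasiconvexity: length is only lower semicontinuous under uniform convergence of curves, so one must be careful to produce a limit curve whose length is bounded \emph{above} by $K d_X(a,b)$, not merely that some lower bound persists. The right device is constant-speed parametrization: parametrize each $\gamma_i$ proportionally to arc length on $[0,1]$ so that it is $(K d_{X_i}(a_i,b_i))$-Lipschitz; then $\phi_i \circ \gamma_i \colon [0,1] \to X$ is $(K d_{X_i}(a_i,b_i) + \epsilon_i)$-Lipschitz on the relevant range, the family is uniformly bounded and equicontinuous, and Arzel\`a--Ascoli (valid since $X$ is complete, indeed proper once we know it is complete and doubling) gives a subsequential uniform limit $\gamma \colon [0,1] \to X$ with $\gamma(0)=a$, $\gamma(1)=b$, which is $(K d_X(a,b))$-Lipschitz, hence has length at most $K d_X(a,b)$. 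A secondary technical point, recurring in all parts, is the bookkeeping of the $\epsilon_i$-errors in the definition of an $\epsilon_i$-isometry — in particular that one does not have an exact inverse but only a $\psi_i$ which is an $\epsilon_i$-isometry in the other direction and which is approximately inverse to $\phi_i$ — but this is routine and is handled exactly as in the pointed Gromov--Hausdorff literature cited in the paper (e.g.\ \cite{BBI01, GCD14}); none of the five properties is delicate enough to be derailed by it.
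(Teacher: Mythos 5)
Your proposal is correct and supplies exactly the routine verifications that the paper omits: the paper's entire proof of this lemma is the assertion that (i)--(iv) are easy to check, together with a citation to Lemma 8.29 of \cite{DS97} for (v). Two small points worth tightening if you write this out in full: in (iv), the Arzel\`a--Ascoli extraction uses that closed balls in $X$ are compact, which is available because $X$ is complete and, by (iii), doubling; and in (v), what converges weakly (along a subsequence) is the pushed-forward measures $(\phi_i)_{\#}\mathcal{H}^Q_{X_i}$ to \emph{some} limit measure satisfying two-sided $r^Q$ bounds on balls centered in $X$ --- one then concludes that $\mathcal{H}^Q_X$ itself satisfies such bounds via the standard density comparison between a measure with $\mu(B(z,r))\approx r^Q$ on its support and Hausdorff measure, which is essentially the content of the cited lemma in \cite{DS97}.
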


\begin{proof}
The first four of these properties are easy to check, and the fifth can be found in, e.g., Lemma 8.29 of \cite{DS97}.
\end{proof}

Now suppose that $(X,x)$ is a pointed metric space, $\G$ is a Carnot group, and $f \colon X\rightarrow \G$ is a Lipschitz map. A tangent of $(X,x,f)$ at $x$ is any limit
\begin{equation} \label{tangentOfPackage}
(\hat{X},\hat{x},\hat{f}) = \lim_{i\rightarrow\infty} \left(\frac{1}{\lambda_i}X, x, \delta_{\frac{1}{\lambda_i}}(f(x)^{-1}\cdot f(\cdot))\right),
\end{equation}
where $\lambda_i$ is a sequence of positive real numbers tending to $0$.

We denote the collection of all complete tangents of $(X,x,f)$ by $\Tan(X,x,f)$. We note again that an element of $\Tan(X,x,f)$ is defined only up to isometries of $\hat{X}$ that preserve $\hat{f}$, but that the image of a tangent in $\G$ is fixed.

If $(X,x,f \colon X\rightarrow \G)$ is a package as defined above and $X$ is doubling, then $\Tan(X,x,f)$ is always non-empty, by the above-mentioned standard facts about Gromov--Hausdorff compactness.

Observe also that if $(\hat{X},\hat{x},\hat{f})\in\Tan(X,x,f)$, then $\hat{f}(\hat{X})\subset E$ for some $E\in\Tan_{\G}(f(X),f(x))$. Indeed, if $(\hat{X},\hat{x},\hat{f})$ is obtained by a sequence $\lambda_i \rightarrow 0$ as in \eqref{tangentOfPackage}, then there is a subsequence of $\lambda_i$ along which $(\delta_{\lambda_i^{-1}}(f(x)^{-1}f(X))$ converges to an element of $\Tan_{\G}(f(X),f(x))$.

\subsection{Tangents of measures on Carnot groups}
We will also need to define the notion of tangent measures on Carnot groups, following \cite{Mat05} (which was in turn inspired by \cite{Pr87} in Euclidean space). Recalling that $L_{x}$ denotes left translation by $x \in \G$ and $\delta_{\lambda}$ dilation by $\lambda >0$, define an affine map on $\G$ by
$$ T_{x,\lambda} = \delta_{\lambda^{-1}} \circ L_{x^{-1}}.$$
If $\nu$ is a Radon measure on $\G$, we say that $\hat{\nu}$ is a \textit{tangent measure} of $\nu$ at $x \in \G$ if there are positive sequences $c_i$ and $\lambda_i$, with $\lambda_i \rightarrow 0$, such that
$$ c_i (T_{x,\lambda_i})_{\#} \nu \rightarrow \hat{\nu} \text{ weakly as } i\rightarrow\infty.$$
We denote the collection of tangent measures to $\nu$ at $x$ by $\Tan(\nu,x)$.

The existence of tangent measures in our setting will be guaranteed by the following lemma. The version in \cite[Theorem 14.3]{Mat95} is stated for measures on Euclidean space, but the proof is identical.

\begin{lemma}\label{tangentmeasureexists}
Let $\nu$ be a Radon measure on a Carnot group $\G$. If $x \in\G$ and
$$ \limsup_{r \rightarrow 0} \frac{\nu(B(x,2r))}{\nu(B(x,r))} < \infty,$$
then every sequence $\lambda_i \rightarrow 0$ contains a subsequence $\lambda_{i_j}$ such that the measures
$$ \nu(B(x,\lambda_{i_j}))^{-1} (T_{x,\lambda_{i_j}})_{\#} \nu$$
converge to a tangent measure of $\nu$ at $x$.
\end{lemma}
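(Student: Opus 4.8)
The plan is to follow the standard Euclidean argument (as in \cite[Theorem 14.3]{Mat95}), checking at each step that only the doubling-type hypothesis at the single point $x$ and the metric space structure of $\G$ are used. First I would normalize: set $\nu_i = \nu(B(x,\lambda_i))^{-1} (T_{x,\lambda_i})_{\#}\nu$, so each $\nu_i$ is a Radon measure on $\G$ with $\nu_i(B(0,1)) = 1$, using that $T_{x,\lambda_i}$ maps $B(x,\lambda_i)$ onto $B(0,1)$ (left-invariance of $d_{cc}$ and $1$-homogeneity of $d_{cc}$ under $\delta_\lambda$). The key quantitative input is a uniform mass bound on all balls: I would show that for each fixed $R > 0$ there is a constant $M_R < \infty$ with $\limsup_i \nu_i(B(0,R)) \le M_R$. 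This comes from the hypothesis $\limsup_{r\to 0}\nu(B(x,2r))/\nu(B(x,r)) < \infty$, which by iteration gives, for $r$ small, $\nu(B(x,2^k r)) \le C^k \nu(B(x,r))$ for a fixed $C$; unwinding $T_{x,\lambda_i}$ translates this into $\nu_i(B(0,R)) \le C^{\lceil \log_2 R\rceil} \nu_i(B(0,1)) = C^{\lceil \log_2 R\rceil}$ once $\lambda_i$ is small enough that the doubling estimate applies on the relevant range of radii.

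With uniform local mass bounds in hand, I would invoke weak-* sequential compactness of Radon measures on the (locally compact, separable) space $\G$: a sequence of Radon measures that is uniformly bounded on every compact set has a subsequence converging weakly to a Radon measure. This yields a subsequence $\lambda_{i_j}$ with $\nu_{i_j} \to \hat\nu$ weakly. By construction $\hat\nu = \lim c_{i_j}(T_{x,\lambda_{i_j}})_{\#}\nu$ with $c_{i_j} = \nu(B(x,\lambda_{i_j}))^{-1} > 0$ and $\lambda_{i_j}\to 0$, so $\hat\nu \in \Tan(\nu,x)$ by definition. The only remaining point is to make sure $\hat\nu$ is not the zero measure, so that it is a genuine (nontrivial) tangent measure; this follows from the lower mass bound $\nu_i(B(0,1)) = 1$ together with a standard argument that weak convergence does not let mass escape from the interior of a ball: one obtains $\hat\nu(\overline{B}(0,1)) \ge \limsup_j \nu_{i_j}(B(0,1)) $ via, e.g., $\hat\nu(\overline{B}(0,1)) \ge \limsup_j \nu_{i_j}(U)$ for open $U \supset \overline{B}(0,1)$ and then shrinking $U$, or more simply $\liminf_j \nu_{i_j}(B(0,1)) \le \hat\nu(\overline{B}(0,1))$ using that $B(0,1)$ is open, which gives $\hat\nu(\overline{B}(0,1)) \ge 1$.

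The main obstacle is entirely bookkeeping rather than conceptual: one must check that the affine maps $T_{x,\lambda}$ on the Carnot group behave, with respect to metric balls, exactly as the Euclidean affine maps $y \mapsto (y-x)/\lambda$ do—namely that $T_{x,\lambda}(B(x,r)) = B(0,r/\lambda)$ and that pushforward under $T_{x,\lambda}$ interacts with this in the expected way. This is immediate from left-invariance of $d_{cc}$ (so $L_{x^{-1}}$ is an isometry carrying $B(x,r)$ to $B(0,r)$) and the homogeneity $d_{cc}(\delta_\mu a, \delta_\mu b) = \mu\, d_{cc}(a,b)$ (so $\delta_{\lambda^{-1}}$ carries $B(0,r)$ to $B(0,r/\lambda)$), both recorded in Definition \ref{homogdistance}. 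Once these identities are noted, Mattila's proof transcribes verbatim, since it never uses anything about Euclidean space beyond local compactness, separability, and the metric scaling/translation structure, all of which $\G$ possesses. I would therefore state the proof briefly, citing \cite[Theorem 14.3]{Mat95} for the argument and pointing out only the substitution of $T_{x,\lambda}$ for the Euclidean homothety and the source of the uniform mass bounds.
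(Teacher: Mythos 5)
Your proposal is correct and matches the paper's approach exactly: the paper simply cites Mattila's Theorem 14.3 and observes that the Euclidean proof transfers verbatim to $\G$, which is precisely the argument you reconstruct (normalization so that $\nu_i(B(0,1))=1$, iterated pointwise doubling for uniform local mass bounds, weak-$*$ compactness of Radon measures, and nonvanishing of the limit). One small nit: your second justification of $\hat\nu(\overline{B}(0,1)) \ge 1$ should appeal to compactness of $\overline{B}(0,1)$ (via $\limsup_j \nu_{i_j}(\overline{B}(0,1)) \le \hat\nu(\overline{B}(0,1))$ is the wrong direction; the correct portmanteau half is $\hat\nu(K) \ge \limsup_j \nu_{i_j}(K)$ for compact $K$), not to openness of $B(0,1)$, whose inequality points the other way --- though your first route already yields the conclusion.
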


For any element of $\Tan(X,x)$, $\Tan(X,x,f)$, $\Tan_{\G}(E,x)$, or $\Tan(\nu,x)$, we say that the tangent object is \textit{subordinate} to the sequence of scales $\lambda_i \rightarrow 0$ along which the limiting sequence is taken. 

We should remark that, for a tangent measure $\hat{\nu} \in \Tan(\nu,x)$ subordinate to $\lambda_i$, it need not be true that the normalization constants $c_i$ are equal to $\nu(B(x,\lambda_i))^{-1}$. The following lemma, however, indicates that this is nearly true. As above, the proof in Remarks 14.4 of \cite{Mat95} for Euclidean space also works in this setting.

\begin{lemma} \label{tanmeasures}
Let $\nu$ be a Radon measure on a Carnot group $\G$, and suppose that $x \in \G$ has
$$\limsup_{r\rightarrow 0} \frac{\nu(B(x,2r))}{\nu(B(x,r))} < \infty.$$
If $\hat{\nu} \in \Tan(\nu,x)$ is subordinate to the sequence $\lambda_i$, then there is a subsequence $\lambda_{i_j}$ for which
$$\hat{\nu} = c \cdot \lim_{j \rightarrow \infty} \nu(B(x,\lambda_{i_j}))^{-1} (T_{x,\lambda_{i_j}})_{\#} \nu$$
with $c>0$ a constant.
\end{lemma}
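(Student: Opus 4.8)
The plan is to follow the argument of Remarks 14.4 in \cite{Mat95}, the only point requiring care being the bookkeeping of normalizing constants. Throughout, write $\mu_i = \nu(B(x,\lambda_i))^{-1}(T_{x,\lambda_i})_{\#}\nu$ for the ``canonically normalized'' rescalings, and recall that $\hat\nu$ subordinate to $\lambda_i$ means $c_i(T_{x,\lambda_i})_{\#}\nu \to \hat\nu$ weakly for some positive constants $c_i$.

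First I would apply Lemma \ref{tangentmeasureexists} to the sequence $\lambda_i$ itself: this uses exactly the pointwise-doubling hypothesis at $x$ to produce a subsequence $\lambda_{i_j}$ along which $\mu_{i_j}$ converges weakly to a Radon measure $\mu\in\Tan(\nu,x)$. Passing to this subsequence costs nothing, since $c_{i_j}(T_{x,\lambda_{i_j}})_{\#}\nu \to \hat\nu$ still. Next I would record two elementary facts. On one hand, since $L_{x}$ is an isometry and $\delta_\lambda$ scales the metric by $\lambda$ and fixes the identity, one computes $(T_{x,\lambda})_{\#}\nu(B(0,1)) = \nu(L_x(\delta_\lambda(B(0,1)))) = \nu(B(x,\lambda))$, hence $\mu_{i_j}(B(0,1)) = 1$ for every $j$; as $\overline{B}(0,1)$ is compact, weak convergence gives $\mu(\overline{B}(0,1)) \ge \limsup_j \mu_{i_j}(\overline{B}(0,1)) \ge 1$, so $\mu\neq 0$. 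On the other hand, $\mu_{i_j} = a_{i_j}\bigl(c_{i_j}(T_{x,\lambda_{i_j}})_{\#}\nu\bigr)$ with $a_{i_j} = \bigl(c_{i_j}\,\nu(B(x,\lambda_{i_j}))\bigr)^{-1} > 0$.

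The final step is to show that $a_{i_j}$ converges to a positive finite limit and that $\mu$ is a constant multiple of $\hat\nu$. Fixing $\varphi\in C_c(\G)$ with $\varphi\ge 0$ and $\int\varphi\,d\hat\nu > 0$ (possible since tangent measures are, by convention, nonzero Radon measures), weak convergence of both families gives $\int\varphi\,d\mu_{i_j}\to\int\varphi\,d\mu$ and $\int\varphi\,d(c_{i_j}(T_{x,\lambda_{i_j}})_{\#}\nu)\to\int\varphi\,d\hat\nu>0$, whence
$$a_{i_j} = \frac{\int\varphi\,d\mu_{i_j}}{\int\varphi\,d(c_{i_j}(T_{x,\lambda_{i_j}})_{\#}\nu)}\longrightarrow \frac{\int\varphi\,d\mu}{\int\varphi\,d\hat\nu} =: a \in [0,\infty).$$
Then for an arbitrary $\psi\in C_c(\G)$ one gets $\int\psi\,d\mu = \lim_j \int\psi\,d\mu_{i_j} = \lim_j a_{i_j}\int\psi\,d(c_{i_j}(T_{x,\lambda_{i_j}})_{\#}\nu) = a\int\psi\,d\hat\nu$, so $\mu = a\hat\nu$; since $\mu\neq 0$ we have $a>0$, and setting $c = a^{-1}$ yields $\hat\nu = c\lim_j \nu(B(x,\lambda_{i_j}))^{-1}(T_{x,\lambda_{i_j}})_{\#}\nu$, as desired.

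I do not expect a serious obstacle here: the content is the familiar (but slightly delicate) phenomenon that a weak limit of rescaled measures does not ``remember'' its precise normalization. The two places needing attention are (i) invoking the pointwise-doubling hypothesis to get precompactness of $\{\mu_{i_j}\}$ — which is precisely Lemma \ref{tangentmeasureexists} — and (ii) ruling out $\mu = 0$ and the degeneration of the ratios $a_{i_j}$, handled via the portmanteau inequality on the compact set $\overline{B}(0,1)$ together with a fixed test function having positive $\hat\nu$-integral.
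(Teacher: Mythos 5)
Your argument is correct and is essentially the proof the paper intends: the paper simply cites Remarks 14.4 of Mattila's book and notes it carries over to Carnot groups, and what you have written is precisely that argument (extract a canonically normalized subsequential limit $\mu$ via Lemma \ref{tangentmeasureexists}, use $\mu_{i_j}(B(0,1))=1$ and upper semicontinuity on compact sets to rule out $\mu=0$, then compare normalizations with a fixed test function of positive $\hat\nu$-integral). No gaps.
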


The following lemma will also be useful during our discussion of tangent measures. Once gain, the proof in Lemma 14.7 of \cite{Mat95} works in the present setting as well.

\begin{lemma}\label{tanmeasureAR}
Let $\nu$ be a Radon measure on a Carnot group $\G$. Suppose that $x \in \G$ has
\begin{equation}\label{tanmeasureAReqn}
0 < \liminf_{r\rightarrow 0} \frac{\nu(B(x,r))}{r^Q} \leq \limsup_{r\rightarrow 0} \frac{\nu(B(x,r))}{r^Q} <\infty,
\end{equation}
with $t\in (0,1]$ the ratio of the second quantity in \eqref{tanmeasureAReqn} to the third.

Then for every $\hat{\nu} \in \Tan(\nu,x)$, there is a positive number $c>0$ such that 
$$ tcr^Q \leq \hat{\nu}(B(z,r)) \leq cr^Q $$
for all $z\in \supp(\hat{\nu})$ and all $r>0$.
\end{lemma}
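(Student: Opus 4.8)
The plan is to follow the Euclidean template of \cite[Lemma 14.7]{Mat95} almost verbatim, relying on the fact that dilations $\delta_\lambda$ scale the Carnot--Carath\'eodory metric exactly by $\lambda$ and that left translations $L_x$ are isometries, so the affine maps $T_{x,\lambda}=\delta_{\lambda^{-1}}\circ L_{x^{-1}}$ behave with respect to balls exactly as Euclidean homotheties do: $T_{x,\lambda}(B(x,\lambda r)) = B(0,r)$. Fix $x$ satisfying \eqref{tanmeasureAReqn} and fix $\hat\nu\in\Tan(\nu,x)$. First I would invoke Lemma \ref{tanmeasures} to reduce to the normalization $c_i = \nu(B(x,\lambda_i))^{-1}$: passing to the subsequence given by that lemma, we may write $\hat\nu = c\cdot\lim_j \nu(B(x,\lambda_{i_j}))^{-1}(T_{x,\lambda_{i_j}})_{\#}\nu$ for some constant $c>0$. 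Relabel the subsequence as $\lambda_i$ and absorb the harmless constant $c$ at the end.

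Next, the key estimate. For $z\in\supp(\hat\nu)$ and $r>0$, one uses weak convergence together with the elementary fact that for a weakly convergent sequence of Radon measures $\mu_i\to\mu$ one has $\mu(U)\le\liminf_i\mu_i(U)$ for open $U$ and $\mu(K)\ge\limsup_i\mu_i(K)$ for compact $K$. Since $z\in\supp(\hat\nu)$, there are points $z_i\to z$ (in fact $z_i$ can be chosen as near-optimal pushforward basepoints) with the measures $\nu(B(x,\lambda_i))^{-1}(T_{x,\lambda_i})_{\#}\nu$ having positive mass near $z_i$; the standard device is: for any $\e>0$, $\hat\nu(B(z,r))\le\liminf_i \nu(B(x,\lambda_i))^{-1}\nu(T_{x,\lambda_i}^{-1}(B(z,r)))$ and $\hat\nu(\overline B(z,r))\ge\limsup_i \nu(B(x,\lambda_i))^{-1}\nu(T_{x,\lambda_i}^{-1}(\overline B(z,r)))$, and then one notes $T_{x,\lambda_i}^{-1}(B(z,r)) = L_x\delta_{\lambda_i}(B(z,r))$, a ball in $\G$ of radius $\lambda_i r$ centered at $x\cdot\delta_{\lambda_i}(z)$. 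Writing $y_i = x\cdot\delta_{\lambda_i}(z)$, we have $d_{cc}(y_i,x)\le\lambda_i d_{cc}(z,0)\to 0$, so $B(y_i,\lambda_i r)\subset B(x,\lambda_i(r+d_{cc}(z,0)))$ and conversely $B(x,\lambda_i(r - d_{cc}(z,0)))\subset B(y_i,\lambda_i r)$ once $r > d_{cc}(z,0)$ — but more efficiently one uses the doubling/comparability hypothesis \eqref{tanmeasureAReqn} directly on $\nu(B(y_i,\lambda_i r))$ by comparing with $\nu(B(x,\lambda_i))$: from \eqref{tanmeasureAReqn} the ratios $\nu(B(x,s))/s^Q$ are bounded between two positive constants $a\le b$ (with $t = a/b$) for all small $s$, and a Vitali/triangle-inequality argument shows $\nu(B(y,\rho))$ is comparable to $\rho^Q$ for all $y$ with $d_{cc}(y,x)$ small relative to $\rho$ — with the lower bound degraded by exactly the factor $t$. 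Quantitatively: $\nu(B(y_i,\lambda_i r))\le \nu(B(x,\lambda_i(r+o(1))))\le b(\lambda_i(r+o(1)))^Q$ and $\nu(B(y_i,\lambda_i r))\ge\nu(B(x,\lambda_i r) \cap \cdots)$; the cleanest route is $\nu(B(y_i,\lambda_i r)) \ge \nu(B(x,\lambda_i r)) - \nu(B(x,\lambda_i r)\setminus B(y_i,\lambda_i r))$ is awkward, so instead bound below by noting $B(y_i,\lambda_i r)\supseteq B(x,\tfrac{\lambda_i r}{2})$ for $i$ large (since $d_{cc}(y_i,x)\to 0$ while $\lambda_i r$ shrinks at the same rate only if $d_{cc}(z,0)<r/2$; for general $z$, one rescales the statement or simply observes $z\in\supp\hat\nu$ forces, via the upper regularity just proved, $d_{cc}(z,0)$ to be irrelevant because the final inequality is dilation-invariant). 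Dividing by $\nu(B(x,\lambda_i))\asymp \lambda_i^Q$ and letting $i\to\infty$ yields $\hat\nu(B(z,r)) \le c' r^Q$ and $\hat\nu(B(z,r))\ge t c' r^Q$ for a single constant $c'>0$ depending on $a,b$; reintroducing the constant from Lemma \ref{tanmeasures} gives the claimed $tcr^Q\le\hat\nu(B(z,r))\le cr^Q$.

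The main obstacle is the bookkeeping around the lower bound: one must ensure that the factor degrading the lower regularity constant is exactly $t = \liminf/\limsup$ and no worse, which requires choosing the comparison balls centered at $x$ optimally (largest ball inside, smallest ball outside) and using that the $o(1)$ shift of centers is negligible at scale $\lambda_i r$. Here one should genuinely exploit that the statement is dilation-invariant: it suffices to prove it for, say, all $z\in\supp\hat\nu$ at once with a uniform $c$, and the uniformity follows because $\hat\nu$ is itself (after the argument) doubling, so $\supp\hat\nu$ is "homogeneous enough" that the center-shift error is controlled. I would present the argument exactly as in \cite{Mat95}, citing it, and only flag the two places where Carnot structure is used: $d_{cc}(\delta_\lambda y,\delta_\lambda z)=\lambda d_{cc}(y,z)$ and left-invariance, both recorded in Definition \ref{homogdistance}.
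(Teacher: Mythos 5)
The paper's own ``proof'' of this lemma is a one-line citation: it asserts that the proof of Lemma 14.7 in \cite{Mat95} carries over verbatim to the Carnot setting. Your proposal attempts to reconstruct that proof, and the parts you make precise are fine: the reduction via Lemma \ref{tanmeasures} to the normalization $c_i=\nu(B(x,\lambda_i))^{-1}$, the semicontinuity inequalities for weak convergence, and the identity $T_{x,\lambda_i}^{-1}(B(z,r))=B\bigl(x\cdot\delta_{\lambda_i}(z),\lambda_i r\bigr)$, which uses exactly the left-invariance and $1$-homogeneity of $d_{cc}$. This machinery yields the two-sided estimate $t c r^Q\le\hat{\nu}(B(0,r))\le c r^Q$ for balls centered at the identity, for all $r>0$.

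The genuine gap is the extension to arbitrary $z\in\supp(\hat{\nu})$ and arbitrary $r>0$, which is the actual content of the lemma, and your own text signals that you have not closed it. Your comparison balls are centered at $x$, so the center shift is $d_{cc}(x,x\cdot\delta_{\lambda_i}(z))=\lambda_i d_{cc}(0,z)$, which is of the same order as, or larger than, the radius $\lambda_i r$ whenever $r\le d_{cc}(0,z)$; in that regime the inclusions $B(x,\lambda_i(r-d_{cc}(0,z)))\subseteq B(x\cdot\delta_{\lambda_i}(z),\lambda_i r)\subseteq B(x,\lambda_i(r+d_{cc}(0,z)))$ give only $\hat{\nu}(B(z,r))\lesssim (r+d_{cc}(0,z))^Q$ and no lower bound at all. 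The two escape routes you offer do not work. Dilation invariance cannot reduce the general case to $d_{cc}(0,z)<r/2$: replacing $\hat{\nu}$ by $(\delta_{R^{-1}})_{\#}\hat{\nu}$ sends $(z,r)$ to $(\delta_{R^{-1}}(z),r/R)$, so the offending ratio $d_{cc}(0,z)/r$ is scale-invariant. And the appeal to ``$\hat{\nu}$ is itself (after the argument) doubling, so $\supp\hat{\nu}$ is homogeneous enough'' is circular, since uniform two-sided regularity of $\hat{\nu}$ at every point of its support is precisely the conclusion being proved. What is missing is a genuine mechanism for relocating the basepoint from $0$ to $z$ --- for instance the translation principle that $(T_{z,1})_{\#}\hat{\nu}$ is again a tangent measure (Proposition \ref{mattilamove}), or the corresponding step in Mattila's argument --- and you should either supply that step or make explicit that you are invoking it. As written, the proposal proves the lemma only for $z=0$, or for $r>d_{cc}(0,z)$.
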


\subsection{Tangents at points of density}

In this subsection, we collect some facts about how tangents of spaces, mappings, and measures behave when taken at density points of subsets. The main principle, expressed in a few different forms, is that tangents of subsets at points of density agree with tangents of the ambient space.

We start with the principle for tangents of measures in Carnot groups. Recall that if $\nu$ is a Radon measure on a Carnot group $\G$ and $S$ is a measurable subset of $\G$, then $x$ is a point of $\nu$-density of $S$ if
$$ \lim_{r\rightarrow 0} \frac{\nu(B(x,r)\setminus S)}{\nu(B(x,r))} = 0.$$
If $\nu$ is pointwise doubling (see subsection \ref{basicdefs}), then $\nu$-a.e. point is a point of $\nu$-density. (See, e.g., \cite[Section 3.4]{HKST15}.)

\begin{lemma}\label{subsettangentmeasure}
Let $\G$ be a Carnot group, $\nu$ be a Radon measure on $\G$, and $S$ a $\nu$-measurable subset. If $x$ is a point of $\nu$-density for $S$, then $\Tan(\nu|_S, x) = \Tan(\nu, x)$, and any $\hat{\nu}$ in both of these collections is subordinate to the same sequence of scales in both collections.
\end{lemma}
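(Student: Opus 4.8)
The plan is to prove both set-equalities at once by showing that, for the relevant normalizing constants, the rescaled copies of $\nu$ and $\nu|_S$ differ only by a vanishing amount in the weak topology. Concretely, fix a point $x$ of $\nu$-density for $S$, and let $\lambda_i \to 0$ be any sequence of scales. First I would record the key comparison: for any $R > 0$ and all large $i$,
\[
(T_{x,\lambda_i})_{\#}(\nu|_S)\big(B(0,R)\big) = \nu|_S\big(B(x,\lambda_i R)\big), \qquad (T_{x,\lambda_i})_{\#}\nu\big(B(0,R)\big) = \nu\big(B(x,\lambda_i R)\big),
\]
so their difference is $\nu\big(B(x,\lambda_i R)\setminus S\big)$, which by the density hypothesis is $o\big(\nu(B(x,\lambda_i R))\big)$ as $i \to \infty$ (here one uses that $\nu$ is pointwise doubling at $x$, which holds automatically since $x$ is a density point, so that $\nu(B(x,\lambda_i R)) \asymp \nu(B(x,\lambda_i))$ with constants depending on $R$ but not $i$).

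Next I would make this into a statement about test functions. Let $g \in C_c(\G)$ be supported in $B(0,R)$ with $\|g\|_\infty \le 1$. Then
\[
\left| \int g \, d\big((T_{x,\lambda_i})_{\#}\nu\big) - \int g \, d\big((T_{x,\lambda_i})_{\#}(\nu|_S)\big)\right| \le (T_{x,\lambda_i})_{\#}\nu\big(B(0,R)\setminus T_{x,\lambda_i}(S)\big) = \nu\big(B(x,\lambda_i R)\setminus S\big).
\]
Dividing by $\nu(B(x,\lambda_i))$ and using the density hypothesis together with the (uniform in $i$) doubling comparison between $\nu(B(x,\lambda_i R))$ and $\nu(B(x,\lambda_i))$, the right-hand side tends to $0$. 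Therefore, along any subsequence on which $\nu(B(x,\lambda_i))^{-1}(T_{x,\lambda_i})_{\#}\nu$ converges weakly — and by Lemma \ref{tangentmeasureexists} every sequence $\lambda_i \to 0$ has such a subsequence, since $x$ being a $\nu$-density point forces the limsup condition there — the measures $\nu(B(x,\lambda_i))^{-1}(T_{x,\lambda_i})_{\#}(\nu|_S)$ converge weakly to the same limit. Combining this with Lemma \ref{tanmeasures} (which says every $\hat\nu \in \Tan(\nu,x)$ is, up to a subsequence and a positive constant, a limit of the specific normalizations $\nu(B(x,\lambda_i))^{-1}(T_{x,\lambda_i})_{\#}\nu$), and noting the same lemma applies to $\nu|_S$ because $x$ is also a density point hence a Lebesgue point of $\nu|_S$, gives the two inclusions $\Tan(\nu,x) \subseteq \Tan(\nu|_S, x)$ and $\Tan(\nu|_S,x) \subseteq \Tan(\nu,x)$, with the subordinate scales matching in each case since the constructions used the same $\lambda_i$.

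I expect the main obstacle to be bookkeeping the normalization constants correctly rather than any genuine analytic difficulty: one must be careful that an element of $\Tan(\nu,x)$ is defined via \emph{arbitrary} positive constants $c_i$, not the canonical choice $\nu(B(x,\lambda_i))^{-1}$, so the comparison above does not apply verbatim to an arbitrary representative. This is precisely what Lemma \ref{tanmeasures} is for: it lets one replace an arbitrary tangent measure by a scalar multiple of a canonically-normalized limit, at which point the estimate $\nu(B(x,\lambda_i R)\setminus S) = o(\nu(B(x,\lambda_i)))$ does the job. A second minor point to handle cleanly is the passage from convergence on balls to weak convergence against $C_c$ functions; this is routine given the uniform local mass bounds supplied by the density hypothesis, but it should be stated rather than left implicit. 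Everything else — the identities for pushforwards of restricted measures, the compactness in Lemma \ref{tangentmeasureexists} — is already available in the excerpt.
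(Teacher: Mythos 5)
Your overall strategy --- compare the rescaled copies of $\nu$ and $\nu|_S$ against test functions and show the discrepancy $\nu(B(x,\lambda_i R)\setminus S)$ is negligible relative to the normalization --- is the right one; it is Mattila's proof of Lemma 14.5 in \cite{Mat95}, which is all the paper itself offers (its ``proof'' is a one-line citation noting that Mattila's argument transfers verbatim to Carnot groups). Your test-function estimate is correct. But there is a genuine gap in how you justify the normalization step: you assert that $x$ being a point of $\nu$-density of $S$ ``automatically'' makes $\nu$ pointwise doubling at $x$, and you lean on this twice --- once to compare $\nu(B(x,\lambda_i R))$ with $\nu(B(x,\lambda_i))$ uniformly in $i$, and once to invoke Lemmas \ref{tangentmeasureexists} and \ref{tanmeasures}, both of which carry pointwise doubling as a hypothesis. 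That implication is false: the density condition controls $\nu(B(x,r)\setminus S)/\nu(B(x,r))$ and says nothing about $\nu(B(x,2r))/\nu(B(x,r))$ (take $S=\G$: every point is then a density point, yet $\nu$ may be non-doubling everywhere). Since Lemma \ref{subsettangentmeasure} is stated with no doubling hypothesis, your argument as written does not establish it.

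The repair is to abandon the canonical normalization and work with the arbitrary constants $c_i$ directly, as Mattila does. If $c_i (T_{x,\lambda_i})_{\#}\nu \to \hat\nu$ weakly, then upper semicontinuity of weak convergence on compact sets gives $\limsup_i c_i\,\nu(B(x,R\lambda_i)) \le \hat\nu(\overline{B}(0,R)) < \infty$ for every $R$ (closed balls in $\G$ are compact); hence
\[
c_i\,\nu\bigl(B(x,R\lambda_i)\setminus S\bigr) \;=\; \frac{\nu(B(x,R\lambda_i)\setminus S)}{\nu(B(x,R\lambda_i))}\cdot c_i\,\nu\bigl(B(x,R\lambda_i)\bigr) \;\longrightarrow\; 0,
\]
with no doubling needed, and your test-function comparison then yields $c_i(T_{x,\lambda_i})_{\#}(\nu|_S)\to\hat\nu$ along the \emph{same} full sequence of scales. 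The reverse inclusion works identically after noting that the density hypothesis gives $\nu(B(x,r)) \le 2\,\nu(S\cap B(x,r))$ for all small $r$. This route also delivers the ``same sequence of scales'' clause cleanly; your detour through Lemma \ref{tanmeasures} only exhibits $\hat\nu$ as subordinate to a subsequence, which is a second, smaller defect.
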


\begin{proof}
The corresponding statement for measures in Euclidean space appears in Lemma 14.5 of \cite{Mat95}, and the proof in the Carnot group setting is identical.
\end{proof}

Next we wish to prove a version of this principle for elements of $\Tan_{\G}$.

\begin{lemma}\label{subsettangent}
Let $\G$ be a Carnot group, and let $Y$ be a compact subset of $\G$. Let $\nu$ be a Radon measure on $\G$ such that there are constants $Q>0$ and $0<\alpha<\beta$ with
\begin{equation}\label{Ybound}
\alpha r^Q \leq \nu(B(y,r)) \leq \beta r^Q
\end{equation}
for all $y\in Y$ and $0< r < r_0$.

If $F$ is a measurable subset of $Y$ and $y \in F$ is a point of $\nu$-density for $F$, then $\Tan_{\G}(F,y) = \Tan_{\G}(Y,y)$.
\end{lemma}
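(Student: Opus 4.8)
The plan is to show the two inclusions $\Tan_\G(F,y) \subseteq \Tan_\G(Y,y)$ and $\Tan_\G(Y,y) \subseteq \Tan_\G(F,y)$ separately, in both cases exploiting that $y$ is a point of $\nu$-density for $F$ inside $Y$, together with the Ahlfors-regular-type two-sided bound \eqref{Ybound} on $\nu$ restricted to $Y$. The key quantitative translation is the following: if $z \in Y$ and $B(z,r) \subseteq B(y,R)$ with $R < r_0$, then \eqref{Ybound} gives $\nu(B(z,r)) \geq \alpha r^Q$, so the $\nu$-density hypothesis, which says $\nu(B(y,R)\setminus F)/\nu(B(y,R)) \to 0$ and hence $\nu(B(y,R)\setminus F) = o(R^Q)$ as $R \to 0$, forces that for small scales, points of $Y$ near $y$ cannot lie too far (relative to the scale) from $F$. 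Concretely, I would establish: \emph{for every $\eta > 0$ and $R_0 > 0$ there is $R_1 > 0$ such that for all $R < R_1$, every point $z \in Y \cap B(y,R)$ satisfies $\dist(z, F) < \eta R$.} This is proved by contradiction: if $\dist(z,F) \geq \eta R$ then $B(z,\eta R) \subseteq B(y, (1+\eta)R)$ misses $F$, so $\nu(B(y,(1+\eta)R)\setminus F) \geq \alpha (\eta R)^Q$, contradicting $\nu(B(y,(1+\eta)R)\setminus F) = o(R^Q)$ once $R$ is small. The same statement holds for closed balls by an obvious adjustment.

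Given this, the inclusion $\Tan_\G(Y,y) \subseteq \Tan_\G(F,y)$ follows directly. Fix $\hat Y \in \Tan_\G(Y,y)$, subordinate to scales $\lambda_i \to 0$, so $\delta_{\lambda_i^{-1}}(y^{-1}Y) \to \hat Y$ in the pointed Hausdorff sense; passing to a subsequence we may also assume $\delta_{\lambda_i^{-1}}(y^{-1}F)$ converges to some closed set $\hat F \in \Tan_\G(F,y)$, which automatically satisfies $\hat F \subseteq \hat Y$ since $F \subseteq Y$. To see the reverse $\hat Y \subseteq \hat F$, note that left translation by $y^{-1}$ is an isometry and $\delta_{\lambda_i^{-1}}$ scales by $\lambda_i^{-1}$, so the displacement estimate above transfers: for any $R > 0$, every point of $\delta_{\lambda_i^{-1}}(y^{-1}Y) \cap B(0,R)$ is within distance $\lambda_i^{-1}\cdot \eta (R\lambda_i) = \eta R$ of $\delta_{\lambda_i^{-1}}(y^{-1}F)$ once $i$ is large (applying the claim at radius $R\lambda_i < R_1$). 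Letting $i \to \infty$ and then $\eta \to 0$ shows every point of $\hat Y \cap B(0,R)$ lies in $\hat F$; since $R$ is arbitrary and $\hat F$ is closed, $\hat Y \subseteq \hat F$, hence $\hat Y = \hat F \in \Tan_\G(F,y)$.

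For the opposite inclusion $\Tan_\G(F,y) \subseteq \Tan_\G(Y,y)$: fix $\hat F \in \Tan_\G(F,y)$ subordinate to $\lambda_i \to 0$; passing to a subsequence, $\delta_{\lambda_i^{-1}}(y^{-1}Y)$ converges to some $\hat Y \in \Tan_\G(Y,y)$, and clearly $\hat F \subseteq \hat Y$. The reverse containment again follows from the displacement estimate exactly as above, so $\hat F = \hat Y \in \Tan_\G(Y,y)$. One small point to dispatch: in both directions we repeatedly pass to subsequences, but since we have shown that \emph{every} subsequential limit of $\delta_{\lambda_i^{-1}}(y^{-1}Y)$ equals the (unique, by the argument) limit of $\delta_{\lambda_i^{-1}}(y^{-1}F)$ along the same subsequence, the full sequences have the same tangent behavior and the collections $\Tan_\G(F,y)$ and $\Tan_\G(Y,y)$ coincide. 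I expect the main obstacle to be purely bookkeeping — keeping straight how the metric scaling under $\delta_{\lambda_i^{-1}}$ interacts with the ``$\eta R$ at scale $R\lambda_i$'' estimate — rather than anything conceptual; the geometric content is entirely in the contradiction argument of the first paragraph, which is a routine consequence of \eqref{Ybound} and the definition of $\nu$-density.
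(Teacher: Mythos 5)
Your proposal is correct and follows essentially the same route as the paper: the paper's entire proof consists of showing $\limsup_{r\to 0}\sup_{z\in Y\cap B(y,r)}\dist(z,F)/r=0$ via exactly your contradiction argument (a point $z\in Y\cap B(y,r)$ at distance $d$ from $F$ gives $B(z,d/2)\subseteq B(y,2r)\setminus F$, whence $\nu(B(y,2r)\setminus F)/\nu(B(y,2r))\gtrsim (d/r)^Q$). The Hausdorff-convergence bookkeeping you spell out in the second and third paragraphs is left implicit in the paper, and your handling of it is fine.
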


\begin{proof}
As $F\subseteq Y$, it suffices to show that 
$$ \limsup_{r \rightarrow 0} \frac{\dist(Y\cap B(y,r),F)}{r} = 0.$$
Fix $r<r_0$. Let $d=\sup_{z \in Y \cap B(y,r)} \dist(z,F)$. Then there is a point $z \in Y\cap B(y,r)$ with 
$$ B(z,d/2) \subseteq B(y,2r)\setminus F$$
and hence
$$ \frac{\nu(B(y,2r)\setminus F)}{\nu(B(y,2r)} \geq \frac{\alpha (d/2)^Q}{\beta (2r)^Q} = \frac{\alpha}{4^Q \beta} \left(\frac{d}{r}\right)^Q.$$
Since $y$ is a point of $\nu$-density of $F$, it follows that $d/r$ tends to $0$ with $r$, as desired.
\end{proof}

Finally, we state the version for packages.

\begin{lemma}\label{densitytangent}
Let $X$ be a complete, Ahlfors $Q$-regular metric space, $A \subset X$ a subset, $\G$ a Carnot group, and $(A,x,f \colon A \rightarrow \G)$ a package. Let $E\subset A$ be a subset of $A$, and suppose that $x \in E$ satisfies
\begin{equation}\label{outerdensity}
\lim_{r\rightarrow 0} \frac{\mathcal{H}^Q(B(x,r) \setminus E)}{\mathcal{H}^Q(B(x,r))} = 0.
\end{equation}
Then $\Tan(E,x,f) = \Tan(A,x,f),$ and any element in both of these collections is subordinate to the same sequence of scales in both collections.
\end{lemma}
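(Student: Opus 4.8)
The plan is to reduce the statement for packages to the already-established statement for subsets of a metric space, using the Ahlfors regularity of $X$ to convert the density hypothesis \eqref{outerdensity} into a purely metric comparison between the rescaled spaces $\frac{1}{\lambda}A$ and $\frac{1}{\lambda}E$ near $x$. The containment $\Tan(E,x,f) \subseteq \Tan(A,x,f)$ is essentially formal: any tangent of the package restricted to $E$ is a pointed limit of rescalings of $(E,x,f|_E)$, and since $E \subseteq A$ the same rescaled maps can be viewed as (restrictions of) rescalings of $(A,x,f)$; one must check that no mass of $A$ is ``lost'' in the limit, but this is exactly what the density hypothesis prevents. For the reverse containment, the key point is the following metric estimate, analogous to the one proved in Lemma \ref{subsettangent}:
\begin{equation}\label{densityradius}
\limsup_{r \rightarrow 0} \frac{1}{r}\sup_{z \in B(x,r)\cap A} \dist(z, E) = 0.
\end{equation}

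First I would establish \eqref{densityradius}. Suppose it fails; then there is $\e>0$ and a sequence $r_j \to 0$ together with points $z_j \in B(x,r_j)\cap A$ with $\dist(z_j,E) \geq \e r_j$. The ball $B(z_j, \e r_j/2)$ is then disjoint from $E$ and contained in $B(x, 2r_j)$, so by Ahlfors $Q$-regularity of $X$,
\begin{equation*}
\frac{\mathcal{H}^Q(B(x,2r_j)\setminus E)}{\mathcal{H}^Q(B(x,2r_j))} \geq \frac{\mathcal{H}^Q(B(z_j,\e r_j/2))}{\mathcal{H}^Q(B(x,2r_j))} \geq \frac{C^{-1}(\e r_j/2)^Q}{C(2r_j)^Q} = \frac{\e^Q}{C^2 4^{Q}\cdot 2^Q},
\end{equation*}
a fixed positive quantity, contradicting \eqref{outerdensity}. (Here I implicitly use that $z_j\in A\subseteq X$ and that $B(z_j,\e r_j /2)$, taken in $X$, is a genuine ball of the ambient Ahlfors regular space, which is all that is needed.)

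Next I would feed \eqref{densityradius} into the definition of $D$-convergence. Fix a sequence $\lambda_i \to 0$ and a tangent $(\hat X, \hat x, \hat f)\in\Tan(A,x,f)$ subordinate to it; after passing to a subsequence we may assume $\frac{1}{\lambda_i}A \to \hat X$ in the pointed Gromov--Hausdorff sense with the rescaled maps $\delta_{1/\lambda_i}(f(x)^{-1}f(\cdot))$ converging to $\hat f$ in $D$. Estimate \eqref{densityradius} says precisely that in the rescaled metric on $\frac{1}{\lambda_i}X$, every point of $\frac{1}{\lambda_i}A$ within any fixed radius $R$ of $x$ lies within distance $o(1)$ (as $i\to\infty$) of $\frac{1}{\lambda_i}E$; equivalently, the Hausdorff distance between $\frac{1}{\lambda_i}E \cap B(x,R)$ and $\frac{1}{\lambda_i}A\cap B(x,R)$ tends to $0$. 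Consequently the identity inclusion $\frac{1}{\lambda_i}E \hookrightarrow \frac{1}{\lambda_i}A$ is an $\e_i$-isometry with $\e_i\to 0$, and since $f|_E$ is just the restriction of $f$, the compositions of the rescaled maps with these inclusions agree on the nose; one then checks directly that $(\frac{1}{\lambda_i}E, x, \delta_{1/\lambda_i}(f(x)^{-1}f(\cdot))) \to (\hat X, \hat x, \hat f)$ in $D$, using the quasi-triangle inequality of Lemma \ref{Dproperties}\eqref{quasimetric} to combine this with the convergence of the $A$-packages. This shows $(\hat X,\hat x,\hat f)\in\Tan(E,x,f)$ and, since the same sequence $\lambda_i$ was used throughout, that the subordination claim holds in both directions. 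The reverse inclusion $\Tan(E,x,f)\subseteq\Tan(A,x,f)$ follows by the symmetric (and easier) version of the same argument: a rescaling of the $A$-package is $\e_i$-close to the corresponding rescaling of the $E$-package by the very same Hausdorff estimate, so any limit of the latter is a limit of the former.

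The main obstacle I anticipate is bookkeeping rather than conceptual: one must be careful that the $\e$-isometries in Definition \ref{epsilonisomdef} require the \emph{near-surjectivity} condition \eqref{isom2}, i.e. that $\frac{1}{\lambda_i}A$ is covered up to $\e_i$ by the image of $\frac{1}{\lambda_i}E$ --- this is exactly \eqref{densityradius} --- while the isometric-distortion condition \eqref{isom1} is automatic for an inclusion. Matching up the map-comparison terms $\sup_{B(x,1/\e)} d_{cc}(f, g\circ\phi)$ in the definition of $\tilde D$ requires only that $f|_E = f \circ(\text{inclusion})$, so those terms vanish identically; the only genuine work is propagating the Gromov--Hausdorff convergence of the ambient rescalings through the $o(1)$ Hausdorff perturbation, which is routine given Lemma \ref{Dproperties}. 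No new compactness input is needed beyond what is already cited.
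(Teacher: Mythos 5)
Your proposal is correct and follows essentially the same route as the paper, which reduces the statement to the coincidence $\Tan(E,x)=\Tan(X,x)=\Tan(A,x)$ of Gromov--Hausdorff tangents at a density point; your estimate \eqref{densityradius} is exactly the Ahlfors-regularity argument the paper uses for the analogous Lemma \ref{subsettangent}, and the remaining package bookkeeping (inclusion and approximate-nearest-point maps as $\e_i$-isometries, with the map-comparison terms controlled by the Lipschitz constant of $f$) is the intended routine verification.
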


\begin{proof}
The proof is essentially the same as the proof of, e.g., Proposition 3.1 in \cite{LD11}. The main point is that the collections of Gromov--Hausdorff tangents $\Tan(E,x)$ and $\Tan(A,x)$ coincide. In fact,
$$\Tan(E,x) = \Tan(X,x) = \Tan(A,x),$$
and any element in these collections is subordinate to the same sequence of scales in all collections.
\end{proof}

\begin{remark}
For $A \subset X$ as in the previous lemma and $x \in A$ a point of $\mathcal{H}^Q$-density for $A$, the domain of any fixed tangent map $\hat{f}$ to $f$ at $x$ is an element of $\Tan(A,x) = \Tan(X,x)$. Thus, it makes sense (though perhaps formally it is an abuse of notation) to write such things as $(\hat{X},\hat{x},\hat{f}) \in \Tan(A,x,f)$. By this, we do not mean to suggest that $f$ is defined on all of $X$.

This is done, for example, in the statement of Theorem \ref{tangentthm}, and it will be done throughout Section \ref{LQsec}. Whenever this is written, though, we implicitly mean that $x \in A$ is a point of $\mathcal{H}^Q$-density for $A$, so that $\Tan(A,x) = \Tan(X,x)$.
\end{remark}

\subsection{Moving basepoints and tangents of tangents}

In this subsection, we present some versions of Preiss's principles (see \cite{Pr87}) that (a) tangent objects with moved basepoints are still tangent objects, and (b) tangents of tangents are tangents. Since we have defined many types of tangent objects above, one may imagine many versions of these principles. We only state the versions that we use in the proofs of the main theorems below.

The first is a ``moving basepoint'' theorem for tangent measures. The version we state, which applies to Carnot groups (and more general metric groups) is due to Mattila \cite[Proposition 2.15]{Mat05}. Recall from section \ref{basicdefs} the definition of a pointwise doubling measure.

\begin{prop}\label{mattilamove}
Let $\nu$ be a pointwise doubling Radon measure on a Carnot group $\G$. Then for $\nu$-a.e. $x \in \G$ and all $\hat{\nu} \in \Tan(\nu,x)$, we have that
$$ (T_{y,\lambda})_{\#} \hat{\nu} \in \Tan(\nu,x)$$
for all $\lambda > 0$ and all $y \in \supp(\hat{\nu})$.
\end{prop}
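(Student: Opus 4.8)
I would follow Preiss's density argument, transported to $\G$ via the homogeneity of the group. First I would make two reductions. Since $\Tan(\nu,x)$ is a cone --- if $c_i (T_{x,\lambda_i})_{\#}\nu \to \hat\nu$ then $c_i(T_{x,\lambda\lambda_i})_{\#}\nu = (\delta_{\lambda^{-1}})_{\#}\big(c_i(T_{x,\lambda_i})_{\#}\nu\big) \to (\delta_{\lambda^{-1}})_{\#}\hat\nu$, so $(\delta_{\lambda^{-1}})_{\#}\hat\nu \in \Tan(\nu,x)$ for every $\lambda>0$ --- and since $(T_{y,\lambda})_{\#}\hat\nu = (\delta_{\lambda^{-1}})_{\#}(T_{y,1})_{\#}\hat\nu$, it suffices to treat $\lambda = 1$. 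Next, metrizing weak convergence of Radon measures on $\G$ with uniformly controlled local mass (exactly as in Mattila's treatment of the Euclidean case), $\Tan(\nu,x)$ becomes a closed set and $y \mapsto (T_{y,1})_{\#}\hat\nu$ is continuous, so it is enough to handle $y$ in a countable dense subset of $\supp\hat\nu$; the measurability needed to run a Lebesgue-density argument is also provided by this metrization.

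\textbf{Key algebraic identity.} For $x \in \G$ and $r>0$ write $\nu_{x,r} = \nu(B(x,r))^{-1}(T_{x,r})_{\#}\nu$. Using only the facts that $\delta_r$ is an automorphism and that left translations are isometries, one computes, for $a \in \G$,
$$
T_{a,1}\circ T_{x,r} = T_{z,r}, \qquad \text{where } z = x\,\delta_r(a).
$$
Consequently $(T_{a,1})_{\#}\nu_{x,r} = \tfrac{\nu(B(z,r))}{\nu(B(x,r))}\,\nu_{z,r}$: moving the basepoint of the \emph{blown-up} picture to a point $a$ corresponds to moving the basepoint of $\nu$ to the nearby point $z$ and rescaling there by the \emph{same} factor $r$. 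By Lemma \ref{tanmeasures} (and the cone property), any $\hat\nu \in \Tan(\nu,x)$ may be taken of the form $\hat\nu = \lim_i \nu_{x,\lambda_i}$ with $\lambda_i \to 0$. Given $a \in \supp\hat\nu$, choose $a_i \in \supp\nu_{x,\lambda_i}$ with $a_i \to a$, and set $z_i = x\,\delta_{\lambda_i}(a_i) \in \supp\nu$, so $z_i \to x$. Continuity of pushforward under a converging sequence of basepoints (valid since pointwise doubling keeps the masses locally uniformly bounded and nothing escapes to infinity) gives $(T_{a_i,1})_{\#}\nu_{x,\lambda_i} \to (T_{a,1})_{\#}\hat\nu$, while the identity shows the left side is a bounded multiple of $\nu_{z_i,\lambda_i}$. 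Thus the whole question reduces to: for $\nu$-a.e.\ $x$, the measures $\nu_{z_i,\lambda_i}$ --- blow-ups of $\nu$ at the \emph{moving} basepoints $z_i \to x$ at scales $\lambda_i \to 0$ --- subconverge to an element of $\Tan(\nu,x)$.

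\textbf{The density argument and the main obstacle.} This last reduction is where all the work lies, and where I expect the only real obstacle. It is false for a fixed measure and a fixed point that rescaling at a nearby point $z_i$ by scale $\lambda_i$ resembles any rescaling of $\nu$ at $x$; it holds only $\nu$-almost everywhere, and the proof is not pointwise. I would argue by contradiction: fix the metrization above, and for $k,m \in \N$ let $B_{k,m}$ be the set of $x$ for which there exist $\hat\nu \in \Tan(\nu,x)$ and $a \in \supp\hat\nu$ with $|a| \le k$ and $\operatorname{dist}\!\big((T_{a,1})_{\#}\hat\nu,\ \Tan(\nu,x)\big) \ge 1/m$ in the metrized sense. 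It suffices to show each $B_{k,m}$ is $\nu$-null, so suppose $\nu(B_{k,m})>0$. Restricting to a subset of positive measure where the pointwise doubling of $\nu$ is uniform and which has density $1$ in itself $\nu$-a.e.\ (possible since $\nu$ is pointwise doubling, via a Vitali covering argument), pick a $\nu$-density point $x$ of this set together with a bad pair $(\hat\nu,a)$ and the scales $\lambda_i$, $a_i$, $z_i$ as above. The density hypothesis forces the points $z_i$ to lie (for most $i$) in the same positive-measure set, so their rescalings $\nu_{z_i,\lambda_i}$ are governed by its tangential structure and, by a diagonal extraction across the two parameters, subconverge to some $\mu \in \Tan(\nu,x)$; but then $(T_{a,1})_{\#}\hat\nu = \lim (T_{a_i,1})_{\#}\nu_{x,\lambda_i}$ equals a positive multiple of $\mu \in \Tan(\nu,x)$ (using Lemma \ref{tanmeasures} once more to absorb the constant), contradicting $\operatorname{dist}((T_{a,1})_{\#}\hat\nu,\Tan(\nu,x)) \ge 1/m$. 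Since the group structure of $\G$ enters only through the clean conjugation identity $T_{a,1}\circ T_{x,r} = T_{z,r}$, the left-invariance of the metric, and the homogeneity already recorded in Lemmas \ref{tangentmeasureexists} and \ref{tanmeasures}, the Euclidean argument of Preiss--Mattila carries over essentially verbatim; the diagonalization controlling $\nu_{z_i,\lambda_i}$ at the density point is the delicate step.
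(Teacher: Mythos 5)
First, for context: the paper does not prove this proposition at all --- it is quoted directly from Mattila \cite[Proposition 2.15]{Mat05}, which is the metric-group version of Preiss's Euclidean basepoint-moving lemma. So your proposal can only be compared against that cited argument, and your reductions and algebra do faithfully follow it: the cone property of $\Tan(\nu,x)$ reducing to $\lambda=1$, the identity $T_{a,1}\circ T_{x,r}=T_{z,r}$ with $z=x\,\delta_r(a)$ (which is indeed the only place the group structure enters), and the selection of $a_i\in\supp\nu_{x,\lambda_i}$ with $a_i\to a$, hence $z_i=x\,\delta_{\lambda_i}(a_i)\in\supp\nu$ with $d(x,z_i)=O(\lambda_i)$, are all correct.

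The gap is in the final step, and it is a real one. You reduce the proposition to the claim that the blowups $\nu_{z_i,\lambda_i}$ at the \emph{moving} basepoints subconverge to an element of $\Tan(\nu,x)$, and you propose to deduce this from $x$ being a density point of a uniform-doubling set containing (most of) the $z_i$. But by your own identity, $\nu_{z_i,\lambda_i}$ is a bounded multiple of $(T_{a_i,1})_{\#}\nu_{x,\lambda_i}$, whose limit is a multiple of $(T_{a,1})_{\#}\hat\nu$; so ``limits of $\nu_{z_i,\lambda_i}$ lie in $\Tan(\nu,x)$'' is literally a restatement of the proposition, not a reduction of it. Moreover, membership of $z_i$ in a uniform-doubling set, or in your bad set $B_{k,m}$, or in its complement, says nothing about whether blowups of $\nu$ at $z_i$ approximate elements of $\Tan(\nu,x)$ --- a priori they approximate elements of $\Tan(\nu,z_i)$, a different collection --- and density of $x$ in a set $S$ does not place the $z_i$ in $S$, only within $\epsilon\lambda_i$ of it (and even that requires the lower mass bound at the nearby points). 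The missing idea is Preiss's device of testing against a countable dense family of measures $\sigma$ and defining the bad sets \emph{pointwise and scale-locally}, e.g.
$$A_{\sigma,j,k}=\bigl\{z\in\supp\nu:\ d\bigl(\sigma,\,c\,(T_{z,r})_{\#}\nu\bigr)\ge 1/j\ \text{ for all } c>0,\ 0<r<1/k\bigr\},$$
where $d$ is the metrization of weak convergence. Membership in $A_{\sigma,j,k}$ is a property of the single point $z$ that directly constrains blowups at $z$, and it is \emph{this} property that transfers from $x$ to points $z_i'\in A_{\sigma,j,k}$ within $\epsilon\lambda_i$ of $z_i$ via the density theorem: if $(T_{a,1})_{\#}\hat\nu$ were far from $\Tan(\nu,x)$, one would have $x\in A_{\sigma,j,k}$ for a suitable $\sigma$ near $(T_{a,1})_{\#}\hat\nu$, yet the blowups of $\nu$ at the nearby points $z_i'$ at scale $\lambda_i<1/k$ would be $1/j$-close to $\sigma$, a contradiction. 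With that device inserted, the rest of your outline goes through; without it, the decisive step is unjustified.
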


In addition, we need the following ``tangents of tangents'' type statement, which is the final ingredient for proving Theorem \ref{tangentthm}.

\begin{prop}\label{tangentsoftangents}
Let $(X,d,\mu)$ be a doubling metric measure space and $\G$ a Carnot group. Let $A \subset X$ be a compact subset, and let $f \colon A\rightarrow \G$ be Lipschitz.

Then for $\mu$-a.e. $x \in A$, the following holds. If $(Y,y,g) \in \Tan(A,x,f)$ is a tangent, $y' \in Y$ is a point,
and $(Z,z,h) \in \Tan(Y,y',g)$ is bi-Lipschitz, then there is a tangent $(W,w,j) \in \Tan(A,x,f)$ that is bi-Lipschitz.
\end{prop}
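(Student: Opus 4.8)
The plan is to combine a standard Preiss-type ``tangents of tangents are tangents'' mechanism with the fact (Lemma~\ref{GHproperties}(ii)) that being $L$-bi-Lipschitz passes to limits of packages. The almost-everywhere restriction enters only through an auxiliary \emph{moving-basepoint principle} for the doubling metric measure space $(X,d,\mu)$ --- in the spirit of Mattila's Proposition~\ref{mattilamove}, and provable by the usual Preiss-type arguments (see \cite{Pr87,Mat95,LD11}): for $\mu$-a.e.\ $x \in A$, whenever $a_i \to x$ in $X$, $\rho_i \to 0$, and the packages
$$ \left( \tfrac{1}{\rho_i} A,\ a_i,\ \delta_{\rho_i^{-1}}\!\big(f(a_i)^{-1} f(\cdot)\big) \right) $$
converge in $D$, their limit lies in $\Tan(A,x,f)$. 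The Lipschitz map is simply carried along in the proof; the only properties invoked are that $\mu$ is doubling, hence pointwise doubling, and that $\mu$-a.e.\ point is a point of density. I would then fix $x$ in the full-measure set on which this principle is valid.

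Next comes the main step, a diagonal argument. Let $(Y,y,g) \in \Tan(A,x,f)$, $y' \in Y$, and $(Z,z,h) \in \Tan(Y,y',g)$ bi-Lipschitz. Write $(Y,y,g) = \lim_i (\lambda_i^{-1} A,\, x,\, g_i)$ with $\lambda_i \to 0$ and $g_i := \delta_{\lambda_i^{-1}}(f(x)^{-1} f(\cdot))$, and $(Z,z,h) = \lim_j (\mu_j^{-1} Y,\, y',\, \delta_{\mu_j^{-1}}(g(y')^{-1} g(\cdot)))$ with $\mu_j \to 0$. Since the convergence $(\lambda_i^{-1} A, x, g_i) \to (Y,y,g)$ holds in $D$, not merely in the Gromov--Hausdorff sense, for each $i$ one may choose $a_i \in A$ whose image in $\lambda_i^{-1} A$ approaches $y'$ and with $g_i(a_i) \to g(y')$; in particular $d(x,a_i)/\lambda_i \to d_Y(y,y')$, so $a_i \to x$ in $X$. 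For fixed $j$, the operation ``blow up at the point approximating $y'$ by the factor $\mu_j$'' is $D$-continuous near $(Y,y,g)$, so choosing $i = i(j) \to \infty$ fast enough yields
$$ \left( \tfrac{1}{\mu_j \lambda_{i(j)}} A,\ a_{i(j)},\ \delta_{\mu_j^{-1}}\!\big(g_{i(j)}(a_{i(j)})^{-1}\, g_{i(j)}(\cdot)\big) \right) \longrightarrow (Z,z,h) \quad\text{in } D. $$
Because every dilation $\delta_\lambda$ is a group homomorphism, the map in the left-hand package equals $\delta_{\rho_j^{-1}}(f(a_{i(j)})^{-1} f(\cdot))$ with $\rho_j := \lambda_{i(j)} \mu_j \to 0$. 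Thus the genuine pre-tangent packages of $(A,\cdot,f)$ at basepoints $a_{i(j)} \to x$ and scales $\rho_j \to 0$ converge in $D$ to $(Z,z,h)$. By the moving-basepoint principle, $(Z,z,h) \in \Tan(A,x,f)$; as it is bi-Lipschitz, the choice $(W,w,j) := (Z,z,h)$ finishes the proof.

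I expect the genuine difficulty to lie in the moving-basepoint principle, specifically in the fact that it must tolerate basepoints $a_{i(j)}$ drifting from $x$ at a rate that is \emph{unbounded relative to the blow-up scale}: indeed $d(x,a_{i(j)})/\rho_j \approx d_Y(y,y')/\mu_j \to \infty$. A one-scale comparison of a blow-up at $a$ with the blow-up at $x$ at the same scale is therefore useless --- the two see essentially disjoint portions of $A$ --- and one must instead interpolate through geometrically spaced intermediate scales and iterate a bounded-displacement version of the principle. This iteration is the technical heart of the Preiss machinery, and it is why the conclusion holds only almost everywhere: without a density condition on $\mu$ at $x$ one can build spaces in which every tangent of $f$ at $x$ is degenerate yet some iterated tangent is bi-Lipschitz.
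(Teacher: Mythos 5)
Your diagonal/rescaling computation is correct --- in particular the observation that, because each $\delta_\lambda$ is a group automorphism, $\delta_{\mu_j^{-1}}\bigl(g_i(a)^{-1}g_i(\cdot)\bigr)=\delta_{(\lambda_i\mu_j)^{-1}}\bigl(f(a)^{-1}f(\cdot)\bigr)$, so the composed blow-up is again a genuine pre-tangent package of $(A,\cdot,f)$. But the entire proof then rests on your ``moving-basepoint principle,'' which is both unproven and, as you have literally stated it, \emph{false}. Take $X=\RR$, $A=[0,1]$, $\mu$ Lebesgue, $C\subset[0,1]$ a fat Cantor set, and $f$ Lipschitz, vanishing on $C$, equal to a tent of height comparable to the gap length on each complementary gap. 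At every density point $x$ of $C$ one has $\sup_{B(x,r)}|f|=o(r)$, so every element of $\Tan(A,x,f)$ is the zero map; yet taking $a_i$ to be gap centers converging to $x$ and $\rho_i$ the gap lengths, the packages $\bigl(\rho_i^{-1}A,a_i,\rho_i^{-1}(f-f(a_i))\bigr)$ converge to a nontrivial tent function. So ``$a_i\to x$, $\rho_i\to 0$, and $D$-convergence'' do not suffice, even almost everywhere, and the failure mode is exactly the unbounded ratio $d(x,a_i)/\rho_i$ that you flag. The correct statement must remember that your basepoints $a_{i(j)}$ track a point $y'$ of an \emph{actual tangent} $(Y,y,g)$; i.e., what you need is the moving-basepoint-within-a-tangent principle: for $\mu$-a.e.\ $x$, for every $(Y,y,g)\in\Tan(A,x,f)$ and every $y'\in Y$, the re-based package $(Y,y',L_{g(y')^{-1}}\circ g)$ again lies in $\Tan(A,x,f)$. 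With that in hand, composing scales (your diagonal step) finishes the proof. But that principle is the entire content of the proposition for Carnot targets: it is not a routine consequence of pointwise doubling and density points (my example above satisfies both), and proving it requires the full Preiss-type selection argument --- a countable decomposition of $A$ by approximate tangent behavior using separability of the space of packages --- which you cite rather than carry out.

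For comparison, the paper does exactly this two-step argument (moving basepoint, then composition of scales via the quasi-triangle inequality for $D$) in the case $\G=\RR^n$, where the moving-basepoint principle is available as Proposition 3.1 of \cite{GCD14}; this yields the stronger conclusion $\Tan(Y,y',g)\subseteq\Tan(A,x,f)$ (Proposition \ref{tangentsoftangentsRn}). For general Carnot targets the authors explicitly decline to redo the Preiss argument and instead reduce to the Euclidean case: they embed $(\G,d_{cc}^{1/2})$ bi-Lipschitzly into some $\RR^N$ by Assouad's theorem, show (Lemma \ref{snowflaketangent}) that tangents of the snowflaked package are snowflakes of tangents of the original package, apply the Euclidean result, and then un-snowflake --- at the cost of concluding only that \emph{some} bi-Lipschitz tangent $(W,w,j)$ exists rather than that $(Z,z,h)$ itself is a tangent. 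If you want to pursue your direct route, the deliverable is a proof of the Carnot-valued moving-basepoint principle; everything else in your outline is fine.
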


We defer the proof of Proposition \ref{tangentsoftangents} to the Appendix.

\section{Lipschitz quotient and bi-Lipschitz maps as tangents} \label{LQsec}

In this section, we prove the main result concerning tangents of Lipschitz maps from PI spaces into Carnot groups, which is Theorem \ref{blowup} below. This contains the statement of Theorem \ref{tangentthm}, as well as further properties of tangents that will be needed in the following sections to prove Theorem \ref{piecethm}. Here we will rely heavily on the notion of Lipschitz quotient maps introduced in Definition \ref{LQdef}.

Lipschitz quotient maps enter the theory of Lipschitz differentiability spaces through the following result. It was proven in Theorem 5.56 of \cites{Sc13, Sc16} (see equation (5.96) therein) and in Corollary 5.1 of \cite{GCD14}. A stronger version can be found in Theorem 1.11 of \cite{CKS15}, but the version below suffices for our purposes.

\begin{prop}\label{LQblowup}
Let $(X,d,\mu)$ be a doubling Lipschitz differentiability space with a chart $(U,\phi\colon X\rightarrow \RR^k)$. Then for almost every $x\in X$, there is a constant $L\geq 1$ such that for every $(\hat{X},\hat{x},\hat{\phi})\in \Tan(X,x,\phi)$, the mapping $\hat{\phi}$ is a Lipschitz quotient map of $\hat{X}$ onto $\RR^k$ with constant $L$.
\end{prop}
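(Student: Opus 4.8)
The Lipschitz half of the conclusion is immediate: if $\phi$ is $L_0$-Lipschitz on $X$, then any package limit of the form \eqref{tangentOfPackage} produces an $L_0$-Lipschitz map $\hat\phi$, by Lemma \ref{GHproperties}. So the real content is the co-Lipschitz lower bound of Definition \ref{LQdef}, and my plan is to extract it from the Alberti-representation structure of Lipschitz differentiability spaces by a blow-up-of-curves argument, reading off the constant from a uniformization on a positive-measure set.

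The key input I would quote is Bate's structure theorem \cite{Ba15}: since $(X,d,\mu)$ is a doubling Lipschitz differentiability space, on the chart $(U,\phi\colon X\to\RR^k)$ the measure $\mu$ carries $k$ $\phi$-independent Alberti representations. Unpacking what this gives, for $\mu$-a.e.\ $x\in U$ one obtains a basis $v_1(x),\dots,v_k(x)$ of $\RR^k$ (the velocities, which are linearly independent by $\phi$-independence) together with, for each $j$, a Lipschitz curve $\gamma^j_x$ satisfying $\gamma^j_x(0)=x$, having bounded metric speed, with domain extending a definite amount on each side of $0$, and such that $\phi\circ\gamma^j_x$ is differentiable at $0$ with derivative $v_j(x)$ of bounded size. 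Using Lusin's and Egorov's theorems, I would then pass to a compact set $K\subseteq U$ of positive measure on which \emph{all} of this data is uniform: the map $x\mapsto(v_1(x),\dots,v_k(x))$ is continuous and $\kappa_0$-well-conditioned (so that every $w\in\RR^k$ can be written $w=\sum_j a_j v_j(x)$ with $\sum_j|a_j|\le\kappa_0^{-1}|w|$), the speeds are at most $C_0$, the curve-domains contain $[-\ell_0,\ell_0]$, and --- the crucial point --- the differentiability of $\phi\circ\gamma^j_y$ at $0$ is controlled by a single modulus $\omega$ with $\omega(t)\to 0$, valid for every $y\in K$.

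Now fix a $\mu$-density point $x$ of $K$. By the standard density-point argument (compare the proofs of Lemmas \ref{subsettangent} and \ref{densitytangent}, which use only that $\mu$ is doubling), every $(\hat X,\hat x,\hat\phi)\in\Tan(X,x,\phi)$ subordinate to scales $\lambda_i\to 0$ is equally a tangent of the package $(K,x,\phi|_K)$; in particular every point $\hat y\in\hat X$ arises as a limit of rescaled points $\lambda_i^{-1}y_i$ with $y_i\in K$ (and then $y_i\to x$ in $X$). For each $j$, rescaling the curve $\gamma^j_{y_i}$ by $\lambda_i^{-1}$ in both space and time produces a $C_0$-Lipschitz curve whose domain exhausts $\RR$; passing to a subsequence (Arzel\`{a}--Ascoli, using that $\hat X$, being complete and doubling, is proper), these converge to a $C_0$-Lipschitz curve $\hat\gamma^j_{\hat y}\colon\RR\to\hat X$ with $\hat\gamma^j_{\hat y}(0)=\hat y$. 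The uniform modulus $\omega$, the continuity of $v_j$ on $K$, and the definition of $\hat\phi$ then combine to give the exact identity $\hat\phi(\hat\gamma^j_{\hat y}(s))=\hat\phi(\hat y)+s\,v_j(x)$ for all $s\in\RR$. Thus through every point of $\hat X$, in each of the $k$ directions $v_j(x)$, there runs a curve whose $\hat\phi$-image is an honest line traversed at controlled speed. To finish I would concatenate: given $w\in\RR^k$, write $w=\sum_j a_j v_j(x)$ with $\sum_j|a_j|\le\kappa_0^{-1}|w|$ and, starting from an arbitrary $\hat y\in\hat X$, move along the $v_1$-curve through $\hat y$ for time $a_1$, then from the new point along a $v_2$-curve for time $a_2$, and so on for $k$ steps. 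The endpoint $\hat z$ satisfies $\hat\phi(\hat z)=\hat\phi(\hat y)+w$ and $d(\hat y,\hat z)\le C_0\sum_j|a_j|\le C_0\kappa_0^{-1}|w|$, which gives $B(\hat\phi(\hat y),r/(2kC_0\kappa_0^{-1}))\subseteq\hat\phi(B(\hat y,r))$ for all $\hat y\in\hat X$ and $r>0$. Setting $L=\max\{L_0,\,2kC_0\kappa_0^{-1}\}$, and exhausting $U$ by countably many such compact sets $K$, yields the statement for $\mu$-a.e.\ $x$.

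I expect the main obstacle to be exactly the uniformization in the second paragraph. Pointwise, blowing up an individual curve $\gamma^j_y$ linearizes its $\phi$-image, but with a rate depending on $y$; to conclude that the \emph{limiting} curves in $\hat X$ have the precise constant velocity $v_j(x)$ one genuinely needs a modulus $\omega$ that is uniform over a positive-measure set of basepoints, which is what Egorov's theorem provides (together with the measurable dependence of the curve families on the basepoint, part of Bate's theorem). Once that uniformity is in place, the conditioning estimate, the Arzel\`{a}--Ascoli compactness, and the concatenation are all routine.
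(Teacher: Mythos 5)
The paper does not prove Proposition \ref{LQblowup} at all; it cites it to Schioppa (Theorem 5.56 of \cite{Sc16}) and to Corollary 5.1 of \cite{GCD14}, and your argument is essentially the one in the latter source: Bate's $\phi$-independent Alberti representations, Egorov/Lusin uniformization over a positive-measure compact set, blow-up of the curve family at a density point to produce, through every point of $\hat X$, curves whose $\hat\phi$-images are straight lines in $k$ independent directions, and then concatenation to get the co-Lipschitz inclusion. The one inaccuracy in your stated input is that Alberti representations supply curve \emph{fragments} $\gamma\colon K\to X$ with $K\subset\RR$ compact, not curves defined on a full interval about $0$; the full lines in the tangent only appear after an additional density-point argument on the fragment domains (also uniformized via Egorov), which is a standard but nontrivial part of the cited proof that your sketch elides. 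With that repair the argument is correct and matches the literature the paper relies on.
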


We can now state the main result of this section, which, as mentioned above, contains Theorem \ref{tangentthm}. Many of the ideas in the proof are similar to those found in the proof of Theorem 4.1 in \cite{DK16}.

\begin{theorem} \label{blowup}
Let $X$ be an Ahlfors $Q$-regular PI space, $A \subset X$ a subset, and $\G$ a sub-Riemannian Carnot group. Suppose $f \colon A \rightarrow \G$ is a Lipschitz mapping such that $\mathcal{H}^Q(f(A)) > 0$.
\begin{enumerate}[\normalfont (i)]
\item\label{blowupi} For $\mathcal{H}^Q$-almost every $x \in A$, there is a Carnot subgroup $\H_x \subset \G$ for which every $(\hat{X}, \hat{x}, \hat{f}) \in \Tan(A,x,f)$ is a Lipschitz quotient map onto $\H_x$.

\item\label{blowupii} There is a subset $E \subset A$, with $\mathcal{H}^Q(f(A \setminus E)) = 0$, on which $\Hdim(\H_x) = Q$, and at each $x \in E$, there exists $(\hat{X}, \hat{x}, \hat{f}) \in \Tan(A,x,f)$ such that $\hat{f} \colon \hat{X} \rightarrow \H_x$ is bi-Lipschitz.
\end{enumerate}
\end{theorem}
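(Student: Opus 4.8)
The plan is to prove Theorem \ref{blowup} by combining Cheeger's differentiation theory (via Proposition \ref{LQblowup}), the structure of the horizontal projection $\pi \colon \G \to V_1$ (Lemma \ref{pifacts}), and the characterization of Carnot subgroups (Proposition \ref{Carnotsub}), with the tangent-measure machinery of Section \ref{tangentsec} used to upgrade the generic Lipschitz-quotient picture to a bi-Lipschitz one on a large set.

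For part \eqref{blowupi}, I would first restrict attention to a single Cheeger chart $(U,\phi \colon X \to \R^k)$ of $X$; since $A$ is (up to restriction to a positive-measure subset) a Lipschitz differentiability space with charts inherited from $X$, it suffices to work chartwise and then sum. At a generic $x \in A \cap U$: apply Proposition \ref{LQblowup} to $\phi$, so every tangent $\hat\phi$ of $\phi$ is an $L$-Lipschitz quotient onto $\R^k$. Next, the composition $\pi \circ f \colon A \to V_1 \simeq \R^m$ has components that are Lipschitz functions on $A$, hence differentiable with respect to $\phi$ a.e.; so at a generic $x$ there is a linear map $T_x \colon \R^k \to V_1$ with $\pi \circ f$ tangent to $T_x \circ \phi$ (in the blowup). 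I'd set $V_x := \mathrm{image}(T_x) \subset V_1$ and let $\H_x$ be the Carnot subgroup it generates. Now take any $(\hat X, \hat x, \hat f) \in \Tan(A,x,f)$, subordinate to some scales $\lambda_i \to 0$; passing to a subsequence, simultaneously the tangent of $\phi$ is an $L$-Lipschitz quotient onto $\R^k$, and $\pi \circ \hat f = T_x \circ \hat\phi$ (using Lemma \ref{pifacts}\eqref{pidilation}, that $\pi$ commutes with dilations, so blowing up $f$ and then projecting equals blowing up $\pi \circ f$). Since $\hat\phi$ is surjective onto $\R^k$ and $T_x$ has image $V_x$, the map $\pi \circ \hat f = T_x \circ \hat\phi$ is a Lipschitz quotient of $\hat X$ onto $V_x$ (a surjective linear map composed with a Lipschitz quotient is a Lipschitz quotient onto its image). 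The image $\hat f(\hat X)$ is a closed (after completion), rectifiably connected subset of $\G$ containing $0$: rectifiable connectivity comes from Lemma \ref{GHproperties}, since tangents of a PI space are quasiconvex, so $\hat X$ is quasiconvex and $\hat f$ is Lipschitz. Then Proposition \ref{Carnotsub} applied to $Y = \overline{\hat f(\hat X)}$ with the subspace $V_x$ forces $\hat f(\hat X)$ to be exactly the Carnot subgroup $\H_x$ generated by $V_x$, and $\hat f$ is a Lipschitz quotient onto it. One subtlety: I must check that $\pi|_{\hat f(\hat X)}$ is genuinely a Lipschitz \emph{quotient} onto $V_x$ — co-Lipschitz bounds for $\pi \circ \hat f = T_x \circ \hat\phi$ descend to a co-Lipschitz bound for $\pi$ restricted to the image, because $\hat f$ is already Lipschitz. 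This gives \eqref{blowupi} with a uniform Lipschitz-quotient constant on the chart.

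For part \eqref{blowupii}, the point is that Lipschitz quotient is not yet bi-Lipschitz; we need a subset $E$ where $\Hdim(\H_x) = Q$ and some tangent is actually bi-Lipschitz. First, the Hausdorff dimension statement: since $\hat f$ is a surjective Lipschitz quotient $\hat X \to \H_x$ and $\hat X$ is Ahlfors $Q$-regular (Lemma \ref{GHproperties}, tangents of Ahlfors $Q$-regular spaces are Ahlfors $Q$-regular), we get $\Hdim(\H_x) \le Q$. For the reverse, use the hypothesis $\mathcal H^Q(f(A)) > 0$: at $\nu$-a.e. point of $f(A)$ (where $\nu = \mathcal H^Q|_{f(A)}$) one has positive and finite upper $Q$-density, and by a standard area/coarea-type or density argument, a positive-measure set $E \subset A$ maps onto a positive-measure subset of $f(A)$ with $f|_E$ "measure-nondegenerate"; on such a set, the pushforward structure forces the tangent $\H_x$ to have full dimension $Q$, so $\Hdim(\H_x) = Q$ there. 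Concretely: take $E$ to be the set of $x \in A$ that are density points of $A$, density points of $f^{-1}(\text{a fixed positive-measure subset of } f(A) \text{ of positive lower } Q\text{-density})$, and at which the conclusion of \eqref{blowupi} holds; a Vitali/density argument shows $\mathcal H^Q(f(A \setminus E)) = 0$. On $E$, I would blow up the measure: choose a tangent measure $\hat\nu \in \Tan(\nu, f(x))$ (Lemma \ref{tangentmeasureexists}), which by Lemma \ref{tanmeasureAR} is Ahlfors $Q$-regular with support in some $\hat E \in \Tan_\G(f(A), f(x))$; combined with the tangent $\hat f(\hat X) = \H_x$ of $f(A)$ at the same scales, and the dimension count, $\H_x$ carries a $Q$-regular measure, forcing $\Hdim \H_x = Q$. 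Then a surjective $L$-Lipschitz-quotient map $\hat f$ between two Ahlfors $Q$-regular spaces of the same dimension $Q$ — with the target $\H_x$ itself a Carnot group — need not be bi-Lipschitz yet, so the final move is Proposition \ref{tangentsoftangents}: pass to a further tangent (of $\hat X$ at a well-chosen point $y' \in \hat X$) at which the Lipschitz-quotient map becomes bi-Lipschitz. To produce the bi-Lipschitz \emph{tangent of the tangent}: at a point $y'$ where the fibers $\hat f^{-1}(\text{pt})$ have zero density (which exists by Fubini/coarea on the $Q$-regular space, since fibers are lower-dimensional), the blowup of $\hat f$ at $y'$ has "thin" fibers, and the Lipschitz-quotient bounds upgrade — via the Carnot-subgroup rigidity of Proposition \ref{Carnotsub} again at this deeper level plus the dimension equality — to injectivity with a uniform lower bound, i.e. bi-Lipschitz. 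Finally Proposition \ref{tangentsoftangents} transports this bi-Lipschitz tangent-of-tangent back to an honest element of $\Tan(A,x,f)$.

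The main obstacle I anticipate is the step inside \eqref{blowupii} that turns a full-dimensional surjective Lipschitz-quotient map between $Q$-regular spaces into a bi-Lipschitz map after one more blowup. The Lipschitz-quotient property alone is genuinely weaker than bi-Lipschitz (space-filling-type behavior is ruled out at the level of dimension but not obviously at the level of point preimages), so I must use Carnot structure essentially: that the target is a Carnot group where $\pi$ controls everything via Lemma \ref{pifacts}, that $\pi \circ \hat f$ is a \emph{linear} image of a Lipschitz-quotient of $\hat X$, and that $\hat f(\hat X)$ is rigidly a Carnot subgroup. The key computation is showing that at a density point of the fibers being negligible, the lower bound $d_\G(\hat f(a), \hat f(b)) \gtrsim d(a,b)$ holds — this is where the delicate interplay between the co-Lipschitz estimate, the regularity of the tangent measure (Lemma \ref{tanmeasureAR}), and the uniqueness of horizontal lifts (Lemma \ref{pifacts}\eqref{pilift}, \eqref{picurve}) must be orchestrated carefully, and it is the heart of the theorem.
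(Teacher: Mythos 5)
Part \eqref{blowupi} of your proposal follows the paper's route essentially verbatim: differentiate $\pi\circ f$ against a Cheeger chart to get $\pi\circ\hat f = T_x\circ\hat\phi$, use Proposition \ref{LQblowup} and linearity to see this is a Lipschitz quotient onto $V_x$, deduce the same for $\pi$ restricted to the image, use quasiconvexity of tangents of PI spaces for rectifiable connectivity, and invoke Proposition \ref{Carnotsub} to identify $\hat f(\hat X)$ with $\H_x$. The one step you assert rather than prove is that $\hat f$ itself (not merely $\pi\circ\hat f$) is \emph{co-Lipschitz} onto $\H_x$; the paper's Lemma \ref{blowupLQ} handles this by taking a quasiconvex curve in $\H_x$, lifting its projection through $\pi\circ\hat f$ back to $\hat X$ via Lemma \ref{pathlifting}, and using uniqueness of horizontal lifts (Lemma \ref{pifacts}\eqref{pilift}) to see the lift pushes forward to the original curve. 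Since you name exactly these tools, this is a presentational rather than conceptual omission.

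Part \eqref{blowupii} has two genuine gaps. First, your derivation of $\Hdim(\H_x)=Q$ from tangent measures does not work: Lemma \ref{tanmeasureAR} makes $\hat\nu$ Ahlfors $Q$-regular \emph{on its support}, and $\H_x=\hat f(\hat X)$ is only \emph{contained} in that support; a $Q$-regular set can perfectly well contain a lower-dimensional Carnot subgroup, so ``$\H_x$ carries a $Q$-regular measure'' does not follow. (Whether $\supp(\hat\nu)$ actually equals $\H_x$ is essentially the content of conditions \eqref{pieceii}--\eqref{pieceiii} of Theorem \ref{piecethm}, which can fail.) The paper instead obtains the dimension from Lemma \ref{DSregular} --- Proposition 12.8 of David--Semmes --- which produces, at a.e.\ point relative to the image measure, a tangent that is David--Semmes \emph{regular}; regularity controls preimages of balls and forces the image $\H_x$ to have dimension $Q$. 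Second, your mechanism for upgrading the Lipschitz quotient tangent to a bi-Lipschitz one --- blowing up at a point where fibers have ``zero density'' --- is not a proof: zero-measure fibers do not prevent distinct nearby fibers from collapsing together in the limit, and nothing in your outline excludes this. The paper's actual engine is Theorem \ref{bigpieces} (David--Jones--Semmes): once $\H_x$ is known to be Ahlfors $Q$-regular, a Lipschitz quotient map from the $Q$-regular tangent onto $\H_x$ is bi-Lipschitz on a subset of positive measure; one then blows up at a density point of that subset and transports the resulting bi-Lipschitz tangent-of-tangent back via Proposition \ref{tangentsoftangents}. Neither Lemma \ref{DSregular} nor Theorem \ref{bigpieces} appears in your proposal, and without them (or a substitute of comparable strength) the argument for \eqref{blowupii} does not close.
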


The remainder of this section is devoted to the proof of Theorem \ref{blowup}. Thus, we fix an Ahlfors $Q$-regular PI space $X$, a subset $A\subset X$, a Carnot group $\G$, and a Lipschitz function $f \colon A\rightarrow \G$. 

We can assume, for the sake of convenience, that $A$ is compact and is contained in a single $k$-dimensional differentiability chart $U$ of $X$. Indeed, if one replaces $A$ by its closure $\overline{A}$ and extends $f$ to $\overline{A}$, the assumptions continue to hold and the conclusions for general $A$ follow. We can then decompose a closed set $A$ up to measure zero into countably many compact subsets of charts. Here we use Lemma \ref{densitytangent} to ensure that, when passing to subsets, $\Tan(A,x,f)$ remains the same, except possibly on a measure-zero set of points $x$.

Let $V_1 = \R^n$ be the horizontal layer of $\G$, and let $\pi \colon \G \rightarrow \R^n$ be the global chart for $\G$. Then $\pi \circ f \colon A \rightarrow \R^n$ is Lipschitz, so for almost every $x \in A$, there is a unique linear map $Df_x \colon \R^k \rightarrow \R^n$ with
\begin{equation}\label{Dfx}
\pi \circ f(y) - \pi \circ f(x) = Df_x(\phi(y)-\phi(x)) + o(d(x,y))
\end{equation}
for $y \in X$. For such $x \in A$, let $\H_x$ be the Carnot subgroup of $\G$ generated by the subspace $Df_x(\R^k) \subset \R^n$.

\begin{lemma}\label{blowuponto}
For almost every $x\in A$ and every $(\hat{X},\hat{x},\hat{f}) \in \Tan(A,x,f)$, we have $\hat{f}(\hat{X}) = \H_x$.
\end{lemma}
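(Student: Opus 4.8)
The plan is to show both inclusions $\hat f(\hat X) \subseteq \H_x$ and $\hat f(\hat X) \supseteq \H_x$, using the differentiability of $\pi \circ f$ at $x$ together with the structure theory of $\pi$ from Lemma \ref{pifacts} and the characterization of Carnot subgroups in Proposition \ref{Carnotsub}.

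\textbf{Setup and the easy inclusion.} First I would fix a point $x \in A$ that is a density point of $A$, at which the derivative $Df_x$ in \eqref{Dfx} exists, and — invoking Proposition \ref{LQblowup} applied to the chart map $\phi$ — at which every tangent $\hat\phi$ of $\phi$ is an $L$-Lipschitz quotient onto $\R^k$ for some fixed $L$; all of these conditions hold a.e. Let $(\hat X, \hat x, \hat f) \in \Tan(A,x,f)$ be subordinate to scales $\lambda_i \to 0$. The key computation is that the ``horizontal part'' of $\hat f$ is linear: because $\pi$ commutes with dilations (Lemma \ref{pifacts}\eqref{pidilation}) and is a homomorphism, one has $\pi \circ \delta_{\lambda_i^{-1}}(f(x)^{-1} f(y)) = \lambda_i^{-1}(\pi f(y) - \pi f(x))$, and \eqref{Dfx} shows this converges, along the blowup, to $Df_x \circ \hat\phi$ evaluated at the corresponding point of $\hat X$. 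Hence $\pi \circ \hat f = Df_x \circ \hat\phi$ on $\hat X$; in particular $\pi(\hat f(\hat X)) \subseteq Df_x(\R^k) =: V$, the generating subspace of $\H_x$. Since $\hat f(\hat X)$ is contained in some intrinsic tangent $E \in \Tan_\G(f(X), f(x))$, which is rectifiably connected (being a blowup of the image of a quasiconvex space under a Lipschitz map — here I would use that tangents of $X$ are quasiconvex, Lemma \ref{GHproperties}(iv), and that $\hat f$ is Lipschitz, so $\hat f(\hat X)$ is rectifiably connected and contains $0$), and $\pi$ maps it into $V$, the ``rectifiably connected part of $\pi^{-1}(V)$'' description of $\H_x$ recalled after Proposition \ref{Carnotsub} gives $\hat f(\hat X) \subseteq \overline{\hat f(\hat X)} \subseteq \H_x$.

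\textbf{The hard inclusion.} The reverse containment $\H_x \subseteq \hat f(\hat X)$ is the main obstacle. The strategy is to apply Proposition \ref{Carnotsub} to $Y := \overline{\hat f(\hat X)}$: we already know $Y$ is closed, rectifiably connected, contains $0$, and satisfies $\pi(Y) \subseteq V$; it remains to show $\pi|_Y \colon Y \to V$ is a Lipschitz quotient map onto $V$, and then Proposition \ref{Carnotsub} forces $Y = \H_x$, whence $\H_x = Y \supseteq \hat f(\hat X)$ and combined with the above $\hat f(\hat X) = \H_x$ (it is worth noting the image of a tangent package is automatically closed, or one argues that $\hat f(\hat X)$ itself, not just its closure, equals $\H_x$, since $\H_x$ is a Carnot group hence complete and $\hat X$ is complete and $\hat f$ is a quotient-type map — this is cleaned up using part \eqref{blowupi} of the theorem which is presumably proven alongside). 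To get the Lipschitz quotient property of $\pi|_Y$: the Lipschitz bound is immediate. For the co-Lipschitz bound, recall $\pi \circ \hat f = Df_x \circ \hat\phi$, and $\hat\phi$ is an $L$-Lipschitz quotient of $\hat X$ onto $\R^k$; since $Df_x$ restricted to an appropriate complement is a linear isomorphism onto $V$ with controlled constants, $Df_x \circ \hat\phi$ is a Lipschitz quotient of $\hat X$ onto $V$, so $\pi \circ \hat f$ is a Lipschitz quotient of $\hat X$ onto $V$. Then for $v \in V$ near $\pi(\hat f(\hat x)) = 0$, pick $\hat y \in \hat X$ with $\pi(\hat f(\hat y)) = v$ and $d(\hat x, \hat y)$ controlled by $|v|$; the point $\hat f(\hat y) \in Y$ has $\pi$-image $v$ and $d_{cc}(0, \hat f(\hat y))$ controlled by $d(\hat x,\hat y)$ (as $\hat f$ is Lipschitz) hence by $|v|$, giving the co-Lipschitz inequality for $\pi|_Y$ on a ball; homogeneity of everything under $\delta_\lambda$ (note all the objects $\hat X$, $\hat f$, $Y$, $\H_x$ are dilation-invariant as tangents) upgrades this to all scales.

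\textbf{Summary of the argument's skeleton.} In order: (1) choose the good point $x$ with differentiability, density, and Proposition \ref{LQblowup} all holding; (2) compute $\pi \circ \hat f = Df_x \circ \hat\phi$ from dilation-equivariance of $\pi$ and the definition \eqref{Dfx} of $Df_x$; (3) deduce $\pi|_{\hat f(\hat X)}$ maps into $V$ and, using that $\hat\phi$ is a Lipschitz quotient, that $\pi \circ \hat f$ is a Lipschitz quotient onto $V$; (4) observe $Y = \overline{\hat f(\hat X)}$ is closed, rectifiably connected, contains $0$; (5) conclude $\pi|_Y$ is a Lipschitz quotient onto $V$ and apply Proposition \ref{Carnotsub} to get $Y = \H_x$; (6) conclude $\hat f(\hat X) = \H_x$. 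The subtle points are verifying rectifiable connectivity of the image at the blowup scale (needing quasiconvexity of tangents of $X$ from the PI assumption) and the passage from the co-Lipschitz estimate for $\pi \circ \hat f$ on $\hat X$ to the co-Lipschitz estimate for $\pi$ restricted to the \emph{image} $Y$ — which works precisely because $\hat f$ is Lipschitz, so preimages under $\pi|_Y$ of points can be realized as $\hat f$-images of nearby points in $\hat X$.
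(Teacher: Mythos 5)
Your proposal follows essentially the same route as the paper's proof: choose $x$ to be a density point at which $f$ is differentiable and Proposition \ref{LQblowup} holds for $\phi$, derive $\pi\circ\hat f = Df_x\circ\hat\phi$ from \eqref{Dfx}, conclude that $\pi\circ\hat f$ is a Lipschitz quotient of $\hat X$ onto $V=Df_x(\R^k)$, transfer this to a Lipschitz quotient property of $\pi$ restricted to the image, observe rectifiable connectivity of the image via quasiconvexity of tangents of PI spaces, and invoke Proposition \ref{Carnotsub}. One step of your writeup is not justified as stated: to upgrade the co-Lipschitz estimate for $\pi|_Y$ from the basepoint to arbitrary points and radii you appeal to ``homogeneity of everything under $\delta_\lambda$,'' claiming $\hat X$, $\hat f$, and $Y$ are dilation-invariant as tangents. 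Tangents subordinate to a fixed sequence of scales are not dilation-invariant in general (that would require a Preiss-type argument), and even dilation invariance about $0$ would not move the estimate to balls centered at other points of $Y$ --- that would amount to the translation invariance one is trying to prove. The detour is unnecessary: the mechanism you correctly identify (realizing $\pi|_Y$-preimages as $\hat f$-images of nearby points of $\hat X$) applies verbatim at an arbitrary point $\hat f(y)$ of the image, with no homogeneity needed --- given a ball $B(\hat f(y),r)$ in $Y$, one has $\hat f(B(y,r/L))\subset B(\hat f(y),r)$ and hence $B_V(\pi(\hat f(y)),r/LM)\subset \pi\circ\hat f(B(y,r/L))\subset\pi(B(\hat f(y),r))$, which is exactly the paper's argument. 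With that repair your proof coincides with the one in the text; your extra care about taking the closure $\overline{\hat f(\hat X)}$ before applying Proposition \ref{Carnotsub} is harmless and, if anything, slightly more scrupulous than the paper.
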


\begin{proof}
Let $x\in A$ be a point of density of $A$ in $X$ as well as a point of differentiability of $f$. This defines a Carnot subgroup $\H_x\subseteq\G$ as above.

Consider any $(\hat{X},\hat{x},\hat{f})\in\Tan(A,x,f)$. By passing to a subsequence, we may also consider 
$$(\hat{X},\hat{x},\hat{\phi}\colon\hat{X}\rightarrow\RR^k)\in\Tan(A,x,\phi)$$
subordinate to the same sequence of scales.

The defining property \eqref{Dfx} of $Df_x$ ensures that
$$\pi \circ \hat{f} = Df_x \circ \hat{\phi}.$$
For ease, let $V = Df_x(\R^k)$. Observe that $\hat{\phi}$ and $Df_x$ are Lipschitz quotients onto their images, the former by Proposition \ref{LQblowup} and the latter by linearity. It follows that $\pi \circ \hat{f} \colon \hat{X} \rightarrow V$ is a Lipschitz quotient. We first claim that $\pi|_{\hat{f}(\hat{X})} \colon \hat{f}(\hat{X}) \rightarrow V$ is also a Lipschitz quotient.

To verify this, consider any ball $B(\hat{f}(y),r)$ in $\hat{f}(\hat{X})$. If $L$ is the Lipschitz constant of $f$, then the ball $B(y,r/L)$ in $\hat{X}$ has
$$\hat{f}(B(y,r/L)) \subset B(\hat{f}(y),r).$$
Moreover, if we let $M$ be the co-Lipschitz constant of $\pi \circ \hat{f}$, then we find that
$$B_V(\pi (\hat{f}(y)), r/LM) \subset \pi \circ \hat{f}( B(y,r/L) ) \subset \pi( B(\hat{f}(y),r) ).$$
As $\pi$ is also Lipschitz, this shows that $\pi|_{\hat{f}(\hat{X})}$ is a Lipschitz quotient mapping.

We also observe that $\hat{f}(\hat{X})$ is rectifiably connected. For this, first note that $\hat{X}\in\Tan(A,x)=\Tan(X,x)$ by Lemma \ref{densitytangent}, and therefore $\hat{X}$ is quasiconvex by Proposition \ref{PIqc} and Lemma \ref{GHproperties}. It follows that $\hat{f}(\hat{X})$ is rectifiably connected, as it is the Lipschitz image of $\hat{X}$. The lemma now follows from Proposition \ref{Carnotsub}.
\end{proof}

The following Lemma will establish Theorem \ref{blowup}\eqref{blowupi}.

\begin{lemma}\label{blowupLQ}
For almost every $x\in A$ and every $(\hat{X},\hat{x},\hat{f}) \in \Tan(A,x,f)$, the tangent map $\hat{f} \colon \hat{X} \rightarrow \H_x$ is a Lipschitz quotient.
\end{lemma}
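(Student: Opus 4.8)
The plan is to deduce Lemma~\ref{blowupLQ} from Lemma~\ref{blowuponto} together with the rectifiable connectivity of the tangents $\hat X$ and a lifting argument through the horizontal projection $\pi$. Fix a point $x \in A$ of density of $A$ in $X$ and of differentiability of $\pi\circ f$, so that Lemma~\ref{blowuponto} applies; at such points $\hat f(\hat X)=\H_x$ for every $(\hat X,\hat x,\hat f)\in\Tan(A,x,f)$. Since $f$ is $L$-Lipschitz, so is every tangent $\hat f$ (by Lemma~\ref{GHproperties}), and hence the second inclusion in the definition of Lipschitz quotient is immediate. The content is the co-Lipschitz inequality: I must produce a constant $c>0$, independent of the choice of tangent, such that $B_{\H_x}(\hat f(p),cr)\subseteq \hat f(B_{\hat X}(p,r))$ for all $p\in\hat X$ and $r>0$.

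The key step is to transfer the Lipschitz quotient property of $\pi\circ\hat f\colon\hat X\to V$ (established inside the proof of Lemma~\ref{blowuponto}, where $V=Df_x(\R^k)$ and $\H_x$ is the Carnot subgroup generated by $V$) up to $\hat f$ itself by lifting curves in $\H_x$ to $\hat X$. Concretely, fix $p\in\hat X$ and a target point $q\in\H_x$ with $d_{cc}(\hat f(p),q)<cr$ for $c$ to be chosen. Using the left-invariance of $d_{cc}$, translate so that $\hat f(p)=0$; then $q\in\H_x$ with $d_{cc}(0,q)$ small, and since $\H_x$ is a Carnot group with horizontal layer $V$, there is a horizontal (hence rectifiable) curve in $\H_x$ from $0$ to $q$ of length comparable to $d_{cc}(0,q)$. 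Project this curve by $\pi$ to get a rectifiable curve in $V$ of comparable length starting at $\pi(\hat f(p))$. Now lift this planar curve: because $\pi\circ\hat f$ is an $L'$-Lipschitz quotient onto $V$ and $\hat X$ is proper (it is complete, Ahlfors $Q$-regular by Lemma~\ref{GHproperties}, hence proper), Lemma~\ref{pathlifting} produces a curve $\tilde\gamma$ in $\hat X$ starting at $p$ with $\pi\circ\hat f\circ\tilde\gamma$ equal to the projected curve and length at most $L'$ times its length; in particular $\tilde\gamma$ stays in $B_{\hat X}(p,r)$ if $c$ is small enough relative to $L$, $L'$, and the horizontal-length constant of $\H_x$. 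Finally, $\hat f\circ\tilde\gamma$ is a curve in $\H_x$ whose $\pi$-image is the chosen curve in $V$ ending at $\pi(q)$; by the uniqueness of horizontal lifts within the Carnot group $\H_x$ (Lemma~\ref{pifacts}\eqref{pilift}, applied inside $\H_x$), $\hat f\circ\tilde\gamma$ must coincide with the original horizontal curve from $0$ to $q$, so its endpoint is $q$. Thus $q\in\hat f(B_{\hat X}(p,r))$, which is the desired co-Lipschitz inequality.

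The remaining point is uniformity of the constant $c$. The Lipschitz constant $L$ of $f$ is fixed; the co-Lipschitz constant $L'$ of $\pi\circ\hat f$ comes from Proposition~\ref{LQblowup} applied to the chart map $\phi$ and the linearity of $Df_x$, and Proposition~\ref{LQblowup} gives a constant depending only on the point $x$ (not on the individual tangent); and the comparability constant between $d_{cc}$ and horizontal length in $\H_x$, while depending on $\H_x$, depends only on $x$ since $\H_x$ is determined by $x$. Hence $c$ depends only on $x$, as required. Combining the co-Lipschitz bound with the trivial Lipschitz bound shows $\hat f$ is an $L''$-Lipschitz quotient onto $\H_x$ with $L''$ depending only on $x$.

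I expect the main obstacle to be the bookkeeping in the lifting/uniqueness argument: one must be careful that the lifted curve $\tilde\gamma$ actually remains inside the ball $B_{\hat X}(p,r)$ (which forces choosing $c$ small in terms of all the constants above), and that the identification of $\hat f\circ\tilde\gamma$ with the prescribed horizontal curve genuinely uses uniqueness of lifts \emph{within} $\H_x$ rather than in $\G$ — this is exactly why Lemma~\ref{blowuponto}, guaranteeing $\hat f(\hat X)=\H_x$ and not merely $\hat f(\hat X)\subseteq\G$, is indispensable here. A secondary subtlety is ensuring properness of $\hat X$ so that Lemma~\ref{pathlifting} is applicable; this follows from Ahlfors regularity of the tangent, itself inherited via Lemma~\ref{GHproperties}.
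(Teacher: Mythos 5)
Your proposal is correct and follows essentially the same route as the paper's proof: quasiconvexity of $\H_x$ gives a short curve to the target point, its $\pi$-projection is lifted through the Lipschitz quotient $\pi\circ\hat f$ via Lemma \ref{pathlifting}, and uniqueness of lifts through $\pi$ identifies $\hat f$ of the lifted curve with the original curve in $\H_x$. The only quibble is your closing remark: the paper invokes uniqueness of lifts in $\G$ (Lemma \ref{pifacts}\eqref{pilift}) directly, which suffices since both candidate lifts live in $\G$; Lemma \ref{blowuponto} is needed for surjectivity onto $\H_x$, not for the uniqueness step.
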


\begin{proof}
Let $x$ be a point at which the conclusion of Lemma \ref{blowuponto} holds, and fix $(\hat{X},\hat{x},\hat{f}) \in \Tan(A,x,f)$. By Lemma \ref{blowuponto}, $\hat{f}(\hat{X})=\H_x$.

Let $y \in \hat{X}$ and $v \in \H_x$ be arbitrary. In order to show that $\hat{f}\colon \hat{X}\rightarrow\H_x$ is a Lipschitz quotient, it suffices to show that there is $u \in \hat{X}$ with $\hat{f}(u)=v$ and $\hat{d}(y,u) \leq C \cdot d_{cc}(\hat{f}(y),v)$, with $C$ a uniform constant.

To this end, let us note that $\H_x$ is quasiconvex, so there is a curve $\gamma$ in $\H_x$ from $\hat{f}(y)$ to $v$ with length at most $C_1 \cdot d_{cc}(\hat{f}(y),v)$. Its projection $\pi \circ \gamma$ then has length at most $C_2 \cdot d_{cc}(\hat{f}(y),v)$. From the argument in the previous lemma, we know that $\pi \circ \hat{f}$ is a Lipschitz quotient map onto $V = \pi(\H_x)$, so by Lemma \ref{pathlifting} the curve $\pi \circ \gamma$ has a lift $\alpha$ to $\hat{X}$ that begins at $y$ and has length at most $C_3 \cdot d_{cc}(\hat{f}(y),v)$.

Observe that $\hat{f} \circ \alpha$ is a lift of $ \pi \circ \gamma$ through $\pi$ that begins at $\hat{f}(y)$. Uniqueness of lifts through $\pi$ (Lemma \ref{pifacts}) ensures that $\hat{f} \circ \alpha$ coincides with $\gamma$. In particular, if $u \in \hat{X}$ is the endpoint of $\alpha$, then $\hat{f}(u) = v$. Moreover, we see that
$$\hat{d}(y, u) \leq \length(\alpha) \leq C_3 \cdot d_{cc}(\hat{f}(y),v),$$
as desired.
\end{proof}

To establish the second part of Theorem \ref{blowup}, we will first need some results of David and Semmes. 

The first is a special case of a result of Semmes \cite[Theorem 10.1]{Se00}, which is in turn based on earlier work of David \cite{Da88} and Jones \cite{Jo88}. The full Theorem 10.1 of Semmes is much more general than we require; for an explanation of how our version follows from the general results of \cite{Se00} and \cite{Da88}, see \cite[Section 6.1]{GCD14}.

\begin{theorem}\label{bigpieces}
Let $M$ and $N$ be complete, Ahlfors $Q$-regular metric spaces, and let $g \colon M \rightarrow N$ be a Lipschitz quotient map from $M$ onto $N$.

Then for each ball $B\subset M$, the map $g$ is $\beta$-bi-Lipschitz on a subset of $B$ with $\HH^Q$-measure at least $\alpha\HH^Q(B)$. The constants $\alpha,\beta>0$ depend only on the Lipschitz quotient constants of $g$ and the Ahlfors regularity constants of $N$.
\end{theorem}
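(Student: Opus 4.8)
\textbf{Proof proposal for Theorem \ref{bigpieces}.}

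The plan is to reduce the statement to a Euclidean-target result by post-composing $g$ with a David--Semmes regular map $N \to \RR^m$, apply the classical ``big pieces of bi-Lipschitz maps'' theorem in that setting, and then pull the conclusion back through the regular map. First I would recall that since $N$ is complete and Ahlfors $Q$-regular, there is (by a theorem of David, see the discussion of \cite{Se00} and \cite{Da88} referenced before the statement, and as spelled out in \cite[Section 6.1]{GCD14}) a David--Semmes regular mapping $\rho \colon N \to \RR^m$ for some $m$ depending only on the regularity data; ``David--Semmes regular'' means $\rho$ is Lipschitz and preimages of balls are covered by a bounded number of balls of comparable radius, as in Definition \ref{DSregulardef}. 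The key structural point is that a regular map onto a regular target is, up to a set that can be exhausted, bi-Lipschitz on big pieces; equivalently, and this is what we will use, the composition $\rho \circ g \colon M \to \RR^m$ is again David--Semmes regular (compositions of regular maps are regular, since the covering estimates chain together), and its image contains a piece of $\RR^m$ of positive measure in the sense that $\HH^m$-almost every point of $\RR^m$ near $g(B)$'s image has full preimage structure --- here we use that $g$ is co-Lipschitz, so $g$ does not collapse measure, and $\rho$ is regular, so neither does $\rho$.

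Next I would apply the classical David--Jones theorem (\cite{Da88}, \cite{Jo88}; or in the metric-domain-to-$\RR^m$ generality, \cite{Sc09}, or the formulation in \cite{Se00}) to the regular map $\rho \circ g$ restricted to the ball $B$: for every ball $B \subset M$, the map $\rho \circ g$ is $\beta_0$-bi-Lipschitz on a subset $B' \subset B$ with $\HH^Q(B') \ge \alpha_0 \HH^Q(B)$, with $\alpha_0, \beta_0$ depending only on the regularity constants of $M$ and $\RR^m$ and on the regularity constant of $\rho \circ g$. On $B'$, since $\rho \circ g$ is $\beta_0$-bi-Lipschitz and $\rho$ is $\Lip(\rho)$-Lipschitz, we get for $a,b \in B'$
\begin{equation*}
d_N(g(a),g(b)) \ge \Lip(\rho)^{-1}\, |\rho(g(a)) - \rho(g(b))| \ge \Lip(\rho)^{-1} \beta_0^{-1}\, d_M(a,b),
\end{equation*}
and the reverse inequality $d_N(g(a),g(b)) \le \Lip(g)\, d_M(a,b)$ is automatic since $g$ is Lipschitz. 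Thus $g$ is $\beta$-bi-Lipschitz on $B'$ with $\beta = \Lip(\rho)\beta_0 \cdot \Lip(g)$ (or the max of the two one-sided constants), and $\HH^Q(B') \ge \alpha \HH^Q(B)$ with $\alpha = \alpha_0$. All the constants involved depend only on the Lipschitz quotient constants of $g$ (which control $\Lip(g)$ and the regularity constant of $\rho\circ g$ via the co-Lipschitz bound) and the Ahlfors regularity constants of $N$ (which control the existence of $\rho$, its Lipschitz constant $m$, and its regularity constant), as claimed.

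The main obstacle I anticipate is making precise the assertion that $\rho \circ g$ is David--Semmes regular \emph{with controlled constants}, and that the David--Jones big-pieces theorem applies to it with constants independent of the particular map --- in other words, verifying that the output of the Euclidean theorem genuinely depends only on the stated data and not on finer features of $M$ or $g$. The composition-of-regular-maps step is routine (chain the covering estimates: $g^{-1}(\rho^{-1}(B))$ is covered by boundedly many $g$-preimages of the balls covering $\rho^{-1}(B)$, each of which is covered by boundedly many balls). One should also be careful that the David--Jones theorem is invoked in the right generality: the domain $M$ is a general Ahlfors $Q$-regular metric space, not Euclidean, so one needs the version for regular maps from Ahlfors regular metric spaces into $\RR^m$, which is exactly the content of \cite[Theorem 10.1]{Se00} (and is discussed in \cite[Section 6.1]{GCD14}); alternatively one can cite this directly as a black box, which is the cleanest route. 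Given that the excerpt explicitly defers to these references for the general statement, I would keep the argument short: construct $\rho$, note $\rho \circ g$ is regular, quote the big-pieces theorem for $\rho\circ g$, and transfer the lower bound back to $g$ using that $\rho$ is Lipschitz.
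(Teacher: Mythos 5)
The paper does not actually reprove this statement: it quotes it as a special case of Semmes' Theorem 10.1 in \cite{Se00}, which is formulated for mappings into general metric targets, with \cite[Section 6.1]{GCD14} cited for how this version follows. Your proposal instead tries to reduce to a Euclidean-target theorem, and the reduction breaks at the very first step: a David--Semmes regular map $\rho \colon N \to \RR^m$ need not exist for an arbitrary complete Ahlfors $Q$-regular space $N$. A regular map cannot decrease $\mathcal{H}^Q$ by more than a bounded factor (cover $\rho(A)$ by balls $B(y_i,r_i)$ with $\sum_i r_i^Q$ nearly minimal; each $\rho^{-1}(B(y_i,r_i))$ is covered by $C$ balls of radius $Cr_i$, so $\mathcal{H}^Q(A)\leq C^{1+Q}\mathcal{H}^Q(\rho(A))$), hence its image has positive $\mathcal{H}^Q$-measure. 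But many $Q$-regular spaces are strongly $Q$-unrectifiable, i.e., every Lipschitz map into every $\RR^m$ has $\mathcal{H}^Q$-null image: the Heisenberg group ($Q=4$) is the standard example, and the Laakso-type spaces mentioned in Section \ref{carnotunrectsection} are others. This is not a corner case you can avoid, because in Lemma \ref{blowupbilip} the theorem is applied with $N=\H_x$ a Carnot subgroup of $\G$ --- exactly a target that in general admits no regular map into Euclidean space (the absence of good Lipschitz ``projections'' out of non-abelian Carnot groups is the same obstruction discussed at the end of Section \ref{piecesec}). So the Euclidean David--Jones theorem cannot be reached this way; one must use a big-pieces theorem that accepts a metric target directly, which is what the citation to \cite[Theorem 10.1]{Se00} provides. (A Lipschitz quotient map does satisfy Semmes' measure-theoretic surjectivity hypothesis, since $\mathcal{H}^Q(g(B(x,r)))\geq \mathcal{H}^Q(B(g(x),cr))\gtrsim r^Q$ by the regularity of $N$.)

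A secondary gap: even granting the existence of $\rho$, you justify the regularity of $\rho\circ g$ by ``compositions of regular maps are regular,'' but $g$ is only assumed to be a Lipschitz quotient map, not a regular one. Regularity of $g$ amounts to the bound $\mathcal{H}^Q(g^{-1}(B(y,r)))\lesssim r^Q$, which is not a formal consequence of the Lipschitz quotient property and would require a separate argument (in the paper's application this is sidestepped because the map fed into Theorem \ref{bigpieces} happens to also be David--Semmes regular by Lemma \ref{DSregular}). The final transfer step --- deducing a lower bound for $d_N(g(a),g(b))$ from one for $|\rho(g(a))-\rho(g(b))|$ --- is fine, but it is downstream of the step that fails.
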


In fact, all we will need from Theorem \ref{bigpieces} is that the mapping in question is bi-Lipschitz on a subset of positive measure, which one could also view as a special case of Theorem \ref{BLthm} below.

For the second result of David and Semmes that we use, recall the notion of David--Semmes regularity from Definition \ref{DSregulardef}.

\begin{lemma} \label{DSregular}
Let $M$ be a complete, Ahlfors $Q$-regular metric space, let $N$ be a complete and doubling metric space, and let $A \subset M$ be a compact subset. Suppose $f \colon A \rightarrow N$ is a Lipschitz map with $\mathcal{H}^Q(f(A)) >0$. Then there is a subset $E \subset A$ with $\mathcal{H}^Q(f(A \setminus E)) = 0$ such that, at each $x \in E$, there is $(\hat{M},\hat{x},\hat{f}) \in \Tan(A,x,f)$ for which $\hat{f}$ is David--Semmes regular.
\end{lemma}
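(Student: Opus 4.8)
The plan is to find, at $\nu$-a.e.\ point of $f(A)$ (where $\nu = \mathcal H^Q|_{f(A)}$), a preimage point $x$ whose tangent-map at $x$ is David--Semmes regular, and then to transfer this ``good point'' structure back to a full-measure subset $E$ of $A$. The key idea is to run the argument on the measure side. First I would note that since $\mathcal H^Q(f(A)) > 0$ and $f$ is Lipschitz with $M$ Ahlfors $Q$-regular, the measure $\nu = \mathcal H^Q|_{f(A)}$ is a finite Radon measure, and its upper and lower $Q$-densities are finite and positive $\nu$-a.e.\ (the upper bound coming from $f$ being Lipschitz out of an Ahlfors $Q$-regular space, the lower bound from the standard density estimate for $\mathcal H^Q$ restricted to a set of positive measure). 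In particular, $\nu$ is pointwise doubling, so at $\nu$-a.e.\ $y \in f(A)$ tangent measures exist and satisfy the Ahlfors-type bounds of Lemma \ref{tanmeasureAR}.

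Next I would set up the pushforward/pullback correspondence between points of $f(A)$ and points of $A$. For $\nu$-a.e.\ $y \in f(A)$ one can choose (measurably, e.g.\ via the coarea-type pushforward decomposition or simply by a measurable selection for the Lipschitz map $f$ restricted to a suitable piece) a point $x = x(y) \in A$ with $f(x) = y$ and such that $x$ is a point of $\mathcal H^Q$-density of $A$ in $X$. The heart of the matter is a quantitative pigeonholing: at $\nu$-a.e.\ $y$, for a sequence of scales $\lambda_i \to 0$ realizing a tangent measure $\hat\nu$ of $\nu$ at $y$, the ``mass distribution'' of $\nu$ near $y$ at scale $\lambda_i$ is comparable (up to a fixed constant) to $\lambda_i^Q$ on balls; this forces the number of $C\lambda_i r$-balls needed to cover $f^{-1}(B(y,\lambda_i r)) \cap A$ to be bounded independently of $i$ and $r$, because the image of each such ball has $\mathcal H^Q$-measure $\gtrsim (\lambda_i r)^Q$ and these images sit inside $B(y, C\lambda_i r)$ which carries $\nu$-mass $\lesssim (\lambda_i r)^Q$. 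Passing to the blow-up along the $\lambda_i$ (using the compactness Lemma for packages, Lemma \ref{Dproperties} and the Arzel\`a--Ascoli discussion following it, and Lemma \ref{densitytangent} to identify $\Tan(A,x,f)$ with the tangent taken in $X$), this covering bound is preserved in the limit, which is exactly the statement that the limiting tangent map $\hat f \colon \hat M \to N$ is David--Semmes regular with a definite constant.

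Finally I would assemble the set $E$. Let $G \subset f(A)$ be the $\nu$-full-measure set of $y$ for which the above works, and let $E = \{ x(y) : y \in G\} \subset A$; then $f(E) \supseteq G$, so $\nu(f(A) \setminus f(E)) = 0$, i.e.\ $\mathcal H^Q(f(A \setminus E)) = \mathcal H^Q(f(A) \setminus f(E)) = 0$ (using that $f(A\setminus E)\subseteq f(A)\setminus f(E)$ fails in general, so in fact one should instead take $E$ to be the \emph{full} set of density points $x \in A$ of $A$ at which \emph{some} tangent of $f$ is David--Semmes regular, and argue that this $E$ has $\nu(f(A)\setminus f(E))=0$ because it contains $x(y)$ for every $y \in G$). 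At each $x \in E$ we have produced, by construction, a tangent $(\hat M, \hat x, \hat f) \in \Tan(A,x,f)$ with $\hat f$ David--Semmes regular, which is the assertion of the lemma.

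\emph{Main obstacle.} The delicate step is the quantitative covering estimate and its stability under blow-up: one must show that the David--Semmes regularity constant can be chosen uniformly (independent of $y$ and of the scale), so that it survives the Gromov--Hausdorff limit. This requires carefully combining the two-sided density bounds for $\nu$ (Lemma \ref{tanmeasureAR}) with the Ahlfors regularity of $M$ and the Lipschitz bound for $f$, and checking that the counting bound for covers of preimages of balls is a closed condition under convergence of packages. Everything else — pointwise doubling of $\nu$, existence of tangent measures, measurable selection of preimage density points, and the identification $\Tan(A,x,f) = \Tan(X,x,f)$ at density points — is routine given the machinery already assembled in Sections \ref{backsec} and \ref{tangentsec}.
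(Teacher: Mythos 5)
The paper does not reprove this lemma from scratch: it observes that the statement is essentially Proposition 12.8 of \cite{DS97} and only explains why that proof yields the version stated here. Your attempt at a self-contained argument is therefore a different route, but it contains a genuine gap at its central step. You justify the bounded covering number of $f^{-1}(B(y,\lambda_i r))\cap A$ by claiming that ``the image of each such ball has $\mathcal{H}^Q$-measure $\gtrsim(\lambda_i r)^Q$.'' There is no reason for this: $f$ may collapse a ball of radius $\rho$ in $A$ to a set of arbitrarily small (even zero) $\mathcal{H}^Q$-measure, and ruling out such degeneracy on most balls is essentially the conclusion one is working toward, not something available from the hypotheses. The David--Semmes argument runs in the opposite direction: if $f^{-1}(\overline{B}(y,t))$ requires more than $\lambda$ balls of radius $\lambda t$ to cover, it contains $\gtrsim\lambda$ points that are $\lambda t$-separated, hence by Ahlfors regularity of the \emph{domain} $M$ it has $\mathcal{H}^Q$-measure $\gtrsim \lambda\,(\lambda t)^Q$; since preimages of disjoint balls are disjoint, a Vitali covering of the bad set $E_{\rho,\lambda}\subset N$ by such balls shows $\mathcal{H}^Q(E_{\rho,\lambda})$ is small for $\lambda$ large and $\rho$ small. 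Your pigeonholing via the mass of $B(y,C\lambda_i r)$ in the image cannot substitute for this, because the images of the disjoint domain balls may overlap and may each be tiny.

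Two further points would still need repair even with the covering estimate in hand. First, David--Semmes regularity of the tangent map requires a uniform covering bound for balls centered \emph{anywhere} in the target of the blow-up, not only at $\hat f(\hat x)$; your sketch only controls preimages of balls centered at $y=f(x)$, whereas the proof in \cite{DS97} builds the scale- and center-uniformity into the definition of the exceptional sets $E_{\rho,\lambda}$ and takes $x$ to be a density point of $A\setminus f^{-1}(E_{\rho,\lambda})$. Second, your assembly of $E$ via a measurable selection $y\mapsto x(y)$ does not give $\mathcal{H}^Q(f(A\setminus E))=0$, as you yourself half-observe; the clean fix is that the ``good'' condition must depend only on the image point (membership of $f(x)$ in the complement of the bad set, together with $x$ being a density point), so that $A\setminus E\subset f^{-1}(\text{bad set})\cup(\text{non-density points})$ and its image is null. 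Finally, a small attribution slip: the finiteness of the upper density of $\nu=\mathcal{H}^Q|_{f(A)}$ is the standard density estimate for sets of finite measure, while it is the \emph{positivity of the lower density} that uses the Lipschitz map out of an Ahlfors regular space (this is exactly Lemma \ref{density} of the paper); you have the two reversed.
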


\begin{proof}
This lemma is essentially a restatement of Proposition 12.8 in \cite{DS97}, and the proof there establishes the version we have stated. We simply make two remarks about our slightly different statement.

First, the notion of tangent in \cite{DS97} is a metric notion which takes Gromov--Hausdorff tangents of both the domain and range, whereas our object $\Tan(A,x,f)$ uses the intrinsic scaling of $\G$ in the target. However, the two notions are isometrically identified, as noted in Section \ref{tangentsec}.

Second, \cite[Proposition 12.8]{DS97} concludes only that there exists a ``weak tangent" mapping that is David--Semmes regular. However, an inspection of the proof shows that at all points of density for the set $A \setminus f^{-1}(E_{\rho, \lambda})$ in $M$, for $\rho$ small enough and $\lambda$ large enough, there is a tangent map that is David--Semmes regular. Our stated conclusion follows from the fact that the measure of $E_{\rho, \lambda} \subset N$ tends to 0 as $\rho \rightarrow 0$ and $\lambda \rightarrow \infty$.
\end{proof}

For our fixed mapping $f \colon A\rightarrow\G$ in this section, let $E \subset A$ be the subset given by Lemma \ref{DSregular}. 

\begin{lemma}\label{blowupbilip}
For almost every $x \in E$, there is $(\hat{X},\hat{x},\hat{f}) \in \Tan(A,x,f)$ such that $\hat{f} \colon \hat{X} \rightarrow \H_x$ is bi-Lipschitz.
\end{lemma}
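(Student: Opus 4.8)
The plan is to combine the three structural facts we have assembled about tangents of $f$: (a) by Lemma \ref{DSregular}, at $\mathcal{H}^Q$-a.e.\ $x \in E$ there is a tangent $(\hat{X}, \hat{x}, \hat{f}_0) \in \Tan(A, x, f)$ with $\hat{f}_0$ David--Semmes regular; (b) by Lemma \ref{blowupLQ} (equivalently, Lemma \ref{blowuponto} together with Lemma \ref{blowupLQ}), at a.e.\ $x$ \emph{every} tangent $(\hat{X}, \hat{x}, \hat{f}) \in \Tan(A,x,f)$ is a Lipschitz quotient map \emph{onto} $\H_x$; and (c) Proposition \ref{tangentsoftangents}, which lets us pass from a bi-Lipschitz tangent-of-a-tangent back to a bi-Lipschitz tangent of $f$ itself. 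So first I would fix a point $x \in E$ at which all of these a.e.\ conclusions hold simultaneously, and produce a tangent map $\hat{f}_0 \colon \hat{X} \to \H_x$ that is \emph{both} David--Semmes regular (from (a); note that the image is automatically $\H_x$ by Lemma \ref{blowuponto}) and a Lipschitz quotient onto $\H_x$ (from (b), applied to this same tangent).

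The key point is that a map which is simultaneously David--Semmes regular and a Lipschitz quotient onto its image behaves, from the measure-theoretic viewpoint, like a controlled covering map, and in particular the domain $\hat{X}$ and the target $\H_x$ must have the same Hausdorff dimension. Indeed $\hat{X}$ is Ahlfors $Q$-regular (as a tangent of $X$, using Lemma \ref{GHproperties}), and $\H_x$, being a Carnot group, is Ahlfors $Q'$-regular for $Q' = \Hdim(\H_x)$. David--Semmes regularity of $\hat{f}_0$ gives, for each ball $B(\hat{f}_0(y), r)$ in $\H_x$, a cover of $\hat{f}_0^{-1}(B)$ by at most $C$ balls of radius $Cr$ in $\hat{X}$; combined with the Lipschitz quotient (co-Lipschitz) lower bound $B(y, r/C') \subset \hat{f}_0^{-1}(B(\hat{f}_0(y), r))$ and $Q$-regularity of $\hat{X}$, this forces $r^Q \lesssim \mathcal{H}^Q(\hat{f}_0^{-1}(B)) \lesssim r^Q$, while pushing forward and using $Q'$-regularity of $\H_x$ forces the comparison $r^{Q'} \approx \mathcal{H}^{Q'}(B)$ to be compatible only if $Q = Q'$. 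Hence $\Hdim(\H_x) = Q$, establishing the dimension claim of part \eqref{blowupii}. Now apply Theorem \ref{bigpieces} to the Lipschitz quotient map $\hat{f}_0 \colon \hat{X} \to \H_x$ between the two Ahlfors $Q$-regular spaces: on every ball of $\hat{X}$, $\hat{f}_0$ is bi-Lipschitz on a subset of positive measure. Pick such a ball, say $B(\hat{x}, R)$, and a subset $F$ of positive measure on which $\hat{f}_0$ is $\beta$-bi-Lipschitz; choose a density point $y'$ of $F$ in $\hat{X}$. By Lemma \ref{densitytangent} and Lemma \ref{GHproperties}, blowing up $(\hat{X}, y', \hat{f}_0)$ at $y'$ yields a tangent $(Z, z, h) \in \Tan(\hat{X}, y', \hat{f}_0)$ which is a limit of rescalings of $\hat{f}_0|_F$, hence $h$ is $\beta$-bi-Lipschitz (bi-Lipschitz bounds pass to limits by Lemma \ref{GHproperties}(ii)).

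Finally, invoke Proposition \ref{tangentsoftangents}: we have $(\hat{X}, \hat{x}, \hat{f}_0) \in \Tan(A, x, f)$, a point $y' \in \hat{X}$, and a bi-Lipschitz tangent $(Z, z, h) \in \Tan(\hat{X}, y', \hat{f}_0)$, so there is a bi-Lipschitz tangent $(W, w, j) \in \Tan(A, x, f)$. Since by Lemma \ref{blowuponto} every tangent of $f$ at $x$ has image exactly $\H_x$, in particular $j \colon W \to \H_x$ is bi-Lipschitz, which is the assertion of Lemma \ref{blowupbilip} (and completes Theorem \ref{blowup}\eqref{blowupii}, since the dimension statement was handled above). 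I expect the main obstacle to be the bookkeeping in the first two paragraphs: carefully justifying that one can choose a single point $x$ at which (a), (b), and the hypotheses of Proposition \ref{tangentsoftangents} all hold (all are a.e.\ statements, so this is just intersecting full-measure sets, but with respect to $\mathcal{H}^Q$ on $A$ and modulo the measure-zero-image sets, one must be slightly careful), and making the dimension-equality argument fully rigorous rather than heuristic — the cleanest route is probably to cite that a David--Semmes regular surjection between Ahlfors regular spaces forces equality of dimensions, or to note that Theorem \ref{bigpieces} already presupposes both spaces are $Q$-regular and simply requires us to have first verified $\Hdim(\H_x) = Q$, which itself follows because $\hat{f}_0$ is David--Semmes regular and co-Lipschitz onto $\H_x$.
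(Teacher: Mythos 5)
Your proposal is correct and follows essentially the same route as the paper: a David--Semmes regular tangent (Lemma \ref{DSregular}) which is also a Lipschitz quotient onto $\H_x$ (Lemma \ref{blowupLQ}), the dimension identification $\Hdim(\H_x)=Q$ (for which the paper simply cites Lemma 12.3 of \cite{DS97}, as you anticipate), Theorem \ref{bigpieces} to get a positive-measure bi-Lipschitz piece, blow-up at a density point of that piece, and Proposition \ref{tangentsoftangents} to return to a bi-Lipschitz element of $\Tan(A,x,f)$. The only cosmetic difference is that your dimension-equality argument is sketched by hand where the paper invokes the cited lemma directly.
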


\begin{proof}
Fix $x \in E$ such that the conclusions of Lemma \ref{blowupLQ} and Proposition \ref{tangentsoftangents} hold. By Lemma \ref{DSregular} there is $(Y,y,g) \in \Tan(A,x,f)$ that is David--Semmes regular. Moreover, the veracity of Lemma \ref{blowupLQ} at $x$ guarantees that $g$ is a Lipschitz quotient map of $Y$ onto the Carnot subgroup $\H_x$.

As $g$ is David--Semmes regular and $Y$ is Ahlfors $Q$-regular, Lemma 12.3 of \cite{DS97} implies that $g(Y)= \H_x$ has Hausdorff dimension $Q$. Since $\H_x$ is a Carnot group, it is therefore also Ahlfors $Q$-regular. Thus, $g$ is a Lipschitz quotient map between Ahlfors $Q$-regular spaces. By Theorem \ref{bigpieces}, there is a subset $Z \subset Y$ of positive measure for which $g|_Z$ is bi-Lipschitz. 

Now let $z \in Z$ be a point of $\HH^Q$-density of $Z$ in $Y$, and consider any element $(\hat{Z},\hat{z},\hat{g})\in\Tan(Z,z,g) = \Tan(Y,z,g)$. As $g|_Z$ is bi-Lipschitz, so is $\hat{g}$. Now, by Proposition \ref{tangentsoftangents}, there is a tangent $(W,w,h) \in \Tan(A,x,f)$ for which $h$ is bi-Lipschitz. Note also that $h(\hat{Z}) = \H_x$ by Lemma \ref{blowuponto}. This completes the proof of Theorem \ref{tangentthm}.
\end{proof}

\begin{proof}[Proof of Theorem \ref{blowup} (and hence Theorem \ref{tangentthm})]
Part \eqref{blowupi} is contained in Lemma \ref{blowupLQ}, while part \eqref{blowupii} is contained in Lemma \ref{blowupbilip} and Lemma \ref{DSregular}.
\end{proof}

\subsection{Carnot unrectifiability}\label{carnotunrectsection}
Here we briefly record an immediate consequence of Theorem \ref{tangentthm} that concerns a strong notion of unrectifiability in the Carnot setting. 

In \cite{AK00}, Ambrosio and Kirchheim introduced the notion of strong unrectifiability: A metric space $X$ is \textit{strongly $Q$-unrectifiable} if $\HH^Q(f(X))=0$ for every $N\in\mathbb{N}$ and every Lipschitz map $f \colon X\rightarrow\RR^N$. This is a stronger notion than the classical pure unrectifiability, since no subset of Euclidean space with positive $\HH^Q$-measure can be purely $Q$-unrectifiable. Nonetheless, Ambrosio and Kirchheim exhibited non-trivial strongly $Q$-unrectifiable spaces for all $Q>0$.

In \cite[Theorem 1.4]{GCD14}, the first author showed that Ahlfors $Q$-regular Lipschitz differentiability spaces are strongly $Q$-unrectifiable under some natural assumptions. We now discuss a Carnot analog of this statement.

Namely, let us say that a metric space $X$ is \ti{strongly Carnot $Q$-unrectifiable} if $\mathcal{H}^Q(f(A)) = 0$ for any Lipschitz map $f \colon A \rightarrow \G$, defined on any subset $A \subset X$, mapping into any sub-Riemannian Carnot group $\G$. Since Euclidean spaces are sub-Riemannian Carnot groups, this is a stronger property than strong $Q$-unrectifiability.

The following is then immediate from Theorem \ref{blowup}.

\begin{corollary}\label{Carnotunrect}
Let $X$ be an Ahlfors $Q$-regular PI space such that, for almost every $x \in X$, no element of $\Tan(X,x)$ is bi-Lipschitz equivalent to a sub-Riemannian Carnot group. Then $X$ is strongly Carnot $Q$-unrectifiable.
\end{corollary}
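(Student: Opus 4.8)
The plan is to prove the contrapositive-flavored statement directly: assuming the tangent hypothesis, show that no Lipschitz map from a subset of $X$ into a Carnot group can have positive $Q$-dimensional image. So suppose, for contradiction, that $A \subset X$ is a subset and $f \colon A \rightarrow \G$ is a Lipschitz map into some sub-Riemannian Carnot group with $\mathcal{H}^Q(f(A)) > 0$. We are then precisely in the setting of Theorem \ref{blowup} (equivalently Theorem \ref{tangentthm}). First I would invoke Theorem \ref{blowup}\eqref{blowupii}: there is a subset $E \subset A$ with $\mathcal{H}^Q(f(A \setminus E)) = 0$, and at each $x \in E$ a tangent $(\hat{X}, \hat{x}, \hat{f}) \in \Tan(A, x, f)$ with $\hat{f} \colon \hat{X} \rightarrow \H_x$ bi-Lipschitz onto the Carnot subgroup $\H_x \subset \G$. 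Since $\mathcal{H}^Q(f(A)) > 0$ while $\mathcal{H}^Q(f(A \setminus E)) = 0$, the set $E$ is nonempty, and in fact $\mathcal{H}^Q(E) > 0$ because $f$ is Lipschitz (hence $\mathcal{H}^Q(f(E)) \le C \mathcal{H}^Q(E)$, and $\mathcal{H}^Q(f(E)) > 0$).

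The key observation is then a bookkeeping point about which points of $X$ can lie in $E$. By the remark following Lemma \ref{densitytangent}, when we write $(\hat{X}, \hat{x}, \hat{f}) \in \Tan(A, x, f)$ we are implicitly at a point $x \in A$ that is a point of $\mathcal{H}^Q$-density for $A$, so that $\Tan(A, x) = \Tan(X, x)$; in particular the domain $\hat{X}$ of the tangent map belongs to $\Tan(X, x)$. Thus at $\mathcal{H}^Q$-a.e. point $x \in E$ there is an element $\hat{X} \in \Tan(X, x)$ which, via the bi-Lipschitz map $\hat{f}$, is bi-Lipschitz equivalent to $\H_x$, a sub-Riemannian Carnot group. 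Here I should note that a Carnot subgroup of $\G$, equipped with the restricted metric, is itself a sub-Riemannian Carnot group (as recalled in the subsection on Carnot subgroups), so $\H_x$ genuinely falls under the hypothesis. But this says precisely that on the positive-measure set $E$, there are points $x$ with some element of $\Tan(X, x)$ bi-Lipschitz equivalent to a sub-Riemannian Carnot group. Since $\mathcal{H}^Q$ is comparable to the measure $\mu$ on the Ahlfors $Q$-regular space $X$, a positive-$\mathcal{H}^Q$-measure set has positive $\mu$-measure, contradicting the hypothesis that for $\mu$-a.e. $x$, no element of $\Tan(X, x)$ is bi-Lipschitz equivalent to a sub-Riemannian Carnot group.

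There is essentially no hard step here — the corollary is, as the text says, ``immediate from Theorem \ref{blowup}'' — so the only thing requiring care is matching up the measure-theoretic quantifiers. Specifically, I would want to be precise that the hypothesis is stated for $\mu$-a.e. $x$ (or equivalently $\mathcal{H}^Q$-a.e.\ $x$, which agree up to multiplicative constants by Ahlfors regularity), while Theorem \ref{blowup} produces the bi-Lipschitz tangent at $\mathcal{H}^Q$-a.e.\ point of $E$, and that $\mathcal{H}^Q(E) > 0$ follows from $\mathcal{H}^Q(f(A)) > 0$, $\mathcal{H}^Q(f(A \setminus E)) = 0$, and the Lipschitz estimate $\mathcal{H}^Q(f(E)) \le (\Lip f)^Q \, \mathcal{H}^Q(E)$. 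Once these are aligned, the contradiction is immediate, and one can phrase the whole argument in two or three sentences. The mildly subtle conceptual point — which is worth stating explicitly rather than leaving implicit — is that the domain of the bi-Lipschitz tangent map lies in $\Tan(X, x)$ (not merely in some abstract tangent category of the subset $A$), which is exactly what the density-point remark after Lemma \ref{densitytangent} guarantees; without that identification the hypothesis about $\Tan(X, x)$ would not directly apply.
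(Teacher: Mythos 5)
Your proof is correct and is essentially the argument the paper intends when it calls the corollary ``immediate from Theorem \ref{blowup}'': apply part \eqref{blowupii} to a hypothetical non-degenerate map, note that $\mathcal{H}^Q(E)>0$, and observe that the bi-Lipschitz tangent maps exhibit elements of $\Tan(X,x)$ bi-Lipschitz equivalent to the Carnot groups $\H_x$ on a positive-measure set, contradicting the hypothesis. The quantifier bookkeeping you flag (density points, $\Tan(A,x)=\Tan(X,x)$, and the Lipschitz lower bound on $\mathcal{H}^Q(E)$) is exactly the right set of details to check, and all are handled correctly.
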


Corollary \ref{Carnotunrect} applies, for example, to the topologically one-dimensional PI spaces constructed by Laakso \cite{La00} and Cheeger--Kleiner \cite{CK13_PI}. One can view Corollary \ref{Carnotunrect} as a strengthening, in the case of Ahlfors regular PI spaces, of Corollary 4.7 in \cite{DK16}, which says that a space satisfying the assumptions of Corollary \ref{Carnotunrect} admits no bi-Lipschitz embedding into any Carnot group.

\section{General criteria for bi-Lipschitz pieces of Lipschitz maps} \label{DCsec}

The previous section resolved Conjecture \ref{conjtangent} completely, but ultimately one would like to know whether Conjecture \ref{conjpiece} is true, i.e. whether such maps $f$ must be bi-Lipschitz on a set of positive measure. In this section, we give a sufficient condition for $f$ to decompose into countably many bi-Lipschitz pieces, expressed entirely in terms of its tangent maps (Proposition \ref{generalbilip}). In the following section, we will use this condition to prove Theorem \ref{piecethm}.

The following is the main result of this section.

\begin{prop} \label{generalbilip}
Let $M, N$ be complete, doubling metric spaces on which $\HH^Q$ is locally finite. Let $E \subset M$ be a measurable subset and $f \colon E \rightarrow N$ a Lipschitz map with image $Y = f(E) \subset N$. Suppose that
\begin{enumerate}[\normalfont (i)]
\item there are $C_0,R_0 > 0$ for which
$$C_0^{-1} r^Q \leq \mathcal{H}^Q(B(x,r)) \leq C_0 r^Q$$
and
$$C_0^{-1} r^Q \leq \mathcal{H}^Q(B(y,r)) \leq C_0 r^Q$$ 
for all $x \in E$, $y \in Y$ and $0<r<R_0$;
\item there is a measurable subset $E_0 \subset E$ such that $\mathcal{H}^Q(f(E \setminus E_0)) = 0$ and for each $x \in E_0$, every tangent $(\hat{E},\hat{x},\hat{f}) \in \Tan(E,x,f)$ surjects onto $\hat{Y}$.
\end{enumerate}
Then there are compact sets $E_i \subset E$, for $i \in \N$, such that $\mathcal{H}^Q(f(E \setminus \cup_i E_i)) = 0$ and $f|_{E_i}$ is bi-Lipschitz for each $i$.
\end{prop}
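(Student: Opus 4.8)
The overall strategy is to combine hypothesis~(ii) with the David--Semmes regularity theory so as to produce, at $\mathcal{H}^Q$-almost every point of the image, a tangent of $f$ that is a David--Semmes regular surjection between Ahlfors $Q$-regular spaces, and then to turn this infinitesimal information into an honest decomposition via a David--Jones type stopping-time argument. To begin, since $\mathcal{H}^Q(f(E\setminus E_0))=0$ I would work over $E_0$, and after a countable decomposition of $E_0$ I may assume that all the constants produced below are uniform and that $\mathcal{H}^Q|_{E}$ is $\sigma$-finite and inner regular on $E_0$. A standard exhaustion argument then reduces the statement to showing that a \emph{definite fraction} of $\mathcal{H}^Q(f(A))$ can be covered by bi-Lipschitz pieces, for every compact $A\subset E_0$ with $\mathcal{H}^Q(f(A))>0$.

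Fix such an $A$. Applying Lemma~\ref{DSregular} to $f|_A$ (note that $E$, hence $A$, is uniformly Ahlfors $Q$-regular by~(i)), one obtains a subset of $A$ of full $f$-image measure at each of whose points $f$ admits a David--Semmes regular tangent $(\hat X,\hat x,\hat f)$, subordinate to some scales $\lambda_i\to 0$. Now fix such a point $x$ that is in addition a point of density (with respect to $\mathcal{H}^Q|_{E}$) of this subset and of $E_0$. By Lemma~\ref{densitytangent}, $\Tan(E_0,x,f)=\Tan(E,x,f)$; by hypothesis~(i) and Lemma~\ref{GHproperties}(v), the domain $\hat X$ and the associated tangent $\hat Y\in\Tan(Y,f(x))$ are Ahlfors $Q$-regular at all scales; and by hypothesis~(ii), $\hat f(\hat X)=\hat Y$. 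Thus $\hat f\colon\hat X\to\hat Y$ is a David--Semmes regular surjection between Ahlfors $Q$-regular spaces, so by the David--Semmes big-pieces theory for such maps (Theorem~\ref{bigpieces} and its extension to David--Semmes regular maps, cf.\ \cite{DS97} and the remark following Theorem~\ref{bigpieces}) there are $\alpha,\beta>0$, depending only on $Q$, $C_0$, and the Lipschitz and David--Semmes constants, such that $\hat f$ is $\beta$-bi-Lipschitz on a subset of $\mathcal{H}^Q$-measure at least $\alpha\,\mathcal{H}^Q(B)$ of \emph{every} ball $B\subset\hat X$. This is exactly where the full force of~(ii) is used: a David--Semmes regular tangent that failed to surject onto a $Q$-dimensional set could collapse mass and contain no bi-Lipschitz piece at all.

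It remains to descend from this infinitesimal statement to an honest decomposition, which I expect to be the main obstacle. Unwinding the convergence $(\tfrac1{\lambda_i}E,x,f)\to(\hat X,\hat x,\hat f)$ in the pseudometric $D$ of Lemma~\ref{Dproperties}, and using the Ahlfors regularity from~(i), the ``bi-Lipschitz on big pieces of every ball'' property of $\hat f$ passes, for $i$ large, down to $f$ at the scale $\lambda_i$: for every $\delta>0$ and $\mathcal{H}^Q$-a.e.\ image point there is a scale $0<\lambda<\delta$ and a set $W\subset B(x,\lambda)\cap E$ with $\mathcal{H}^Q(W)\ge\tfrac{\alpha}{2}\mathcal{H}^Q(B(x,\lambda)\cap E)$ on which $f$ is $2\beta$-bi-Lipschitz for pairs separated by at least a small fixed multiple of $\lambda$. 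Feeding this into a Besicovitch/stopping-time scheme in the image --- select, around $\mathcal{H}^Q$-a.e.\ point, the first scale below the current threshold at which the above holds, record the good set $W$, and recurse into the uncaptured remainder at finer scales (which is legitimate because, by Lemma~\ref{densitytangent}, the ``good tangent'' property survives $\mathcal{H}^Q$-a.e.\ on any positive-measure subset) --- the outstanding image measure drops by the factor $\alpha/2$ at each round and hence tends to $0$. On each $W$, after subdividing into sets of small diameter (where the ``separated-pairs'' bound at scale $\lambda$ is upgraded to a genuine bi-Lipschitz bound by using the finer scales already handled), $f$ is bi-Lipschitz, and passing to closures --- to which $\beta$-bi-Lipschitz maps extend --- yields the compact sets $E_i$. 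The delicate point throughout is this conversion of sequential, scale-by-scale tangent information into a true decomposition: it rests on the quantitative, scale-invariant form of the David--Semmes estimates together with careful bookkeeping across scales, in the spirit of the classical arguments of David and Jones.
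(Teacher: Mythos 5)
Your proposal diverges from the paper's argument in a way that leaves a genuine gap at its central step. The paper does not use Lemma \ref{DSregular} or Theorem \ref{bigpieces} here at all. Instead, it first upgrades hypothesis (ii) to a statement valid at \emph{all} sufficiently small scales: because \emph{every} tangent at $x\in E_0$ surjects onto $\hat Y$, a compactness argument yields constants $L,R>0$ such that $f(B(x,Lr)\cap E)$ is $(r/10000)$-dense in $B(f(x),r)\cap Y$ for all $0<r<R$ (not merely along a sequence of scales). The stopping-time argument on dyadic cubes in the image then converts this into David's condition \eqref{DCeq} at every scale, and the conclusion is obtained by invoking the Bate--Li theorem (Theorem \ref{BLthm}), which is precisely the black box that turns an all-scales, measure-theoretic near-surjectivity statement into a countable bi-Lipschitz decomposition.

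Your descent step is where the argument breaks. Transferring the ``bi-Lipschitz on a big piece of every ball'' property of a tangent $\hat f$ back to $f$ via the $\epsilon$-isometries only controls pairs of points in $B(x,\lambda)$ whose separation is at least a fixed multiple of $\lambda$, and only along the subsequence of scales $\lambda_i$ realizing that particular tangent. Your proposed fix --- ``the separated-pairs bound at scale $\lambda$ is upgraded to a genuine bi-Lipschitz bound by using the finer scales already handled'' --- does not work as described: your recursion descends into the \emph{uncaptured remainder} of the image, not into the good set $W$ itself, so nothing constrains close pairs inside $W$; subdividing $W$ into small-diameter pieces does not help, since each piece still contains pairs at all separations below its diameter where no lower bound is available. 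Converting sequential tangent information into a single set on which $f$ is bi-Lipschitz at \emph{all} scales is exactly the difficulty the Bate--Li/David--Jones machinery is designed to handle, and your sketch replaces it with an assertion rather than an argument. (There is also a secondary issue: a positive-measure subset of the tangent $\hat X$ does not canonically correspond to a positive-measure subset of $B(x,\lambda)\cap E$ under an $\epsilon$-isometry; this can be repaired, but it is not addressed.) The correct route, given the tools in the paper, is to use (ii) to establish the uniform finite-scale density of images, verify David's condition, and apply Theorem \ref{BLthm}.
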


Let us emphasize here that $B(x,r)$ and $B(y,r)$ refer to balls in the ambient spaces $M$ and $N$, not their restrictions to the subsets $E$ and $Y$.

The proof of this proposition is based heavily on \cite{BL15}, in particular on the proof of Lemma 3.2 and especially on Theorem 5.3 therein. (The latter result itself builds on related work of David \cite{Da88}, Jones \cite{Jo88}, and Semmes \cite{Se00}.)
We restate it here, as we will use it directly, but first we need an important definition.

\begin{definition}[\cite{BL15}]\label{DCdef}
Let $(W,\mu)$ and $(Z,\HH^Q)$ be metric measure spaces (the latter equipped with $\HH^Q$ measure). Let $f \colon W \rightarrow Z$ be a Lipschitz mapping.

For constants $\beta,\epsilon,R>0$, we define the set
\begin{align*}
\text{DC}(\beta,\epsilon,R) = \{x\in W: \HH^Q &\left(B(f(x),\beta r) \cap f(B(x,r))\right) \geq\\
 &(1-\epsilon)\HH^Q(B(f(x),\beta r)), \forall r<R\}. 
\end{align*}

We say that $f$ \textit{satisfies David's condition} $\mu$-a.e. on $W$ if, for every $\epsilon>0$, 
\begin{equation}\label{DCaedef}
\mu\left(W \setminus \bigcup_{j=1}^\infty \bigcup_{k=1}^\infty \text{DC}\left(\frac{1}{j},\epsilon,\frac{1}{k}\right)\right) = 0.
\end{equation}
\end{definition}

The name ``David's condition'' was coined by Semmes \cite{Se00} for a more quantitative version of \eqref{DCaedef}, first discussed in \cite{Da88}. Roughly speaking, David's condition expresses that a map is ``almost locally surjective'' in a measure-theoretic sense.

David's condition is the key element in finding bi-Lipschitz pieces of Lipschitz mappings, as the following result of Bate and Li (building on \cites{Da88, Jo88, Se00}) shows. The following is Theorem 5.3 of \cite{BL15} (with the subsequent remarks incorporated).

\begin{theorem}\label{BLthm}
Let $(W,\mu)$ and $(Z,\HH^Q)$ be complete metric measure spaces, and let $f \colon W\rightarrow Z$ be Lipschitz. Assume that, for $\mu$-a.e. $x\in W$,
\begin{equation}\label{BLthm1}
0 < \liminf_{r \rightarrow 0} \frac{\mu(B(x,r))}{r^Q} \leq \limsup_{r \rightarrow 0} \frac{\mu(B(x,r))}{r^Q} < \infty
\end{equation}
and
\begin{equation}\label{BLthm2}
 \liminf_{r\rightarrow 0} \frac{\HH^Q(B(f(x),r) \cap f(W))}{r^Q} > 0.
\end{equation}
If $f$ satisfies David's condition $\mu$-a.e. on $W$, then there is a countable collection of compact sets $U_i \subseteq W$ such that $\mu(W \setminus \cup_i U_i) = 0$ and $f|_{U_i}$ is bi-Lipschitz for each $i$.
\end{theorem}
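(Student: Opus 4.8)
The plan is to carry out the David--Jones--Semmes ``big pieces of bi-Lipschitz maps'' argument (\cite{Da88}, \cite{Jo88}, \cite{Se00}) in the metric generality of \cite{BL15}; I will indicate the structure rather than reproduce the quantitative estimates of \cite{Da88}.

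\emph{Reductions.} Since David's condition (Definition \ref{DCdef}) is asked to hold $\mu$-a.e. for \emph{every} $\epsilon>0$, and since \eqref{BLthm1} and \eqref{BLthm2} are pointwise a.e. statements, a standard decomposition — Lebesgue-density/Egorov truncation to fix the density constant, and a countable splitting to fix the pair $(\beta,R)=(1/j,1/k)$ in $\text{DC}$ — reduces everything to the uniform situation: $C_0^{-1}r^Q\le\mu(B(x,r))\le C_0 r^Q$ and $\mathcal H^Q(B(f(x),r)\cap f(W))\ge C_0^{-1} r^Q$ for all $x\in W$ and $0<r<R_0$, together with $W=\text{DC}(\beta,\epsilon,R_0)$ for a single $\beta>0$ and an $\epsilon$ that we may take as small as we please relative to $L$ (the Lipschitz constant), $C_0$, and $Q$ (the $\beta$-factors will cancel below, so no $\beta$-dependence in the threshold is needed). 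The two-sided density bound gives $\mu|_W\asymp\mathcal H^Q|_W$, hence $\mathcal H^Q(f(S))\lesssim\mu(S)$ for $S\subseteq W$. I then reduce the theorem to a single claim: the set
\[
B:=\Big\{x\in W:\ \liminf_{y\to x}\frac{d(f(x),f(y))}{d(x,y)}=0\Big\}
\]
is $\mu$-null. Indeed $W\setminus B=\bigcup_{m,n\in\N}U_{m,n}$, where $U_{m,n}$ is the Borel set of $x\in W$ with $d(f(x),f(y))\ge\tfrac1m d(x,y)$ for every $y\in W$ with $d(x,y)<\tfrac1n$; any subset of $U_{m,n}$ of diameter $<\tfrac1n$ is a bi-Lipschitz piece (lower bound by definition, upper bound from $L$-Lipschitzness), and intersecting with a countable cover of $W$ by sets of small diameter and with compact subsets furnished by inner regularity of $\mu$ exhibits $U_{m,n}$, up to a $\mu$-null set, as a countable union of compact bi-Lipschitz pieces. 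So $\mu(B)=0$ yields exactly the conclusion.

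\emph{The core estimate $\mu(B)=0$.} Suppose not. Passing to a piece as above and discarding a $\mu$-null set, obtain $G\subseteq W$ with $\mu(G)>0$ on which all the uniform data hold and every point is a $\mu$-density point of $G$. Fix a small constant $\tau>0$ (below thresholds depending only on $\epsilon,L,C_0,Q$). Because every $x\in G$ lies in $B$, for each $r>0$ there is a \emph{collapse event} at $x$ of scale $\le r$: a point $y\in W$ with $s:=d(x,y)\le r$ and $d(f(x),f(y))\le\tau s$. The key local fact is that such an event wastes a definite amount of image measure: applying the David condition at $x$ and at $y$ at scale $s/2$, both $f(B(x,s/2))$ and $f(B(y,s/2))$ cover all but an $O(\epsilon C_0^2)$-fraction of a common ball $B(f(x),c\beta s)$ (here the $\beta$'s cancel: the missing mass is $\le\epsilon C_0(\beta s)^Q$ while the small ball has $\mathcal H^Q\gtrsim(\beta s)^Q$), so for $\epsilon$ small their images overlap in a set of $\mathcal H^Q$-measure $\ge c_1 s^Q$, whereas $B(x,s/2)$ and $B(y,s/2)$ are disjoint with total $\mu$-measure $\asymp s^Q$. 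Now fix a small ball $B_0$ with $\mu(G\cap B_0)$ substantial and run a stopping-time/Carleson argument: using a Christ--David dyadic decomposition of $W$ (or a Vitali/Besicovitch covering substitute), organize the collapse events of the points of $G\cap B_0$ into generations of disjointified balls; Ahlfors $Q$-regularity makes each generation pack with total $Q$-content $\lesssim\mu(B_0)$, while the David condition caps the total multiply-covered (and missing) image mass inside $f(B_0)$ — here one uses that $\mathcal H^Q|_{f(W)}$ is itself Ahlfors-regular near image points, which replaces the Euclidean averaging of \cite{Da88}. Matching the per-event waste against this global cap, summed over generations, yields $\mu(G\cap B_0)\le\kappa\,\mu(B_0)$ with $\kappa<1$ independent of $B_0$; applied at a $\mu$-density point of $G$ this contradicts density $1$. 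Hence $\mu(B)=0$, and combined with the reduction this proves the theorem.

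\emph{Main obstacle.} The crux is the final Carleson-packing step: converting the pointwise, scale-local ``almost surjectivity'' encoded by David's condition into a single summable bound on multiply-covered image mass, and matching it generation-by-generation against the definite per-event waste. This is the technical heart of \cite{Da88}, \cite{Jo88}, \cite{Se00}; carrying it out for a general doubling metric target — via the induced Ahlfors regularity of $f(W)$ and a dyadic decomposition of $W$ in place of Euclidean structure — is the content of \cite[Theorem 5.3]{BL15}.
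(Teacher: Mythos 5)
This statement is not proved in the paper at all: it is quoted verbatim as Theorem 5.3 of \cite{BL15} (together with the remarks following it there), so the paper's ``proof'' is a citation. Your reductions are sound and match the standard scheme --- splitting $W$ to make the density bounds and the parameters $(\beta,R)$ in $\textup{DC}$ uniform, and reducing the theorem to showing that the collapse set $B=\{x:\liminf_{y\to x}d(f(x),f(y))/d(x,y)=0\}$ is $\mu$-null, after which the countable decomposition into the sets $U_{m,n}$ and inner regularity give the compact bi-Lipschitz pieces. This is indeed the David--Jones--Semmes strategy that Bate and Li adapt to metric targets.

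However, as a proof your proposal is circular at the decisive point: the Carleson-packing/stopping-time estimate that converts the per-collapse ``wasted image mass'' into the bound $\mu(G\cap B_0)\le\kappa\,\mu(B_0)$ is exactly the content of \cite[Theorem 5.3]{BL15}, which is the statement you are asked to prove, and you explicitly defer to it. The overlap heuristic also needs care: from the disjointness of $B(x,s/2)$ and $B(y,s/2)$ and an overlap of their images of measure $\gtrsim(\beta s)^Q$ one cannot directly extract a contradiction without a global control on multiplicity (no na\"ive area formula holds here), and organizing the collapse events into generations so that the doubly-covered mass is summable is precisely the technical heart you leave out; note also that the per-event waste is of order $(\beta s)^Q$ against a domain mass of order $s^Q$, so the admissible $\epsilon$ does depend on $\beta$, contrary to your parenthetical claim. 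Since the paper treats this result as an external input, the appropriate resolution is simply to cite \cite[Theorem 5.3]{BL15} rather than to sketch its proof; if a self-contained proof is intended, the packing argument must actually be carried out.
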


\begin{remark}
In proving Proposition \ref{generalbilip}, it will be convenient to apply Theorem \ref{BLthm} in the case where $W$ and $Z$ are not necessarily complete but only have compact completions.

In that case, we can still apply the theorem under one additional assumption. Suppose $W$, $Z$, and $f \colon W\rightarrow Z$ satisfy the assumptions of Theorem \ref{BLthm}, replacing the requirement that $W$ and $Z$ are complete with the assumption that their completions are compact. Consider the completions $(\overline{W}, \overline{\mu})$ and $(\overline{Z},\mathcal{H}^Q)$, where $\overline{\mu}$ is equal to $\mu$ on $W\subset \overline{W}$  and  $\overline{\mu}(\overline{W}\setminus W)=0$. Assume in addition that
$$ \mathcal{H}^Q(f(\overline{W})\setminus f(W))=0.$$

Then the assumption that $f\colon W\rightarrow Z$ satisfies David's condition $\mu$-a.e. immediately implies that $f \colon \overline{W}\rightarrow f(\overline{W})$ satisfies David's condition $\overline{\mu}$-a.e. Hence Theorem \ref{BLthm} applies to $f \colon \overline{W}\rightarrow f(\overline{W})$, and we obtain the desired bi-Lipschitz decomposition of $f$.
\end{remark}

The second result from \cite{BL15} that we will use concerns the construction of useful systems of dyadic ``cubes'' in general metric measure spaces. It is based on earlier results of David \cite{Da88} and Christ \cite{Ch90}.

\begin{prop}[\cite{BL15}, Proposition 2.1]\label{BLcubes}
Let $Z\subset N$ be a compact subset of a metric measure space $(N,\nu)$ for which there are $C,R >0$ with
$$ C^{-1} r^n \leq \nu(B(x,r)) \leq Cr^n$$
for all $x\in Z$ and $0<r<R$. Assume also that $Z \subset \overline{B}(y,r)$ for some $y\in Z$ and $0<r<R/16$.

Then there is a collection of subsets
$$ \Delta = \{Q^k_\omega \subset X :  k\in\mathbb{N}, \omega\in I_k\}$$
with the following properties:
\begin{enumerate}[\normalfont (i)]
\item $\nu(Z \setminus \cup_\omega Q^k_\omega)=0$;
\item if $\ell\geq k$ then either $Q^\ell_\alpha \subset Q^k_\omega$ or $Q^\ell_\alpha \cap Q^k_\omega = \emptyset$;
\item for each $Q^\ell_\alpha$ and $0\leq k \leq \ell$, there is a unique $\omega \in I_k$ with $Q^\ell_\alpha\subset Q^k_\omega$;
\item\label{cubeball} for each $Q^k_\omega$, there is $z^k_\omega\in Z$, called the ``center'' of $Q^k_\omega$, with
$$ B(z^k_\omega, 16^{-k-1}r) \subseteq Q^k_\omega \subseteq \overline{B}(z^k_\omega, 16^{-k+1}r); $$
\item each index set $I_k$ is finite.
\end{enumerate}
\end{prop}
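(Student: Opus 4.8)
The proposition is, up to the omission of a small-boundary estimate, Christ's dyadic cube construction \cite{Ch90} (see also \cite{Da88}); so my plan is to recall that construction and check that the stated normalizations can be arranged, although one could also just invoke \cite[Proposition 2.1]{BL15} verbatim. The hypotheses say exactly that $Z$ is bounded and that $(Z,\nu)$ is a doubling metric measure space with $\nu(B(z,\rho)) \approx \rho^n$ for $z \in Z$ and $\rho$ in the relevant range of scales, and that is all that is used.

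Fix $\delta = 1/16$. First I would choose, for each $k \in \N$, a maximal $\delta^k r$-separated set $\{z^k_\omega : \omega \in I_k\} \subseteq Z$; maximality forces it to be $\delta^k r$-dense in $Z$ as well. Each $I_k$ is finite because the balls $B(z^k_\omega, \tfrac{1}{2}\delta^k r)$ are pairwise disjoint, each has $\nu$-measure $\gtrsim (\delta^k r)^n$, and their union lies in $\overline{B}(y,2r)$, which has $\nu$-measure $\lesssim r^n$; this is property (v). Next, at each level $k$ I would cut $Z$ into Borel ``tiles'' $R^k_\omega$ ($\omega \in I_k$) by assigning each point to the nearest level-$k$ center, breaking ties according to one fixed enumeration of $\bigcup_k\{z^k_\omega\}$; separation and density of the level-$k$ net then give $z^k_\omega \in R^k_\omega$ and $B(z^k_\omega, \tfrac{1}{2}\delta^k r) \subseteq R^k_\omega \subseteq \overline{B}(z^k_\omega, \delta^k r)$. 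Finally, for $\omega \in I_k$ with $k \ge 1$, let $p(\omega) \in I_{k-1}$ be the nearest level-$(k-1)$ center to $z^k_\omega$ (same tie-break), so $d(z^k_\omega, z^{k-1}_{p(\omega)}) \le \delta^{k-1}r$, and define the cube
$$ Q^k_\omega \ = \ \bigcup \bigl\{ R^\ell_\alpha : \ell \ge k,\ p^{\ell-k}(\alpha) = \omega \bigr\}. $$

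With this definition everything else is bookkeeping on the tree $\bigsqcup_k I_k$. Properties (ii) and (iii) are immediate: for $\ell \ge k$ one has $Q^\ell_\beta \subseteq Q^k_\omega$ exactly when $p^{\ell-k}(\beta) = \omega$, and $Q^\ell_\beta \cap Q^k_\omega = \emptyset$ otherwise, since the tiles at any single level are pairwise disjoint. Property (i) holds because $\bigcup_\omega Q^k_\omega \supseteq \bigcup_\alpha R^k_\alpha$ already exhausts $Z$. For (iv), the lower inclusion follows from $R^k_\omega \subseteq Q^k_\omega$ together with $B(z^k_\omega, \delta^{k+1}r) \subseteq B(z^k_\omega, \tfrac{1}{2}\delta^k r) \subseteq R^k_\omega$; the upper inclusion is a geometric-series estimate, since a point of a descendant tile $R^\ell_\alpha$ is within $\delta^\ell r$ of $z^\ell_\alpha$, which is within $\sum_{m=k+1}^\ell \delta^{m-1}r$ of $z^k_\omega$, for a total at most $\delta^k r(2-\delta)/(1-\delta) < \delta^{k-1}r = 16^{-k+1}r$, the last inequality needing only that the base $16$ is not too small.

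The conceptual crux, and the only point that is not pure bookkeeping, is noticing that the naive tiles $R^k_\omega$ at consecutive levels do not nest, so cubes cannot be taken to be tiles; defining $Q^k_\omega$ instead by descending along the fixed parent tree is the device that simultaneously produces the nesting/disjointness in (ii)--(iii), the exhaustion in (i), and the two-sided ball sandwich in (iv). Beyond that, the one thing to handle with care is that all tie-breaking choices must be made once and globally (a single enumeration of every center at every scale), which is what keeps each $R^k_\omega$ Borel and makes each iterated parent map $p^{\ell-k}$ well defined; with those choices pinned down, properties (i)--(v) follow as indicated.
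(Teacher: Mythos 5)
The paper does not prove this proposition; it quotes it from \cite{BL15} (Proposition 2.1 there, after Christ and David), so your proposal must stand on its own as a reconstruction of that argument. It does not, because of a genuine gap in the disjointness of the cubes at a fixed level. Your cube is $Q^k_\omega = \bigcup\{R^\ell_\alpha : \ell\geq k,\ p^{\ell-k}(\alpha)=\omega\}$, and you claim $Q^k_\omega\cap Q^k_{\omega'}=\emptyset$ for $\omega\neq\omega'$ ``since the tiles at any single level are pairwise disjoint.'' But a point $x$ in the intersection need not be witnessed by tiles at the \emph{same} level: $x$ lies in exactly one tile at \emph{each} level $\ell\geq k$, and since (as you yourself note) tiles at consecutive levels do not nest, the $(\ell-k)$-fold ancestor of the level-$\ell$ tile containing $x$ can genuinely change with $\ell$. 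Concretely, take $x$ near the interface of two level-$k$ tiles, with $x\in R^k_\omega$ but with the nearest level-$(k+1)$ center to $x$ lying slightly on the other side, so that $x\in R^{k+1}_\alpha$ with $p(\alpha)=\omega'\neq\omega$. Then $x\in R^k_\omega\subseteq Q^k_\omega$ and $x\in R^{k+1}_\alpha\subseteq Q^k_{\omega'}$, so property (ii) fails already at $\ell=k$, and the uniqueness in (iii) fails with it. Unioning descendant tiles along the parent tree does not repair the non-nesting of tiles; it inherits it.

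This is exactly the point where the Christ--David construction requires more than bookkeeping. The standard fixes are either (a) define the closed ``cubes'' as closures of the sets of descendant \emph{centers} $\{z^\ell_\beta:(\ell,\beta)\leq(k,\omega)\}$, prove the small-boundary estimate (the overlap of distinct cubes at a level is contained in thin boundary shells of controlled measure), and then trim to obtain genuinely disjoint Borel cubes while re-verifying that the inner ball $B(z^k_\omega,16^{-k-1}r)$ survives the trimming; or (b) make a single consistent top-down selection of addresses so that each point has one well-defined ancestor at every level. Your opening sentence dismisses the small-boundary estimate as an omitted extra, but in the route you chose it (or a substitute selection argument) is precisely what is needed to get (ii), (iii), the measure-zero exceptional set in (i), and the lower inclusion in (iv) simultaneously. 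The net construction, the finiteness of $I_k$, and the geometric-series bound for the outer inclusion in (iv) are all correct as written.
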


Note that (\ref{cubeball}) implies, in particular, that $Q^k_{\omega} \cap Z \neq \emptyset$ for all $k \in \N$ and $\omega \in I_k$. We are now ready to prove Proposition \ref{generalbilip}.

\begin{proof}[Proof of Proposition \ref{generalbilip}]
For notational ease, let $\mu = \mathcal{H}^Q|_M$ and $\nu = \mathcal{H}^Q|_N$. By assumption, both measures are locally finite and, thus, inner regular. It is clear that we may assume $\nu(Y) >0$; otherwise there is nothing to prove. Finally, we may also assume that $f$ has Lipschitz constant equal to 1.

Let us first observe that for all $x \in E_0$, there are constants $L, R > 0$ such that $f(B(x,Lr) \cap E)$ is $r/10000$-dense in $B(f(x),r) \cap Y$ whenever $0< r< R$. Indeed, if this were to fail, then there would be a sequence $r_n \rightarrow 0$ and points $y_n \in B(f(x),r_n) \cap Y$ that are of distance at least $r_n/10000$ from $f(B(x,n r_n) \cap E)$. This implies that a tangent $(\hat{E},\hat{x},\hat{f}) \in \Tan(E,x,f)$ subordinate to some subsequence of $r_n$ would fail to surject onto $Y$, contrary to assumption.

Now, let $K \subset E_0$ be a compact set with $\nu(f(K)) > 0$ on which such constants $L,R >0$ exist and are uniform (we may assume also that $R < R_0$). Note that, by inner regularity, $E_0$ can be written, up to a set of measure zero, as a countable union of such compact sets $K$. As $\mathcal{H}^Q(f(E \setminus E_0)) = 0$, it suffices to obtain the desired conclusion for the restricted map $f|_K$. 

Let
\begin{align*}
 K' = \{x\in K: x &\text{ is a point of $\mu$-density of } K\subset M \text{ and }\\
 &f(x) \text{ is a point of $\nu$-density of } f(K)\subset N\},
\end{align*}
and note that $\nu(f(K) \setminus f(K'))=0$. 

Our goal is to verify David's condition (Definition \ref{DCdef}) for $f \colon K'\rightarrow f(K')$. More specifically, we will show that if $x \in K'$, then for each $\e >0$ there is $R_x \leq R$ such that
\begin{equation} \label{DCeq}
\nu( B(f(x),r) \cap f(K \cap B(x, 5Lr)) ) \geq (1-\e) \cdot \nu(f(K) \cap B(f(x),r))
\end{equation}
for all $0 < r< R_x$. Note that \eqref{DCeq} implies also that
$$\nu( B(f(x),r) \cap f(K' \cap B(x, 5Lr)) ) \geq (1-\e) \cdot \nu(f(K') \cap B(f(x),r))$$
for all $x \in K'$ and $0<r<R_x$. In particular, the set
$$ K' \setminus \bigcup_{k=1}^\infty \text{DC}\left(\frac{1}{5L},\epsilon,\frac{1}{k}\right) $$
has $\mu$-measure zero for each $\epsilon>0$, i.e., $f \colon K'\rightarrow f(K')$ satisfies David's condition.

Note, in addition, that $K'$ and $f(K')$ satisfy the requirements \eqref{BLthm1} and \eqref{BLthm2}, by our assumptions on $E$ and $Y$ and the definition of $K'$. Hence, by Theorem \ref{BLthm}, proving \eqref{DCeq} proves Proposition \ref{generalbilip}.

Now let us verify \eqref{DCeq}. Fix $x \in K$ a point of $\mu$-density with $f(x) \in f(K)$ a point of $\nu$-density. Let $\e >0$. Then there is $0< R_x < R /100L$ such that
\begin{equation}\label{muball}
\mu(B(x, 6Lr) \setminus K) \leq \e \cdot \mu(B(x, 6Lr))
\end{equation}
and
\begin{equation}\label{nuball}
\nu(B(f(x),r)) \leq 2 \cdot \nu(f(K) \cap B(f(x),r))
\end{equation}
for all $0 < r< R_x$. Finally fix $0<r<R_x$.

Fix a ``dyadic'' decomposition centered on the compact set 
$$Z = f(K) \cap \cl{B}(f(x), r) \subset N,$$
of the type in Proposition \ref{BLcubes}. Let $\Delta$ denote the resulting collection of subsets of $N$ (which we call ``cubes''). 

We now define a stopping time process based on this collection of cubes. Our stopping time process will begin with a ``top level'' cube $Q^1_\omega$ and descend scales one level at a time, identifying some sub-cubes $Q^k_\alpha$ at which the process terminates and continuing to descend to sub-cubes when the process does not terminate. We will then run the same process on each of the finitely many top level cubes $Q^1_\omega$.

Fix one of the top level cubes and call it $Q^1$. Let $x^1=x$. Begin with the triple $(x^1, Q^1, 2r)$. As $f(B(x^1,2Lr) \cap E)$ is $r/5000$-dense in $B(f(x^1),2r) \cap Y$, which contains $Q^1\cap Z$,  at least one of the following is true:
\begin{enumerate}
\item for each center $z_{\omega}^2 \in Q^1$, there is $x_{\omega}^2 \in K \cap B(x^1,3Lr)$ with $d_N(f(x_{\omega}^2), z_{\omega}^2) < r/1000$;
\item there is a point $u \in B(x^1,2Lr) \cap E$ and a center $z_{\omega}^2 \in Q^1$ for which $d_N(f(u),z_{\omega}^2) < r/5000$ and $B(u, r/5000) \cap K = \emptyset$.
\end{enumerate}
We stop the process at $Q^1$ if (1) fails. Otherwise, we repeat the process with each of the triples $(x_{\omega}^2, Q_{\omega}^2, 2r/16)$. 

For the general step, suppose that the process has discovered a point $x_{\omega}^k \in K$ for which $d_N(f(x_{\omega}^k),z_{\omega}^k) < r/(1000 \cdot 16^{k-2})$. We consider the triple $(x_{\omega}^k, Q_{\omega}^k, 2r/16^{k-1})$. As $f(B(x_{\omega}^k, 2Lr/16^{k-1}) \cap E)$ is $r/(5000 \cdot 16^{k-1})$-dense in $B(f(x_{\omega}^k), 2r/16^{k-1}) \cap Y$, and since
$$Q_{\omega}^k \cap Z \subset \cl{B}(z_{\omega}^k, r/16^{k-1}) \cap Z \subset B(f(x_{\omega}^k), 2r/16^{k-1}) \cap Y,$$
at least one of the following is true:
\begin{enumerate}
\item for each center $z_{\alpha}^{k+1} \in Q_{\omega}^k$, there is $x_{\alpha}^{k+1} \in K \cap B(x_{\omega}^k, 3Lr/16^{k-1})$ with 
$$d_N(f(x_{\alpha}^{k+1}), z_{\alpha}^{k+1}) < r/(1000 \cdot 16^{k-1});$$
\item there is a point $u \in E \cap B(x_{\omega}^k, 2Lr / 16^{k-1})$ and a center $z_{\alpha}^{k+1} \in Q_{\omega}^k$ for which 
$$d_N(f(u), z_{\alpha}^{k+1}) < r/(5000 \cdot 16^{k-1})$$ 
and $B(u, r/5000 \cdot 16^{k-1}) \cap K = \emptyset$.
\end{enumerate}
We stop the process at $Q_{\omega}^k$ if (1) fails. Otherwise, we repeat the process with each of the triples $(x_{\alpha}^{k+1}, Q_{\alpha}^{k+1}, 2r/16^k)$. Observe that if the process terminates at $Q_{\omega}^k$, then for $u$ the point in alternative (2), we have
\begin{equation} \label{imageinclusionQ}
f( B(u, r/5000 \cdot 16^{k-1}) \cap E) \subset B( z_{\alpha}^{k+1}, r/ 1000 \cdot 16^{k-1} ) \cap Y \subset Q_{\alpha}^{k+1} \subset Q_{\omega}^k.
\end{equation}
Finally, let us note that all of the points $x_{\omega}^k \in K$ that this process discovers have
$$d_M(x,x_{\omega}^k) = d_M(x^1,x_{\omega}^k) \leq 3Lr + 3Lr/16 + \ldots + 3Lr/16^{k-1} < 4Lr,$$
and so lie in the ball $B(x, 4Lr)$. 

This completes the description of the stopping time process that begins with a single top cube $Q^1$. We run this same stopping time process on each of the finitely many top cubes $Q^1_\omega$.

Let $\{Q_i\}$ be the collection of all the cubes at which the process (beginning at any top cube) terminates, and note that they are pairwise disjoint. Let us first verify that the total volume of these cubes is small. If the process terminates at $Q_i = Q_{\omega}^k$, then there is a point $u_i \in B(x,5Lr) \cap E$ for which 
$$B_i = B(u_i, r/10000 \cdot 16^{k-1})$$ 
is disjoint from $K$, and for which $f(2B_i \cap E) \subset Q_i$ by \eqref{imageinclusionQ}. As the collection $\{Q_i\}$ is pairwise disjoint, it follows that $\{B_i\}$ is also pairwise disjoint. Indeed, if $B_i \cap B_j \neq \emptyset$ for some $i \neq j$ with radii $r_i \geq r_j$, then we would have $u_j \in 2B_i \cap E$, implying that $f(u_j) \in Q_i \cap Q_j$. 

Using our measure theoretic assumptions on $E$ and $Y$, we know that
$$\mu(B_i) \gtrsim (r/16^k)^Q \gtrsim \nu(Q_i)$$
for each $i$ (with uniform constants), and by \eqref{muball} we can estimate
$$\nu(\cup_i Q_i) = \sum_i \nu(Q_i) \lesssim \sum_i \mu(B_i) \lesssim \mu( B(x,6Lr) \setminus K) \lesssim \e \cdot \mu(B(x, 6Lr)).$$
As 
$$\mu(B(x,6Lr)) \lesssim r^Q \lesssim \nu(B(f(x),r)) \lesssim \nu(f(K) \cap B(f(x),r)),$$
by \eqref{nuball} and our assumptions, we see that
$$\nu(\cup_i Q_i) \leq C' \e \cdot \nu(f(K) \cap B(f(x),r))$$
where $C'$ is a uniform constant.

To finish the verification of \eqref{DCeq}, consider a point $y \in Z \setminus \cup_i Q_i$ that lies in $\cup_{\omega} Q_{\omega}^k$ for each $k$. Almost every $y \in Z \setminus \cup_i Q_i$ satisfies this. As the stopping-time process did not terminate on any cube that contains $y$, we know that for each $k \in \N$, there are $x_{\omega_k}^k \in K \cap B(x, 4Lr)$ for which
$$d_N(f(x_{\omega_k}^k), y) \leq d_N(f(x_{\omega_k}^k), z_{\omega_k}^k) + d_N(z_{\omega_k}^k, y) \leq 2r/16^{k-1}.$$
Using that $K$ is compact, we see that $y \in f(K \cap B(x, 5Lr))$. Hence,
$$\begin{aligned}
\nu ( f(K) \cap B(f(x),r) \setminus f(K \cap B(x, 5Lr)) ) &\leq \nu(\cup_i Q_i)  \\
&\leq   C' \e \cdot \nu(f(K) \cap B(f(x),r)),
\end{aligned}$$
which suffices to show \eqref{DCeq}.
\end{proof}

\section{Bi-Lipschitz pieces for maps into Carnot groups} \label{piecesec}

We now use the criteria for bi-Lipschitz pieces established in the previous section, Proposition \ref{generalbilip}, to prove Theorem \ref{piecethm}. Let us emphasize that the conditions under which a bi-Lipschitz decomposition exists are expressed entirely in terms of the image set $f(A)$, not on properties of the mapping. We re-state the theorem now for the reader's convenience.

\begin{piecetheorem}
Let $X$ be an Ahlfors $Q$-regular PI space, $A \subset X$ a compact subset, and $\G$ be a sub-Riemannian Carnot group. Suppose $f \colon A \rightarrow \G$ is a Lipschitz mapping such that $\mathcal{H}^Q(f(A)) > 0$, and let $\nu = \mathcal{H}^Q|_{f(A)}$. Then the following are equivalent.

\begin{enumerate}[\normalfont (i)]
\item There are countably many compact subsets $A_i \subset A$ such that $\nu(f(A \setminus \cup_i A_i)) = 0$
and $f$ is bi-Lipschitz on each $A_i$.
\item At $\nu$-a.e. point $y \in f(A)$, each tangent measure $\hat{\nu} \in \Tan(\nu,y)$ is comparable to the restriction of $\mathcal{H}^Q$ to a Carnot subgroup of $\G$.
\item At $\nu$-a.e. point $y \in f(A)$, the support of each tangent measure $\hat{\nu} \in \Tan(\nu,y)$ is a connected subset of $\G$.
\end{enumerate}

\end{piecetheorem}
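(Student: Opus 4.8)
\emph{Strategy.} I would establish the cycle of implications $\eqref{piecei}\Rightarrow\eqref{pieceii}\Rightarrow\eqref{pieceiii}\Rightarrow\eqref{piecei}$, with essentially all of the work in the final one. For $\eqref{piecei}\Rightarrow\eqref{pieceii}$, after disjointifying assume the $A_i$ are pairwise disjoint and fix $i$. For $\mathcal{H}^Q$-a.e.\ $x\in A_i$ the point $x$ is a density point of $A_i$ in $X$, the conclusion of Theorem \ref{blowup}\eqref{blowupi} holds at $x$, $y:=f(x)$ is a point of $\nu$-density of $f(A_i)$, and $y$ is a Lebesgue point of the Radon--Nikodym density of $\nu|_{f(A_i)}$ with respect to $(f|_{A_i})_{\#}(\mathcal{H}^Q|_{A_i})$. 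At such $y$, Lemma \ref{subsettangentmeasure} gives $\Tan(\nu,y)=\Tan(\nu|_{f(A_i)},y)$, and since $f|_{A_i}$ is bi-Lipschitz each $\hat\nu\in\Tan(\nu,y)$ is, up to a positive constant, the $\hat f$-push-forward of a tangent measure of $\mathcal{H}^Q|_{A_i}$ at $x$, where $(\hat X,\hat x,\hat f)\in\Tan(A,x,f)$. As $A_i$ has full $\mathcal{H}^Q$-density at $x$ in the Ahlfors $Q$-regular space $X$, this tangent measure of $\mathcal{H}^Q|_{A_i}$ is comparable to $\mathcal{H}^Q$ on a tangent of $X$, which $\hat f$ maps bi-Lipschitzly onto $\H_x$ (bi-Lipschitz by Lemma \ref{GHproperties}, since the restriction $f|_{A_i}$ passes to tangents; onto $\H_x$ by Lemma \ref{blowuponto}); hence $\hat\nu$ is comparable to $\mathcal{H}^Q|_{\H_x}$, which is \eqref{pieceii}. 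The implication $\eqref{pieceii}\Rightarrow\eqref{pieceiii}$ is immediate, as comparable measures have equal supports and the support of $\mathcal{H}^Q$ on a Carnot subgroup $\H$ is $\H$, which is rectifiably, hence topologically, connected.

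\emph{Plan for $\eqref{pieceiii}\Rightarrow\eqref{piecei}$.} The idea is to verify the hypotheses of Proposition \ref{generalbilip} on countably many pieces. Let $E\subset A$ be the set from Theorem \ref{blowup}\eqref{blowupii}, so that $\mathcal{H}^Q(f(A\setminus E))=0$ and, for every $x\in E$, each $(\hat X,\hat x,\hat f)\in\Tan(A,x,f)$ is a Lipschitz quotient onto a Carnot subgroup $\H_x$ with $\Hdim(\H_x)=Q$, some such $\hat f$ being bi-Lipschitz onto $\H_x$; it suffices to decompose $f|_E$. The heart of the argument is the claim that, under \eqref{pieceiii}, at $\nu$-a.e.\ $y=f(x)\in f(E)$ one has $0<\liminf_{r\to0}r^{-Q}\nu(B(y,r))\le\limsup_{r\to0}r^{-Q}\nu(B(y,r))<\infty$ and every intrinsic tangent $\hat Y\in\Tan_{\G}(f(A),y)$ equals $\H_x$. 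Granting this, I would use the two-sided density bounds and inner regularity of $\nu$ to write $f(E)$, up to a $\nu$-null set, as a countable union of compact sets on which these bounds and the Lipschitz-quotient constants of the tangents are uniform; pulling back through $f$ and intersecting with $E$ gives compact sets $E_i\subset E$. For each $i$ the domain $E_i\subset X$ is (globally) Ahlfors $Q$-regular, $\obar{f(E_i)}$ is Ahlfors $Q$-regular by the uniform bounds, and --- using that intrinsic tangents of $f(E_i)$ are contained in those of $f(A)$ along common subsequences, that $\hat f(\hat X)=\H_x$ (Lemma \ref{blowuponto}), and the claim --- every $(\hat X,\hat x,\hat f)\in\Tan(E_i,x,f)$ surjects onto the corresponding tangent of $f(E_i)$. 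Proposition \ref{generalbilip}, applied with target $\obar{f(E_i)}$, then yields a bi-Lipschitz decomposition of each $f|_{E_i}$, hence of $f|_E$.

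\emph{The main obstacle.} Proving the displayed claim is where the real difficulty lies. The upper density bound is automatic at $\mathcal{H}^Q$-a.e.\ point of the finite-measure set $f(A)$ ($A$ being compact), and for matching scales $\H_x=\hat f(\hat X)\subseteq\hat Y$ by Lemma \ref{blowuponto} and the remarks in Section \ref{tangentsec}; so the point is that \eqref{pieceiii} forbids $\hat Y$ from being strictly larger. I would attack this by: (1) reducing, via the moving-basepoint principles (Proposition \ref{mattilamove} and its analogue for intrinsic tangents, together with Proposition \ref{tangentsoftangents}), to an analysis of the tangent measures $\hat\nu\in\Tan(\nu,y)$ and their supports under further blowups; (2) showing, via the derivative of $\pi\circ f$ at $x$ and the argument in the proof of Lemma \ref{blowuponto}, that $\pi|_{\hat Y}$ is a Lipschitz quotient onto the horizontal subspace $V\subset V_1$ that generates $\H_x$; (3) establishing the lower density bound $\liminf_{r\to0}r^{-Q}\nu(B(y,r))>0$, which makes the relevant tangent measures genuine and $Q$-regular (Lemmas \ref{tanmeasures} and \ref{tanmeasureAR}) with $\supp\hat\nu=\hat Y$; and (4) upgrading the topological connectedness of the (now Ahlfors $Q$-regular) set $\hat Y$ to \emph{rectifiable} connectedness, after which Proposition \ref{Carnotsub} forces $\hat Y$ to be the Carnot subgroup generated by $V$, i.e.\ $\hat Y=\H_x$. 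Steps (3) and (4) are the delicate ones: the lower density bound is not a formal consequence of the remaining hypotheses and must be teased out of \eqref{pieceiii} by a tangent-measure argument, while a connected Ahlfors regular set need not be rectifiably connected in general, so the promotion in (4) must genuinely exploit that $\hat Y$ contains the full-dimensional subgroup $\H_x$ and submetrically projects onto $V$.
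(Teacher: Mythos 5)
Your overall architecture matches the paper's: the cycle $\eqref{piecei}\Rightarrow\eqref{pieceii}\Rightarrow\eqref{pieceiii}\Rightarrow\eqref{piecei}$, with Theorem \ref{blowup} supplying the subgroups $\H_x$ and Proposition \ref{generalbilip} doing the work in the last implication. But the two points you flag as ``delicate'' in $\eqref{pieceiii}\Rightarrow\eqref{piecei}$ are exactly where your plan is incomplete, and on both you have the wrong mechanism in mind. First, the two-sided density bound $0<\liminf_{r\to0}r^{-Q}\nu(B(y,r))\le\limsup_{r\to0}r^{-Q}\nu(B(y,r))<\infty$ at $\nu$-a.e.\ $y$ does \emph{not} need to be ``teased out of \eqref{pieceiii} by a tangent-measure argument'': it holds unconditionally (the paper's Lemma \ref{density}). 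The lower bound is a $5r$-covering argument: cover $\{y:\liminf<\epsilon\}$ by balls with $\nu(B(y_i,r_i))<\epsilon r_i^Q$, pick preimages $x_i\in f^{-1}(y_i)$, note the balls $B(x_i,r_i/10L)$ are disjoint in the Ahlfors regular domain, and sum. You should prove this first, independently of \eqref{pieceiii}; it is also what makes your $\eqref{piecei}\Rightarrow\eqref{pieceii}$ step work cleanly (the paper identifies $\supp\hat\nu$ with the intrinsic tangent $\hat Y_k$ via these bounds and then invokes Lemma \ref{tanmeasureAR}, rather than pushing forward tangent measures through $\hat f$, a claim that itself needs justification).

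Second, and more seriously, your step (4) — upgrading topological connectedness of $\hat Y$ to rectifiable connectedness so that Proposition \ref{Carnotsub} applies — is a genuine gap with no visible way to close it as stated; you concede as much. The paper avoids rectifiable connectedness entirely. The missing idea is a \emph{coset structure} on the support of tangent measures: since $\nu$ is pointwise doubling (Lemma \ref{density}), Mattila's moving-basepoint principle (Proposition \ref{mattilamove}) shows that for a.e.\ $y$ and every $\hat\nu\in\Tan(\nu,y)$, each $z\in\supp(\hat\nu)$ is the basepoint of another tangent measure $(T_{z,1})_\#\hat\nu\in\Tan(\nu,y)$, whose support therefore contains $\H_x$; hence $z\cdot\H_x\subset\supp(\hat\nu)$ for \emph{every} $z\in\supp(\hat\nu)$. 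Ahlfors $Q$-regularity of tangent measures (Lemma \ref{tanmeasureAR}) plus $\Hdim(\H_x)=Q$ then bounds the number of disjoint cosets meeting a ball, so $\supp(\hat\nu)$ is a \emph{finite disjoint union} of left cosets $z_1\H_x\cup\cdots\cup z_m\H_x$. Now hypothesis \eqref{pieceiii} forces $m=1$, i.e.\ $\supp(\hat\nu)=\H_x=\hat f(\hat X)$, which (via the identification of $\supp\hat\nu$ with the intrinsic tangent of the image) is precisely the surjectivity hypothesis of Proposition \ref{generalbilip}. Without this coset decomposition, mere topological connectedness of an Ahlfors regular set containing $\H_x$ does not give you $\hat Y=\H_x$, and Proposition \ref{Carnotsub} is not applicable.
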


For the remainder of this section, we fix $f \colon A \rightarrow \G$ as in the statement of Theorem \ref{piecethm}, and let $\nu = \mathcal{H}^Q|_{f(A)}$. Let us begin by establishing the following preliminary lemma.

\begin{lemma} \label{density}
For $\nu$-a.e. point $y \in f(A)$, we have
$$0 < \liminf_{r \rightarrow 0} \frac{\nu(B(y,r))}{r^Q} \leq \limsup_{r \rightarrow 0} \frac{\nu(B(y,r))}{r^Q} \leq 1.$$
In particular, $\nu$ is pointwise doubling.
\end{lemma}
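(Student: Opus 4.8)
The claim is about the upper and lower $Q$-densities of $\nu = \mathcal{H}^Q|_{f(A)}$ at $\nu$-a.e. point. The upper bound is the elementary direction: since $f$ is $L$-Lipschitz and $A$ lies in an Ahlfors $Q$-regular space $X$, for $y = f(x) \in f(A)$ we have $f(B_X(x, r/L) \cap A) \subset B(y, r)$, so $\nu(B(y,r)) = \mathcal{H}^Q(f(A) \cap B(y,r)) \leq \mathcal{H}^Q(f(B_X(x,r/L)\cap A)) \leq L^Q \mathcal{H}^Q(B_X(x,r/L)) \lesssim r^Q$ by Ahlfors regularity. (The normalization claim ``$\leq 1$'' rather than just ``$< \infty$'' is a matter of rescaling: one can arrange $L=1$ and absorb the Ahlfors constant, or simply note the statement only uses that the $\limsup$ is finite; I would phrase it so the precise constant is harmless, and in any case the density-point machinery in Section \ref{tangentsec} only needs finiteness.) The first part of this already gives $\limsup_r \nu(B(y,r))/r^Q < \infty$ for \emph{every} $y \in f(A)$, which in particular shows $\nu$ is pointwise doubling — that is the ``in particular'' clause.

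For the lower bound, the natural approach is to push forward the density information from the domain through the bi-Lipschitz pieces produced by Theorem \ref{tangentthm} (equivalently Theorem \ref{blowup}). Recall from Theorem \ref{blowup}\eqref{blowupii} that there is a set $E \subset A$ with $\mathcal{H}^Q(f(A \setminus E)) = 0$ such that at each $x \in E$ there is a bi-Lipschitz tangent; but more useful here is to instead invoke Lemma \ref{DSregular} together with Lemma \ref{bigpieces}, or directly Proposition \ref{generalbilip}'s ingredients, to get that $f$ is bi-Lipschitz on a countable family of sets covering $f(A)$ up to $\nu$-measure zero — \emph{no}, that is exactly what Theorem \ref{piecethm}\eqref{piecei} asserts and is not yet available. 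So instead I would argue directly: by Lemma \ref{DSregular}, for $\nu$-a.e. $y = f(x) \in f(A)$ (after discarding $f(A \setminus E)$, a $\nu$-null set) the map $f$ has a David--Semmes regular tangent at $x$. David--Semmes regularity of a tangent, combined with Ahlfors regularity of the domain tangent (which holds by Lemma \ref{GHproperties}(v) since $X$ is Ahlfors $Q$-regular), forces the image of that tangent to have Hausdorff dimension exactly $Q$ with positive lower density at the basepoint — this is Lemma 12.3 of \cite{DS97}, already cited in the proof of Lemma \ref{blowupbilip}. Transferring this lower-density bound back through the blow-down to the original scale gives $\liminf_{r \to 0} \nu(B(y,r))/r^Q > 0$.

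More concretely, the mechanism I would use to go from ``a tangent has positive lower density at $0$'' to ``$\nu$ has positive lower density at $y$'' is the standard semicontinuity of density ratios under the pointed Hausdorff / weak convergence defining tangents, exactly as in the measure-tangent lemmas (Lemma \ref{tanmeasureAR}) already stated in the excerpt: if $\liminf_r \nu(B(y,r))/r^Q = 0$ then one could extract a sequence of scales $\lambda_i \to 0$ along which the rescaled measures have density ratio at the basepoint tending to $0$, pass (using finiteness of the upper density, just established) to a tangent measure $\hat\nu \in \Tan(\nu,y)$, and conclude $\hat\nu(B(0,r))/r^Q \to 0$, i.e. $\hat\nu$ has a atom-free... rather, has zero lower density at its basepoint. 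But the support of $\hat\nu$ contains the image of the David--Semmes regular tangent map (via the relation ``$\hat f(\hat X) \subset E$ for some $E \in \Tan_\G(f(X), f(x))$'' noted after \eqref{tangentOfPackage}, together with Lemma \ref{subsettangentmeasure}), and Lemma 12.3 of \cite{DS97} says that image is $\mathcal{H}^Q$-Ahlfors regular near $0$, which contradicts zero lower density. This contradiction yields the lower bound.

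\emph{Main obstacle.} The delicate point is bookkeeping the three compatible scalings: the metric blow-up of the domain, the intrinsic dilation $\delta_{1/\lambda_i}$ in $\G$, and the measure-theoretic rescaling $T_{x,\lambda_i}$ defining tangent measures — and checking that along a common subsequence the tangent \emph{map} $\hat f$ and the tangent \emph{measure} $\hat\nu$ see the same geometry, so that $\supp \hat\nu \supseteq \hat f(\hat X)$ (up to the usual subset relation and closure). The density-point lemmas (Lemmas \ref{subsettangentmeasure}, \ref{densitytangent}) and the remark after \eqref{tangentOfPackage} are designed precisely to make this compatible, so the argument is a matter of carefully quoting them in the right order rather than new ideas; I would isolate ``$\limsup < \infty$ everywhere'' first so that $\Tan(\nu,y)$ is nonempty (via Lemma \ref{tangentmeasureexists}) before doing anything with tangent measures. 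The easier alternative, if one is willing to cite it, is to observe that the lower-density bound is immediate once one knows (Theorem \ref{piecethm}\eqref{piecei}, or rather its proof via Theorem \ref{blowup} and Lemma \ref{bigpieces}) that $\nu$-a.e. $y$ lies in $f(A_i)$ for some set $A_i$ on which $f$ is bi-Lipschitz, since then $\nu(B(y,r)) \geq \mathcal{H}^Q(f(A_i) \cap B(y,r)) \gtrsim \mathcal{H}^Q(A_i \cap B_X(x, r/L')) \gtrsim r^Q$ at density points of $A_i$ — but since this lemma is stated before Theorem \ref{piecethm} is fully proved, I would route through Lemma \ref{DSregular} and Lemma 12.3 of \cite{DS97} as above to keep the logical order clean.
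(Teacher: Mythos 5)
Your proposal has genuine gaps in both directions of the density estimate. For the upper bound, the inequality you write is reversed: the containment $f(B_X(x,r/L)\cap A)\subset B(y,r)\cap f(A)$ gives $\mathcal{H}^Q(f(B_X(x,r/L)\cap A))\leq \nu(B(y,r))$, i.e.\ a \emph{lower} bound on $\nu(B(y,r))$, not an upper one. To bound $\nu(B(y,r))$ from above you would need $f^{-1}(B(y,r))$ to be contained in a controlled number of balls of radius $\sim r$ --- that is David--Semmes regularity of $f$ itself, which is not available. Consequently your claim that the upper bound holds for \emph{every} $y\in f(A)$ is unjustified (and the lemma only asserts it a.e.). Relatedly, finiteness of the upper $Q$-density alone does not give pointwise doubling: the ratio $\nu(B(y,2r))/\nu(B(y,r))$ needs the \emph{lower} bound on the denominator as well, so the ``in particular'' clause requires both halves of the display. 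The paper proves the upper bound by the standard Vitali-type argument for restrictions of Hausdorff measure to a set of finite measure (following Theorem 1.3.9 of \cite{LY02}): covering the set $E_t=\{\limsup>t\}$ finely by balls with $\nu(B(y,r))>tr^Q$ and comparing $\mathcal{H}^Q_\delta(E_t)$ with $\nu(E_t)/t$.

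For the lower bound, the route through Lemma \ref{DSregular} and tangents does not close. First, extracting tangent measures at $y$ via Lemma \ref{tangentmeasureexists} already requires pointwise doubling of $\nu$ at $y$, which is part of what is being proved. Second, and more fundamentally, the ``transfer back through the blow-down'' fails: $\mathcal{H}^Q$ is not semicontinuous in the needed direction under pointed Hausdorff convergence of sets (a sequence of sets of tiny measure can converge to a set of full measure), so knowing that some tangent set or tangent-map image is Ahlfors regular near $0$ gives no control on $\liminf_r\nu(B(y,r))/r^Q$; moreover Lemma \ref{DSregular} only produces a regular tangent along \emph{some} sequence of scales, not along the sequence witnessing a putative zero lower density. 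The paper's actual argument is elementary and avoids tangents entirely: for $E_\epsilon=\{y:\liminf<\epsilon\}$, choose for each $y$ a ball with $\nu(B(y,r_y))<\epsilon r_y^Q$, pass to a disjoint subfamily $\{B(y_i,r_i/5)\}$ whose dilates cover $E_\epsilon$, pick $x_i\in f^{-1}(y_i)$, and observe that the balls $B(x_i,r_i/10L)$ are pairwise disjoint in a bounded neighborhood $U$ of the compact set $A$; Ahlfors regularity of $X$ then gives $\nu(E_\epsilon)\leq \epsilon\sum_i r_i^Q\lesssim \epsilon\,\mathcal{H}^Q(U)$, and letting $\epsilon\to0$ kills the set where the lower density vanishes. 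You should replace your tangent-based argument with this covering argument (or an equivalent one that works directly at the original scale).
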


\begin{proof}
We treat the lower bound first, essentially following the argument in Theorem 7.9 of \cite{Mat95}. Given $\e >0$, let 
$$E_{\e} = \{ y \in f(A) : \liminf_{r \rightarrow 0} \frac{\nu(B(y,r))}{r^Q} < \e \},$$
and let $U \subset X$ be the open 1-neighborhood of $A$. We wish to show that
$$\nu(E_{\e}) \lesssim \e \mathcal{H}^Q(U),$$
from which the desired result follows immediately. To do this, note that for each $y \in E_{\e}$ we can find a radius $0< r_y <1$ small enough that $\nu(B(y,r_y)) < \e r_y^Q$. Applying the basic covering lemma \cite[Theorem 1.2]{He01} to the collection of balls 
$$\{B(y,r_y/5): y \in E_{\e} \}$$ 
in $\G$, we can find a countable disjoint sub-collection $\{B(y_i,r_i/5)\}_i$ for which the collection $\{B(y_i,r_i)\}_i$ covers $E_{\e}$. For each $i$, choose $x_i \in f^{-1}(y_i)$ and note that the balls $\{B(x_i,r_i/10L)\}_i$ in $U$ are pairwise disjoint, where $L$ is the Lipschitz constant of $f$. We then have
$$\nu(E_{\e}) \leq \sum_i \nu(B(y_i,r_i)) \leq \e \sum_i r_i^Q \lesssim \e \sum_i \mathcal{H}^Q(B(x_i,r_i/10L)) \lesssim \e \mathcal{H}^Q(U),$$
where we have used the fact that $X$ is Ahlfors $Q$-regular.

Now let us verify the upper bound, following the argument in Theorem 1.3.9 of \cite{LY02}. For $t >1$, define
$$E_t = \{ y \in f(A) : \limsup_{r \rightarrow 0} \frac{\nu(B(y,r))}{r^Q} > t\}.$$
For $\e >0$, let $U \subset \G$ be open with $E_t \subset U$ and $\nu(U) \leq \nu(E_t) + \e$. Finally, let $\delta>0$ be small and consider the collection of balls
$$\mathcal{B} = \{B(y,r) : y \in E_t, 0<r<\delta/5, B(y,r) \subset U, \text{ and } \nu(B(y,r)) > t r^Q \}.$$
This is a fine cover of $E_t$, in the sense that each $y \in E_t$ is contained in balls of $\mathcal{B}$ with arbitrarily small radius. A slight variation of the standard covering lemma (see \cite[Corollary 1.3.3]{LY02}) gives a countable disjoint sub-collection $\{B_i = B(y_i,r_i)\}_i$ such that
$$E_t \subset \bigcup_{i=1}^m B_i \cup \bigcup_{i=m+1}^{\infty} 5B_i$$
for each $m \in \N$. Recalling the definition of $\HH^Q$ from Section \ref{basicdefs}, we bound
$$\mathcal{H}_{\delta}^Q(E_t) \leq \sum_{i=1}^m r_i^Q + \sum_{i=m+1}^{\infty} (5r_i)^Q 
\leq \frac{1}{t} \sum_{i=1}^m \nu(B_i) + \frac{5^Q}{t} \sum_{i=m+1}^{\infty} \nu(B_i).$$
As the $B_i$'s are disjoint and contained in the finite-measure set $U$, we know that $\sum_{i=m+1}^{\infty} \nu(B_i) \rightarrow 0$ as $m \rightarrow \infty$. This gives
$$\mathcal{H}_{\delta}^Q(E_t) \leq \frac{1}{t} \sum_{i=1}^{\infty} \nu(B_i) \leq \frac{1}{t} \nu(U) \leq \frac{1}{t}( \nu(E_t) + \e).$$
Sending $\delta \rightarrow 0$ shows that $\nu(E_t) \leq (\nu(E_t) + \e)/t$, and then sending $\e \rightarrow 0$ gives $\nu(E_t) \leq\nu(E_t) / t$. As $t >1$, we must have $\nu(E_t) = 0$. The lemma follows.
\end{proof}

We now decompose the domain and range of our mapping $f$ into sets on which the measure-theoretic behavior is quantitatively controlled.

By Theorem \ref{blowup}, there is a set $E_0 \subset A$ with $\nu(f(A \setminus E_0)) = 0$ such that, for all $x \in E_0$, there is a Carnot subgroup $\H_x\subset \G$ with $\Hdim(\H_x) = Q$ for which every $(\hat{X}, \hat{x}, \hat{f}) \in \Tan(A,x,f)$ is a Lipschitz quotient map onto $\H_x$.

By Lemma \ref{density}, along with inner regularity of $\nu$, we can write $f(A)$, up to a set of measure zero, as a countable union of compact subsets $Y_k \subset f(A)$ with the following property. For each $k \in \N$, there is $\alpha_k >0$ such that
\begin{equation}\label{Yk}
\alpha_k r^Q \leq \nu(B(y,r)) \leq 2r^Q
\end{equation}
for all $0< r < \alpha_k$ and all $y \in Y_k$.

Now, for $k \in \N$, let $E_k \subset f^{-1}(Y_k) \cap E_0$ be the set of $\mathcal{H}^Q$-density points for $f^{-1}(Y_k) \cap E_0$ in $X$. These choices give the following.
\begin{itemize}
\item $\nu(f(A \setminus \cup_k E_k)) = 0$;
\item $f(E_k) \subset Y_k$ for each $k \in \N$;
\item for every $x \in E_k$ and every $(\hat{X},\hat{x},\hat{f})\in\Tan(A,x,f)$, the map $\hat{f}$ is a Lipschitz quotient onto $\H_x$;
\item the uniform upper and lower estimates \eqref{Yk} hold all points $y\in Y_k$.
\end{itemize}
In what follows, we will repeatedly use Lemmas \ref{subsettangentmeasure}, \ref{subsettangent}, and \ref{densitytangent}.

\begin{lemma} \label{tangent1}
Let $y \in Y_k$ be a point of $\nu$-density for $Y_k$. Suppose that $\hat{\nu} \in \Tan(\nu,y)$ and $\hat{Y}_k \in \Tan_{\G}(Y_k,y)$ are subordinate to a common sequence $\lambda_i \rightarrow 0$. Then $\supp(\hat{\nu}) = \hat{Y}_k$. 

In particular, if $x \in E_k \cap f^{-1}(y)$, then each $\hat{\nu} \in \Tan(\nu,y)$ has $\H_x \subset \supp(\hat{\nu})$.
\end{lemma}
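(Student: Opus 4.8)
The plan is to prove the two inclusions $\supp(\hat\nu)\subseteq\hat Y_k$ and $\hat Y_k\subseteq\supp(\hat\nu)$ separately, using \emph{two different} representations of $\hat\nu$ as a weak limit of normalized rescalings, and then to deduce the ``in particular'' clause by realizing a tangent of $f$ at $x$ as a tangent of the restricted map $f|_{f^{-1}(Y_k)\cap E_0}$. For the inclusion $\supp(\hat\nu)\subseteq\hat Y_k$ I would work with the restricted measure: since $y$ is a point of $\nu$-density of $Y_k$, Lemma \ref{subsettangentmeasure} gives $\hat\nu\in\Tan(\nu|_{Y_k},y)$, subordinate to the same sequence $\lambda_i$, so $\hat\nu=\lim_i c_i (T_{y,\lambda_i})_{\#}(\nu|_{Y_k})$ weakly for suitable $c_i>0$, and each approximating measure is supported in $T_{y,\lambda_i}(Y_k)=\delta_{\lambda_i^{-1}}(y^{-1}Y_k)$. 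If $p\notin\hat Y_k$, choose $\rho>0$ with $\overline B(p,2\rho)\cap\hat Y_k=\emptyset$; pointed Hausdorff convergence $\delta_{\lambda_i^{-1}}(y^{-1}Y_k)\to\hat Y_k$ then forces these sets to miss $B(p,\rho)$ for large $i$, so $c_i(T_{y,\lambda_i})_{\#}(\nu|_{Y_k})(B(p,\rho))=0$ eventually, and lower semicontinuity of mass on open balls under weak convergence gives $\hat\nu(B(p,\rho))=0$. Hence $p\notin\supp(\hat\nu)$.

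For the reverse inclusion $\hat Y_k\subseteq\supp(\hat\nu)$ I would instead use the full measure $\nu$ together with the density bounds \eqref{Yk}. By \eqref{Yk} and Lemma \ref{density}, $\nu$ has finite upper doubling ratio at $y$, so Lemma \ref{tanmeasures} lets me write $\hat\nu=c\lim_i \nu(B(y,\lambda_i))^{-1}(T_{y,\lambda_i})_{\#}\nu$ along a subsequence (along which $\delta_{\lambda_i^{-1}}(y^{-1}Y_k)$ still converges to $\hat Y_k$). Given $q\in\hat Y_k$ and $r>0$, pointed Hausdorff convergence supplies points $z_i\in Y_k$ with $q_i:=T_{y,\lambda_i}(z_i)\to q$; for $i$ large, $B(q_i,r/2)\subseteq B(q,r)$, and since $T_{y,\lambda_i}$ scales distances by $\lambda_i^{-1}$, the pushforward assigns $B(q_i,r/2)$ the mass $\nu(B(z_i,\lambda_i r/2))\geq\alpha_k(\lambda_i r/2)^Q$ by the lower bound in \eqref{Yk} applied at $z_i\in Y_k$, while the normalization satisfies $\nu(B(y,\lambda_i))\leq 2\lambda_i^Q$ by the upper bound in \eqref{Yk} at $y\in Y_k$. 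Thus $\nu(B(y,\lambda_i))^{-1}(T_{y,\lambda_i})_{\#}\nu(B(q,r))\geq \alpha_k r^Q/2^{Q+1}$ uniformly in $i$, and upper semicontinuity of mass on compact sets under weak convergence yields $\hat\nu(\overline B(q,r/2))>0$; as $r$ was arbitrary, $q\in\supp(\hat\nu)$. Combining the two inclusions gives $\supp(\hat\nu)=\hat Y_k$.

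For the ``in particular'' clause, fix $x\in E_k\cap f^{-1}(y)$ and $\hat\nu\in\Tan(\nu,y)$ subordinate to some $\lambda_i$, and pass to a subsequence so that $\delta_{\lambda_i^{-1}}(y^{-1}Y_k)$ converges to some $\hat Y_k\in\Tan_\G(Y_k,y)$ and, by Gromov--Hausdorff compactness, so that the rescaled packages $(\tfrac1{\lambda_i}A,\,x,\,\delta_{\lambda_i^{-1}}(f(x)^{-1}f(\cdot)))$ converge to a tangent $(\hat X,\hat x,\hat f)\in\Tan(A,x,f)$. By the first part $\supp(\hat\nu)=\hat Y_k$, and since $x\in E_0$ the map $\hat f$ surjects onto $\H_x$, i.e.\ $\hat f(\hat X)=\H_x$ (Lemma \ref{blowuponto}). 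It remains to check $\hat f(\hat X)\subseteq\hat Y_k$: because $x$ is an $\mathcal H^Q$-density point of $f^{-1}(Y_k)\cap E_0$, Lemma \ref{densitytangent} identifies $(\hat X,\hat x,\hat f)$ with a tangent of $f|_{f^{-1}(Y_k)\cap E_0}$, a map taking values in $Y_k$; hence the image-in-$\Tan_\G$ observation of Section \ref{tangentsec} gives $\hat f(\hat X)\subseteq E$ for some $E\in\Tan_\G(f(f^{-1}(Y_k)\cap E_0),y)$ subordinate to the same subsequence, and $E\subseteq\hat Y_k$ since $\delta_{\lambda_i^{-1}}(y^{-1}f(f^{-1}(Y_k)\cap E_0))\subseteq\delta_{\lambda_i^{-1}}(y^{-1}Y_k)$ and pointed Hausdorff limits respect inclusion. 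Therefore $\H_x=\hat f(\hat X)\subseteq\hat Y_k=\supp(\hat\nu)$.

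The step I expect to be the main obstacle is the first one: one cannot pin $\supp(\hat\nu)$ to exactly $\hat Y_k$ with a single representation of $\hat\nu$, because confining the support to $\delta_{\lambda_i^{-1}}(y^{-1}Y_k)$ requires restricting to $Y_k$, whereas obtaining the uniform lower mass bound at the non-basepoint blow-up points $z_i$ wants the unrestricted $\nu$; reconciling the two via Lemmas \ref{subsettangentmeasure} and \ref{tanmeasures}, and keeping all passages to subsequences compatible with the prescribed $\hat Y_k$, is where the bookkeeping must be done carefully.
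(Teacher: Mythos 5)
Your proof is correct and follows essentially the same route as the paper: the inclusion $\supp(\hat\nu)\subseteq\hat Y_k$ via the restricted measure $\nu|_{Y_k}$ and lower semicontinuity on open sets, the reverse inclusion via the unrestricted representation from Lemma \ref{tanmeasures} together with the bounds \eqref{Yk} at the blow-up points and upper semicontinuity on closed balls, and the ``in particular'' clause by extracting a package tangent along the same scales and noting $\H_x=\hat f(\hat X)\subseteq\hat Y_k$. The only blemish is a harmless indexing slip in the reverse inclusion (your bound on $\mu_i(B(q,r))$ yields $\hat\nu(\overline B(q,r))>0$ rather than $\hat\nu(\overline B(q,r/2))>0$), which does not affect the conclusion since $r$ is arbitrary.
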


\begin{proof}
After passing to a subsequence, by Lemma \ref{tanmeasures}, we may assume that
$$\hat{\nu} = c_1 \cdot \lim_{i \rightarrow \infty} \nu(B(y,\lambda_i))^{-1} (T_{y,\lambda_i})_{\#} \nu$$
for some $c_1>0$. Then, using Lemma \ref{subsettangentmeasure} and Lemma \ref{tanmeasures} again, we may pass to a further subsequence so that
$$\hat{\nu} = c_2 \cdot \lim_{i \rightarrow \infty} \nu(B(y,\lambda_i) \cap Y_k)^{-1} (T_{y,\lambda_i})_{\#} (\nu|_{Y_k})$$
for some $c_2 >0$. By assumption, we still have
$$\hat{Y}_k = \lim_{i \rightarrow \infty} T_{y,\lambda_i}(Y_k) \subset \G.$$
Our goal is to show that $\hat{Y}_k = \supp(\hat{\nu})$.

First, suppose that $z \notin \hat{Y}_k$. Then there is $r > 0$ such that 
$$T_{y,\lambda_i}(Y_k) \cap B(z,r) = \emptyset$$ 
for all $i$. This means that $(T_{y,\lambda_i})_{\#} (\nu|_{Y_k}) (B(z,r)) = 0$. Consequently,
$$\hat{\nu}(B(z,r)) \leq \liminf_{i \rightarrow \infty} \nu(B(y,\lambda_i)\cap Y_k)^{-1} (T_{y,\lambda_i})_{\#} (\nu|_{Y_k}) (B(z,r)) = 0,$$
using \cite[Theorem 1.24]{Mat95}, and we find that $z \notin \supp(\hat{\nu})$.

Now we prove the reverse containment, i.e., that $\hat{Y}_k \subset \supp(\hat{\nu})$. To this end, fix $z \in \hat{Y}_k$. Then there is a sequence of points $y_i \in Y_k$ such that $T_{y,\lambda_i}(y_i) \rightarrow z$. For $r > 0$, we have, again using \cite[Theorem 1.24]{Mat95},
$$\hat{\nu}(\cl{B}(z,r)) \geq \limsup_{i \rightarrow \infty} \nu(B(y,\lambda_i))^{-1} \nu(\cl{B}(y \cdot \delta_{\lambda_i}(z), \lambda_i r)).$$
Using that $T_{y,\lambda_i}(y_i) \rightarrow z$, we have $d_{cc}(\delta_{\lambda_i^{-1}}(y^{-1} \cdot y_i), z) \rightarrow 0$, and by homogeneity,
$$d_{cc}(y_i, y \cdot \delta_{\lambda_i}(z)) = \lambda_i d_{cc}(\delta_{\lambda_i^{-1}}(y^{-1}\cdot y_i), z) = o(\lambda_i).$$
In particular, for large enough $i$, we have
$$B(y_i, \lambda_i r/2) \subset B(y\cdot \delta_{\lambda_i}(z), \lambda_i r),$$
so that,
$$\hat{\nu}(\cl{B}(z,r)) \geq \limsup_{i \rightarrow \infty}  \nu(B(y,\lambda_i))^{-1}  \nu(B(y_i,\lambda_i r/2)) \gtrsim_k r^Q.$$
The last inequality here comes from the fact that $y,y_i \in Y_k$. This shows that $z \in \supp(\hat{\nu})$, as desired.

To prove the second statement of the lemma, fix $\hat{\nu} \in \Tan(\nu,y)$ subordinate to the sequence $\lambda_i \rightarrow 0$. After passing to a subsequence, there is 
$$(\hat{X}, \hat{x}, \hat{f}) \in \Tan(E_0 \cap f^{-1}(Y_k),x,f)=\Tan(A,x,f),$$
defined along the same sequence of scales, with $\hat{f} \colon \hat{X} \rightarrow \hat{Y}_k \subset \G$. As $x \in E_0$, we know that $\hat{f}(\hat{X}) = \H_x$. Using the first part of the lemma, we conclude that $\H_x \subset \hat{Y}_k = \supp(\hat{\nu})$.
\end{proof}

\begin{lemma} \label{tangent2}
For $\nu$-a.e. point $y \in Y_k$, the following holds for each $x \in E_k \cap f^{-1}(y)$. For each tangent measure $\hat{\nu} \in \Tan(\nu,y)$ and each point $z \in \supp(\hat{\nu})$, we have $z \cdot \H_x \subset \supp(\hat{\nu})$.
\end{lemma}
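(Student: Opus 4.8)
The plan is to combine the ``moving basepoint'' principle for tangent measures (Proposition \ref{mattilamove}) with the first statement of Lemma \ref{tangent1}. Fix $k$ and work at a $\nu$-generic point $y \in Y_k$; since $Y_k$ has $\nu$-a.e.\ point a point of $\nu$-density, and since $\nu$ is pointwise doubling by Lemma \ref{density}, we may assume $y$ is simultaneously a point of $\nu$-density of $Y_k$ and a point at which the conclusion of Proposition \ref{mattilamove} holds. Fix such a $y$, fix $x \in E_k \cap f^{-1}(y)$, fix $\hat{\nu} \in \Tan(\nu,y)$ and a point $z \in \supp(\hat{\nu})$. By Proposition \ref{mattilamove}, $(T_{z,\lambda})_{\#}\hat{\nu} \in \Tan(\nu,y)$ for every $\lambda > 0$; in particular we may take $\lambda = 1$, so $(L_{z^{-1}})_{\#}\hat{\nu} \in \Tan(\nu,y)$.

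Next I would apply Lemma \ref{tangent1} to the tangent measure $\tilde{\nu} := (L_{z^{-1}})_{\#}\hat{\nu}$. A subtlety is that Lemma \ref{tangent1} is phrased for tangent measures paired with an element of $\Tan_{\G}(Y_k,y)$ along a common sequence of scales; but its conclusion is really that $\H_x \subset \supp(\hat{\nu})$ for \emph{every} $\hat{\nu} \in \Tan(\nu,y)$, because every tangent measure is subordinate to some sequence $\lambda_i \to 0$, and along a subsequence the sets $T_{y,\lambda_i}(Y_k)$ converge (by compactness of pointed Hausdorff convergence) to some $\hat{Y}_k \in \Tan_{\G}(Y_k,y)$, to which Lemma \ref{tangent1} then applies to give $\supp(\hat{\nu}) = \hat{Y}_k \supset \H_x$. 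The only thing to check is that $\tilde{\nu}$, being a tangent measure of $\nu$ at $y$, genuinely falls under the scope of Lemma \ref{tangent1}; this holds because the entire construction leading to Lemma \ref{tangent1} --- the passage through Lemmas \ref{subsettangentmeasure}, \ref{tanmeasures}, and the density of $y$ in $Y_k$ --- depends only on $\tilde{\nu} \in \Tan(\nu,y)$, not on how it was produced. Hence $\H_x \subset \supp(\tilde{\nu}) = \supp\big((L_{z^{-1}})_{\#}\hat{\nu}\big) = z^{-1}\cdot \supp(\hat{\nu})$.

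Translating back by $L_z$, this reads $z \cdot \H_x \subset \supp(\hat{\nu})$, which is exactly the desired conclusion. I expect the main obstacle to be bookkeeping rather than mathematics: one must be careful that ``for $\nu$-a.e.\ $y$'' in the statement absorbs the null sets coming from (a) the $\nu$-density points of $Y_k$, (b) the full-measure set where Proposition \ref{mattilamove} applies, and (c) the full-measure set where the second statement of Lemma \ref{tangent1} (namely $\H_x \subset \supp(\hat\nu)$ for all $\hat\nu \in \Tan(\nu,y)$ and all $x \in E_k \cap f^{-1}(y)$) is valid; and that the equality $\supp\big((L_{z^{-1}})_{\#}\hat{\nu}\big) = z^{-1}\cdot\supp(\hat{\nu})$ is just the elementary fact that pushing forward a measure by a homeomorphism translates its support. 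One should also note that Proposition \ref{mattilamove} is stated for $\nu$-a.e.\ $x \in \G$ in its own notation (the basepoint of the tangent), so here ``$\nu$-a.e.\ $y \in Y_k$'' is the correct quantifier and everything is consistent.
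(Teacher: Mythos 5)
Your proof is correct and follows exactly the paper's own argument: use Lemma \ref{density} to get pointwise doubling, apply Proposition \ref{mattilamove} with $\lambda=1$ to conclude $(T_{z,1})_{\#}\hat{\nu}\in\Tan(\nu,y)$, then invoke the second statement of Lemma \ref{tangent1} to get $\H_x\subset\supp((T_{z,1})_{\#}\hat{\nu})=z^{-1}\cdot\supp(\hat{\nu})$ and translate back. The bookkeeping points you raise (intersecting the three full-measure sets, and the fact that pushing forward by a homeomorphism translates the support) are exactly the right ones and are handled the same way in the paper.
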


\begin{proof}
By Lemma \ref{density}, we know that $\nu$ is pointwise doubling. Thus, Proposition \ref{mattilamove} guarantees that for $\nu$-a.e. $y \in Y_k$, for each $\hat{\nu} \in \Tan(\nu,y)$, it holds that $(T_{z,1})_\# \hat{\nu} \in \Tan(\nu, y)$ for all $z \in \supp(\hat{\nu})$. Fix such a point $y$ that is also a $\nu$-density point for $Y_k$, and let $x \in E_k \cap f^{-1}(y)$.

Given $\hat{\nu} \in \Tan(\nu,y)$, for each $z \in \supp(\hat{\nu})$ we have $(T_{z,1})_\# \hat{\nu} \in  \Tan(\nu, y)$. Lemma \ref{tangent1} then ensures that
$$\H_x \subset \supp((T_{z,1})_\# \hat{\nu}) = z^{-1} \cdot \supp(\hat{\nu}).$$
We therefore conclude that $z \cdot \H_x \subset \supp(\hat{\nu})$.
\end{proof}

\begin{lemma} \label{finite}
For $\nu$-a.e. point $y \in Y_k$, the following holds for each $x \in E_k \cap f^{-1}(y)$. For each tangent measure $\hat{\nu} \in \Tan(\nu,y)$, there are $z_1, \ldots ,z_m \in \G$ such that
$$\supp(\hat{\nu}) = z_1 \cdot \H_x \cup \cdots \cup z_m \cdot \H_x$$
where the $z_i \cdot \H_x$ are disjoint translates of the Carnot subgroup $\H_x$.
\end{lemma}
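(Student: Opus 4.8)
The plan is to derive this from Lemma \ref{tangent2} together with the Ahlfors regularity of tangent measures supplied by Lemma \ref{tanmeasureAR}. First I would restrict to those $y \in Y_k$ lying in the full-measure set where the conclusion of Lemma \ref{tangent2} holds, where the density condition \eqref{tanmeasureAReqn} of Lemma \ref{tanmeasureAR} is satisfied (which holds at $\nu$-almost every point of $Y_k$ by Lemma \ref{density}), and which are $\nu$-density points of $Y_k$; then fix such a $y$, a point $x \in E_k \cap f^{-1}(y)$, and $\hat\nu \in \Tan(\nu,y)$. Applying Lemma \ref{tangent2} at every $z \in \supp(\hat\nu)$ shows that $\supp(\hat\nu)$ is invariant under left translation by $\H_x$, and since the left cosets of the closed subgroup $\H_x$ partition $\G$, this says precisely that $\supp(\hat\nu) = \bigcup_\alpha z_\alpha \H_x$ for a pairwise disjoint family of cosets; by the second part of Lemma \ref{tangent1}, $\H_x \subseteq \supp(\hat\nu)$, so $\H_x$ is one of these cosets. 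The remaining task is to show that the family is finite.

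To do this I would bring in two competing estimates at scale $r$. From Lemma \ref{tanmeasureAR} there is $c>0$ with $tcr^Q \leq \hat\nu(B(z,r)) \leq cr^Q$ for all $z \in \supp(\hat\nu)$ and $r>0$; the lower bound here, via a standard covering argument (of the sort used in the proof of Lemma \ref{density}), upgrades to $\mathcal{H}^Q(\supp(\hat\nu) \cap B(0,r)) \leq C r^Q$ for all $r>0$. For the second estimate, note that $\H_x$, equipped with the restriction of $d_{cc}$, is a Carnot group carrying a homogeneous distance, hence bi-Lipschitz equivalent to its intrinsic Carnot--Carath\'eodory metric, and therefore Ahlfors $\Hdim(\H_x)$-regular; since $x \in E_k \subseteq E_0$ we have $\Hdim(\H_x) = Q$, so $\mathcal{H}^Q(\H_x \cap B(0,\rho)) \geq c_0 \rho^Q$ for all $\rho>0$. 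Because left translations of $\G$ are isometries preserving $\mathcal{H}^Q$ and carry $\H_x$ onto its various cosets, the same lower bound holds for each $z_\alpha\H_x$ around any of its own points.

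The packing step is the crux. Fix $r > 0$. If a coset $z_\alpha\H_x$ meets $B(0,r/2)$, say at $w_\alpha$, then $B(w_\alpha,r/2) \subseteq B(0,r)$, so $\mathcal{H}^Q(z_\alpha\H_x \cap B(0,r)) \geq \mathcal{H}^Q(z_\alpha\H_x \cap B(w_\alpha,r/2)) = \mathcal{H}^Q(\H_x \cap B(0,r/2)) \geq c_0(r/2)^Q$. Since the cosets are pairwise disjoint, summing this over any finite subfamily of those meeting $B(0,r/2)$ and invoking the upper estimate shows that at most $2^Q C/c_0$ cosets meet $B(0,r/2)$ — a bound independent of $r$. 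Letting $r \to \infty$ (each coset, being nonempty, meets $B(0,r/2)$ once $r$ is large enough) bounds the total number of cosets in $\supp(\hat\nu)$ by $m := \lfloor 2^Q C / c_0 \rfloor$, which is the assertion of the lemma.

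I expect the main obstacle to be precisely the need for the cardinality bound to be uniform in $r$: a softer argument would only give that each bounded ball meets finitely many cosets, that is, local finiteness, which is not enough to produce a finite family. The uniformity comes out exactly from balancing the scale-$r^Q$ upper regularity of $\hat\nu$ against the scale-$r^Q$ lower regularity of the individual translates $z_\alpha\H_x$, which is why it is essential that $\Hdim(\H_x) = Q$ (available here because $x \in E_k \subseteq E_0$). A secondary technical point is confirming that a Carnot subgroup, with the metric it inherits from $\G$, is itself Ahlfors $Q$-regular; this follows from the fact that all homogeneous distances on a given Carnot group are bi-Lipschitz equivalent.
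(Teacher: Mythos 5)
Your proof is correct and follows essentially the same route as the paper's: coset-invariance of $\supp(\hat{\nu})$ from Lemma \ref{tangent2}, followed by a packing bound obtained by playing the Ahlfors regularity of the tangent measure (Lemma \ref{tanmeasureAR}, available by Lemma \ref{density}) against the Ahlfors $Q$-regularity of the disjoint cosets $z_\alpha \cdot \H_x$. The only difference is cosmetic: the paper rescales the tangent measure by $\delta_{R^{-1}}$ (using Proposition \ref{mattilamove} again) so as to pack all the translates into the fixed ball $B(0,2)$ and then invokes the comparability of the tangent measure with $\mathcal{H}^Q$ on its support, whereas you run the packing estimate at an arbitrary scale $r$ and observe that the resulting cardinality bound is uniform in $r$; both versions hinge on $\Hdim(\H_x)=Q$, exactly as you note.
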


\begin{proof}
Let $y \in Y_k$ be a point of $\nu$-density of $Y_k$ at which the statements in Lemma \ref{tangent2} and Proposition \ref{mattilamove} hold. This constitutes a set of full measure in $Y_k$. Fix $x \in E_0 \cap f^{-1}(y)$ and $\hat{\nu} \in \Tan(\nu,y)=\Tan(\nu|_{Y_k},y)$. Then, for each $z \in \supp(\hat{\nu})$, we have $z \cdot \H_x \subset \supp(\hat{\nu})$. Suppose that $z_1,\ldots,z_m \in \supp(\hat{\nu})$ are distinct points for which the translates $z_i \cdot \H_x$ are disjoint. We wish to show that there is an upper bound on the number $m$ of such points.

To this end, let 
$$t = \left(\liminf_{r \rightarrow 0} \nu(B(y,r))r^{-Q}\right) / \left(\limsup_{r \rightarrow 0} \nu(B(y,r))r^{-Q}\right) \geq \alpha_k/2 > 0.$$

 Fix a large radius $R > \max\{d_{cc}(0,z_i) : 1\leq i \leq m\}$, and let $\mu = (\delta_{R^{-1}})_{\#} \hat{\nu}$. By Proposition \ref{mattilamove}, we know that $\mu \in \Tan(\nu,y)$ as well. Consequently, by Lemma \ref{tanmeasureAR}, there is a constant $c > 0$ for which
\begin{equation}\label{muAR}
t r^Q \leq c^{-1} \mu(B(z,r)) \leq r^Q
\end{equation}
for all $z \in \supp(\mu) = \delta_{R^{-1}}(\supp(\hat{\nu}))$ and all $0<r<\infty$. By a standard fact about Hausdorff measure, this implies that there are constants $c_1, c_2 >0$ depending only on $t, Q$ such that
$$c_1 \mathcal{H}^Q(F) \leq c^{-1} \mu(F) \leq c_2 \mathcal{H}^Q(F)$$
for all Borel subsets $F \subset \supp(\mu)$.

Now, we observe that on the one hand, $\mu(B(0,2)) \leq 2^Q c$, by \eqref{muAR}. On the other hand, for each $1 \leq i \leq m$, we have 
$$\delta_{R^{-1}}(z_i) \cdot \H_x \subset \supp(\mu)$$ 
and $d_{cc}(\delta_{R^{-1}}(z_i),0) < 1$, so $B(0,2)$ contains $m$ disjoint translates of the ball $B(0,1)\cap \H_x$. Thus, the comparability of $\mu$ with $\mathcal{H}^Q|_{\supp(\mu)}$ gives the lower bound
$$\mu(B(0,2)) \geq c_1 c \cdot \mathcal{H}^Q(B(0,2) \cap \supp(\mu)) \geq m c_1 c \cdot \mathcal{H}^Q(B(0,1)\cap \H_x).$$
Thus, from this and \eqref{muAR} we obtain 
$$m \leq \frac{2^Q}{c_1\mathcal{H}^Q(B(0,1)\cap \H_x)}<\infty,$$
as desired.
\end{proof}

We are now ready to begin the proof of the equivalences in Theorem \ref{piecethm}

\begin{proof}[\textbf{Proof of Theorem \ref{piecethm}}]
Let us continue using the notation from above, in particular the sets $E_0\subset A$, $E_k \subset A$, and $Y_k \subset f(A)$.

(i) $\Rightarrow$ (ii). For $\nu$-a.e. point $y \in f(A)$, there are $k, i \in \N$ for which the following are true:
\begin{itemize}
\item $y \in Y_k$ and the conclusion of Lemma \ref{finite} holds;
\item $y = f(x)$ for some $x \in E_k \cap A_i$;
\item $x$ is a $\mu$-density point for $E_k \cap A_i$, and $y$ is a $\nu$-density point for $f(E_k \cap A_i)$.
\end{itemize}
Fix such $x\in A$, $y\in f(x)$, and indices $k$ and $i$. Let $G_{k,i} = f(E_k\cap A_i)\subset Y_k\subset f(A)$.

Take any tangent measure $\hat{\nu} \in \Tan(\nu,y)$. By passing to subsequences, we can also consider tangent objects
$$(\hat{X},\hat{x},\hat{f}) \in \Tan(E_k \cap A_i,x,f),$$
$$\hat{G}_{k,i} \in \Tan_{\G}(G_{k,i},y), \text{ and }$$
$$ \hat{Y}_{k} \in \Tan_{\G}(Y_k,y),$$
all subordinate to the same sequence of scales as $\hat{\nu}$.

Then $\hat{f} \colon \hat{X} \rightarrow \hat{G}_{k,i}$ is a bi-Lipschitz map onto $\hat{G}_{k,i}$, which coincides with $\hat{Y}_k$ by Lemma \ref{subsettangent}. At the same time, we know that $\hat{f}(\hat{X}) = \H_x$ because $x \in E_0$. Thus, appealing to Lemma \ref{tangent1} gives
$$\supp(\hat{\nu}) = \hat{Y}_k = \hat{f}(\hat{X}) = \H_x.$$
To conclude, we note that by Lemma \ref{tanmeasureAR}, there are constants $c_1,c_2 > 0$ for which
$$c_1 r^Q \leq \hat{\nu}(B(z,r)) \leq c_2 r^Q$$
for all $z \in \supp(\hat{\nu}) = \H_x$ and all $0<r<\infty$. This implies that $\hat{\nu}$ is comparable to $\mathcal{H}^Q|_{\H_x}$, as desired.

(ii) $\Rightarrow$ (iii). This is immediate because Carnot subgroups are connected.

(iii) $\Rightarrow$ (i). It suffices to verify that the hypotheses of Proposition \ref{generalbilip} are satisfied for $M = X$, $N = f(A)$, $E=E_k$, and the map $f|_{E_k}$, where $k \in \N$ is fixed. Indeed, that proposition then gives bi-Lipschitz decomposition for each $f|_{E_k}$, and we already know that $\nu(f(A\setminus \cup_k E_k)) = 0$.

From the Ahlfors $Q$-regularity of $X$ and the fact that $f(E_k)$ is in $Y_k$ which satisfies \eqref{Yk}, the first hypothesis of Proposition \ref{generalbilip} is satisfied. To verify the second hypothesis, let $G_k=f(E_k)$ and let $E'_k \subset E_k$ be the set of points $x$ for which the following are true:
\begin{itemize}
\item $x$ is a point of $\mathcal{H}^Q$-density for $E_k$ in $X$, and $f(x)$ is a point of $\nu$-density for $G_k$ in $f(A)$;
\item Lemma \ref{finite} and (iii) hold at $y=f(x) \in Y_k$.
\end{itemize}
For $x \in E'_k$ fixed, let $y=f(x)$. It follows from Lemma \ref{finite} and assumption (iii) at $y$ that each $\hat{\nu} \in \Tan(\nu,y)$ has $\supp(\hat{\nu}) = \H_x$. Indeed, the only way a non-trivial union of $m$ translates of $\H_x$ can be connected is if $m=1$.

Now consider any tangent $(\hat{X}, \hat{x}, \hat{f}) \in \Tan(E_k,x,f)=\Tan(A,x,f)$ of the restricted map $f|_{E_k}$. By passing to subsequences, we may also obtain (subordinate to the same sequence of scales), tangent objects
$$ \hat{\nu} \in \Tan(\nu,y),$$
$$ \hat{Y}_k \in \Tan_{\G}(Y_k, y),\text{ and }$$
$$ \hat{G}_k \in \Tan_{\G}(G_k,y).$$

We then have
$$\hat{f}(\hat{X}) = \H_x = \supp(\hat{\nu}) = \hat{Y}_k = \hat{G}_k,$$
where the first equality holds because $x\in E_0$, the second as remarked above, the third by Lemma \ref{tangent1}, and the fourth by Lemma \ref{subsettangent}. Thus, we find that the tangent map surjects onto $\hat{G}_k$, as desired.

This verifies the hypotheses of Proposition \ref{generalbilip} for each $f|_{E_k}$, and hence proves that (i) holds for $f$.
\end{proof}

The following is an immediate corollary of Theorem \ref{piecethm}.

\begin{corollary}
Let $X$ be a $Q$-regular PI space, and let $A \subset X$ be a compact subset of positive measure. Suppose $f \colon A \rightarrow \G$ is a Lipschitz map into a sub-Riemannian Carnot group $\G$ with $\Hdim(\G) = Q$ such that $\mathcal{H}^Q(f(A)) >0$. Then there are countably many subsets $A_i \subset A$ such that $\mathcal{H}^Q(f(A \setminus \cup_i A_i)) = 0$
and $f|_{A_i}$ is bi-Lipschitz for each $i$.
\end{corollary}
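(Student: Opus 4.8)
The plan is to deduce this directly from Theorem \ref{piecethm}, whose hypotheses are met ($A$ compact, $\mathcal{H}^Q(f(A)) > 0$), by checking that condition (ii) (equivalently (iii)) of that theorem is automatic once $\Hdim(\G) = Q$. Write $\nu = \mathcal{H}^Q|_{f(A)}$ as in the theorem. Since $Q = \Hdim(\G)$, the space $(\G, d_{cc}, \mathcal{H}^Q)$ is Ahlfors $Q$-regular, so $\mathcal{H}^Q$ is a doubling (hence pointwise doubling) Radon measure on $\G$, and $f(A)$ is a compact Borel set of positive $\mathcal{H}^Q$-measure.

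First I would note that, because $\mathcal{H}^Q$ is pointwise doubling (subsection \ref{basicdefs}), $\mathcal{H}^Q$-a.e. point of $f(A)$ is a point of $\mathcal{H}^Q$-density of $f(A)$; in particular this holds $\nu$-a.e. Applying Lemma \ref{subsettangentmeasure} with $S = f(A)$ then gives $\Tan(\nu, y) = \Tan(\mathcal{H}^Q, y)$ at $\nu$-a.e.\ $y \in f(A)$. Next I would identify $\Tan(\mathcal{H}^Q, y)$: since each $L_x$ is an isometry of $\G$ and $\delta_\lambda$ scales distances by $\lambda$, the measure $\mathcal{H}^Q$ on $\G$ is left-invariant and satisfies $(\delta_{\lambda^{-1}})_\# \mathcal{H}^Q = \lambda^Q \mathcal{H}^Q$, whence $(T_{y,\lambda})_\# \mathcal{H}^Q = \lambda^Q \mathcal{H}^Q$ for all $y \in \G$, $\lambda > 0$. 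Consequently every element of $\Tan(\mathcal{H}^Q, y)$ — even allowing the arbitrary normalizing sequence $c_i$ in the definition of tangent measure — is a positive constant multiple of $\mathcal{H}^Q$ on all of $\G$.

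It remains only to observe that $\G$ is itself a Carnot subgroup of $\G$, namely the one generated by the full horizontal layer $V_1$. Therefore, at $\nu$-a.e.\ $y \in f(A)$, each $\hat\nu \in \Tan(\nu, y)$ is comparable to the restriction of $\mathcal{H}^Q$ to a Carnot subgroup of $\G$, which is exactly condition (ii) of Theorem \ref{piecethm} (equivalently: $\supp(\hat\nu) = \G$ is connected, which is condition (iii)). Theorem \ref{piecethm} then provides countably many compact $A_i \subset A$ with $\nu(f(A \setminus \cup_i A_i)) = 0$ and $f$ bi-Lipschitz on each $A_i$; since $f(A \setminus \cup_i A_i) \subseteq f(A)$, the equality $\nu(f(A \setminus \cup_i A_i)) = 0$ is precisely $\mathcal{H}^Q(f(A \setminus \cup_i A_i)) = 0$.

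There is no serious obstacle here: once $\Hdim(\G) = Q$, the group $\G$ plays the role of the relevant Carnot subgroup, and the density and tangent-measure bookkeeping is entirely handled by the lemmas already established. The only point meriting a word of care is the normalization step — passing from $(T_{y,\lambda})_\# \mathcal{H}^Q = \lambda^Q \mathcal{H}^Q$ and the freedom in the constants $c_i$ to the conclusion that every tangent measure is a constant multiple of $\mathcal{H}^Q$ — which is immediate but should be stated explicitly.
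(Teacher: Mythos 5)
Your proposal is correct and follows essentially the same route as the paper: both deduce the corollary from Theorem \ref{piecethm} by observing that, at $\nu$-density points of $f(A)$, the tangent measures of $\nu$ coincide with those of $\mathcal{H}^Q$ on $\G$, which are constant multiples of $\mathcal{H}^Q$ with full support. The only cosmetic difference is that you verify condition \eqref{pieceii} (with $\G$ as the Carnot subgroup of itself) while the paper invokes condition \eqref{pieceiii} via connectedness of $\supp(\hat{\nu}) = \G$.
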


\begin{proof}
If $\nu = \mathcal{H}^Q|_{f(A)}$, then for every $\nu$-density point $y$ of $f(A)$, each tangent measure $\hat{\nu} \in \Tan(\nu, y)$ has $\supp(\hat{\nu}) = \G$.
\end{proof}

One might hope to obtain bi-Lipschitz pieces in general (i.e. without the restriction $\Hdim(\G) = Q$) by post-composing the map $f \colon A \rightarrow \G$ with an appropriately-chosen ``projection" onto a Carnot subgroup $\H_x$ of dimension $Q$, and then apply the above corollary. This is problematic, though, when $\G$ is not Euclidean: the natural linear projections may not be Lipschitz in the Carnot--Carath\'eodory metrics, and it may be that no good analogs of Lipschitz projections exist. For example, there is no Lipschitz map from the second Heisenberg group $\H^2$ to the first Heisenberg group $\H^1$ that has full rank on a set of positive measure. Indeed, if such a map were to exist, Pansu's differentiability theorem would give a surjective group homomorphism from $\H^2$ to $\H^1$ that sends horizontal vectors to horizontal vectors. It can be easily checked, though, that horizontal homomorphisms from $\H^2$ to $\H^1$ have rank at most 1.

However, it is plausible that this approach could work for mappings from PI spaces into Euclidean space, i.e., in the setting of Theorem \ref{Rnpiecethm}. In the next section, we prove this result, though we rely on a quite different (and shorter) argument.

\section{Bi-Lipschitz pieces for maps into Euclidean spaces}\label{Rnpiecesection}

In this section, we present the brief proof of Theorem \ref{Rnpiecethm}. The idea is to first show that, under the assumptions of the theorem, the set $A$ must in fact be $n$-rectifiable; this uses two results from \cite{GCD14} and \cite{BL15}. This step reduces Theorem \ref{Rnpiecethm} to the classical bi-Lipschitz pieces statement for mappings between subsets of Euclidean space (\cite[Lemma 3.2.2]{Fe69}). The arguments in this section are essentially independent of the rest of the paper.

In fact, as the proof shows, Theorem \ref{Rnpiecethm} works in the broader setting of Lipschitz differentiability spaces, and not just RNP-differentiability spaces or subsets of PI spaces. (Recall the discussion in Remark \ref{assumptions} about the various implications among these properties.)

We need to recall the notion of strong unrectifiability, introduced in subsection \ref{carnotunrectsection}: A metric space $X$ is said to be \textit{strongly $Q$-unrectifiable} if $\HH^Q(f(X))=0$ for all $N\in\mathbb{N}$ and all Lipschitz mappings $f \colon X \rightarrow \R^N$.

The first ingredient in the proof of Theorem \ref{Rnpiecethm} is a result from \cite{GCD14} which shows strong unrectifiability of Lipschitz differentiability spaces under a dimension condition.
\begin{theorem}[\cite{GCD14}, Theorem 1.4]\label{strongunrect}
Let $X$ be an Ahlfors $Q$-regular Lipschitz differentiability space containing a chart $U$ of dimension $k$, with $k<Q$. Then $U$ is strongly $Q$-unrectifiable.
\end{theorem}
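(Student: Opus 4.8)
The plan is to deduce the statement directly from Cheeger's differentiation theorem on the chart $U$. The point is that a Lipschitz map $f\colon U\to\mathbb{R}^N$ is, infinitesimally at $\mathcal{H}^Q$-a.e.\ point of $U$, a map into a $k$-dimensional affine subspace, and since $k<Q$ this forces its image to be $\mathcal{H}^Q$-negligible.

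Fix an $L$-Lipschitz map $f=(f_1,\dots,f_N)\colon U\to\mathbb{R}^N$ and let $\phi\colon X\to\mathbb{R}^k$ be the chart map of $U$. Applying Theorem \ref{cheeger} to each coordinate $f_j$ and combining, one obtains a Borel set $U_0\subset U$ with $\mathcal{H}^Q(U\setminus U_0)=0$ such that for every $x\in U_0$ there is a linear map $Df(x)\colon\mathbb{R}^k\to\mathbb{R}^N$ with
$$\lim_{y\to x}\frac{|f(y)-f(x)-Df(x)(\phi(y)-\phi(x))|}{d(x,y)}=0.$$
Since an $L$-Lipschitz image of an $\mathcal{H}^Q$-null set is $\mathcal{H}^Q$-null, and $U$ is a countable union of bounded Borel subsets, it suffices to fix $\epsilon>0$, a bounded Borel set $E\subset U_0$, and a bounded open neighborhood $V\supset E$, and to prove that
$$\mathcal{H}^Q(f(E))\lesssim_N L^k\,\epsilon^{\,Q-k}\,\mathcal{H}^Q(V);$$
letting $\epsilon\to 0$ and using $Q-k>0$ then yields $\mathcal{H}^Q(f(E))=0$, and summing over the pieces finishes the proof.

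To prove this estimate I would argue as follows. For each $x\in E$ the displayed limit provides a radius $r(x)>0$ so that whenever $0<d(x,y)<r(x)$ the point $f(y)$ lies within distance $\epsilon\,d(x,y)$ of the affine plane $P_x:=f(x)+Df(x)(\mathbb{R}^k)$, which has dimension at most $k$. Hence for any $0<r<r(x)$ the set $f(B(x,r))$ is contained in the ``slab'' $B(f(x),Lr)\cap N_{\epsilon r}(P_x)$, and an elementary computation shows this slab is covered by at most $C(L/\epsilon)^k$ balls of radius comparable to $\epsilon r$, with $C$ depending only on $N$; its $\mathcal{H}^Q$-content is therefore $\lesssim_N L^k\epsilon^{\,Q-k}r^Q$. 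Now the family $\{\overline{B}(x,r):x\in E,\ 0<r<r(x),\ \overline{B}(x,r)\subset V,\ r<\eta\}$ is a fine cover of $E$ for any $\eta>0$, so by the Vitali covering theorem for the doubling measure $\mathcal{H}^Q$ we may extract a countable pairwise disjoint subfamily $\{\overline{B}(x_i,r_i)\}_i$ with $\mathcal{H}^Q(E\setminus\bigcup_i\overline{B}(x_i,r_i))=0$. Ahlfors $Q$-regularity gives $\sum_i r_i^Q\lesssim\sum_i\mathcal{H}^Q(\overline{B}(x_i,r_i))\le\mathcal{H}^Q(V)$; covering each $f(B(x_i,r_i))$ by the balls above (whose radii are small once $\eta$ is, so that the resulting bound is on $\mathcal{H}^Q$ itself) and noting that the null leftover maps to a null set, we obtain
$$\mathcal{H}^Q(f(E))\lesssim_N \sum_i L^k\,\epsilon^{\,Q-k}\,r_i^Q\lesssim_N L^k\,\epsilon^{\,Q-k}\,\mathcal{H}^Q(V),$$
which is the required estimate.

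The step I expect to demand the most care is the geometric bookkeeping in the covering: checking that a slab $B(f(x),Lr)\cap N_{\epsilon r}(P_x)\subset\mathbb{R}^N$ around a $k$-plane is covered by on the order of $(L/\epsilon)^k$ balls of radius $\sim\epsilon r$. This is routine Euclidean geometry, but it is precisely the place where the surplus exponent $Q-k>0$ appears and makes the estimate collapse as $\epsilon\to 0$. Everything else is soft: the Vitali covering theorem for the doubling measure $\mathcal{H}^Q$ (which uses nothing about $f$), Ahlfors regularity, and the fact that Lipschitz maps preserve $\mathcal{H}^Q$-nullity. The one indispensable ingredient is Cheeger's Theorem \ref{cheeger}, which furnishes a chart of dimension $k<Q$ through which $f$ infinitesimally factors, so that $f$ locally looks like a map into a $k$-plane.
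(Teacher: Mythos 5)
Your argument is correct, but it is not the route taken in \cite{GCD14} (the paper under review only cites that result, and the proof there is quite different). The original proof is a blow-up argument: assuming $\mathcal{H}^Q(f(U))>0$, one invokes the David--Semmes machinery (Proposition 12.8 of \cite{DS97}, i.e.\ the analogue of Lemma \ref{DSregular} here) to produce a weak tangent of $f$ that is David--Semmes regular, hence has image of Hausdorff dimension $Q$; on the other hand, differentiability forces every tangent map to factor as $Df(x)\circ\hat\phi$ through the $k$-dimensional chart blow-up, so its image lies in a $k$-plane with $k<Q$ --- a contradiction. Your proof instead transplants the classical Sard-type covering argument (rank of the derivative $\le k<Q$ implies null image) to the metric setting: the slab $B(f(x),Lr)\cap N_{\epsilon r}(P_x)$ has $\mathcal{H}^Q_\delta$-content $\lesssim_N L^k\epsilon^{Q-k}r^Q$, the Vitali covering theorem for the doubling measure $\mathcal{H}^Q$ produces the disjoint family, and lower Ahlfors regularity gives $\sum_i r_i^Q\lesssim\mathcal{H}^Q(V)$. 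I checked the bookkeeping (in particular that the bound is obtained at level $\mathcal{H}^Q_\delta$ for every $\delta$ by shrinking $\eta$, and that the Vitali leftover is handled by Lipschitz nullity) and it goes through. What each approach buys: yours is elementary and self-contained, using only Cheeger's theorem, Vitali, and the lower regularity bound on the domain; the blow-up proof is softer and sits naturally inside the tangent-map framework that the rest of this paper is built on, yielding the structure of tangent maps as a byproduct. One small point you should make explicit: Definition \ref{LDspace} differentiates Lipschitz functions defined on all of $X$, so before applying Theorem \ref{cheeger} you should extend each coordinate $f_j$ from $U$ to $X$ by McShane's theorem (as the paper does in the proof of Theorem \ref{Rnpiecethm}); the slab containment is then for $\tilde f(B_X(x,r))$, which suffices since $\tilde f=f$ on $U$.
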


The other ingredient in this proof of Theorem \ref{Rnpiecethm} is a characterization of rectifiable metric measure spaces due to Bate and Li \cite{BL15}. We quote only a piece of Theorem 1.2 from \cite{BL15}. In this context, it can be viewed as a sort of converse to Theorem \ref{strongunrect}.

\begin{theorem}[\cite{BL15}, Theorem 1.2]\label{bateli}
Let $(X,d,\mu)$ be a metric measure space such that
\begin{equation}\label{densities}
0 < \liminf_{r\rightarrow 0} \frac{\mu(B(x,r))}{r^n} \leq \limsup_{r\rightarrow 0} \frac{\mu(B(x,r))}{r^n} < \infty
\end{equation}
for $\mu$-a.e. $x\in X$ and some $n\in\mathbb{N}$. Suppose further that $(X,d,\mu)$ is a Lipschitz differentiability space with a chart $U$ of dimension $n$. Then $\mu|_U$ is $n$-rectifiable.
\end{theorem}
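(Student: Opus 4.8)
\emph{Proof proposal.} We may assume $\mu(U)>0$; let $\phi\colon X\to\R^n$ denote the chart map of $U$. The goal is to produce countably many compact sets covering $\mu$-almost all of $U$ on each of which $\phi$ is bi-Lipschitz: each such set is then bi-Lipschitz equivalent to a subset of $\R^n$, hence $n$-rectifiable, and since \eqref{densities} forces $\mu|_U\ll\mathcal H^n$, this gives the $n$-rectifiability of $\mu|_U$. The mechanism for producing bi-Lipschitz pieces will be Theorem \ref{BLthm} (equivalently Proposition \ref{generalbilip}), applied to $\phi$ on suitable subsets of $U$. As a first, routine step I would decompose: using \eqref{densities}, the pointwise doubling it implies, and inner regularity of $\mu$, write $U$ as a $\mu$-null set together with countably many compact sets $U_k$ on each of which $\mathcal H^n$ is Ahlfors $n$-regular with uniform constants. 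Then each $(U_k,d,\mathcal H^n|_{U_k})$ is a compact Ahlfors $n$-regular Lipschitz differentiability space with chart $(U_k,\phi|_{U_k})$ of dimension $n$ (a restriction of the chart of $X$), and after shrinking $U_k$ I would also arrange that every point of $U_k$ is a density point of $U_k$ and a point at which Proposition \ref{LQblowup} applies with a uniform constant $L_k$; thus every tangent $(\hat X,\hat x,\hat\phi)\in\Tan(U_k,x,\phi)$ is an $L_k$-Lipschitz quotient map onto $\R^n$.

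The two genuine points are the following. First, \textbf{(a)}: for $\mu$-a.e. $x\in U$ one has $\liminf_{r\to0}r^{-n}\,\mathcal H^n\big(\phi(B(x,r)\cap U)\big)>0$ --- so that, after one more decomposition of the $U_k$, the lower bound $\mathcal H^n\big(B(\phi(x),r)\cap\phi(U_k)\big)\gtrsim r^n$ holds uniformly on each $U_k$, and $\phi$ sends positive-$\mu$-measure subsets of $U$ to positive-$\mathcal H^n$-measure sets. This is where the hypothesis that $U$ is a chart of dimension \emph{exactly} $n$ is really used: I would deduce it from Bate's structure theorem \cite{Ba15}, which furnishes $n$ Alberti representations of $\mu|_U$ whose curve velocities point in $\phi$-independent directions, combined with the density hypothesis --- $n$ independent families of curves along which $\phi$ moves with definite speed and $\mu$ has positive lower density cannot have $\phi$-image squeezed into a set of measure $o(r^n)$. (Proposition \ref{LQblowup} by itself does not suffice here: it yields only that $\phi(B(x,r)\cap U)$ is $\epsilon r$-dense in a ball of radius $\asymp r$ about $\phi(x)$, which is compatible with that image being $\mathcal H^n$-null.) Second, \textbf{(b)}: on each $U_k$ the map $\phi|_{U_k}\colon U_k\to\overline{\phi(U_k)}\subset\R^n$ satisfies David's condition (Definition \ref{DCdef}) $\mathcal H^n$-a.e. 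I would prove this by running the stopping-time and Christ--David-cube argument verbatim as in the proof of Proposition \ref{generalbilip}: the ``almost surjectivity of the image'' needed there is exactly the assertion that $\phi(B(x,L_k r)\cap U_k)$ is $\epsilon r$-dense in $B(\phi(x),r)\cap\phi(U_k)$ for small $r$, which follows from the Lipschitz-quotient tangents of $\phi|_{U_k}$ supplied by Proposition \ref{LQblowup} (together with the fact that, at a density point of $U_k$, the lower bound in \eqref{densities} makes $U_k$ itself $\epsilon r$-dense in $B(x,r/2)$).

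Granting (a) and (b), conditions \eqref{BLthm1} and \eqref{BLthm2} hold for $\phi|_{U_k}\colon U_k\to\overline{\phi(U_k)}$, so Theorem \ref{BLthm} produces compact sets $U^{(k)}_i\subset U_k$ with $\mathcal H^n\big(U_k\setminus\bigcup_i U^{(k)}_i\big)=0$ and $\phi|_{U^{(k)}_i}$ bi-Lipschitz. (One could instead quote Proposition \ref{generalbilip}, whose hypotheses are all verified above, and then upgrade its image-null conclusion to a domain-null one using (a).) Because $\mu|_{U_k}$ and $\mathcal H^n|_{U_k}$ are mutually absolutely continuous, the sets $U^{(k)}_i$ cover $\mu$-almost all of $U$; each is bi-Lipschitz to a subset of $\R^n$, hence $n$-rectifiable, and pulling $\mathcal L^n$ back along the bi-Lipschitz maps shows $\mathcal H^n|_{U^{(k)}_i}$ --- and therefore $\mu|_{U^{(k)}_i}$ --- is $n$-rectifiable. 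Summing over $i$ and $k$ yields the $n$-rectifiability of $\mu|_U$.

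I expect the main obstacle to be step (a): passing from the abstract chart structure to the concrete \emph{measure-theoretic} non-degeneracy of the coordinate map $\phi$ (equivalently, the lower $n$-Ahlfors-regularity of its image), which forces one to use Bate's Alberti-representation machinery rather than the softer tangent statement of Proposition \ref{LQblowup}. Step (b) is the heaviest computation but is, in a parallel setting, already carried out in the proof of Proposition \ref{generalbilip}; and one should note that the borderline nature of the hypothesis here (chart dimension equal to $Q=n$, rather than $<Q$ as in Theorem \ref{strongunrect}) is exactly what lets the conclusion be rectifiability rather than strong unrectifiability.
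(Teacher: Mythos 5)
First, a point of orientation: the paper does not prove this statement. It is quoted verbatim as Theorem 1.2 of \cite{BL15} and used as a black box in Section \ref{Rnpiecesection}, so there is no internal proof to compare yours against. Judged against the actual argument in \cite{BL15}, your outline follows essentially the same route: reduce to compact pieces $U_k$ on which $\mathcal{H}^n$ is uniformly Ahlfors regular and comparable to $\mu$, establish measure-theoretic non-degeneracy of the chart map $\phi$ (your step (a)), verify David's condition by a stopping-time argument over Christ--David cubes (your step (b)), and invoke Theorem \ref{BLthm} --- which is Theorem 5.3 of the same paper \cite{BL15}, proved there independently of their Theorem 1.2, so there is no circularity in quoting it. Your diagnosis of why Proposition \ref{LQblowup} alone is insufficient is also correct: asymptotic $\epsilon r$-density of $\phi(B(x,Lr)\cap U)$ in $B(\phi(x),r)$ is compatible with that image being $\mathcal{H}^n$-null, so a genuinely measure-theoretic input is needed.

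That input, your step (a), is the technical heart of the theorem, and your proposal handles it with a single heuristic sentence (``$n$ independent families of curves \dots cannot have $\phi$-image squeezed into a set of measure $o(r^n)$''). As written this is not a proof: one must quantify how $n$ Alberti representations with $\phi$-independent direction cones and positive $\phi$-speed, combined with the lower density bound in \eqref{densities}, force $\mathcal{H}^n\bigl(\phi(B(x,r)\cap U)\bigr)\gtrsim r^n$, and this occupies a substantial part of \cite{BL15}. You correctly identify this as the main obstacle, but until it is carried out the proposal is an accurate roadmap of the Bate--Li argument rather than a self-contained proof. Everything downstream of (a) and (b) --- the verification of \eqref{BLthm1} and \eqref{BLthm2}, the application of Theorem \ref{BLthm}, the mutual absolute continuity of $\mu|_{U_k}$ and $\mathcal{H}^n|_{U_k}$, and the conclusion that bi-Lipschitz pieces yield $n$-rectifiability --- is sound.
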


Recall that a measure $\mu$ on a metric space $Z$ is called \textit{$n$-rectifiable} if there are countably many compact sets $F_i\subseteq \RR^n$ and Lipschitz mappings $f_i \colon F_i\rightarrow X$ such that 
$$ \mu(X\setminus \cup_i f_i(F_i)) = 0.$$
By a result of Kirchheim \cite[Lemma 4]{Ki94}, if $\mathcal{H}^n$ is $n$-rectifiable on a metric space $Z$, then in fact there are countably many compact sets $E_i\subseteq \RR^n$ and bi-Lipschitz mappings $g_i \colon E_i\rightarrow X$ such that 
$$ \HH^n(X\setminus \cup_i g_i(E_i)) = 0.$$

\begin{proof}[\textbf{Proof of Theorem \ref{Rnpiecethm}}]
Let $X$ be an Ahlfors $Q$-regular PI space, $A$ a compact subset of $X$, and $N\in\mathbb{N}$. Suppose $f \colon A\rightarrow \RR^N$ is a Lipschitz mapping such that
$$\mathcal{H}^Q(f(A)) > 0.$$

Since we can write $A = \bigcup_i (A\cap U_i)$, where $U_i$ are the differentiability charts of $X$, it suffices to prove the theorem in the case that $A$ is contained in a single differentiability chart $U$.

In that case, we may extend $f$ to all of $U$ by McShane's extension theorem \cite[Theorem 6.2]{He01}, in which case
$$ \HH^Q(f(U)) \geq \HH^Q(f(A)) > 0.$$
It follows that $U$ is not strongly $Q$-unrectifiable. Theorem \ref{strongunrect} then implies that $Q$ is an integer and $U$ is a $Q$-dimensional differentiability chart.

Since $X$ is Ahlfors $Q$-regular, the metric measure space $(U, d, \HH^Q|_U)$ satisfies the density assumptions \eqref{densities}. Therefore $\HH^Q|_U$ is $Q$-rectifiable by Theorem \ref{bateli}.

It follows that we can write $A = Z \cup \bigcup_i g_i(E_i)$, where $\HH^Q(Z)=0$, the countably-many $E_i$ are compact subsets of $\RR^Q$, and the maps $g_i \colon E_i\rightarrow X$ are bi-Lipschitz.

We now use the classical Euclidean results to tell us that, for each $i$, the set $E_i$ can be written as a countable union $Z_i \cup \bigcup_{j} E_{i,j}$, where $\HH^Q(f(g_i(Z_i)))=0$ and $f\circ g_i$ is bi-Lipschitz on $E_{i,j}$. (See \cite[Lemma 3.2.2]{Fe69} or \cite[Lemma 4]{Ki94}.)

If we write $Z' = Z \cup \bigcup_i g_i(Z_i)$ and $A_{i,j} = g_i(E_{i,j})$, then $A = Z' \cup \bigcup_{i,j} A_{i,j}$. Furthermore,
$$ \HH^Q(f(Z')) \leq \HH^Q(f(Z)) + \sum_i\HH^Q(f(g_i(Z_i)) = 0$$
and $f$ is bi-Lipschitz on each set $A_{i,j}$. This completes the proof.
\end{proof}

\begin{appendix}
\section{Proof of Proposition \ref{tangentsoftangents}}\label{appendix}

Our goal in this appendix is to prove Proposition \ref{tangentsoftangents}. We first observe a stronger statement in the case $\G=\RR^n$. This is a simple consequence of \cite[Proposition 3.1]{GCD14}. Note that, while that Proposition is stated for doubling metric measure spaces, it applies equally well to pointwise doubling metric measure spaces, using Lemma \ref{densitytangent}.

\begin{prop}\label{tangentsoftangentsRn}
Suppose $(X,d,\mu)$ is an Ahlfors regular metric space, $A\subset X$ is compact, and $f\colon X\rightarrow \RR^n$ is Lipschitz. Then, for $\mu$-almost every $x\in A$, for all $(Y,y,g)\in \Tan(A,x,f)$ and all $y'\in Y$, we have $\Tan(Y,y',g)\subseteq\Tan(A,x,f)$.
\end{prop}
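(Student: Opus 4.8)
The plan is to obtain Proposition \ref{tangentsoftangentsRn} as a direct consequence of \cite[Proposition 3.1]{GCD14}, which is precisely the ``moving basepoints'' and ``tangents of tangents'' principle for packages with $\RR^n$-target over a doubling metric measure space. Since $X$ is Ahlfors regular, the measure $\mathcal{H}^Q$ (comparable to, or equal to, the reference measure $\mu$) is doubling, and $f$ is a Lipschitz map defined on all of $X$, so \cite[Proposition 3.1]{GCD14} applies to the package $(X,x,f\colon X\rightarrow\RR^n)$: it produces a Borel set $X'\subseteq X$ of full $\mathcal{H}^Q$-measure such that, for every $x\in X'$, every $(Y,y,g)\in\Tan(X,x,f)$, and every $y'\in Y$, one has $\Tan(Y,y',g)\subseteq\Tan(X,x,f)$.

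The second step is to transfer this from $X$ to the compact subset $A$. Here I would invoke Lemma \ref{densitytangent}: since $\mathcal{H}^Q$ is doubling, $\mathcal{H}^Q$-a.e.\ $x\in A$ is a point of density of $A$ in $X$, and at any such point $\Tan(A,x,f)=\Tan(X,x,f)$, with the same sequences of scales realizing each limit. Restricting attention to those $x\in A$ that are density points of $A$ in $X$ and also lie in $X'$ --- a set of full $\mathcal{H}^Q$-measure in $A$ --- we obtain, for all $(Y,y,g)\in\Tan(A,x,f)=\Tan(X,x,f)$ and all $y'\in Y$,
$$\Tan(Y,y',g)\subseteq\Tan(X,x,f)=\Tan(A,x,f),$$
which is the claim. (Nonemptiness of all the tangent collections involved, so that the statement has content, comes from the standard Arzel\`a--Ascoli compactness argument recalled after Lemma \ref{Dproperties}, using that $A$ is compact and $f$ is Lipschitz.)

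Thus essentially all of the work is contained in \cite[Proposition 3.1]{GCD14}, and the main thing to be careful about is bookkeeping in the translation: matching the target-renormalization $g(y')\mapsto 0$ built into our definition \eqref{tangentOfPackage} of $\Tan(Y,y',g)$, and confirming that the pseudo-quasi-metric $D$ of Lemma \ref{Dproperties} induces the same notion of convergence of packages as the metrization used in \cite{GCD14} (this is immediate for $\RR^n$-targets, where $D$ is literally the object from \cite{GCD14}). If one wished to avoid citing the tangents-of-tangents conclusion of \cite[Proposition 3.1]{GCD14} directly and use only its moving-basepoint statement, the remaining gap is a routine diagonal argument: given $(Z,z,h)\in\Tan(Y,y',g)$ along scales $\mu_j\to0$, approximate each rescaling $(\mu_j^{-1}Y,y',\dots)$ in $D$ by a rescaling of $(X,x,f)$ with basepoint moved to a point converging to $y'$, use the moving-basepoint statement to recenter at $x$ at a comparable scale, and diagonalize using the quasi-triangle inequality of Lemma \ref{Dproperties}. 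I expect this diagonalization, rather than any genuinely new idea, to be the only slightly delicate point.
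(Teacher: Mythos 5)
Your proposal is correct and matches the paper's proof: the paper invokes \cite[Proposition 3.1]{GCD14} only for its moving-basepoint conclusion --- that $(Y,y',g-g(y'))\in\Tan(A,x,f)$ --- and then carries out exactly the diagonalization you describe as a fallback, choosing the scale $\lambda_j$ so fine that the $D$-distance to $(Y,y',g-g(y'))$ is at most $\eta_i\epsilon$, rescaling by $\eta_i^{-1}$, and applying the quasi-triangle inequality of Lemma \ref{Dproperties}. The only caution is that your ``primary route'' of citing a ready-made tangents-of-tangents conclusion from \cite[Proposition 3.1]{GCD14} assumes that proposition says more than the paper uses it for; the short diagonal argument (together with the scale-composition property of $D$) is the actual content of the proof, not an optional refinement.
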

\begin{proof}[Proof of Proposition \ref{tangentsoftangentsRn}]
Let $x\in A$ be a point at which the conclusion of \cite[Proposition 3.1]{GCD14} holds. Fix $(Y,y,g)\in\Tan(A,x,f)$, $y'\in Y$, and $(Z,z,h)\in\Tan(Y,y',g)$. We must show that
\begin{equation}\label{toft1}
 (Z,z,h)\in \Tan(A,x,f).
\end{equation}

We know that $(Z,z,h)\in\Tan(Y,y',g)$. Let $\eta_i\rightarrow 0$ be a sequence to which $(Z,z,h)$ is subordinate in $\Tan(Y,y',g)$.

Fix $\epsilon\in (0,\frac{1}{10})$. There is a choice $i=i(\epsilon)$ such that
\begin{equation}\label{toft2}
 D\left( \left(\eta_i^{-1} Y, y',\eta_i^{-1}(g-g(y))\right), (Z,z,h)\right) < \epsilon.
\end{equation}
We may also take $\eta_i<1$.

By \cite[Proposition 3.1]{GCD14}, we also have that
$$(Y,y',g-g(y'))\in\Tan(A,x,f).$$
Let $\lambda_j$ be a sequence to which $(Y,y',g-g(y'))$ is subordinate in $\Tan(A,x,f)$. We may select $j$, depending on our choices above, such that
$$ D\left( \left(\lambda_j^{-1} A, x,\lambda_j^{-1} (f-f(x))\right), (Y,y',g-g(y'))\right) < \eta_i \epsilon.$$
It follows from the definition of $D$ and the bound $\eta_i<1$ that
\begin{equation}\label{toft3}
D\left( \left((\eta_i\lambda_j)^{-1} A, x,(\eta_i\lambda_j)^{-1} (f-f(x))\right), \left(\eta_i^{-1}Y,y',\eta_i^{-1}(g-g(y'))\right)\right) <  \epsilon.
\end{equation}

Combining \eqref{toft2} with \eqref{toft3} and using the quasi-triangle inequality from Lemma \ref{Dproperties}\eqref{quasimetric}, we get
$$ D(\left((\eta_i\lambda_j)^{-1} A, x,(\eta_i\lambda_j)^{-1}(f-f(x))\right), (Z,z,h)) < 4\epsilon.$$
Since $\epsilon$ was arbitrary, this proves \eqref{toft1} and hence the proposition.

\end{proof}

In order to prove Proposition \ref{tangentsoftangents}, one could perhaps run a similar argument as in \cite[Proposition 3.1]{GCD14} but for Carnot group rather than Euclidean targets. However, the nonabelian nature of the group law in general Carnot groups makes some of the arguments more difficult. We therefore take a different tack here, reducing to the Euclidean statement, Proposition \ref{tangentsoftangentsRn}, by use of Assouad's embedding theorem \cite[Theorem 12.2]{He01}.

We begin with a preliminary lemma, which shows that the $\epsilon$-isometries in the definition of our Gromov--Hausdorff distance $D$ can be taken to be inverses up to small additive error. The proof is essentially identical to \cite[Lemma 2.5]{GCD14}.

\begin{lemma}\label{inverselemma}
Let $(X,x,f\colon X\rightarrow \G)$ and $(Y,y',g\colon Y\rightarrow \G)$ be packages, and let $\phi \colon X\rightarrow Y$ be an $\epsilon$-isometry such that
$$\sup_{B(x,1/\epsilon)} d_{cc}(f,g\circ\phi)<\epsilon.$$
Then there is a $C\epsilon$-isometry $\psi \colon Y\rightarrow X$ such that
$$\sup_{B(y,1/\epsilon)} d_{cc}(f\circ \psi,g)<C\epsilon$$
and
\begin{equation}\label{inverses}
 d(\psi \circ \phi (z), z) < C\epsilon \hspace{0.3cm} \text{ and } \hspace{0.3cm} d(\phi\circ\psi(w),w) <C\epsilon
\end{equation}
for all $z\in B(x,1/C\epsilon)$ and $w\in B(y,1/C\epsilon)$.

The constant $C$ depends only on the Lipschitz constants of $f$ and $g$.
\end{lemma}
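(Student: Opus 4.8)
The plan is to construct $\psi$ by assigning to each point of $Y$ an approximate $\phi$-preimage, and then to verify every required property by chasing the defining inequalities of an $\epsilon$-isometry together with the Lipschitz bounds on $f$ and $g$; the argument is the same as for \cite[Lemma 2.5]{GCD14}. Concretely, for $w \in Y$ with $d(y,w) < 1/\epsilon - \epsilon$, I would apply condition \eqref{isom2} of Definition \ref{epsilonisomdef} to $\phi$ with radius $r = d(y,w) + \epsilon$ to obtain a point $a_w \in B_X(x,r)$ with $d(\phi(a_w),w) < \epsilon$, and set $\psi(w) = a_w$ (defining $\psi$ to be the constant map $x$ on the rest of $Y$, which is harmless since the conclusions concern only balls of radius comparable to $1/\epsilon$ about $y$). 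By construction $d(\phi\circ\psi(w),w) < \epsilon$, and since $d(\phi(x),y) \leq 2\epsilon$, the point $\psi(w)$ lies in the ball about $x$ of radius $d(y,w) + 3\epsilon$; after shrinking by a fixed factor the radii appearing in the statement (a loss absorbed into $C$), this keeps $\psi(w)$ in $B_X(x,1/\epsilon)$, so the hypothesis $\sup_{B(x,1/\epsilon)} d_{cc}(f,g\circ\phi) < \epsilon$ is available at every relevant $\psi(w)$.

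Next I would check that $\psi$ is a $C\epsilon$-isometry and satisfies \eqref{inverses}. For condition \eqref{isom1}, for $w_1,w_2$ in the relevant ball about $y$, \eqref{isom1} for $\phi$ places $d(\psi(w_1),\psi(w_2))$ within $\epsilon$ of $d(\phi\circ\psi(w_1),\phi\circ\psi(w_2))$, and the latter is within $2\epsilon$ of $d(w_1,w_2)$ by the previous step, giving $|d(\psi(w_1),\psi(w_2)) - d(w_1,w_2)| < 3\epsilon$. For condition \eqref{isom2}, given $a$ in a suitable ball about $x$, put $w = \phi(a)$; then $d(\phi\circ\psi(w),\phi(a)) < \epsilon$, hence $d(\psi(w),a) < 2\epsilon$ by \eqref{isom1} for $\phi$. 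This one estimate yields both the covering property of $\psi$ and the relation $d(\psi\circ\phi(a),a) < 2\epsilon < C\epsilon$, while $d(\phi\circ\psi(w),w) < \epsilon < C\epsilon$ is immediate from the construction.

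For the compatibility of $\psi$ with the maps I would estimate, for $w$ in the relevant ball about $y$,
$$d_{cc}(f\circ\psi(w),g(w)) \leq d_{cc}(f(\psi(w)),g(\phi\circ\psi(w))) + \Lip(g)\,d(\phi\circ\psi(w),w) < \epsilon + \Lip(g)\,\epsilon,$$
using the hypothesis on $\phi$ at $\psi(w)\in B_X(x,1/\epsilon)$ for the first summand and the construction of $\psi$ for the second; this is the only point where the dependence of $C$ on the Lipschitz constants enters, here through $\Lip(g)$ and, symmetrically, through $\Lip(f)$. I expect no genuinely hard step: the content is triangle-inequality bookkeeping of exactly the kind carried out in \cite{GCD14}. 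The one delicate point, and the source of the constant $C$, is the matching of radii --- condition \eqref{isom2} for $\phi$ supplies approximate preimages only for $w \in B_Y(y,1/\epsilon-\epsilon)$, and $\psi(w)$ may lie slightly outside $B_X(x,d(y,w))$, so every radius in the argument must be contracted by a fixed amount before the hypotheses on $\phi$, $f$, and $g$ are invoked.
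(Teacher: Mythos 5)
Your proposal is correct and follows essentially the same strategy as the paper: build $\psi$ by assigning approximate $\phi$-preimages and then verify the $\epsilon$-isometry conditions, the approximate-inverse relations, and the map compatibility by triangle-inequality chasing with the Lipschitz constant of $g$ entering exactly where you say. The only (cosmetic) difference is that the paper selects preimages via a maximal $\epsilon$-net $N\subset B(x,1/\epsilon)$ and a nearest-point projection onto $\phi(N)$, whereas you invoke condition \eqref{isom2} directly at each point; both routes require the same radius bookkeeping you flag at the end.
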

\begin{proof}
For simplicity, we denote the metrics on $X$ and $Y$ both by $d$. Let $N\subset B(x,1/\epsilon)$ be a maximal separated $\epsilon$-net. In other words,
$$ d(y,z) \geq \epsilon $$
if $y,z\in N$ and $y\neq z$, and
$$ \dist(z,N)<\epsilon $$
for all $z\in B(x,1/\epsilon)$. We can also arrange that $x\in N$.

The fact that $\phi$ is an $\epsilon$-isometry implies that $\phi|_N$ is injective. Let $N'=\phi(N)\subset Y$. Because $\phi$ is an $\epsilon$-isometry, we know that every point of $B(y,1/2\epsilon)$ is within $3\epsilon$ of a point in $N'$.

Let $\pi\colon Y\rightarrow N'$ denote any choice of closest-point projection, i.e., $\pi(Y)\subset N'$ and $d(y,\pi(y)) = \dist(y,N')$. Then $\pi$ preserves distances up to an additive error of $6\epsilon$ for points in $B(y,1/2\epsilon)$. Let
$$ \psi = \left(\phi|_N\right)^{-1} \circ \pi \colon Y \rightarrow X. $$

We first claim that $\psi$ is a $7\epsilon$-isometry. Fix $y_1, y_2\in B(y,1/7\epsilon)$. We have
\begin{align*}
|d(\psi(y_1), \psi(y_2)) - d(y_1, y_2)| &\leq |d(\phi^{-1}(\pi(y_1)), \phi^{-1}(\pi(y_2))) - d(\pi(y_1), \pi(y_2))|\\
&+ |d(\pi(y_1), \pi(y_2)) - d(y_1, y_2)|\\
&\leq \epsilon + 6\epsilon\\
&= 7\epsilon.
\end{align*}

In addition, for $r\leq 1/(7\epsilon)$,
$$\psi(B(y,r))\supseteq N \cap B(x, r-\epsilon) $$
and therefore
$$ N_{7\epsilon}(\psi(B(y,r))) \supseteq B(x,r-7\epsilon). $$

We now claim that 
$$\sup_{B(y,1/7\epsilon)} d_{cc}(g , f\circ \psi) < C\epsilon,$$
where $C$ depends only on the Lipschitz constant of $g$. For $z\in B(y, 1/7\epsilon)$, we have
\begin{align*}
d_{cc}(g(z) , f(\psi(z))) &= d_{cc}(g(z) , f((\phi|_N)^{-1}(\pi(z))))\\
&\leq d_{cc}(g(z) , g(\pi(z))) + d_{cc}((g(\pi(z)) , f((\phi|_N)^{-1}(\pi(z))))\\
&\leq 6\epsilon \LIP(g) + \epsilon.
\end{align*}

Finally, for any $z\in B(x,1/\epsilon)$, we have
\begin{align*}
d(z,\psi(\phi(z))) &\leq d(z,\pi(z)) + d(\pi(z),\psi(\phi(\pi(z)))) + d(\psi(\phi(\pi(z))),  \psi(\phi(z)))\\
&\leq \epsilon+0 + 3\epsilon\\
&=4\epsilon 
\end{align*}

A similar argument using the $3\epsilon$-density of $N'$ in $B(y,1/2\epsilon)$ shows that
$$ d(w,\phi(\psi(w))) \leq 10\epsilon.$$

This completes the proof.
\end{proof}

To prove Proposition \ref{tangentsoftangents}, we will reduce to Proposition \ref{tangentsoftangentsRn} by embedding the Carnot group $\G$ in some $\RR^N$. This cannot be done in a bi-Lipschitz way, but it can be done in a ``snowflake'' way; i.e., there is a bi-Lipschitz embedding of $(\G, d_{cc}^{1/2})$ in some $\RR^N$. With the following lemma, this will suffice for our purposes.

\begin{lemma}\label{snowflaketangent}
Let $\G$ be a Carnot group. Let $(X,x,f \colon X\rightarrow\G)$ be a metrically doubling Lipschitz package, and let $e \colon (\G,d_{cc}^{1/2})\rightarrow \RR^N$ be a bi-Lipschitz embedding. 

Write $\tilde{X} = (X,d^{1/2})$ and $\tilde{f}=e\circ f$, so that $\tilde{f}$ is Lipschitz on $\tilde{X}$. Let
$$ (Y,y,g)\in \Tan(X,x,f),$$
$$ (\tilde{Y},\tilde{y},\tilde{g}) \in \Tan(\tilde{X},x,\tilde{f}),$$
be subordinate to the sequences $\lambda_n \rightarrow 0$ and $\lambda_n^{1/2} \rightarrow 0$, respectively.

Then $(\tilde{Y},\tilde{y},\tilde{g})$ is pointedly isometric to $( (Y,d^{1/2}),y,\hat{e}\circ g))$ for some bi-Lipschitz embedding $\hat{e}\colon (\G,d^{1/2})\rightarrow \RR^N$.
\end{lemma}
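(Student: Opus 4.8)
The plan is to track the two blow-up procedures in parallel and observe that, because snowflaking commutes with the rescaling in the tangent construction, the blow-up of $\tilde f$ at $x$ along $\lambda_n^{1/2}$ is forced to be the snowflake of the blow-up of $f$ at $x$ along $\lambda_n$. First I would write out the two rescaled packages explicitly:
\[
\left(\tfrac{1}{\lambda_n}X,\, x,\, \delta_{1/\lambda_n}(f(x)^{-1}f(\cdot))\right)
\quad\text{and}\quad
\left(\tfrac{1}{\lambda_n^{1/2}}\tilde X,\, x,\, \lambda_n^{-1/2}\big(\tilde f(\cdot)-\tilde f(x)\big)\right).
\]
The key point is that $\tfrac{1}{\lambda_n^{1/2}}\tilde X$ is, by definition of the snowflake metric, pointedly isometric to $\big((\tfrac{1}{\lambda_n}X)^{1/2}, x\big)$: the metric $\lambda_n^{-1/2}d(\cdot,\cdot)^{1/2}$ equals $\big(\lambda_n^{-1}d(\cdot,\cdot)\big)^{1/2}$. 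So the domains of the two rescaled packages are related by snowflaking at every finite scale, uniformly in $n$. For the maps, set $\tilde e_n = \lambda_n^{-1/2}\,e(\delta_{\lambda_n}(\cdot))$; since $e$ is bi-Lipschitz from $(\G,d_{cc}^{1/2})$ to $\RR^N$ and $\delta_{\lambda_n}$ scales $d_{cc}$ by $\lambda_n$, each $\tilde e_n$ is a bi-Lipschitz embedding $(\G,d_{cc}^{1/2})\to\RR^N$ with constants independent of $n$, and one checks directly that $\lambda_n^{-1/2}(\tilde f(z)-\tilde f(x)) = \tilde e_n\big(\delta_{1/\lambda_n}(f(x)^{-1}f(z))\big)$, i.e. the rescaled $\tilde f$-package is exactly $\tilde e_n$ composed with the rescaled $f$-package.

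Next I would pass to the limit. By hypothesis the rescaled $f$-packages converge in $D$ to $(Y,y,g)$, and the rescaled $\tilde f$-packages converge in $D$ to $(\tilde Y,\tilde y,\tilde g)$. Using Lemma \ref{inverselemma} I can realize the $D$-convergence of the $f$-packages via $\epsilon_n$-isometries $\phi_n\colon \tfrac{1}{\lambda_n}X\to Y$ with $\epsilon_n\to 0$ that are two-sided inverses up to small additive error and that nearly intertwine the maps into $\G$. Post-composing domains with the identity-as-a-map-to-snowflakes, $\phi_n$ is automatically an $\epsilon_n'$-isometry $(\tfrac{1}{\lambda_n}X)^{1/2}\to (Y,d^{1/2})$ with $\epsilon_n'\to 0$ (on balls of fixed radius, since $t\mapsto t^{1/2}$ is uniformly continuous and the relevant distances are bounded). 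Because the $\tilde e_n$ are uniformly bi-Lipschitz, a subsequence of them converges locally uniformly on the (precompact, by metric doubling) image sets to a bi-Lipschitz embedding $\hat e\colon (\G,d_{cc}^{1/2})\to\RR^N$; one extracts this using Arzelà--Ascoli on $\overline B(0,R)\cap f(X)$-type sets together with the uniform bounds. Feeding all this through the definition of $D$ shows the rescaled $\tilde f$-packages are $D$-close to $\big((\tfrac{1}{\lambda_n}X)^{1/2}, x, \hat e\circ(\text{rescaled }f)\big)$, which in turn converges to $\big((Y,d^{1/2}), y, \hat e\circ g\big)$. By uniqueness of limits in $D$ up to isometry (Lemma \ref{Dproperties}(\ref{pseudometric}), valid here by metric doubling), $(\tilde Y,\tilde y,\tilde g)$ is pointedly isometric to $\big((Y,d^{1/2}), y, \hat e\circ g\big)$, as claimed.

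The main obstacle I anticipate is the bookkeeping in the last step: one must be careful that the $\epsilon$-isometry data for the snowflaked domains really does degrade controllably (the condition (ii) in Definition \ref{epsilonisomdef}, about $\epsilon$-neighborhoods of images covering balls, needs to be re-examined after snowflaking, since radii are distorted nonlinearly), and that the closeness of maps, measured in $d_{cc}$ on the $f$-side and in the Euclidean metric on the $\tilde f$-side, transfers correctly through the uniformly bi-Lipschitz $\tilde e_n$ and their limit $\hat e$. The cleanest way to organize this is probably to prove a short auxiliary claim: if $(X_n,x_n,h_n)\to(X_\infty,x_\infty,h_\infty)$ in $D$ with targets in a fixed $\G$, all metrically doubling with uniform constants, and $\iota_n$ are uniformly bi-Lipschitz embeddings of the relevant target balls of $(\G,d_{cc}^{1/2})$ into $\RR^N$ converging locally uniformly to $\iota_\infty$, then $\big(X_n^{1/2},x_n,\iota_n\circ h_n\big)\to\big(X_\infty^{1/2},x_\infty,\iota_\infty\circ h_\infty\big)$ in the Euclidean-target version of $D$. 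Granting that claim, the lemma is immediate from the explicit computations in the first paragraph. I do not expect any genuinely new idea beyond this; it is essentially the observation that snowflaking is compatible with Gromov--Hausdorff blow-ups, combined with a compactness argument for the embeddings.
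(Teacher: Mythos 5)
Your strategy is essentially the paper's: the paper also extracts a locally uniform limit $\hat e$ of a sequence of uniformly bi-Lipschitz rescalings of $e$, snowflakes the $\epsilon$-isometries realizing the two convergences, and identifies $(\tilde Y,\tilde y,\tilde g)$ with $((Y,d^{1/2}),y,\hat e\circ g)$ (the paper builds the identifying isometry explicitly as a limit of compositions $\phi_n\circ\tilde\psi_n$ rather than quoting uniqueness of $D$-limits, but that is the same argument in different packaging). There is, however, one concrete error in your computation: with your definition $\tilde e_n=\lambda_n^{-1/2}\,e(\delta_{\lambda_n}(\cdot))$, the claimed identity
$$\lambda_n^{-1/2}\bigl(\tilde f(z)-\tilde f(x)\bigr)=\tilde e_n\bigl(\delta_{1/\lambda_n}(f(x)^{-1}f(z))\bigr)$$
is false: the right-hand side equals $\lambda_n^{-1/2}\,e\bigl(f(x)^{-1}f(z)\bigr)$, while the left-hand side is $\lambda_n^{-1/2}\bigl(e(f(z))-e(f(x))\bigr)$, and these differ because $e$ is merely a bi-Lipschitz embedding, not a homomorphism or affine map (your $\tilde e_n$ also fails to be uniformly bounded at the basepoint, since $\tilde e_n(0)=\lambda_n^{-1/2}e(0)$). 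The fix is the normalization the paper uses, $\tilde e_n(p)=\lambda_n^{-1/2}\bigl(e(f(x)\cdot\delta_{\lambda_n}(p))-e(f(x))\bigr)$, which satisfies your identity exactly, is still uniformly bi-Lipschitz from $(\G,d_{cc}^{1/2})$ to $\RR^N$, and fixes $0$; with that replacement your Arzel\`a--Ascoli extraction of $\hat e$ and the remainder of the argument go through. The snowflaking bookkeeping you flag (condition (ii) of Definition \ref{epsilonisomdef} degrading from $\epsilon$ to roughly $\epsilon^{1/2}$) does work out and is handled in the paper by an explicit estimate of the form $|d_Y(i_n(z),i_n(w))-d_{\tilde Y}(z,w)^2|\to 0$.
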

\begin{proof}
By passing to subsequences, we may also achieve the local uniform convergence of the sequence of uniformly bi-Lipschitz embeddings
$$ p\mapsto \lambda_n^{-1/2}( e(f(x)\cdot \delta_{\lambda_n}(p)) - e(f(x))) \colon (\G, d_{cc}^{1/2})\rightarrow \RR^N.$$
to a bi-Lipschitz embedding $\hat{e} \colon (\G, d_{cc}^{1/2})\rightarrow \RR^N$.

Now, let
\begin{equation}\label{Xconvergence}
 \epsilon_n = D\left( (\lambda_n^{-1} X, x, \delta_{\lambda_n^{-1}}(f(x)^{-1}f)), (Y,y,g) \right) \rightarrow 0.
\end{equation}
\begin{equation}\label{tildeconvergence}
 \tilde{\epsilon}_n = D\left( (\lambda_n^{-1/2} \tilde{X}, x, \lambda_n^{-1/2}(\tilde{f} - \tilde{f}(x))) , (\tilde{Y},\tilde{y},\tilde{g}) \right) \rightarrow 0.
\end{equation}

Let
$$ \phi_n \colon \lambda_n^{-1}X\rightarrow Y \hspace{0.3cm} \text{and} \hspace{0.3cm} \psi_n \colon Y\rightarrow \lambda_n^{-1}X$$
be $\epsilon_n$-isometries achieving the distances in \eqref{Xconvergence}, and similarly let
$$ \tilde{\phi}_n \colon \lambda_n^{-1/2}\tilde{X}\rightarrow \tilde{Y} \hspace{0.3cm} \text{and} \hspace{0.3cm} \psi_n \colon \tilde{Y}\rightarrow \lambda_n^{-1}X$$
be $\tilde{\epsilon}_n$-isometries achieving the distances in \eqref{tildeconvergence}. Furthermore, we may assume that $\phi_n$ and  $\psi_n$ are approximate $C\epsilon_n$-inverses, in the sense of \eqref{inverses}.

For convenience, we labeled the domains of these mappings as $X,Y,\tilde{X},\tilde{Y}$, but really they are defined only on balls of radius $\epsilon_n^{-1}$ or $\tilde{\epsilon}_n^{-1}$ centered at the relevant base points.

Let $R_n = \frac{1}{2C}\min(\epsilon_n^{-1/2}, \tilde{\epsilon}_n^{-1/2})$, where $C$ is the constant from Lemma \ref{inverselemma}. Let $i_n$ be the mapping 
$$ i_n = \phi_n \circ \tilde{\psi}_n \colon B_{\tilde{Y}}(\tilde{y}, R_n) \rightarrow Y.$$
For any $z,w\in B_{\tilde{Y}}(\tilde{y},R_n)$, we have
\begin{align*}
|d_Y(i_n(z), i_n(w)) - d_{\tilde{Y}}(z,w)^2| &\leq |d_{\lambda^{-1}X}(\tilde{\psi}_n(z), \tilde{\psi}_n(w)) - d_{\tilde{Y}}(z,w)^2| + \epsilon_n\\
&= |d_{\lambda^{-1/2}\tilde{X}}(\tilde{\psi}_n(z), \tilde{\psi}_n(w))^2 - d_{\tilde{Y}}(z,w)^2| + \epsilon_n\\
&\leq |d_{\lambda^{-1/2}\tilde{X}}(\tilde{\psi}_n(z), \tilde{\psi}_n(w)) - d_{\tilde{Y}}(z,w)|(3R_n) + \epsilon_n\\
&\leq \tilde{\epsilon}_n(3R_n) + \epsilon_n\\
&\leq 3\tilde{\epsilon}_n^{1/2} + \epsilon_n
\end{align*}
which tends to zero as $n$ tends to infinity.

It follows that $i_n$ converges to an isometric embedding $i$ of $(\tilde{Y},d^{1/2})$ into $(Y,d)$.  (Note that both $Y$ and $\tilde{Y}$, being complete doubling spaces, have the property that closed balls are compact, which is all that is needed here.)

Furthermore, it is easy to see using properties \eqref{isom1} and \eqref{isom2} in Definition \ref{epsilonisomdef}, applied to $\phi_n$ and $\tilde{\psi}_n$ that $i$ is surjective with $i(\tilde{y})=y$.

It remains to show that
$$ \hat{e} \circ g \circ i = \tilde{g}.$$

Fix $\epsilon>0$ and $z\in Y$. Choose $n$ large so that $\epsilon_n, \tilde{\epsilon}_n<\epsilon$ and in addition
$$ |\hat{e}(p) - \lambda_n^{-1/2}( e(f(x)\cdot \delta_{\lambda_n}(p)) - e(f(x)))| < \epsilon$$
uniformly for $p$ in a ball of radius $1$ around $g(i(z))$.

To avoid cumbersome constants below, we use the notation $O(\epsilon)$ to denote a quantity which is bounded by $C\epsilon$ for some constant $C$ depending only on the Lipschitz constants of $f$,$g$, and $e$, which may change from line to line. Then
\begin{align*}
|\hat{e}\circ g \circ i(z) - \tilde{g}(z)|&\leq |\lambda_n^{-1/2}( e(f(x)\cdot \delta_{\lambda_n}(g(i(z)))) - e(f(x)))\\
&\phantom{\leq } - \lambda_n^{-1/2}( e(f(\tilde{\psi}_n(z))) - e(f(x)))| + O(\epsilon)\\
&\leq |\lambda_n^{-1/2}( e(f(x)\cdot f(x)^{-1} \cdot f(\psi_n(i(z)))) - e(f(x)))\\
&\phantom{\leq} - \lambda_n^{-1/2}( e(f(\tilde{\psi}_n(z))) - e(f(x)))| + O(\epsilon) \\
&\leq |\lambda_n^{-1/2}( e(f(\psi_n(\phi_n(\tilde{\psi}_n(z))))) - e(f(x)))\\
&\phantom{\leq} - \lambda_n^{-1/2}( e(f(\tilde{\psi}_n(z))) - e(f(x)))| + O(\epsilon)\\
&= O(\epsilon),
\end{align*}
where in the last line we used \eqref{inverses} and the Lipschitz properties of $e$ and $f$.

As $\epsilon$ was arbitrary, this shows that $\hat{e}\circ g \circ i = \tilde{g}$. Hence, $i$ is a pointed isometry between the packages $(\tilde{Y}, \tilde{y},\tilde{g})$ and $((Y,d^{1/2}), y, \hat{e}\circ g)$.

\end{proof}

\begin{proof}[Proof of Proposition \ref{tangentsoftangents}]
Fix a bi-Lipschitz embedding $e\colon (\G,d_{cc}^{1/2})\rightarrow \RR^n$ as in Lemma \ref{snowflaketangent}, using Assouad's embedding theorem \cite[Theorem 12.2]{He01}. Write $\tilde{A} = (A,d^{1/2})$ and $\tilde{f}=e\circ f$, so that $\tilde{f}$ is Lipschitz on $\tilde{A}$.

Choose $x\in A$ such that the conclusion of Proposition \ref{tangentsoftangentsRn} holds for the package $(\tilde{A}, x, \tilde{f}\colon \tilde{A}\rightarrow\RR^N)$. Let $(Y,y,g)\in\Tan(A,x,f)$, subordinate to the sequence $\lambda_n \rightarrow 0$, and fix $y' \in Y$. Let $(Z,z,h)\in\Tan(Y,y',g)$ be subordinate to the sequence $\eta_n \rightarrow 0$, and suppose that $h$ is bi-Lipschitz. 

By passing to a subsequence, we may also find a tangent
$$ (\tilde{Y},\tilde{y},\tilde{g}) \in \Tan(\tilde{A},x,\tilde{f}),$$
subordinate to the sequence $\lambda_n^{1/2}$.

By Lemma \ref{snowflaketangent}, we know that $(\tilde{Y},\tilde{y},\tilde{g})$ is pointedly isometric to $( (Y,d^{1/2}),y,\hat{e}\circ g))$ for some bi-Lipschitz embedding $\hat{e}$.

Passing to a further subsequence, we also have
$$ (\tilde{Z},\tilde{z},\tilde{h}) \in \Tan(\tilde{Y},y',\tilde{g}),$$
subordinate to the sequence $\eta_n^{1/2}$. Appealing to Lemma \ref{snowflaketangent} again, $(\tilde{Z},\tilde{z},\tilde{h})$ is pointedly isometric to 
$$((Z,d^{1/2}), z, e'\circ h),$$
for some bi-Lipschitz embedding $e'$.

Now, we know that
$$(\tilde{Z},\tilde{z},\tilde{h}) = ((Z,d^{1/2}), z, e'\circ h) \in \Tan(\tilde{A},x,\tilde{f})$$
by Proposition \ref{tangentsoftangentsRn}. Let $\alpha_n \rightarrow 0$ be a sequence to which this tangent is subordinate. By passing to a subsequence, we may assume that $(A,x,f)$ has a tangent $(W,w,j)$ subordinate to the sequence $\alpha^2_n$.

Once again, by Lemma \ref{snowflaketangent}, it follows that
$$ (\tilde{Z},\tilde{z},\tilde{h}) = ((W,d^{1/2}),w,b\circ j)$$
for some bi-Lipschitz embedding $b \colon (\G,d_{cc}^{1/2})\rightarrow \RR^N$. We conclude by observing that $j = (b|_{b(\G)})^{-1} \circ e' \circ h$ is bi-Lipschitz from $W$ to $(\G,d_{cc})$.
\end{proof}

\end{appendix}

\bibliographystyle{plain}
\bibliography{carnot2}

\end{document}